\documentclass[reqno,10pt]{amsart}


\usepackage{amsmath}
\usepackage{amsfonts}
\usepackage{amssymb}
\usepackage{amsthm}
\usepackage{newlfont}
\usepackage{graphicx}
\newtheorem{thm}{Theorem}[section]

\newtheorem{lem}[thm]{Lemma}
\newtheorem{prop}[thm]{Proposition}
\newtheorem{conj}[thm]{Conjecture}
\theoremstyle{definition}

\theoremstyle{remark}
\newtheorem{rem}[thm]{Remark}
\numberwithin{equation}{section}

\newcommand{\R}{\mathbb R}


\newcommand{\ed}{\end {document}}

\newcommand{\dpp}{{\Delta^{-1}\nabla^{\perp}}}
\newcommand{\haa}{{\sqrt{\log A}}} 

\title[illposedness of Euler in Sobolev]
{Strong Ill-posedness of the incompressible Euler equation in borderline
Sobolev spaces}
\author[J. Bourgain]{Jean Bourgain}
\address[J. Bourgain]{School of Mathematics, Institute for Advanced Study, Princeton, NJ 08544, USA}
\email{bourgain@math.ias.edu}

\author[D. Li]{Dong Li}
\address[D. Li]{Department of Mathematics, University of British Columbia, Vancouver BC Canada V6T 1Z2}%
\email{mpdongli@gmail.com}

\begin{document}
\begin{abstract}
For the $d$-dimensional incompressible Euler equation, the standard
energy method gives local wellposedness for initial velocity in
Sobolev space $H^s(\mathbb R^d)$, $s>s_c:=d/2+1$. The borderline
case $s=s_c$ was a folklore open problem. In this paper we consider
the physical dimensions $d=2,3$ and show that if we perturb any
given smooth initial data in $H^{s_c}$ norm, then  the corresponding
solution can have infinite $H^{s_c}$ norm instantaneously at $t>0$.
The constructed solutions are unique and even $C^{\infty}$-smooth in
some cases. To prove these results we introduce a new strategy:
\emph{large Lagrangian deformation induces critical norm inflation}.
As an application we also settle several closely related open
problems.

\end{abstract}

\maketitle

\section{Introduction}
The $d$-dimensional incompressible Euler equation takes the form
\begin{align} \label{V_usual}
 \begin{cases}
  \partial_t u + (u\cdot \nabla) u + \nabla p =0, \quad (t,x) \in \mathbb R \times \mathbb R^d, \\
  \nabla \cdot u =0, \\
  u \bigr|_{t=0}=u_0,
 \end{cases}
\end{align}
where $u=u(t,x)=(u_1(t,x),\cdots,u_d(t,x)):\, \mathbb R \times \mathbb R^d \to \mathbb R^d$
denotes the velocity of the fluid and $p=p(t,x):\, \mathbb R \times \mathbb R^d \to \mathbb R$ is the pressure.
The second equation $\nabla \cdot u=0$ in \eqref{V_usual} is usually called the incompressibility (divergence-free)
condition. By taking the divergence on both sides of the first equation in \eqref{V_usual}, one can recover the pressure
from the quadratic term in velocity by inverting the Laplacian in suitable functional spaces. Another way to eliminate
the pressure is to use the vorticity formulation. For this we will discuss separately the 2D and 3D case.
In 2D, introduce the scalar-valued vorticity function
\begin{align*}
\omega = -\partial_2 u_1 + \partial_1 u_2=\nabla^{\perp}\cdot u,\quad\nabla^{\perp} := (-\partial_2, \partial_1).
\end{align*}
By taking $\nabla^{\perp}\cdot$ on both sides of \eqref{V_usual}, we have the equation
\begin{align} \label{E_usual}
\begin{cases}
\partial_t \omega + u \cdot \nabla \omega =0,  \quad (t,x) \in \mathbb R\times \mathbb R^2,\\
u= \nabla^{\perp} \psi=(-\partial_2 \psi, \partial_1 \psi),  \;\; \Delta \psi= \omega, \\
\omega\big|_{t=0}= \omega_0.
\end{cases}
\end{align}
Under some suitable regularity assumptions, the second equations in \eqref{E_usual} can be written as
a single equation
\begin{align}
 u= \Delta^{-1} \nabla^{\perp} \omega, \label{uw}
\end{align}
which is the usual Biot--Savart law. Alternatively one can express
\eqref{uw} as a convolution integral
\begin{align*}
u=K*\omega, \quad K(x)= \frac 1 {2\pi} \cdot \frac {x^{\perp}}{|x|^2}.
\end{align*}
We can then rewrite \eqref{E_usual} more compactly as
\begin{align} \label{2D_vorticity_usual}
\begin{cases}
 \partial_t \omega + (\Delta^{-1} \nabla^{\perp} \omega \cdot \nabla) \omega =0, \\
 \omega\Bigr|_{t=0} =\omega_0.
 \end{cases}
\end{align}
We shall frequently refer to \eqref{2D_vorticity_usual} as the usual 2D Euler equation in vorticity formulation.
Note that \eqref{2D_vorticity_usual} is a transport equation which preserves all $L^p$, $1\le p\le \infty$ norm of the
vorticity $\omega$. In the 3D case the vorticity is vector-valued and given by
$$\omega = \operatorname{curl} u =\nabla \times u.$$
The 3D Euler equation in vorticity formulation has the form
\begin{align*}
 \begin{cases}
  \partial_t \omega + (u\cdot \nabla) \omega =(\omega \cdot \nabla) u, \\
  u=-\Delta^{-1} \nabla \times \omega, \\
  \omega \Bigr|_{t=0} =\omega_0.
 \end{cases}
\end{align*}
Note that  the second equation above is just the Biot--Savart law in
3D.  The expression $(\omega \cdot \nabla) u$ is often referred to
as the vorticity stretching term. It is one  of the main source of
difficulties in the wellposedness theory of 3D Euler.

There is by now an extensive literature on the wellposedness theory for Euler equations. We shall only mention a few
and refer to Majda-Bertozzi \cite{MB} and Constantin \cite{C07} for more extensive references.
 The papers of Lichtenstein \cite{L21} and Gunther \cite{Gun27} started the subject of
  local wellposedness in H\"older spaces $C^{k,\alpha}$ ($k\ge 1$, $0<\alpha<1$).
In \cite{Wolibner33} Wolibner obtained global solvability of classical (belonging to H\"older class)
solutions for 2D Euler (see Chemin \cite{Chemin_book} for a modern exposition).
In \cite{EM70} Ebin and Marsden proved the short time existence, uniqueness, regularity, and continuous dependence on
initial conditions for solutions of the Euler equation on general compact manifolds (possibly with
$C^{\infty}$ boundary). Their method is to topologize the
space of diffeomorphisms by Sobolev $H^s$, $s> d/2+1$ norms and then solve the geodesic equation using contractions.
In \cite{BB74} Bourguignon and Brezis  generalized $H^s$ to the case of $W^{s,p}$ for $s>d/p+1$.
In \cite{Kato72} Kato proved local wellposedness of $d$-dimensional Euler in $C_t^0H_x^m$
for initial velocity $u_0\in H^m(\mathbb R^d)$ with integer $m>d/2+1$.  Later Kato and Ponce
\cite{KP88} proved wellposedness in the general
Sobolev space $W^{s,p}(\mathbb R^d) =(1-\Delta)^{-s/2} L^p(\mathbb R^d) $ with real $s>d/p+1$ and $1<p<\infty$.
The key argument in \cite{KP88} is the following commutator estimate for the operator $J^s=(1-\Delta)^{s/2}$:
\begin{align}
\| J^s(fg)-f J^s g\|_{p} \lesssim_{d,s,p} \|D f\|_{\infty}  \|
J^{s-1} g\|_{p} +\| J^s f\|_p \| g\|_{\infty}, \qquad 1<p<\infty,\,
s\ge 0. \label{commutator_1}
\end{align}
To extend the local solutions globally in time, one can use
 the Beale-Kato-Majda criterion \cite{BKM84} which asserts that (here $s>d/2+1$)
 \begin{align*}
 \limsup_{t\to T^*} \| u(t,\cdot)\|_{H^s(\mathbb R^d)} = +\infty,
 \end{align*}
 if and only if
 \begin{align*}
 \limsup_{t\to T^*} \int_0^{t} \| \omega(s,\cdot) \|_{L^\infty(\mathbb R^d)} ds =+\infty.
 \end{align*}
By using this criterion and conservation of $\|\omega\|_{\infty}$ in 2D, one can immediately deduce
the global existence of Kato's solutions in dimension two. In \cite{Vishik98} (see also \cite{Vishik99})
 Vishik considered the borderline case $s=d/p+1$ and obtained global solvability for
the 2D Euler in Besov space $B^{ 2/p+1}_{p,1}$ with $1<p<\infty$. In
\cite{Chae04} Chae proved local existence and uniqueness of
solutions to $d$-dimensional
 Euler in critical Besov space (for velocity) $B^{d/p+1}_{p,1}(\mathbb R^d)$
with $1<p<\infty$. The local wellposedness in
$B^{1}_{\infty,1}(\mathbb R^d)$, $d\ge 2$ was settled by Pak and
Park in \cite{PP04}. Roughly speaking, all the aforementioned local
wellposedness results rely on finding a certain Banach space $X$
with the norm $\|\cdot\|_X$ such that (take $f=\nabla \times u$ and
$X=B^{d/p}_{p,1}$ for example)
\begin{enumerate}
\item If $f\in X$, then $ \| f \|_{L^{\infty}} + \| \mathcal R_{ij}
f \|_{L^{\infty}} \lesssim \| f \|_X$ ($\mathcal R_{ij}$ is the
Riesz transform);

\item Some version of a commutator estimate similar to
\eqref{commutator_1} holds in $X$.
\end{enumerate}
The above are essentially minimal conditions needed to close the
energy estimates. On the other hand, this type of scheme completely
breaks down for the natural borderline Sobolev spaces such as $H^{
d/2+1}$  (in terms of vorticity we have $X=H^{d/2}$) since both
conditions will be violated. In \cite{Ta10}, Takada constructed\footnote{Counterexamples for the case $s<d/p+1$
was also considered therein.} several counterexamples of Kato-Ponce-type commutator estimates in critical
Besov $B^{d/p+1}_{p,q}(\mathbb R^d)$ and Triebel-Lizorkin spaces $F^{d/p+1}_{p,q}(\mathbb R^d)$ for
various exponents $p$ and $q$ (For Besov: $1\le p\le \infty$, $1< q\le \infty$; For Triebel-Lizorkin:
$1<p<\infty$, $1\le q\le \infty$ or $p=q=\infty$).
It should be noted that the vector fields used in his counterexamples
are divergence-free.
In light of these considerations, a
well-known long standing open problem was the following

\begin{conj} \label{conj1}
The Euler equation \eqref{V_usual} is illposed for a class of
initial data in $H^{d/2+1}(\mathbb R^d)$.
\end{conj}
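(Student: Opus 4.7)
The plan is to pass to the vorticity formulation and use the equivalence $\|u\|_{H^{s_c}}\approx \|u\|_{L^2}+\|\omega\|_{\dot H^{d/2}}$, thereby reducing the conjecture to producing initial vorticities $\omega_0$ (split as a fixed smooth background plus an arbitrarily small $\dot H^{d/2}$ perturbation) whose transported image $\omega(t,x)=\omega_0\circ\Phi_t^{-1}(x)$ in 2D (in 3D with an additional stretching factor $D\Phi_t$ acting on $\omega_0$) acquires infinite $\dot H^{d/2}$ norm at every $t>0$. The underlying mechanism, announced in the abstract as \emph{large Lagrangian deformation induces critical norm inflation}, is that while transport preserves every $L^p$ norm, it does not preserve fractional Sobolev regularity: under a volume-preserving map with Jacobian $D\Phi_t$, the Fourier support of $\omega_0\circ\Phi_t^{-1}$ is pushed by $(D\Phi_t)^{-T}$, so a strongly hyperbolic flow, with one eigenvalue $\sim e^{\lambda t}$ and its inverse $\sim e^{-\lambda t}$, injects a logarithmic factor $\sim \sqrt{\log \|D\Phi_t\|_\infty}$ into the critical norm of any localized high-frequency wave packet.

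Concretely, I would fix any smooth background $\bar u$ whose flow produces a hyperbolic stagnation region (plane shear, saddle, or the gradient structure of $\bar\omega$) and perturb the vorticity by a lacunary sum $\sum_j \lambda_j g_j$ of wave packets localized in that region at disjoint spatial scales and well-separated frequencies $N_j\to\infty$. The amplitudes $\lambda_j$ are tuned so that $\sum_j \lambda_j^2 N_j^{d}\ll 1$ (hence the initial perturbation is arbitrarily small in $\dot H^{d/2}$) while the times $t_j$ at which the $j$-th packet's critical norm is inflated by $\sqrt{\log A_j}$ satisfy $t_j\to 0^+$. Summing a divergent series in $j$ forces $\|\omega(t)\|_{\dot H^{d/2}}=+\infty$ at every $t>0$; uniqueness and even $C^\infty$ smoothness in the relevant sub-cases follow because each packet is a transported scalar that remains $L^\infty$-small, placing the solution inside Yudovich's class.

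The technical core consists of two estimates. First, a \emph{Lagrangian inflation lemma}: for a wave packet $g$ at frequency $N$ and a flow map whose differential grows to $A=\|D\bar\Phi_t\|_\infty$, prove
\begin{align*}
\|g\circ\bar\Phi_t^{-1}\|_{\dot H^{d/2}}\gtrsim\sqrt{\log A}\;\|g\|_{\dot H^{d/2}},
\end{align*}
which is essentially a Littlewood--Paley analysis of the pushed-forward Fourier support. Second, a \emph{stability lemma} allowing replacement of the true flow $\Phi_t$ by the background flow $\bar\Phi_t$ over the short window $[0,t_j]$, obtained from a perturbative ODE bound on $\Phi_t-\bar\Phi_t$ controlled by the smallness of $\|\omega-\bar\omega\|_\infty$ (via log-Lipschitz estimates in 2D; via Kato--Ponce up to the stretching term in 3D).

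The main obstacle is exactly this stability step at critical regularity: the velocity generated by the perturbation must not distort the hyperbolic background over the very short times $t_j$ during which the deformation has to be large enough to inflate the critical norm. In 2D the argument closes because $\|\omega\|_\infty$ can be kept small and log-Lipschitz theory pins down $\Phi_t-\bar\Phi_t$ pointwise; in 3D the vorticity stretching term $(\omega\cdot\nabla)u$ is a genuine competitor with the transport mechanism and must be shown, by a short-time ODE argument on the full vorticity vector, neither to overwhelm nor to conspire to cancel the inflation before time $t_j$. This 3D self-consistency control is where I expect the real technical difficulty to lie.
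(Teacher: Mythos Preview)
Your proposal identifies the right slogan but has two concrete gaps that prevent the argument from closing.

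The first is the source of large Lagrangian deformation. You fix a smooth background $\bar u$ with a hyperbolic stagnation point and let its flow do the stretching. But for any fixed smooth $\bar u$ one has $\|D\bar\Phi_t\|_\infty\le e^{t\|\nabla\bar u\|_\infty}$, so as $t_j\to 0^+$ the deformation tends to the identity and there is nothing to exploit. The paper's essential point is that the deformation-generating data itself carries a free parameter and has \emph{unbounded} velocity gradient: the odd-in-both-variables vorticity $h_A=\frac{\sqrt{\log A}}{A}\sum_{A\le k\le 2A}\eta_0(2^k\cdot)$ is small in $L^1\cap L^\infty\cap\dot H^1$, yet through the full nonlinear dynamics (with the odd symmetry pinning the origin as a hyperbolic stagnation point) it generates a flow satisfying $\max_{0\le t\le 1/\log\log A}\|D\phi_A(t)\|_\infty>\log\log A$ (Proposition~\ref{prop40}, or the sharper Proposition~\ref{prop_gener_1}). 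Sending $A\to\infty$ yields simultaneously $t_A\to 0$ and deformation $\to\infty$; this is where the real analysis lives, and it is not available from a fixed Lipschitz background.

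The second gap is the inflation lemma. The claimed generic bound $\|g\circ\Phi^{-1}\|_{\dot H^{d/2}}\gtrsim\sqrt{\log A}\,\|g\|_{\dot H^{d/2}}$ is not correct: under a hyperbolic area-preserving map a packet's critical norm can \emph{decrease} when its frequency is aligned with the compressing eigendirection, and for an aligned or isotropic packet the gain is a full power of $A$, not a logarithm. In 2D the paper uses the exact identity $\|\omega(t)\|_{\dot H^1}^2=\int|(\nabla\omega_0)(x)\,(D\phi(t,x))^{-1}|^2\,dx$ and chooses the perturbation $\beta(x)=\frac{1}{k\sqrt M}\sin(kx_1)b(x)$ \emph{after} locating the point and matrix entry where $\|D\phi(t_0,\cdot)\|_\infty=M$ is realized, so that $\nabla\beta$ points along the relevant column of $(D\phi)^{-1}$; the resulting inflation is $\|\tilde\omega(t_0)\|_{\dot H^1}>M^{1/3}$ (Proposition~\ref{prop20}). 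A lacunary sum with frequency directions fixed in advance, as you propose, need not be aligned with the stretching that the nonlinear flow eventually produces. Your stability step (comparing the perturbed flow to the unperturbed one) is indeed needed and is carried out in the paper via a $W^{1,4}$ energy estimate plus Lemma~\ref{lem20}, but it only becomes useful once the two issues above are resolved.
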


Of course one can state analogous versions of Conjecture
\ref{conj1} in similar Sobolev spaces $W^{d/p+1,p}$ or other Besov or Triebel-Lizorkin type
spaces with various boundary conditions. A rather delicate matter is
to give a precise (and satisfactory) formulation of the illposedness
statement in Conjecture \ref{conj1}. The formulation and the proof
of such a statement requires a deep understanding of how the
critical space topology changes under the Euler dynamics.

To begin, one can consider explicit solutions to \eqref{V_usual}. In \cite{DM87}, DiPerna and Majda
introduced the following shear flow (in their study of measure-valued solutions for 3D Euler):
\begin{align*}
u(t,x)=(f(x_2),0,g(x_1-tf(x_2))), \quad x=(x_1,x_2,x_3),
\end{align*}
where $f$ and $g$ are given single variable functions. This explicit flow (sometimes called "2+1/2"-dimensional
flow) solves \eqref{V_usual} with pressure $p=0$. DiPerna and Lions used the above flow (see e.g. p152 of
\cite{Lions_book}) to show that for every $1\le p<\infty$, $T>0$, $M>0$, there exists a smooth shear flow for which
$\| u(0)\|_{W^{1,p}(\mathbb T^3)}=1$ and $\| u(T)\|_{W^{1,p}(\mathbb T^3)}>M$.
 Recently Bardos and Titi \cite{BT10} revisited this
example and constructed a weak solution which initially lies in $C^{\alpha}$ but does not belong
to any $C^{\beta}$ for any $t>0$ and $1>\beta>\alpha^2$. By similar arguments one can also deduce illposedness in
$F^{1}_{\infty,2}$ and $B^1_{\infty,\infty}$ (see Remark 1 therein).
In \cite{MY12}, Misio{\l}ek and Yoneda considered
the logarithmic Lipschitz space $\operatorname{LL}_{\alpha}(\mathbb R^d)$ consisting of continuous functions
such that
\begin{align*}
\| f\|_{\operatorname{LL}_{\alpha}} = \| f \|_{\infty} + \sup_{0<|x-y|<\frac 12} \frac{|f(x)-f(y)|}{|x-y| |\log |x-y||^{\alpha}}<\infty.
\end{align*}
They used the above shear-flow example to generate illposedness of 3D Euler in $\operatorname{LL}_{\alpha}$ for any $0<\alpha\le 1$.
In connection with Conjecture \ref{conj1}, a related issue is the dependence of the solution operator on the
underlying topology. In \cite{Kato75}, to describe the sharpness of
the continuous dependence on initial data in his wellposedness
result, Kato showed that (see Example 5.2 therein) the solution
operator for the Burgers equation is not H\"older continuous in
$H^s(\mathbb R)$, $s\ge 2$ norm for any prescribed H\"older
exponent. In \cite{HM10} Himonas and Misio{\l}ek proved that for the
Euler equation the data-to-solution map is not uniform continuous in
$H^s(\Omega)$ topology where $s\in \mathbb R$ if $\Omega=\mathbb
T^d=\mathbb R^d/2\pi\mathbb Z^d$ and $s>0$ if $\Omega=\mathbb R^d$.
Very recently Inci \cite{In13} strengthened this result and showed for any $T>0$
that the solution map $u(0) \to u(T)$ is nowhere locally uniformly continuous for $H^s(\mathbb R^n)$, $s>n/2+1$.
In \cite{CS10}, Cheskidov and Shvydkoy proved illposedness of
$d$-dimensional Euler in Besov spaces $B^s_{r,\infty}(\mathbb T^d)$
where $s>0$ if $r>2$ and $s>d(\frac 2r-1)$ if $1\le r\le 2$.
However, as was pointed out by the aforementioned authors, the above
works 
do not address the borderline Sobolev space $H^{d/2+1}$ or
similar critical spaces which was an outstanding open problem.

The purpose of this work is to completely settle the borderline case $H^{ d/2+1}$ (Conjecture \ref{conj1})
and several other related open problems. Roughly speaking, we prove the following
\medskip

\noindent
\textbf{Theorem}. Let the dimension $d=2,3$. The Euler equation \eqref{V_usual} is illposed
in the Sobolev space $W^{d/p+1,p}$ for any $1<p<\infty$ or the Besov space $B^{d/p+1}_{p,q}$ for
any $1<p<\infty$, $1<q\le \infty$.
\medskip

As a matter of fact, we shall show that in the
borderline case, ill-posedness holds in the strongest sense. Namely
for \emph{any} given smooth initial data, we shall find special
perturbations which can be made arbitrarily small in the critical
Sobolev  norm, such that the corresponding perturbed solution is
unique (in other functional spaces) but loses borderline Sobolev
regularity instantaneously in time. Our analysis shows that in
some sense the illposedness happens in a very generic way. In
particular, it is ``dense'' in the $H^{ d/2+1}$ (and similarly for
other critical spaces) topology.

We now state more precisely the main results.
The first result is for 2D Euler with non-compactly supported data. A special feature is that our constructed solutions
are $C^{\infty}$-smooth which are classical solutions.

\begin{thm}[2D non-compact case]\label{thm1}
For any given $\omega^{(g)}_0\in C_c^{\infty}(\mathbb R^2) \cap \dot H^{-1}(\mathbb R^2)$ and any $\epsilon>0$,
we can find a $C^{\infty}$ perturbation $\omega^{(p)}_0:\mathbb R^2\to \mathbb R$ such that the following hold true:

\begin{enumerate}
 \item $\| \omega^{(p)}_0 \|_{\dot H^1(\mathbb R^2)} + \| \omega^{(p)}_0\|_{L^1(\mathbb R^2)} +
 \| \omega^{(p)}_0\|_{L^{\infty}(\mathbb R^2)}+\|\omega^{(p)}_0\|_{\dot H^{-1}(\mathbb R^2)}
 <\epsilon$.
\item  Let $\omega_0= \omega^{(g)}_0 + \omega^{(p)}_0$. The initial velocity $u_0 = \Delta^{-1} \nabla^{\perp} \omega_0$ has regularity
  $u_0 \in H^2(\mathbb R^2) \cap C^{\infty} (\mathbb R^2) \cap L^{\infty}(\mathbb R^2)$.
 \item There exists a unique classical solution $\omega = \omega(t)$ to the 2D Euler equation (in vorticity form)
 \begin{align*}
  \begin{cases}
   \partial_t \omega + (\Delta^{-1} \nabla^{\perp} \omega \cdot \nabla) \omega =0, \quad 0<t\le 1, \, x \in \mathbb R^2,\\
   \omega \Bigr|_{t=0} = \omega_0,
  \end{cases}
 \end{align*}
satisfying
\begin{align*}
 \max_{0\le t\le 1}\Bigl( \| \omega(t,\cdot) \|_{L^1}+\|\omega(t,\cdot)\|_{L^\infty} + \|\omega(t,\cdot)\|_{\dot H^{-1}} \Bigr)<\infty.
\end{align*}
Here $\omega(t) \in C^{\infty}$, $u(t)=\Delta^{-1} \nabla^{\perp} \omega (t)\in C^{\infty}
\cap L^2\cap L^{\infty}$ for each $0\le t \le 1$.

\item For any $0<t_0 \le 1$, we have
\begin{align} \label{thm1_refined_e1}
 \operatorname{ess-sup}_{0<t \le t_0} \| \omega(t,\cdot) \|_{\dot H^1} =+\infty.
\end{align}

\end{enumerate}

\end{thm}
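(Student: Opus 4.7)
The strategy follows the slogan \emph{large Lagrangian deformation induces critical norm inflation}. Since 2D Euler vorticity is transported by the volume-preserving flow map $\phi(t,\cdot)$, one has $\omega(t,y) = \omega_0(\phi^{-1}(t,y))$, and a change of variables yields the key identity
\begin{align*}
\|\nabla \omega(t)\|_{L^2}^2 = \int_{\mathbb R^2} \bigl|(\nabla \phi(t,x))^{-T} \nabla \omega_0(x)\bigr|^2 \, dx.
\end{align*}
In 2D, $\det \nabla \phi \equiv 1$, so $\|(\nabla\phi)^{-T}\|_{\mathrm{op}}$ coincides with the larger singular value of $\nabla \phi$. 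Hence if the Lagrangian deformation $\|\nabla \phi(t,\cdot)\|$ can be driven to be unboundedly large on a set that meaningfully supports $\nabla \omega_0$, the critical norm $\|\omega(t)\|_{\dot H^1}$ must diverge. The whole game is to arrange such deformation starting from a perturbation that is small in the critical norm.

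I would construct $\omega_0^{(p)}$ as a superposition of well-separated, high-frequency building blocks at a sequence of vanishing scales $\lambda_N \downarrow 0$,
\begin{align*}
\omega_0^{(p)}(x) \;=\; \sum_N \delta_N\, \Psi\!\bigl(\lambda_N^{-1}(x-x_N)\bigr),
\end{align*}
with centers $x_N$ placed outside $\operatorname{supp}\omega_0^{(g)}$ and mutually far apart, and with $\Psi$ a smooth profile whose local structure is engineered to generate a hyperbolic saddle in its self-induced velocity field (an odd-odd symmetric high-frequency packet is the natural choice). By scaling, each block has $\dot H^1$ norm of order $\delta_N$ independent of $\lambda_N$, while the $L^1$, $L^\infty$, and $\dot H^{-1}$ norms pick up favorable positive powers of $\lambda_N$. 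Choosing $\delta_N$ and $\lambda_N$ rapidly decaying and exploiting spatial separation to make the norms essentially additive yields (1).

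For (2) and (3) I would invoke standard theory. Since $\omega_0 \in L^1 \cap L^\infty$ and $u_0 \in H^2 \cap C^\infty \cap L^\infty$, Yudovich's theorem provides a unique weak solution transported along a bi--log-Lipschitz Lagrangian flow, preserving the $L^1 \cap L^\infty$ norms; the $\dot H^{-1}$ bound on $\omega(t)$ follows from $L^2$ control of $u(t)$ via Biot--Savart. Classical smoothness propagates because $\omega_0 \in C^\infty$ and the flow map is $C^\infty$ in $x$ for each fixed $t$ (even when a global critical norm has already failed). The decisive step is a \emph{local inflation lemma}: for each $N$, by exploiting the self-induced hyperbolic structure of the block (suitably shielded from the background and the other blocks) one shows that at some time $t_N \to 0$ and with $A_N \to \infty$,
\begin{align*}
\operatorname{ess-sup}_{|x-x_N|\lesssim \lambda_N} \|\nabla \phi(t_N, x)\| \;\ge\; A_N .
\end{align*}
Feeding this back into the identity above gives $\|\omega(t_N)\|_{\dot H^1} \gtrsim A_N \|\nabla \omega_0^{(N)}\|_{L^2}$, so that $\operatorname{ess-sup}_{0<t\le t_0} \|\omega(t)\|_{\dot H^1} = +\infty$ for every $t_0 > 0$, which is (4).

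The principal obstacle is the inflation lemma itself: one must demonstrate quantitatively that a perturbation which is arbitrarily small in the scale-invariant $\dot H^1$ norm can nevertheless generate arbitrarily large Lagrangian deformation in arbitrarily short time. This couples the scale invariance of the critical norm with the non-local, log-loss nature of the Biot--Savart law, and it is where the bulk of the hard analysis must live. A secondary but essential difficulty is decoupling: one must verify that the smooth background $\omega_0^{(g)}$ and the nonlinear cross-interactions among the blocks do not disrupt the hyperbolic stretching built into each block, which forces a careful hierarchical choice of the parameters $\{\delta_N, \lambda_N, x_N\}$.
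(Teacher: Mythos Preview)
Your slogan and the transport identity are exactly right, and the patching strategy with far-apart centers matches the paper. But there is a genuine gap at the point where you write
\[
\|\omega(t_N)\|_{\dot H^1} \;\gtrsim\; A_N\,\|\nabla\omega_0^{(N)}\|_{L^2}.
\]
This lower bound is false in general. Since $\det\nabla\phi\equiv 1$, the matrix $(\nabla\phi)^{-T}$ has singular values $\sigma\ge 1\ge \sigma^{-1}$, so pointwise one only has $\sigma^{-1}|\nabla\omega_0|\le |(\nabla\phi)^{-T}\nabla\omega_0|\le \sigma|\nabla\omega_0|$. Amplification occurs only if $\nabla\omega_0$ has a nontrivial component along the expanding eigendirection of $(\nabla\phi)^{-T}$, and that direction is determined \emph{dynamically} by the flow. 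There is no reason the gradient of the very profile that generated the hyperbolic saddle should happen to be so aligned; indeed the paper exhibits data $h_A$ with $\|D\phi(t_0)\|_\infty>M$ yet allows for $\|\omega(t_0)\|_{\dot H^1}\le M^{1/3}$ (Case~2 of Proposition~\ref{prop20}). Quantitatively, for the paper's $h_A$ one has $\|h_A\|_{\dot H^1}\lesssim \sqrt{\log A}/\sqrt{A}$ while the deformation is only $\gtrsim\log\log A$, so the product tends to zero; your one-step block cannot produce inflation this way.

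The paper closes this gap by a \emph{two-step} local construction: first choose $h_A$ to manufacture large Lagrangian deformation (Proposition~\ref{prop40}), then, \emph{after} locating a point $x_*$ and a specific entry (say $|\partial_{x_2}\phi_2(t_0,x_*)|>M$), add a further high-frequency perturbation $\beta(x)=\tfrac{1}{10k}\sin(kx_1)\,b(x)\,M^{-1/2}$ supported near $x_*$ so that $\nabla\beta$ is forced to align with the amplifying direction in the formula $\|\tilde\omega(t_0)\|_{\dot H^1}^2\ge \int|\nabla\tilde\omega_0\cdot\nabla^\perp\phi_2|^2\,dx$ (Proposition~\ref{prop20}). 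A $W^{1,4}$ stability argument shows the perturbed flow map is $O(k^{-\alpha})$-close to the original one, so the deformation persists, and one gets $\|\tilde\omega(t_0)\|_{\dot H^1}\gtrsim M^{1/3}$. Your proposal collapses these two steps into one and thereby loses the crucial alignment mechanism; as written, the inflation step does not go through.
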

\begin{rem}
 The $\dot H^{-1}$ assumption on the vorticity data $\omega^{(g)}_0$
 can be removed. We include it here simply to stress that the
 perturbed solution can inherit $\dot H^{-1}$ regularity which is natural since the
  corresponding velocity will be in $L^2$.
 Of course one can also state  similar results for $\omega^{(g)}_0 \in H^s$ with $s>1$ or
 some other subcritical functional spaces.
\end{rem}

\begin{rem}
In our construction, although the initial velocity $u_0$ is $C^{\infty}$-smooth, its
gradient turns out to be unbounded, i.e.
$\| \nabla u_0 \|_{L^{\infty}(\mathbb R^2)}=+\infty$.
\end{rem}

\begin{rem}
In  \cite{Kato75} Kato introduced the uniformly local Sobolev spaces
$L^p_{ul}(\mathbb R^d)$ (see \eqref{uniform_local_def1}) and
$H^s_{ul}(\mathbb R^d)$. These spaces contain $H^s(\mathbb R^d)$ and
the periodic space $H^s(\mathbb T^d)$. The statement
\eqref{thm1_refined_e1} in Theorem \ref{thm1} can be improved to
\begin{align} \notag
 \operatorname{ess-sup}_{0<t \le t_0} \| \nabla \omega(t,\cdot) \|_{L^2_{ul}(\mathbb R^2)} =+\infty.
\end{align}
Similar results also hold for Theorem \ref{thm2}--\ref{thm4} below. We shall not state them but leave it to interested readers.

\end{rem}

Our next result is for the compactly supported data for the 2D Euler equation. Note that this result carries over
(with simple changes) to the periodic case as well. For simplicity we shall consider vorticity functions having one-fold
symmetry. For example, we shall say $g=g(x_1,x_2):\, \mathbb R^2\to \mathbb R$ is odd in $x_1$ if
\begin{align*}
 g(-x_1,x_2)=-g(x_1,x_2),\quad \forall\, x=(x_1,x_2)\, \in \mathbb R^2.
\end{align*}
It is not difficult to check that the one-fold odd symmetry (of the vorticity function) is preserved by the Euler flow.

\begin{thm}[2D compact case]\label{thm2}
Let $\omega^{(g)}_0\in C_c^{\infty}(\mathbb R^2) \cap \dot
H^{-1}(\mathbb R^2)$ be any given vorticity function which is odd in
$x_2$.\footnote{Similar results also hold for vorticity functions
which are odd in $x_1$, or odd in both $x_1$ and $x_2$.} For any
such $\omega^{(g)}_0$ and any $\epsilon>0$, we can find a
perturbation $\omega^{(p)}_0:\mathbb R^2\to \mathbb R$ such that the
following hold true:

\begin{enumerate}
 \item $\omega^{(p)}_0$ is compactly supported (in a ball of radius $\le 1$), continuous and
 $$\| \omega^{(p)}_0 \|_{\dot H^1(\mathbb R^2)}  +
 \| \omega^{(p)}_0\|_{L^{\infty}(\mathbb R^2)}+\|\omega^{(p)}_0\|_{\dot H^{-1}(\mathbb R^2)}
 <\epsilon.$$

 \item Let $\omega_0= \omega^{(g)}_0 + \omega^{(p)}_0$. Corresponding to $\omega_0$
 there exists a unique time-global solution $\omega = \omega(t)$ to the Euler equation
satisfying $\omega(t) \in  L^{\infty} \cap \dot H^{-1}$. Furthermore $\omega \in C_t^0 C_x^0$ and\footnote{Actually it is
easy to show that $u$ is log-Lipschitz.}
$u=\Delta^{-1} \nabla^{\perp} \omega \in C_t^0 L_x^2 \cap C_t^0 C_x^{\alpha}$ for any $0<\alpha<1$.

\item $\omega(t)$ has additional local regularity in the following sense: there exists $x_* \in \mathbb R^2$ such
that for any $x\ne x_*$, there exists a neighborhood $N_x \ni x$, $t_x >0$ such that $w(t, \cdot) \in C^{\infty} (N_x)$ for any
$0\le t \le t_x$.

\item For any $0<t_0 \le 1$, we have
\begin{align*}
 \operatorname{ess-sup}_{0<t \le t_0} \| \omega(t,\cdot) \|_{\dot H^1} =+\infty.
\end{align*}
More precisely, there exist $0<t_n^1<t_n^2 <\frac 1n$, open precompact sets $\Omega_n$, $n=1,2,3,\cdots$ such that
$\omega(t) \in C^{\infty}(\Omega_n)$ for all $0\le t \le t_n^2$, and
\begin{align*}
 \| \nabla \omega(t,\cdot) \|_{L^2(\Omega_n)} >n, \quad \forall\, t\in[t_n^1,t_n^2].
\end{align*}

\end{enumerate}

\end{thm}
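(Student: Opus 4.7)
The overall strategy, following the abstract's slogan, is to exhibit large Lagrangian deformation by hand and to convert it into $\dot H^1$ norm inflation through a multi-scale superposition. Because $\omega^{(g)}_0$ is odd in $x_2$, the line $\{x_2=0\}$ is invariant under the Euler flow and on it the velocity is purely horizontal. After an inessential translation I may assume that, near the origin, either the background $\omega^{(g)}_0$ already produces a nondegenerate hyperbolic stagnation point for its velocity field, or else I prepend to $\omega^{(p)}_0$ a small odd-in-$x_2$ corrector that creates one. In either case one obtains a local flow near $0$ of the form $\partial_t X\approx (\lambda X_1,-\lambda X_2)$, giving stretching along $x_1$ and compression along $x_2$; this is the engine that will produce critical norm inflation.

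For each large integer $n$ I construct an odd smooth building block
\[
b_n(x)=a_n\,\psi\!\bigl(N_n(x-x_n)\bigr)-a_n\,\psi\!\bigl(N_n(x-\bar x_n)\bigr),
\]
where $\psi\in C_c^\infty$ is a fixed mean-zero profile, $x_n=(r_n,h_n)$ lies in the stretching region and approaches $x_*:=0$, $\bar x_n$ is the reflection of $x_n$ across $\{x_2=0\}$, and the parameters $a_n,N_n,r_n,h_n$ are tuned so that (i) $\|b_n\|_{L^\infty}+\|b_n\|_{\dot H^1}+\|b_n\|_{\dot H^{-1}}\le 2^{-n}\epsilon$; (ii) the supports are pairwise disjoint, sit in the unit ball, and accumulate only at $x_*$; (iii) at some small time $t_n\in(0,1/n)$ the local Lagrangian deformation of the flow on an open precompact set $\Omega_n$ (disjoint from a neighbourhood of $x_*$) is arbitrarily large. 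The link between stretching and $\dot H^1$ is the transport identity $\nabla\omega(t,x)=(\nabla\Phi_t^{-1}(x))^{\top}(\nabla\omega_0)(\Phi_t^{-1}(x))$; since $\Phi_t$ is area preserving, if $\nabla\Phi_t$ has singular values $e^{\pm\mu_n}$ on the support of $b_n$, then $\|\nabla\omega(t_n,\cdot)\|_{L^2(\Omega_n)}^2\gtrsim e^{2\mu_n}\|\nabla b_n\|_{L^2}^2$, which can be driven above $n^2$ while keeping $\|b_n\|_{\dot H^1}$ as small as one likes.

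I then set $\omega^{(p)}_0:=\sum_n b_n$. By (i) the series converges in $\dot H^1\cap L^\infty\cap \dot H^{-1}$ with norm less than $\epsilon$, giving item (1); the function is not $C^\infty$ at $x_*$ but only continuous, matching item (2). Because the full initial vorticity lies in $L^1\cap L^\infty$, Yudovich theory yields a unique global weak solution and a log-Lipschitz, area-preserving flow $\Phi_t\in C^\alpha$ for every $\alpha<1$, giving the other assertions of item (2). For local smoothness away from $x_*$ (item (3)) I observe that near any $x\ne x_*$ only finitely many $b_n$ contribute and $\omega_0$ is $C^\infty$ there; the standard propagation of $H^s$ regularity along trajectories of a log-Lipschitz velocity, applied in a tube around the trajectory of $x$ on which the velocity remains smooth, then yields $\omega(t,\cdot)\in C^\infty(N_x)$ for small $t$. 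Item (4) is a direct consequence of the single-scale bound applied in $\Omega_n$ with $\Omega_n\to x_*$.

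The main obstacle, and where the bulk of the real work lies, is to verify (iii) rigorously for the full nonlinear Euler flow rather than for transport by a frozen velocity. What one needs is quantitative hyperbolicity of $\Phi_t$ near $0$ that remains stable under the addition of all the other building blocks $b_m$, $m\ne n$: the combined contribution of the $b_m$ to the velocity gradient inside $\Omega_n$ over $[0,t_n]$ must be smaller than the background stretching rate, so that the lower bound on $\|\nabla\omega(t_n)\|_{L^2(\Omega_n)}$ survives the coupling. This decoupling is the heart of the argument and forces the delicate simultaneous tuning of $(a_n,N_n,r_n,h_n,t_n)$ in which smallness in the critical norm and largeness of the Lagrangian deformation must coexist.
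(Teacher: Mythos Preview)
Your proposal has the right high-level slogan but a genuine gap at the core mechanism: you have no source of large Lagrangian deformation on the short time scales $t_n<1/n$. A fixed hyperbolic stagnation point of the background (or of a small corrector) with strain rate $\lambda=O(1)$ produces $\|D\Phi_{t_n}\|\le e^{\lambda t_n}\to 1$ as $n\to\infty$; your single-scale bumps $b_n=a_n\psi(N_n(\cdot-x_n))-\ldots$ have $\|b_n\|_\infty\to 0$ and contribute only $o(1)$ to $\nabla u$ anywhere. So $\mu_n$ stays bounded, and since you also force $\|\nabla b_n\|_2\le 2^{-n}\epsilon$, the quantity $e^{2\mu_n}\|\nabla b_n\|_2^2$ actually tends to zero, not to infinity. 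The claim in (iii) is therefore false as written.

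The paper's construction is built around exactly this missing ingredient. Each local patch is not a single bump but a \emph{multi-scale} object
\[
g_A(x)=\frac{1}{\log\log\log\log A}\cdot\frac{1}{\sqrt{\log A}}\sum_{A\le k\le A+\log A}\eta_k(x),
\]
odd in both variables, whose self-generated velocity gradient at the origin satisfies $(\mathcal R_{12}g_A)(0)\sim \sum_k \int \eta_k\frac{x_1x_2}{|x|^4}\,dx$, with each dyadic piece contributing comparably by scale invariance. This makes the strain at the stagnation point large (of order a power of $\log A$), so that in time $1/\log\log A$ one already has $\|D\phi\|_\infty>\log\log A$ (Proposition~\ref{prop40} and its variants, Lemmas~\ref{lem53}--\ref{lem55}). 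Only \emph{after} this is established does the paper add a second, high-frequency perturbation $\beta(x)=\frac{1}{k}\sin(kx_1)\,b(x)M^{-1/2}$ aligned with the stretching direction, which converts $\|D\phi\|_\infty>M$ into $\|\omega(t_0)\|_{\dot H^1}>M^{1/3}$ (Proposition~\ref{prop20}). The compactly-supported version (Lemma~\ref{lem57}) handles the additional drift from the previously-placed patches by Taylor expanding it as a hyperbolic linear part plus a quadratic remainder; the $C^0$-convergence of the sequence of approximate solutions (Lemma~\ref{lem_C0thm2}) then upgrades the $L^\infty$ Yudovich solution to $C^0_tC^0_x$.

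In short: your building blocks must themselves be the engine of deformation, not passengers riding a fixed background; replace the single-scale $b_n$ by a log-stacked family of dyadic annuli as above, and separate the ``create deformation'' and ``detect deformation in $\dot H^1$'' steps.
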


\begin{rem}
In \cite{Yu63} Yudovich proved the existence and uniqueness of weak solutions to 2D Euler in bounded domains
for $L^{\infty}$ vorticity data. The uniqueness result (for bounded domain in general dimensions
$d\ge 2$) was improved in \cite{Yu95} allowing vorticty $\omega \in \cap_{p_0 \le p<\infty} L^p$
and $\| \omega \|_p \le C \theta (p)$ with $\theta(p)$ growing relatively slowly in $p$ (such as
$\theta(p)=\log p$). Vishik \cite{Vishik99} proved the uniqueness of weak solutions to Euler in
$\mathbb R^d$, $d\ge 2$, under the following assumptions:
\begin{itemize}
\item  $\omega \in L^{p_0}$, $1<p_0<d$,
\item For some $a(k)>0$ with the property
\begin{align*}
\int_1^{\infty} \frac 1 {a(k)} d k =+\infty,
\end{align*}
it holds that
\begin{align*}
\Bigl|\sum_{j=2}^{k} \| P_{2^j} \omega \|_{\infty} \Bigr| \le \operatorname{const} \cdot a(k), \qquad \forall\, k\ge 4.
\end{align*}
\end{itemize}
In other words uniqueness is guaranteed as long as $\omega$ has a little bit integrability and
the partial sum of the Besov $\dot B^0_{\infty,1}$ norm of $\omega$ is allowed to diverge in a controlled fashion.
Since we have uniform in time
$L^{\infty}$ control of the vorticity $\omega$ in both 2D and 3D (see Theorem \ref{thm3}--\ref{thm4} below),
the uniqueness of the constructed solution is not an issue and we shall not discuss this point further
in this work.
\end{rem}

Our third result is for 3D Euler with non-compactly supported data. As is well-known the lifespan of solutions to
3D Euler emanating from smooth initial data is an outstanding open problem. Since we are perturbing smooth
initial data using functions with critical Sobolev regularity, we need to make sure the perturbed solution has
a positive lifespan in some suitable functional spaces.
In the non-compact data case, this issue turns out to be immaterial
since we can choose the patches sufficiently far away from each other and the lifespan of each patch is
well under control.

\begin{thm}[3D non-compact case] \label{thm3}
Consider the 3D incompressible Euler equation in vorticity form:
\begin{align} \label{thm3_1}
 \begin{cases}
  \partial_t \omega + (u\cdot \nabla) \omega = (\omega \cdot \nabla)u, \quad t>0, \; x= (x_1,x_2,z) \in \mathbb R^3;\\
  u=-\Delta^{-1} \nabla \times \omega,\\
  \omega \Bigr|_{t=0} =\omega_0.
 \end{cases}
\end{align}

For any given $\omega^{(g)}_0 \in C_c^{\infty}(\mathbb R^3)$ and any $\epsilon>0$,
we can find a $T_0=T_0(\omega^{(g)}_0)>0$ and $C^{\infty}$ perturbation $\omega^{(p)}_0:\mathbb R^3\to \mathbb R^3$
such that the following hold true:

\begin{enumerate}
 \item $\| \omega^{(p)}_0 \|_{\dot H^{\frac 32}(\mathbb R^3)} + \| \omega^{(p)}_0\|_{L^1(\mathbb R^3)} +
 \| \omega^{(p)}_0\|_{L^{\infty}(\mathbb R^3)} <\epsilon$.
\item Let $\omega_0= \omega^{(g)}_0+ \omega^{(p)}_0$. Let $u_0$ be the velocity corresponding to the initial vorticity $\omega_0$. We have
$u_0 \in H^{\frac 52}(\mathbb R^3) \cap C^{\infty} (\mathbb R^3)\cap L^{\infty} (\mathbb R^3)$.

\item Corresponding to $\omega_0$, there exists a unique  solution $\omega = \omega(t)$ to \eqref{thm3_1}
on the whole time interval $[0,T_0]$ such that
\begin{align*}
 \sup_{0\le t \le 1} (\| \omega(t,\cdot)\|_{L^1} + \| \omega(t,\cdot) \|_{L^\infty} )<\infty.
\end{align*}
Moreover $\omega \in C^{\infty}$ and $u \in C^{\infty}$ so that the solution is actually classical.

\item For any $0<t_0 \le T_0$, we have
\begin{align*}
 \operatorname{ess-sup}_{0<t \le t_0} \| \omega(t,\cdot) \|_{\dot H^{\frac 32}} =+\infty.
\end{align*}

\end{enumerate}

\end{thm}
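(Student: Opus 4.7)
The plan is to construct $\omega^{(p)}_0$ as a countable superposition $\omega^{(p)}_0 = \sum_k \omega^{(k)}_0$ of highly localized vortex ``building blocks,'' placed in well-separated balls $B_k \subset \mathbb R^3$ located far from $\operatorname{supp}(\omega^{(g)}_0)$ and from each other, directly paralleling the non-compact 2D construction of Theorem \ref{thm1}. Since $\omega^{(g)}_0$ is smooth and compactly supported, standard local well-posedness furnishes a smooth background solution on some $[0, T_0]$; and since the supports are far apart, the Biot--Savart kernel decays like $|x|^{-2}$, so on each $B_k$ the velocity generated by the entire configuration agrees with the self-induced velocity of $\omega^{(k)}$ alone up to a $C^{\infty}$-small correction. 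By tuning the amplitudes and scales of the blocks one can arrange $\| \omega_0^{(p)}\|_{\dot H^{3/2}} + \| \omega_0^{(p)}\|_{L^1} + \| \omega_0^{(p)}\|_{L^{\infty}} < \eps$, with the essential $\ell^2$ gain provided by disjoint supports at the critical regularity.

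For the geometry of each block I would take a localized axisymmetric-without-swirl configuration about an axis through the center of $B_k$. In this regime, writing $\omega = \omega_\theta e_\theta$, the quantity $\omega_\theta/r$ is transported by the flow, and the 3D vortex stretching term $(\omega \cdot \nabla)u$ collapses to the explicit multiplicative Lagrangian factor $u^r/r$. This reduces the evolution of each block to a 2D Euler-type transport with a tractable correction, allowing me to import the mechanism announced in the abstract, namely \emph{large Lagrangian deformation induces critical norm inflation}. Concretely, I design $\omega^{(k)}_0$ so that its self-induced Lagrangian flow $\Phi_t^{(k)}$ develops a stretching $\| D \Phi_t^{(k)} \|_{\infty} \gtrsim \exp(N_k)$ on a subinterval $[t_k^1, t_k^2] \subset (0, 1/k)$ with $N_k \to \infty$ as fast as desired. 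Combining the transport identity $\omega(t,\Phi_t(x)) = D\Phi_t(x)\,\omega_0(x)$ with a Littlewood--Paley decomposition at the critical exponent (where the logarithmic factor exactly saturates) forces $\|\omega(t,\cdot)\|_{\dot H^{3/2}(B_k)} \gtrsim N_k^{1/2}$ on that time window. Summing over $k$ yields $\operatorname{ess-sup}_{0<t\le t_0} \|\omega(t,\cdot)\|_{\dot H^{3/2}} = +\infty$ for every $0 < t_0 \le T_0$, and the uniform $L^\infty$ bound on each block plus the smoothness of the background give uniqueness and $C^\infty$ smoothness of the full solution on $[0,T_0]$ by standard propagation arguments.

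The main obstacle is clause (3): controlling the 3D vortex stretching in a way that simultaneously preserves a common lower bound $T_0$ on the lifespan of every block, keeps the cross-interactions between blocks negligible in the critical norm, and, most delicately, does not destroy the large Lagrangian deformation one is trying to build. Unlike in 2D, $\|\omega\|_{\infty}$ is not conserved, so the BKM criterion must be applied block by block; the axisymmetric-without-swirl ansatz is what makes this feasible, since the stretching factor is then an explicit bounded Lagrangian multiplier tied to the amplitude of the block, and the $|x|^{-2}$ decay of Biot--Savart between well-separated blocks renders the cross-influence $C^{\infty}$-small and hence harmless at the critical Fourier frequencies carried by each $\omega^{(k)}$. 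Verifying these three compatibilities uniformly in $k$ (so that $N_k$ may be sent to infinity while $T_0$ stays bounded below) is the technical heart of the proof.
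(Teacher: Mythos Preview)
Your overall architecture matches the paper's: axisymmetric-without-swirl building blocks at well-separated centers, Biot--Savart decay making cross-interactions negligible, and $L^{3,1}$ conservation of $\omega^\theta/r$ controlling $u^r/r$ so that $\|\omega\|_\infty$ stays bounded and a uniform lifespan follows. The paper packages this as Propositions~\ref{x7} and~\ref{x9} together with Lemma~\ref{x8}.

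There is, however, a genuine gap in your norm-inflation step. You assert that once $\|D\Phi_t^{(k)}\|_\infty$ is large, the Cauchy formula $\omega(t,\Phi_t(x)) = D\Phi_t(x)\,\omega_0(x)$ plus Littlewood--Paley ``forces $\|\omega(t,\cdot)\|_{\dot H^{3/2}} \gtrsim N_k^{1/2}$.'' That inference is false as stated: the data $g_A$ engineered to produce large deformation need not itself see its $\dot H^{3/2}$ norm inflate (the paper says this explicitly in the introduction, Step~2). The actual mechanism is a two-step dichotomy (Proposition~\ref{x6}, the 3D analogue of Proposition~\ref{prop20}): either $\|\omega(t_0)\|_{\dot H^{3/2}}$ is already $>M^{1/6}$, or one \emph{further} perturbs the initial data by $k^{-3/2}\cos(kr)\,b(r,z)\,e_\theta$, supported where the deformation matrix is largest, and runs a stability argument to show the perturbed and unperturbed flow maps stay $O(k^{-\alpha})$-close. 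This perturbation sits at frequency $\sim k$ with $\dot H^{3/2}$ norm $O(M^{-1/6})$; transported by the flow it lands at frequency $\sim kM$, and it is this frequency shift---not the Cauchy formula by itself---that produces the growth. Your sketch omits this second perturbation, and without it the argument does not close. A smaller issue: $\dot H^{3/2}$ is nonlocal, so passing from patchwise inflation to $\|\omega\|_{\dot H^{3/2}(\mathbb R^3)}=+\infty$ needs its own cutoff argument (see the Claim near \eqref{sec6_claim_i1}).
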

\begin{rem}
If the vorticity $\omega_0^{(g)}$ is axisymmetric (see \eqref{jun12_add1} below), then we can choose
$T_0>0$ to be any positive number. This is due to the fact that for 3D Euler smooth axisymmetric flows without swirl
exist globally in time.
\end{rem}

The following theorem concerns the 3D Euler case with compactly supported initial vorticity. In this case
the situation is more complicated than that in Theorem \ref{thm3}. For convenience we will work with
a class of axisymmetric vorticity functions $\omega$ having the form:
\begin{align} \label{jun12_add1}
\omega(x)=\omega^{\theta}(r,z) e_{\theta}, \quad x=(x_1,x_2,z), \, r=\sqrt{x_1^2+x_2^2},
\end{align}
where $\omega^{\theta}$ is scalar-valued and $e_{\theta}=\frac 1r(-x_2,x_1,0)$. The corresponding velocity fields
are usually called axisymmetric without swirl flows. In this paper
we shall call such $\omega$ axisymmetric without swirl vorticity or simply axisymmetric vorticity when there is no
obvious confusion. The theory of axisymmetric flows on $\mathbb R^3$
and some recent developments are reviewed in the beginning of Section 7.

\begin{thm}[3D compact case]\label{thm4}
For any given axisymmetric vorticity $\omega^{(g)}_0\in C_c^{\infty}(\mathbb R^3)$ 
and any $\epsilon>0$,
we can find a perturbation $\omega^{(p)}_0:\mathbb R^3\to \mathbb R^3$ such that the following hold true:

\begin{enumerate}
 \item $\omega^{(p)}_0$ is compactly supported (in a ball of radius $\le 1$), continuous and $$\| \omega^{(p)}_0 \|_{\dot H^{\frac 32}(\mathbb R^3)}+
 \| \omega^{(p)}_0\|_{L^{\infty}(\mathbb R^3)}  <\epsilon.$$
 \item Let $\omega_0= \omega^{(g)}_0+ \omega^{(p)}_0$.
 Corresponding to $\omega_0$ there exists a unique solution $\omega = \omega(t,x)$ to the Euler equation \eqref{thm3_1} on the time
 interval $[0,1]$ satisfying
 \begin{align} \label{thm4_t1}
 &\sup_{0\le t\le 1} \|\omega(t,\cdot)\|_{L^\infty}<\infty, \notag \\
 &\operatorname{supp}(\omega(t,\cdot))\subset\{x, \: |x| < R\}, \quad \forall\, 0\le t\le 1,
 \end{align}
where $R>0$ is some constant. Furthermore $\omega \in C_t^0 C_x^0$ and $u \in C_t^0 L_x^2 \cap C_t^0 C_x^{\alpha}$ for any $\alpha<1$.

\item $\omega(t)$ has additional local regularity in the following sense: there exists $x_* \in \mathbb R^3$ such
that for any $x\ne x_*$, there exists a neighborhood $N_x \ni x$, $t_x >0$ such that $w(t) \in C^{\infty} (N_x)$ for any
$0\le t \le t_x$.

\item For any $0<t_0 \le 1$, we have
\begin{align} \label{thm4_t2}
 \operatorname{ess-sup}_{0<t \le t_0} \| \omega(t,\cdot) \|_{\dot H^{\frac 32}(\mathbb R^3) } =+\infty.
\end{align}
More precisely, there exist $0<t_n^1<t_n^2 <\frac 1n$, open precompact sets
$\Omega_n^1$, $\Omega_n^2$ with $\Omega_n^1\subset \overline{\Omega_n^1} \subset \Omega_n^2$, $n=1,2,3,\cdots$ such that
\begin{itemize}
\item $\omega(t) \in C^{\infty}(\Omega_n^2)$ for all $0\le t \le t_n^2$;
\item $\omega(t,x)\equiv 0$ for any $x\in \Omega_n^2 \setminus \Omega_n^1$, $0\le t\le t_n^2$.
\item Define $\omega_n(t,x)=\omega(t,x)$ for $x\in \Omega_n^1$, and $\omega_n(t,x)=0$ otherwise. Then
$\omega_n \in C^\infty_c(\mathbb R^3)$,
 \begin{align} \label{thm4_t3a}
 \| \omega_n(t,\cdot) \|_{\dot H^{\frac 32} (\mathbb R^3)} >n, \quad \forall \, t_n^1\le t\le t_n^2.
\end{align}
and
\begin{align} \label{thm4_t3b}
\| (|\nabla|^3 \omega_n)(t,\cdot)\|_{L^2(x\in \mathbb R^3 \setminus \Omega_n^2)} \le 1,
\quad\forall\, 0\le t\le t_n^2.
\end{align}
\end{itemize}

\end{enumerate}

\end{thm}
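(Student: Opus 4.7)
The plan is to implement the strategy stated in the abstract — \emph{large Lagrangian deformation induces critical norm inflation} — in the 3D axisymmetric setting, while making the construction local enough that the perturbation is compactly supported and the ambient solution is well understood. I would use the axisymmetric reduction: since $\omega_0^{(g)}$ is smooth, compactly supported, and axisymmetric without swirl, the corresponding solution $\omega^{(g)}(t)$ exists globally and is smooth, and $\omega^{(g)\theta}/r$ is transported by the (regular) flow in the half-plane $\{(r,z):r>0\}$. The perturbation $\omega_0^{(p)}$ will also be taken axisymmetric without swirl, so uniqueness can be handled via the Yudovich theory (using that the velocity corresponding to the bounded, compactly supported axisymmetric vorticity is log-Lipschitz), and support propagation will yield \eqref{thm4_t1} with some $R>0$.

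The perturbation is constructed as an infinite superposition $\omega_0^{(p)} = \sum_n \omega_{0,n}^{(p)}$ of ``building block'' patches $\omega_{0,n}^{(p)}$ supported in pairwise disjoint small axisymmetric regions $D_n$ that shrink and accumulate at a chosen point $x_*$. Each patch is a localized, divergence-free, axisymmetric analogue of the 2D construction used for Theorem \ref{thm2}: inside a thin tube of scale $\delta_n$ around a base point away from the axis, one introduces a highly oscillatory axisymmetric vorticity field of amplitude $a_n$ and frequency $\lambda_n$, tuned so that $\|\omega_{0,n}^{(p)}\|_{\dot H^{3/2}} + \|\omega_{0,n}^{(p)}\|_{L^\infty}\lesssim \epsilon 2^{-n}$, yet after a short time $t_n^1$ the background plus the patch itself generates a Lagrangian flow map whose local linearization has a hyperbolic-type eigenvalue of size comparable to $\log\lambda_n$. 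Composition with such a map squeezes the oscillations of the patch into a much shorter length scale in one direction, and a direct computation on the transported $\omega$ (together with the Biot--Savart law's boundedness on $\dot H^{3/2}$) produces a lower bound of the form $\|\omega_n(t)\|_{\dot H^{3/2}}\gtrsim a_n \lambda_n^{3/2}\cdot (\text{stretching factor})$ on the window $[t_n^1,t_n^2]\subset(0,1/n)$; choosing $a_n,\lambda_n,\delta_n,t_n^j$ appropriately makes this exceed $n$ while keeping the initial $\dot H^{3/2}$ norm tiny.

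Once each patch is localized in its own tube $D_n$, I take $\Omega_n^1$ to be the image of a slight enlargement of the spatial support of $\omega_{0,n}^{(p)}$ under the full flow at times $t\in[0,t_n^2]$, and $\Omega_n^2$ a further thickening that (by choosing the $D_n$ sufficiently separated) remains disjoint from the transport images of the other $D_m$ and of $x_*$ itself on this time window. Since $\omega$ is transported (modulo the stretching term, which preserves localization), $\omega(t)$ vanishes on $\Omega_n^2\setminus \Omega_n^1$, giving the cutoff $\omega_n$ in the statement; the smoothness of $\omega_0^{(g)}$ and of the finitely many nearby patches then yields $\omega(t)\in C^\infty(\Omega_n^2)$ by standard persistence of regularity in the non-critical (e.g.\ $C^{k,\alpha}$) topology on a bounded-stretching region, which also gives \eqref{thm4_t3b}. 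The same argument, applied to any $x\neq x_*$, provides the neighborhood $N_x$ and time $t_x$ of local smoothness.

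The main obstacle is the quantitative critical norm inflation step: unlike in 2D where $\dot H^1$ of the vorticity is estimated directly from a shear-type flow (as in Theorems \ref{thm1}--\ref{thm2}), in 3D one must track $\dot H^{3/2}$ of a vector-valued axisymmetric vorticity subject to the vortex stretching term $(\omega\cdot\nabla)u$. The key technical task is to verify that over the short window $[t_n^1,t_n^2]$ (i) the background flow near the base point of $D_n$ really does produce a linear stretching of the desired magnitude (this will be arranged by placing the base points in a region where the smooth background velocity, or a small companion patch, has a nearly hyperbolic stagnation structure), and (ii) the stretching term does not cancel the norm inflation generated by Lagrangian deformation — one shows it is a lower-order perturbation on the time scales $t_n^2\ll 1/n$ because $\|u\|_{\mathrm{Lip}}$ remains bounded by $\log\lambda_n$ while the frequency stretching contributes a factor $\lambda_n^{3/2}\cdot e^{C\log\lambda_n\cdot t_n^2}$. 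A careful bookkeeping of the constants, together with keeping all cross-interactions between distinct patches negligible via the spatial separation, closes the argument.
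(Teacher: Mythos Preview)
Your high-level plan---axisymmetric without swirl, infinitely many disjoint patches accumulating at a point $x_*$, local $\dot H^{3/2}$ inflation driven by large Lagrangian deformation, then gluing---matches the paper's architecture. But two of the steps you describe as routine are precisely the places where the 3D compact case is substantially harder than Theorem~\ref{thm2}, and your proposal does not address them.

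\textbf{Existence of the glued solution.} You write that ``uniqueness can be handled via the Yudovich theory.'' In 3D axisymmetric without swirl, the relevant conserved quantity is $\omega^{\theta}/r$, and existence in the standard theory (Danchin, Abidi--Hmidi--Keraani) requires control of $\|\omega^{\theta}/r\|_{L^{3,1}}$. In the paper's construction the patches shrink toward $x_*$ on the axis, and the \emph{sum} $\sum_n\|\omega_{0,n}^{(p)}/r\|_{L^{3,1}}$ is infinite; the limiting initial data lies outside the classical existence theory. The paper handles this by a new $C^0$ perturbation lemma (Lemma~\ref{y1}) in which each newly added patch $g_n$ must satisfy
\[
\|g_n\|_{\infty}\,\exp\bigl(C\|g_n/r\|_{L^{3,1}}\bigr)<\delta_n,
\]
with $\delta_n$ depending on all previous patches. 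The approximants then form a Cauchy sequence in $C^0_tC^0_x$ (Proposition~\ref{y2}), which is how $\omega\in C^0_tC^0_x$ in statement~(2) is obtained. Your sketch does not identify this difficulty, and ``Yudovich theory'' will not close it.

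\textbf{Vortex stretching.} Your item~(ii) asserts that the stretching term is lower order because ``$\|u\|_{\mathrm{Lip}}$ remains bounded by $\log\lambda_n$.'' This is in tension with the mechanism you need: the \emph{whole point} of the local step is that on the patch $\|D\phi\|_\infty$ becomes large (of order $\log\log A$ in the paper's notation), which forces $\|Du\|_\infty$ to be large somewhere. The paper never bounds $\|Du\|_\infty$ during inflation; instead it uses the exact axisymmetric identity $(\omega\cdot\nabla)u=\tfrac{u^r}{r}\,\omega$ and the estimate $\|u^r/r\|_\infty\lesssim\|\omega^{\theta}/r\|_{L^{3,1}}$ (Lemma~\ref{lem_ur_r}), which stays $O(\sqrt{\log A})$ for a single patch. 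The metric factor $r/\tilde\phi^r$ in the solution formula for $\omega^{\theta}$ is then controlled independently of the Lipschitz norm of $u$; see Proposition~\ref{x6}, where the $\dot H^{3/2}$ inflation is obtained by interpolating an $L^2$ upper bound against an $\dot H^1$ lower bound (equations \eqref{x6_p26}--\eqref{x6_p28}), with the ratio $N_*=\phi^r/r$ carefully tracked and the key inequality $N_*M\ge 1$ (\eqref{x6_p20aa}) used to close. Your ``direct computation'' and the bound you propose for the stretching term would not survive this.

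In short, your outline is correct at the strategic level but misses the two 3D-specific ingredients the paper flags explicitly: the $C^0$ gluing that bypasses the failure of $L^{3,1}$ summability of $\omega/r$, and the use of the Biot--Savart estimate for $u^r/r$ (rather than a Lipschitz bound on $u$) to neutralize vortex stretching.
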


\begin{rem}
We stress that the situation here in Theorem
\ref{thm4} is much more complex than the 2D case in Theorem \ref{thm2}.
Due to the nonlocal character of the fractional differentiation operator $|\nabla|^{\frac 32}$, we
have to include the additional constraint \eqref{thm4_t3b} in our construction
in order to derive \eqref{thm4_t2} from \eqref{thm4_t3a}. We briefly sketch the argument as follows.
Suppose $\|\omega(\tau,\cdot)\|_{\dot H^{\frac 32}}<\infty$, for  some $\tau \in[t_n^1,t_n^2]$.
Then we write
\begin{align*}
\omega(\tau)= \omega_n(\tau)+ g_n(\tau),
\end{align*}
where $g_n(\tau)=\omega(\tau)-\omega_n(\tau)$ also has finite $\dot H^{\frac 32}$-norm. Clearly
\begin{align*}
\| \omega(\tau)\|_{\dot H^{\frac 32}}^2 &= \|\omega_n(\tau)\|_{\dot H^{\frac 32}}^2 +\|g_n(\tau)\|_{\dot H^{\frac 32}}^2
\\&\;\; +2\langle |\nabla|^{\frac 32}\omega_n(\tau), |\nabla|^{\frac 32} g_n(\tau) \rangle,
\end{align*}
where $\langle,\rangle$ denotes the usual $L^2$ inner product on $L^2(\mathbb R^3)$. Now observe that
$\operatorname{supp}(g_n(\tau)) \subset \mathbb R^3\setminus \Omega_n^2$ and $\|g_n(\tau)\|_{L^2} \lesssim \|\omega(\tau)\|_2$
($g_n(\tau)$ and $\omega_n(\tau)$ have disjoint supports),  therefore
\begin{align*}
& |\langle |\nabla|^{\frac 32}\omega_n(\tau), |\nabla|^{\frac 32} g_n(\tau) \rangle| \notag \\
= & |\langle |\nabla|^{3}\omega_n(\tau), g_n(\tau) \rangle| \notag \\
\le & \|(|\nabla|^3 \omega_n)(\tau) \|_{L^2(\mathbb R^3\setminus \Omega_n^2)} \| g_n(\tau)\|_{L^2} \notag \\
\lesssim & \| \omega(\tau)\|_2 \lesssim 1.
\end{align*}
Hence for $n$ sufficiently large, we have for any $\tau \in [t_n^1,t_n^2]$, either
\begin{align*}
\|\omega(\tau)\|_{\dot H^{\frac 32}} =+\infty
\end{align*}
or
\begin{align*}
\|\omega(\tau)\|_{\dot H^{\frac 32}} >\frac n2.
\end{align*}
This obviously implies \eqref{thm4_t2}.

\end{rem}

Theorem \ref{thm1}--\ref{thm4} can be sharpened significantly. We have the following Besov version which essentially includes
all previous theorems as special cases. In order not to
overburden with notations, we shall state an informal version.  The detailed (and more precise)
statements can be found in Section \ref{sec_Bp2} and Theorem \ref{thm1_Bp2}--\ref{thm4_Bp2} therein.

\begin{thm}[Besov case] \label{thm5}
Let $d=2,3$. For any smooth initial velocity $u_0^{(g)}$, any $\epsilon>0$,  and any $1<p<\infty$, $1<q\le \infty$,
there exist a nearby initial velocity $u_0 \in B^{\frac dp+1}_{p,q}$ such that $\|u_0-u_0^{(g)}\|_{B^{\frac dp}_{p,q}}<\epsilon$,
and  the corresponding solution satisfies
\begin{align*}
 \operatorname{ess-sup}_{0<t<t_0} \| u(t,\cdot)\|_{\dot B^{\frac dp+1}_{p,\infty}} = +\infty
\end{align*}
for any $t_0>0$.
\end{thm}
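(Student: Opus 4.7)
The plan is to transport the \emph{large Lagrangian deformation induces critical norm inflation} machinery of Theorems \ref{thm1}--\ref{thm4} to the Besov setting. Passing to the vorticity formulation, a velocity perturbation small in $B^{\frac dp}_{p,q}$ corresponds (via a Biot--Savart lift) to a vorticity perturbation $\omega_0^{(p)}$ small in $B^{\frac dp-1}_{p,q}$, while blow-up of $u(t,\cdot)$ in $\dot B^{\frac dp+1}_{p,\infty}$ is equivalent to blow-up of $\omega(t,\cdot)$ in $\dot B^{\frac dp}_{p,\infty}$. We therefore reduce to constructing $\omega_0^{(p)}=\sum_{n\ge 1}\omega_n$ as a superposition of dyadic building blocks $\omega_n$ of essentially the same shape as those appearing in the proofs of Theorems \ref{thm1}--\ref{thm4}, but renormalized according to $B^{\frac dp-1}_{p,q}$ (rather than $\dot H^{\frac d2-1}$) scaling.

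Each $\omega_n$ is supported in a small ball $B(x_n,r_n)$, oscillates at frequency $\lambda_n\to\infty$, and has amplitude $\alpha_n$ tuned so that
\begin{align*}
 \|\omega_n\|_{B^{\frac dp-1}_{p,q}}\lesssim \alpha_n\, r_n^{\frac dp}\lambda_n^{\frac dp-1} \ll \epsilon\, 2^{-n},
\end{align*}
while each individual patch is allowed to retain a large critical Besov size $\|\omega_n\|_{\dot B^{\frac dp}_{p,\infty}}\gtrsim A_n$ with a large parameter $A_n\uparrow\infty$ (exactly as in the Sobolev case, where the $\dot H^{\frac d2}$-norm of the corresponding bump is of order $\sqrt{\log A_n}$). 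The centers $x_n$ are either pushed to infinity (as in Theorems \ref{thm1}, \ref{thm3}) or clustered near a fixed accumulation point $x_*$ (as in Theorems \ref{thm2}, \ref{thm4}), and the scales $r_n,\lambda_n^{-1}$ are chosen so that distinct patches are essentially disjoint in both physical and frequency space during a short time window $[t_n^1,t_n^2]\subset(0,1/n)$. Local existence, uniqueness and regularity of the full solution $\omega(t)$ on $[0,1]$ then follow from the same Yudovich--Vishik / axisymmetric-without-swirl framework used in the earlier theorems, and on $[0,t_n^2]$ the $n$-th patch evolves (up to translation by the background flow) as a single isolated patch to which the single-bump analysis of Theorems \ref{thm1}--\ref{thm4} applies; in particular its flow map $\Phi_n$ develops Lagrangian deformation $\|D\Phi_n(t)\|_\infty\gtrsim A_n$ on $[t_n^1,t_n^2]$.

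The desired lower bound then takes the schematic form
\begin{align*}
 \|\omega(t,\cdot)\|_{\dot B^{\frac dp}_{p,\infty}}
 \gtrsim \sup_{j}\, 2^{j\frac dp}\,\|P_j\omega_n(t,\cdot)\|_{L^p}
 \gtrsim (\log A_n)^{c}\gg n,
 \qquad t\in[t_n^1,t_n^2],
\end{align*}
where the single-block lower bound isolates one Littlewood--Paley annulus $\{|\xi|\sim A_n\lambda_n\}$ on which the deformed patch $\omega_n(t,\cdot)=\omega_n(0,\Phi_n^{-1}(t,\cdot))$ concentrates. The principal obstacle, and the genuinely new technical ingredient compared with the $L^2$-based arguments of Theorems \ref{thm1}--\ref{thm4}, is to prove this block-wise $L^p$ lower bound for $p\ne 2$: without Plancherel orthogonality one cannot extract a single frequency block through an inner-product computation, and I would instead use a direct physical-side argument (stationary phase / oscillatory-integral analysis of the sheared patch), or a duality test against a normalized $B^{-\frac dp}_{p',1}$ atom supported on the dual annulus, to read off the amplitude on a single dyadic band. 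The $q=\infty$ target is then harmless, since only one block needs to become large. In the 3D compact case the nonlocality of the Besov multiplier is handled by the same cut-off argument sketched after Theorem \ref{thm4}, with $|\nabla|^3$ replaced by the relevant Besov projection and the $L^2$ estimate by its $L^p$ analogue. Letting $t_n^1\to 0$ then forces $\operatorname{ess-sup}_{0<t<t_0}\|\omega(t,\cdot)\|_{\dot B^{\frac dp}_{p,\infty}}=+\infty$ for every $t_0>0$, which is the assertion of Theorem \ref{thm5}.
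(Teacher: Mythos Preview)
Your overall strategy---transport the large-deformation mechanism of Theorems \ref{thm1}--\ref{thm4} to the Besov scale and extract a single Littlewood--Paley block via stationary phase---is exactly what the paper does in Section \ref{sec_Bp2}. A few points of comparison and one genuine slip are worth flagging.

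First, the sentence ``each individual patch is allowed to retain a large critical Besov size $\|\omega_n\|_{\dot B^{d/p}_{p,\infty}}\gtrsim A_n$'' is either a misstatement or a misconception. In the paper's detailed theorems (Theorems \ref{thm1_Bp2}--\ref{thm4_Bp2}) the initial vorticity perturbation is small in the \emph{critical} norm $B^{d/p}_{p,q}$, not merely in $B^{d/p-1}_{p,q}$; the critical norm is genuinely inflated by the dynamics, not large to begin with. If you start with large critical norm you would instead have to show persistence, which is a different (and harder) problem.

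Second, your description of the block-wise lower bound is correct in spirit but compresses the paper's two-step mechanism. The paper first takes the base profile $h_A$ (as in \eqref{thm1_Bp2_hA_def}) and runs the deformation argument to get $\|D\phi_A(t_A)\|_\infty\ge M_A\sim\log\log A$; then it splits into two cases. Either $W_A(t_A)$ already has large $\dot B^{2/p}_{p,\infty}$ norm, or one adds the high-frequency perturbation $\beta(x)=c_A\,k^{-2/p}\sin(kx_1)\,b(x)$ and shows, by analyzing the phase $\phi_A(t_A,x)\cdot\xi\pm kx_1$, that the transported perturbation has Fourier support essentially on $\{|\xi|\sim kM_A\}$ (see \eqref{thm1_Bp2_113}--\eqref{thm1_Bp2_m21_10}). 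Since $L^p$ is conserved under transport, this yields $\|P_{\sim kM_A}W_A^4(t_A)\|_p\gtrsim 1$ and hence $\|W_A^3(t_A)\|_{\dot B^{2/p}_{p,\infty}}\gtrsim M_A^{2/p}$. Your ``duality against a $B^{-d/p}_{p',1}$ atom'' would presumably recover the same conclusion, but the paper's direct route---stationary phase plus $L^p$ conservation---is cleaner and avoids any interpolation loss.

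Third, for the patching step the paper does not rely on the $|\nabla|^3$-type argument you allude to; instead it uses the elementary convexity inequality $|x+y|^r\ge |x|^r+r|x|^{r-2}\bar x\cdot y$ (see \eqref{convex_ineq1}) together with the rapid off-support decay of $P_N\omega_j$ to show $\|P_N\omega\|_p^p\ge\|P_N\omega_j\|_p^p-CN^{-100}$. This is both simpler and sharper than lifting the $L^2$ cut-off argument to $L^p$.
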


Our last result concerns the illposedness in the usual Sobolev $W^{s,p}$ spaces.

\begin{thm} \label{thm6}
Let $d=2,3$. For any smooth initial velocity $u_0^{(g)}$, any $\epsilon>0$, and any $1<p<\infty$, there exists
a nearby initial velocity $u_0 \in W^{d/p+1,p}$ such that $\| u_0 - u_0^{(g)}\|_{W^{d/p+1,p}}<\epsilon$, and
the solution corresponding to $u_0$ satisfies
\begin{align*}
\operatorname{ess-sup}_{0<t<t_0} \| u(t,\cdot)\|_{ \dot W^{d/p+1,p}} =+\infty
\end{align*}
for any $t_0>0$.
\end{thm}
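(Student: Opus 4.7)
The plan is to deduce Theorem~\ref{thm6} from the Besov construction behind Theorem~\ref{thm5} (and from Theorems~\ref{thm1}--\ref{thm4}) via a standard Triebel--Lizorkin embedding. Writing $\dot W^{s,p}=\dot F^{s}_{p,2}$ and using $\dot F^{s}_{p,2}\hookrightarrow \dot B^{s}_{p,\infty}$, valid for any $1<p<\infty$, one obtains
\[
\|f\|_{\dot B^{d/p+1}_{p,\infty}(\mathbb R^d)}\lesssim \|f\|_{\dot W^{d/p+1,p}(\mathbb R^d)}.
\]
Taking the contrapositive, any solution whose $\dot B^{d/p+1}_{p,\infty}$ norm becomes infinite on every initial time interval automatically has infinite $\dot W^{d/p+1,p}$ norm on that interval. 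Hence the blow-up assertion of Theorem~\ref{thm6} reduces to the Besov blow-up already produced in Section~\ref{sec_Bp2}.

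The remaining issue is that Theorem~\ref{thm5} only controls $u_0-u_0^{(g)}$ in the subcritical $B^{d/p}_{p,q}$ norm, while Theorem~\ref{thm6} asks for smallness in the critical $W^{d/p+1,p}$ norm. To overcome this I would revisit the structure of the building blocks. The perturbations are finite sums of patches $\phi_n$, each frequency-localized in a dyadic annulus $|\xi|\sim 2^{j_n}$ with widely separated scales $j_n$ and essentially disjoint spatial supports. For such phase-space-disjoint data the Littlewood--Paley square function of $\dot F^{s}_{p,2}$ collapses pointwise, giving
\[
\|u_0^{(p)}\|_{\dot W^{d/p+1,p}}\approx \|u_0^{(p)}\|_{\dot B^{d/p+1}_{p,p}}\approx \Bigl(\sum_n 2^{j_n(d/p+1)p}\|\phi_n\|_{L^p}^p\Bigr)^{1/p}.
\]
Choosing $\|\phi_n\|_{L^p}\sim \delta_n\,2^{-j_n(d/p+1)}$ with $\sum_n \delta_n^p<\epsilon^p$ then forces $\|u_0^{(p)}\|_{W^{d/p+1,p}}<\epsilon$.

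The main obstacle is to verify that the norm-inflation mechanism of the Besov construction still fires under this more restrictive critical-scale amplitude normalization. The guiding observation, emphasized in the abstract as \emph{large Lagrangian deformation induces critical norm inflation}, is that the inflation is driven by the geometry of the Lagrangian flow rather than by the critical-norm amplitude of the triggering patch; it is enough that each patch, placed at a sufficiently high frequency $j_n$, generates the required Lagrangian stretching at its native spatial scale. Thus the task reduces to a parameter recalibration: choose $\{j_n,\delta_n\}$ so that both (i) $\|u_0^{(p)}\|_{W^{d/p+1,p}}<\epsilon$, and (ii) each patch still produces enough Lagrangian deformation for the mechanism of Theorems~\ref{thm1}--\ref{thm5}. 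Once this calibration is carried out, the same transport-based argument yields instantaneous blow-up of $\|u(t,\cdot)\|_{\dot B^{d/p+1}_{p,\infty}}$, and the embedding above completes the proof.
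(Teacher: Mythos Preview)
The paper does not prove Theorem~\ref{thm6}; it explicitly omits the proof and defers to a forthcoming paper on Triebel--Lizorkin spaces. So there is no in-paper argument to compare against directly.

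Your blow-up reduction via $\dot W^{s,p}=\dot F^{s}_{p,2}\hookrightarrow\dot B^{s}_{p,\infty}$ is correct and is almost certainly what the authors have in mind for that direction.

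Your concern about the smallness norm, however, rests on a misreading. The informal Theorem~\ref{thm5} writes $B^{d/p}_{p,q}$ for the velocity perturbation, which would indeed be subcritical, but this is a slip: the detailed Theorems~\ref{thm1_Bp2}--\ref{thm4_Bp2} control the \emph{vorticity} perturbation in $B^{d/p}_{p,q}$, i.e.\ the velocity in the critical $B^{d/p+1}_{p,q}$, and the construction in Section~\ref{sec_Bp2} works for any $1<q<p$. Taking $q<\min(p,2)$ and using the standard embedding $B^{s}_{p,q}\hookrightarrow F^{s}_{p,2}=W^{s,p}$ immediately yields $\|u_0-u_0^{(g)}\|_{W^{d/p+1,p}}<\epsilon$. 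No recalibration is needed, so the unverified tail of your outline---checking that the inflation mechanism survives an amplitude change you have not actually specified---becomes moot. (Incidentally, your description of the patches as ``each frequency-localized in a dyadic annulus $|\xi|\sim 2^{j_n}$'' is inaccurate: each building block $h_A$ in \eqref{thm1_Bp2_hA_def} spreads over roughly $\log A$ dyadic shells, so your square-function collapse formula would require more care even if the recalibration route were pursued.)

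The paper's remark that Theorem~\ref{thm6} is ``subsumed under a more general argument dealing with Triebel--Lizorkin spaces'' suggests that the authors' intended route is a direct construction in $F^{s}_{p,q}$ covering all secondary indices, rather than the embedding shortcut above, which only delivers the case $q=2$.
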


The proof of Theorem \ref{thm6} will be omitted. It can be subsumed under a more general
argument dealing with Triebel-Lizorkin spaces which we will address in a forthcoming paper.

In the rest of this introduction, we give a brief overview of the proofs of Theorem \ref{thm1}--\ref{thm5}.
The overall scheme consists of three steps. The first two steps are devoted to local constructions.
The last step is a global patching argument. We shall explain the main arguments for the 2D Euler case with
$H^2$ (for vorticity the space is  $H^1$) as the working critical space. Some additional technical points needed to treat the 3D case will be
clarified along the way.


\begin{itemize}

\item[Step 1.] Creation of large Lagrangian deformation. Define the flow map associated to \eqref{V_usual}
as $\phi=\phi(t,x)$ which solves
\begin{align*}
\begin{cases}
\partial_t \phi(t,x) = u(t,\phi(t,x)),\\
\phi(0,x)=x.
\end{cases}
\end{align*}
For any $0<T\ll 1$, $B(x_0,\delta) \subset \mathbb R^2$ with $x_0 \in \mathbb R^2$ arbitrary and $\delta\ll 1$,
we choose initial (vorticity) data $\omega^{(0)}_a$ such that
\begin{align*}
\|\omega^{(0)}_a \|_{L^1} + \| \omega^{(0)}_a\|_{L^{\infty}} + \| \omega^{(0)}_a \|_{H^1} \ll 1,
\end{align*}
and
\begin{align*}
\sup_{0<t\le T} \| D \phi_a(t,\cdot) \|_{\infty} \gg 1.
\end{align*}
Here $\phi_a$ is the flow map associated with the velocity $u=u_a$ which solves \eqref{V_usual} with $\omega^{(0)}_a$
as vorticity initial data. By translation invariance of Euler it suffices to consider the case $x_0=0$. In our
construction we restrict to some special flows which have odd symmetry and admit the origin as a stagnation point.
We prove that the deformation matrix $D u$ remains essentially hyperbolic near the spatial origin in the short time
interval considered (cf. Proposition \ref{prop40} and Proposition \ref{prop_gener_1}).

\item[Step 2.] Local inflation of critical norm. As was already mentioned, the critical norm for the vorticity
is $H^1$. The solution constructed in Step 1 does not necessarily obey $\sup_{0<t\le T} \|\nabla \omega_a(t)\|_2
\gg 1$. We then perturb the initial data $\omega^{(0)}_a$ and take
\begin{align*}
\omega^{(0)}_b = \omega^{(0)}_a + \frac 1 k \sin (k f(x))g(x),
\end{align*}
where $k$ is a very large parameter. The function $g$ is smooth and has $o(1)$ $L^2$ norm.\footnote{In the actual
perturbation argument, we need to divide by a suitable power
of $\|D \phi\|_{\infty}$.} The function $f(x)$ and the support of $g$ will be chosen depending on the exact location
of the maximum of $\|D \phi_a(t,\cdot)\|_{\infty}$. Of course since the initial data is altered, the corresponding
characteristic line (flow map) is changed as well. For this we run a perturbation argument in $W^{1,4}$ so that
$\| D\phi_b(t,\cdot)-D \phi_a(t,\cdot)\|_{\infty}\ll 1 $. The same argument is used to show that in the main
order the $H^1$ norm of the solution
corresponding to $\omega^{(0)}_b$ is inflated through the Lagrangian deformation matrix $D \phi_a$.
The technical details are elaborated in Proposition \ref{prop20}.

\item[Step 3.] Gluing of patch solutions. The construction in previous two steps can be repeated in infinitely
many small patches which stay away from each other initially. To glue these solutions together we need to differentiate
 two situations. In the case of Theorem \ref{thm1}, we exploit the unboundedness nature of $\mathbb R^2$
and add each patches sequentially. Each time a new patch is added, we choose the distance between it and the old
patches so large such that their interaction is very small.  The key properties exploited here are the finite
transport speed of the Euler flow and spatial decay of the Riesz kernel. In the case of Theorem \ref{thm2}, we need to
deal with compactly supported data. This forces us to analyze in detail the interactions of the patches since the
patches can become infinitely close to each other. For each $n\ge 2$, define $\omega_{\le n-1}$ the existing patch
and $\omega_n$ the current (to be added) patch. It turns out that there exists a patch time $T_n$ such that
for $0\le t \le T_n$, the patch $\omega_n$ has disjoint support from $\omega_{\le n-1}$, and obeys the dynamics
\begin{align*}
\partial_t \omega_n + \Delta^{-1} \nabla^{\perp} \omega_n \cdot \nabla \omega_{\le n-1}
+ \Delta^{-1} \nabla^{\perp} \omega_n \cdot \nabla \omega_n =0.
\end{align*}
By a suitable re-definition of the patch center and change of variable, we find that
$\tilde \omega_n$ (which is $\omega_n$ expressed in the new variable) satisfies the equation
\begin{align}
&\partial_t \tilde \omega_n + \Delta^{-1} \nabla^{\perp} \tilde \omega_n \cdot \nabla \tilde \omega_n  \notag \\
& \quad + b(t) \begin{pmatrix} -y_1 \\ y_2 \end{pmatrix} \cdot \nabla \tilde \omega_n
+r(t,y) \cdot \nabla \tilde \omega_n=0, \notag
\end{align}
 where $b(t)=O(1)$ and $|r(t,y)| \lesssim |y|^2$.  We then choose initial data for $\omega_n$ such that
within patch time $0<t\le T_n$ the critical norm of $\omega_n$ inflates rapidly. As we take $n\to \infty$, the patch time
$T_n \to 0$ and $\omega_n$ becomes more and more localized. Note that the whole solution (consisting of
all patches $\omega_n$) is actually a time-global solution. During interaction time $T_n$
 the patch $\omega_n$ produces the desired norm inflation since it stays disjoint from all the other patches.
The details of the perturbation analysis can be found in Lemma \ref{lem57} (and some related lemmas in Section 6).

\item[\underline{The 3D case}.] Compared with the 2D case, the first difficulty in 3D is the lack of $L^p$ conservation of the
vorticity. It is deeply connected with the vorticity stretching term $(\omega\cdot \nabla) u$. To simplify the analysis
we take the axisymmetric flow without swirl as the basic building block for the whole construction. The vorticity
equation in the axisymmetric case (see the beginning of Section 7) takes the form
\begin{align*}
\partial_t \left( \frac{\omega} r \right) + (u\cdot \nabla) \left( \frac{\omega} r \right)=0, \quad
r=\sqrt{x_1^2+x_2^2}, \, x=(x_1,x_2,z).
\end{align*}
Owing to the denominator $r$, the solution
formula for $\omega$ then acquires an additional metric factor (compared with 2D)  which represents the vorticity stretching effect
in the axisymmetric setting. A lot of analysis (cf. Proposition \ref{x6}) goes into controlling the metric factor by
the large Lagrangian deformation matrix and producing the desired $H^{3/2}$ norm inflation. In our construction the
patch solutions which are made of asymmetric without swirl flows typically carry infinite $\| \omega/r \|_{L^{3,1}}$ norm
(when summing all the patches together). To glue these solutions together in the 3D compactly supported case,
we need to run a new perturbation argument (cf. Lemma \ref{y1}) which allows to add each new patch $\omega_n$ with sufficiently
small $\|\omega_n\|_{\infty}$ norm (over the whole lifespan)  such that the effect of
the large $\| \omega_n/r \|_{L^{3,1}}$ becomes negligible. All in all, the constructed patch solutions converge in
the $C^0$ metric after building several auxiliary lemmas (cf. Lemma \ref{y5}, Proposition \ref{y6}).

\end{itemize}

We have roughly described the whole strategy of the proof although some technical points could not be elucidated or
even mentioned
in this short introduction. In some sense our approach is a hybrid of the Lagrangian point of view and the Eulerian
one, using in an essential way several features of the Euler dynamics: finite speed propagation and weak interaction
between well-separated vortex patches. The rest of this paper is organized as follows. In Section 2 we set up some
basic notations and preliminaries. In Section 3 we describe in detail the first part of the local construction for the 2D case.
Section 4 is devoted to the perturbation argument needed for the 2D local construction step. In Section 5 and 6 we treat
the 2D noncompact case and compactly supported case separately. Section 7-8 are devoted to the construction in
the 3D case. Finally in Section 9 we give details for the Besov space case.

\subsection*{Acknowledgements}
J. Bourgain was supported in part by NSF No. DMS-0808042 and DMS-0835373.
D. Li was supported in part by NSF under agreement No. DMS-1128155. Any opinions, findings
and conclusions or recommendations expressed in this material are those of the authors and
do not necessarily reflect the views of the National Science Foundation.
D. Li was also supported by an Nserc discovery grant.

\section{Notation and Preliminaries}

 For any two quantities $X$ and $Y$, we denote $X \lesssim Y$ if
$X \le C Y$ for some harmless constant $C>0$. Similarly $X \gtrsim Y$ if $X
\ge CY$ for some $C>0$. We denote $X \sim Y$ if $X\lesssim Y$ and $Y
\lesssim X$. We shall write $X\lesssim_{Z_1,Z_2,\cdots, Z_k} Y$ if
$X \le CY$ and the constant $C$ depends on the quantities
$(Z_1,\cdots, Z_k)$. Similarly we define $\gtrsim_{Z_1,\cdots, Z_k}$
and $\sim_{Z_1,\cdots,Z_k}$.

 We shall denote by $X+$ any quantity of the form $X+\epsilon$ for any $\epsilon>0$.
For example we shall write
\begin{align}
Y \lesssim 2^{X+} \label{notation_plus}
\end{align}
if $Y\lesssim_{\epsilon} 2^{X+\epsilon}$ for any $\epsilon>0$. The notation $X-$ is similarly
defined.

For any center $x_0 \in \R^d$ and radius $R>0$, we use $B(x_0,R) := \{ x \in \R^d: |x-x_0| < R \}$ to
denote the open Euclidean ball. More generally for any set $A\subset \mathbb R^d$, we denote
\begin{align}
B(A,R):=\{y \in \R^d:\; |y-x|<R\text{ for some $x\in A$} \}. \label{def_ball_1}
\end{align}
For any two sets $A_1$, $A_2\subset \mathbb R^d$, we define
\begin{align}
 d(A_1,A_2) =\operatorname{dist}(A_1,A_2)= \inf\{|x-y|:\; x\in A_1,\, y \in A_2\}. \notag
\end{align}

For any $f$ on $\mathbb R^d$, we denote the Fourier transform of
$f$ has
\begin{align*}
(\mathcal F f)(\xi) = \hat f (\xi) = \int_{\mathbb R^d} f(x) e^{-i \xi \cdot x}\,dx.
\end{align*}
The inverse Fourier transform of any $g$  is given by
\begin{align*}
(\mathcal F^{-1} g)(x) = \frac 1 {(2\pi)^d} \int_{\mathbb R^d}  g(\xi) e^{i x \cdot \xi} \,d\xi.
\end{align*}


 For any $1\le p \le \infty$ we use $\|f\|_p$, $\|f \|_{L^p(\mathbb R^d)}$, or $\|f\|_{L^p_x(\mathbb R^d)}$ to  denote the
usual Lebesgue norm on $\mathbb R^d$. The Sobolev space $H^1(\mathbb
R^d)$ is defined in the usual way as the completion of
$C_c^{\infty}$ functions under the norm $\|f\|_{H^1} = \|f\|_2 + \|
\nabla f \|_2$. For any $s\in \mathbb R$, we define the homogeneous
Sobolev norm of a tempered distribution $f:\mathbb R^d\to \mathbb R$
as
\begin{align*}
\| f \|_{\dot H^s} = \Bigl(\int_{\mathbb R^d} |\xi|^{2s} |\hat f(\xi)|^2 d\xi \Bigr)^{\frac 12}.
\end{align*}
We use the Fourier transform to define the fractional differentiation operators $|\nabla|^s$ by the formula
\begin{align*}
 \widehat{|\nabla|^s f} (\xi) = |\xi|^s \hat f(\xi).
\end{align*}
For any integer $n\ge 0$ and any open set $U\subset \mathbb R^d$,
we use the notation $C^n(U)$ to denote  functions on $U$ whose $n^{th}$ derivatives
are all continuous.

For any $1\le p<\infty$, we denote by $L^p_{ul}(\mathbb R^d)$ the
Banach space endowed with the norm
\begin{align} \label{uniform_local_def1}
\| u \|_{L^p_{ul}(\mathbb R^d)} :=\sup_{x\in \mathbb R^d} \Bigl(
\int_{|y-x|<1} |u(y)|^p dy \Bigr)^{\frac 1p}.
\end{align}
Let $\phi\in C_c^{\infty}(\mathbb R^d)$ be not identically zero. The
condition $u\in L^p_{ul}$ is equivalent to
\begin{align*}
\sup_{x\in \mathbb R^d} \| \phi(\cdot-x) u(\cdot) \|_{L^p(\mathbb
R^d)} <\infty.
\end{align*}
 For any $s\in \mathbb R$ and any function $u \in H^s_{loc}(\mathbb R^d)$, one can define
 \begin{align*}
 \|u\|_{H^s_{ul}(\mathbb R^d)} = \sup_{x\in \mathbb R^d} \|\phi(\cdot-x)
 u(\cdot)\|_{H^s(\mathbb R^d)}.
 \end{align*}

In Section \ref{sec_3D_1} and later sections, we need to use Lorentz spaces. We recall the definitions here.
For a measurable function $f$, the nonincreasing rearrangement $f^*$ is defined by
\begin{align*}
f^*(t) = \inf \Bigl\{s:\, \operatorname{Leb}(x:\, |f(x)|>s) \le t \Bigr\}.
\end{align*}
For $1\le p,q<\infty$, the Lorentz space $L^{p,q}$ is the set of functions $f$ which satisfy
\begin{align*}
\| f \|_{L^{p,q}}:= \Bigl( \int_0^{\infty} (t^{\frac 1p} f^*(t) )^q \frac {dt} t\Bigr)^{\frac 1q}
<\infty.
\end{align*}
For $q=\infty$, $L^{p,\infty}$ is the set of functions such that
\begin{align*}
\| f \|_{L^{p,\infty}}=\sup_{t>0}t^{\frac 1p} f^*(t) <\infty.
\end{align*}
For $p=\infty$, we set $L^{\infty,q}=L^{\infty}$ for all $1\le q\le \infty$.
Note that $L^{p,p}=L^p$. For $1<p<\infty$, the space $L^{p,q}$ coincides with the real interpolation
from Lebesgue spaces.

 We will need to use the Littlewood--Paley frequency projection operators. Let $\varphi(\xi)$
be a smooth bump function supported in the ball $|\xi| \leq 2$ and
equal to one on the ball $|\xi| \leq 1$. For any real number $N>0$ and $f\in \mathcal S^{\prime} (\mathbb R^d)$, define the frequency
localized (LP) projection operators:
\begin{align*}
\widehat{P_{\le N }f}(\xi) &:=  \varphi(\xi/N )\hat f (\xi), \notag\\
\widehat{P_{> N }f}(\xi) &:=  [1-\varphi(\xi/N)]\hat f (\xi), \notag\\
\widehat{P_N f}(\xi) &:=  [\varphi(\xi/N) - \varphi (2 \xi /N )] \hat
f (\xi). 
\end{align*}
Similarly we can define $P_{<N}$, $P_{\geq N }$, and $P_{M < \cdot
\leq N} := P_{\leq N} - P_{\leq M}$, whenever $N>M>0$ are real numbers. We will usually
use these operators when $M$ and $N$ are dyadic numbers. The summation over $N$ or $M$ are
understood to be over dyadic numbers. Occasionally for convenience of notation we allow $M$ and $N$ not to be a power of
$2$.

We recall the following Bernstein estimates:  for any $1\le p\le q\le \infty$, $s\in \mathbb R$,
\begin{align}
& \| |\nabla|^s P_N f \|_{L_x^p(\mathbb R^d)} \sim N^s \| P_N f \|_{L_x^p(\mathbb R^d)}, \notag \\
&\| P_{\le N} f\|_{L_x^q (\mathbb R^d)} \lesssim_d N^{d(\frac 1p -\frac 1q)} \| P_{\le N} f\|_{L_x^p(\mathbb R^d)}, \notag \\
&\| P_{N} f\|_{L_x^q (\mathbb R^d)} \lesssim_d N^{d(\frac 1p -\frac 1q)} \| P_{N} f\|_{L_x^p(\mathbb R^d)}. \notag
\end{align}

For any $s\in \mathbb R$, $1\le p,q\le \infty$, we define the homogeneous Besov seminorm as
\begin{align*}
 \| f \|_{\dot B^{s}_{p,q}}
  := \begin{cases}
  \Bigl(\sum_{N>0}   N^{sq}\| P_N f \|_{L^p(\mathbb R^d)}^q \Bigr)^{\frac 1q}, \qquad \text{if $1\le q<\infty$,} \\
  \sup_{N>0} N^{s} \| P_N f \|_{L^p(\mathbb R^d)}, \qquad \text{if $q=\infty$.}
\end{cases}
\end{align*}
The inhomogeneous Besov norm $\|f \|_{B^{s}_{p,q}}$ of $f \in\mathcal S^{\prime}(\mathbb R^d)$ is
\begin{align*}
 \|f \|_{B^s_{p,q}} = \| f \|_p + \| f \|_{\dot B^s_{p,q}}.
\end{align*}

For any $s \in \mathbb R$, $1<p<\infty$, $1\le q\le \infty$, the homogeneous Triebel-Lizorkin seminorm
is defined by
\begin{align*}
\| f \|_{\dot F^s_{p,q}}:=
\begin{cases}
\Bigl\| (\sum_{N>0} N^{sq} |P_N f|^q)^{\frac 1q} \Bigr\|_{L^p},
\qquad \text{if $1\le q <\infty$,} \\
\Bigl\| \sup_{N>0} N^s |P_N f| \Bigr\|_{L^p}, \quad \text{if $q=\infty$.}
\end{cases}
\end{align*}
The inhomogeneous Triebel-Lizorkin norm is
\begin{align*}
\| f \|_{F^s_{p,q}} =\| f \|_p+ \| f \|_{\dot F^s_{p,q}}.
\end{align*}

\section{Local construction for 2D case}
We begin by describing the choice of initial data for the local
construction.

Let $\varphi_0\in C_c^{\infty}(\mathbb R^2)$ be a radial bump function
such that $\operatorname{supp}(\varphi_0) \subset B(0,1)$ and $0\le \varphi_0\le
1$. Define
\begin{align*}
 \eta_0(x_1,x_2) = \sum_{a_1,a_2=\pm 1} a_1 a_2 \cdot \varphi_0\Bigl(\frac{(x_1-a_1,x_2-a_2)} {2^{-10}} \Bigr).
\end{align*}
Clearly by definition $\eta_0$ is odd in $x_1$, $x_2$, i.e.
\begin{align*}
 \eta_0(x_1,x_2) = -\eta_0(-x_1,x_2) = -\eta_0(x_1,-x_2), \quad \forall\, x=(x_1,x_2) \in \mathbb R^2.
\end{align*}

Define for each integer $k\ge 1$,
\begin{align} \label{30_0a}
\eta_k(x) = \eta_0(2^k x).
\end{align}

Obviously,
\begin{align}
\operatorname{supp}(\eta_k)\; \subset \bigcup_{a_1,a_2=\pm 1} B\biggl(
(2^{-k}a_1,2^{-k}a_2),\, 2^{-(k+10)} \biggr), \label{30_0}
\end{align}
so that $\eta_k$ and $\eta_l$ have disjoint supports for $k\ne l$, and
\begin{align}
\| \partial^{(\alpha)} \eta_k \|_{\infty} \lesssim_{\alpha}
2^{k|\alpha|}. \label{30_1}
\end{align}

Take any $A\gg 1$ and define the following one parameter family of functions:
\begin{align}
h_A(x) = \frac{\haa} A \sum_{A\le k \le 2A} \eta_k(x). \label{30_2}
\end{align}

It is easy to check
\begin{align}
\| h_A\|_1 + \| h_A \|_{\infty} \lesssim \frac {\haa} A \notag
\end{align}
and
\begin{align}
\| h_A \|_{H^1} \lesssim \frac {\haa} {\sqrt A}. \notag
\end{align}
Note that in computing the $H^1$-norm above, we have a saving of $A^{\frac 12}$ due to the fact that
each composing piece $\eta_k$ has $O(1)$ $H^1$-norm and they have disjoint supports.

We begin with a simple interpolation lemma.

\begin{lem} \label{lem30p}
Let $\mathcal R = \mathcal R_{ij}$ be a Riesz transform on $\mathbb
R^2$, then
\begin{align}
\| \mathcal R f \|_{\infty} \lesssim \| f \|_2^{\frac 12} \| \nabla
f \|_{\infty}^{\frac 12}. \label{lem30p_1}
\end{align}
\end{lem}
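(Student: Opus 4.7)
The plan is to use a Littlewood--Paley decomposition together with a low/high frequency split, choosing the cutoff to balance two competing estimates. Write $f = P_{\le N_0} f + \sum_{N>N_0} P_N f$ for a dyadic parameter $N_0>0$ to be optimized, and correspondingly split
\begin{align*}
\|\mathcal R f\|_{\infty} \le \|\mathcal R P_{\le N_0} f\|_{\infty} + \sum_{N>N_0} \|\mathcal R P_N f\|_{\infty}.
\end{align*}

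For the low-frequency term, note that $\mathcal R P_{\le N_0} f$ is frequency-localized on $|\xi|\lesssim N_0$, so Bernstein (in $d=2$) together with the $L^2$ boundedness of the Riesz transform yields
\begin{align*}
\|\mathcal R P_{\le N_0} f\|_{\infty} \lesssim N_0 \|\mathcal R P_{\le N_0} f\|_2 \lesssim N_0 \|f\|_2.
\end{align*}
For each high-frequency piece, one observes that on the range of $P_N$ the Riesz transform is just convolution with a Schwartz kernel (the usual Riesz kernel multiplied by a smooth frequency cutoff at scale $N$), hence bounded on $L^\infty$ with a dimensionless constant:
\begin{align*}
\|\mathcal R P_N f\|_{\infty} \lesssim \|P_N f\|_{\infty} \lesssim N^{-1}\|\nabla P_N f\|_{\infty} \lesssim N^{-1}\|\nabla f\|_{\infty},
\end{align*}
where I used $\|P_N f\|_{\infty} \sim N^{-1}\|\nabla P_N f\|_{\infty}$ (which in turn follows from Bernstein applied to the frequency-localized $\nabla P_N f$) and the fact that $P_N$ is given by convolution with an $L^1$-bounded kernel. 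Summing the geometric dyadic series gives $\sum_{N>N_0} N^{-1}\lesssim N_0^{-1}$, so the high-frequency contribution is $\lesssim N_0^{-1}\|\nabla f\|_{\infty}$.

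Combining the two pieces produces $\|\mathcal R f\|_{\infty} \lesssim N_0 \|f\|_2 + N_0^{-1}\|\nabla f\|_{\infty}$, and choosing $N_0 = (\|\nabla f\|_{\infty}/\|f\|_2)^{1/2}$ optimizes this to the claimed bound \eqref{lem30p_1}. There is no real obstacle here; the only point to be slightly careful about is justifying $L^\infty$-boundedness of $\mathcal R$ on frequency-localized functions (since $\mathcal R$ is not bounded on $L^\infty$ in general), which is handled by viewing $\mathcal R P_N$ as convolution with a Schwartz function of unit $L^1$ norm (up to a constant independent of $N$, by scaling).
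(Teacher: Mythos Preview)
Your proposal is correct and is essentially the same argument as the paper's: a Littlewood--Paley low/high split with Bernstein on the low frequencies and the $N^{-1}$ gain from $\nabla$ on the high frequencies, followed by optimizing the cutoff $N_0\sim(\|\nabla f\|_\infty/\|f\|_2)^{1/2}$. The only cosmetic difference is that the paper sums $\sum_{N<N_0} N\|P_N f\|_2$ over dyadic $N$ rather than applying Bernstein once to $P_{\le N_0}$, but this is the same estimate.
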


\begin{proof}
By using the Littlewood-Paley decomposition, splitting into dyadic
frequencies and the Bernstein inequality, we have
\begin{align*}
\| \mathcal R f \|_{\infty} & \lesssim \sum_{N} \|P_N f \|_{\infty}
\notag \\
& \lesssim \sum_{N<N_0} N \| P_N f\|_2 + \sum_{N>N_0} N^{-1} \| P_N
\nabla f \|_{\infty} \notag \\
& \lesssim N_0 \| f\|_2 + N_0^{-1} \| \nabla f \|_{\infty}.
\end{align*}

Choosing $N_0\in 2^{\mathbb Z}$ such that $N_0 \sim \bigl( \frac{\|
\nabla f \|_{\infty}}{\| f \|_2} \bigr)^{\frac 12}$ then yields
\eqref{lem30p_1}.
\end{proof}

The following lemma gives the estimates of Riesz
transforms of compositions with Lipschitz maps on $\mathbb R^2$ for the functions $h_A$ defined
earlier.

\begin{lem} \label{lem30a}
Let $\phi: \; \mathbb R^2 \to \mathbb R^2$ be a bi-Lipschitz
function satisfying the following conditions:
\begin{itemize}
\item[(i)] $\phi(0)=0$.
\item[(ii)] $\phi=(\phi_1,\phi_2)$ commutes with the reflection map $\sigma_2(x_1,x_2)=(x_1,-x_2)$, i.e.
\begin{align*}
 &\phi_1(x_1,-x_2)= \phi_1(x_1,x_2), \notag \\
 &\phi_2(x_1,-x_2)=-\phi_2(x_1,x_2),\qquad \forall\, x=(x_1,x_2) \in \mathbb R^2.
\end{align*}

\item[(iii)] For some integer $n_0\ge 1$,
\begin{align}
\| D\phi \|_{\infty} \le 2^{n_0} \quad \text{and } \|
D (\phi^{-1})\|_{\infty} \le 2^{n_0}. \label{lem30a_1}
\end{align}
Here $\phi^{-1}$ denotes the inverse map of $\phi$. Note that equivalently we can write
\begin{align*}
 \| (D \phi)^{-1} \|_{\infty} \le 2^{n_0},
\end{align*}
where $(D\phi)^{-1}$ is the matrix inverse of $D \phi$.

\end{itemize}
Then with $w=h_A$ defined in \eqref{30_2}, we have
\begin{align}
\| \mathcal R_{11} (\omega \circ \phi)\|_{\infty} \le C \cdot
{3^{n_0}} \cdot \frac {\haa} A, \label{lem30a_2} \\
\| \mathcal R_{22} (\omega \circ \phi)\|_{\infty} \le C \cdot
{3^{n_0}} \cdot \frac {\haa} A. \label{lem30a_3}
\end{align}
Here $C>0$ is an absolute constant. $\mathcal R_{11}=\Delta^{-1}
\partial_{11}$ and $\mathcal R_{22}= \Delta^{-1} \partial_{22}$ are
the Riesz transforms.
\end{lem}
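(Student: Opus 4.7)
The plan is to reduce the two bounds to a single estimate and then proceed by a dyadic decomposition. Since the Fourier symbols $\xi_1^2/|\xi|^2$ and $\xi_2^2/|\xi|^2$ of $\mathcal R_{11}$ and $\mathcal R_{22}$ sum to $1$, we have the operator identity $\mathcal R_{11}=I-\mathcal R_{22}$. Because $\phi$ is a bi-Lipschitz bijection, $\|w\circ\phi\|_\infty=\|w\|_\infty\lesssim \haa/A$, so \eqref{lem30a_2} is a consequence of \eqref{lem30a_3}, and we may concentrate on $\mathcal R_{22}$. Writing $w\circ\phi=\tfrac{\haa}{A}\sum_{k=A}^{2A}(\eta_k\circ\phi)$, the remaining goal is the pointwise inequality
\begin{equation*}
\sum_{k=A}^{2A}\bigl|\mathcal R_{22}(\eta_k\circ\phi)(x)\bigr|\lesssim 3^{n_0} \qquad \text{uniformly in } x\in\mathbb R^2.
\end{equation*}

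The key geometric fact, using $\phi(0)=0$ together with \eqref{lem30a_1}, is that $\eta_k\circ\phi$ is supported in the dyadic annulus $\mathcal A_k:=\{2^{-k-n_0-C}\le|x|\le 2^{-k+n_0+C}\}$, so for each fixed $x$ only $O(n_0)$ \emph{resonant} scales $k$ satisfy $x\in\mathcal A_k$. On these resonant scales I would apply Lemma~\ref{lem30p} to each piece $\eta_k\circ\phi$ separately: the change-of-variables estimates $\|\eta_k\circ\phi\|_2\lesssim 2^{n_0-k}$ and $\|\nabla(\eta_k\circ\phi)\|_\infty\lesssim 2^{n_0+k}$ combine to give a per-scale contribution $O(2^{n_0})$, totalling $O(n_0\cdot 2^{n_0})\le O(3^{n_0})$ for $n_0\ge 1$.

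For the non-resonant scales I would exploit the double oddness of $\eta_0$, which kills its zeroth, first and $y_1^2,y_2^2$-moments and forces $\mathcal R_{22}\eta_0(y)=O(|y|^2)$ as $y\to 0$ and $O(|y|^{-4})$ as $|y|\to\infty$ (the latter via Taylor-expanding the homogeneous kernel $K_{22}$ and using the vanishing moments). Scaling gives $|\mathcal R_{22}\eta_k(x)|\lesssim\min((2^k|x|)^2,(2^k|x|)^{-4})$ when $\phi=\mathrm{id}$, which sums to $O(1)$ in $k$. The hypothesis $\phi\circ\sigma_2=\sigma_2\circ\phi$ preserves the $x_2$-oddness of $\eta_k\circ\phi$, and combining this with the $y_2$-evenness of $K_{22}$ and a Taylor expansion performed after the bi-Lipschitz change of variables gives a quantitative decay of the form $|\mathcal R_{22}(\eta_k\circ\phi)(x)|\lesssim c^{n_0}\min((2^k|x|)^2,(2^k|x|)^{-2})$ with $c<3$, producing a convergent two-sided geometric series bounded by $O(c^{n_0})$.

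The main obstacle is the non-resonant estimate. Because $\phi$ only commutes with $\sigma_2$ (and generally not with $\sigma_1$), the full vanishing-moment structure of $\eta_k$ cannot be transplanted to $\eta_k\circ\phi$, so the cancellation argument must rely purely on $\sigma_2$-symmetry and pay an honest distortion cost in the Jacobian and Lipschitz constants of $\phi$. Keeping this cost below $3^{n_0}$, rather than the naive $2^{2n_0}$ coming from the determinant bound $|\det D\phi|\le 2^{2n_0}$, is the delicate point; once achieved, multiplying the resulting pointwise sum by the prefactor $\haa/A$ yields both \eqref{lem30a_2} and \eqref{lem30a_3}.
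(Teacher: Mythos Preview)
Your overall architecture---reduce to one of the two Riesz transforms, decompose into the building blocks $\eta_k\circ\phi$, split into resonant and non-resonant scales, and handle the $O(n_0)$ resonant scales via Lemma~\ref{lem30p}---matches the paper exactly, and your resonant computation is correct.

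The gap is in the non-resonant regime, where you are working much harder than necessary and, by your own admission, do not actually close the argument. You propose to exploit higher vanishing moments of $\eta_0$ to obtain quadratic decay $(2^k|x|)^{\pm 2}$ after composition with $\phi$, and then worry about whether the distortion cost stays below $3^{n_0}$. The paper does something much more elementary. It separates the non-resonant scales into two sub-cases according to whether $|x|\sim 2^{-l}$ is small or large compared with the support annulus of $\eta_k\circ\phi$:
\begin{itemize}
\item If $2^k\gg 2^{l+n_0}$ (the support of $\eta_k\circ\phi$ lies well inside $B(0,|x|)$), \emph{no cancellation is used at all}. One simply bounds $|K(y)|\lesssim 4^{l}$ on $|y|\sim|x|$ and $\|\eta_k\circ\phi\|_{L^1}\lesssim 4^{n_0-k}$, giving $4^{-k+l+n_0}$, which sums in $k$ to $O(1)$.
\item If $2^k\ll 2^{l-n_0}$ (the support lies far outside $B(0,|x|)$), one uses \emph{only} the $x_2$-oddness of $\eta_k\circ\phi$ together with the $y_2$-evenness of the kernel to get $\mathcal R_{11}(\eta_k\circ\phi)(0)=0$. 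Subtracting this value and applying a single Lipschitz increment yields
\[
\bigl|\mathcal R_{11}(\eta_k\circ\phi)(x)\bigr|\le |x|\,\|\nabla(\eta_k\circ\phi)\|_\infty\int_{|y|\sim 2^{-k\pm n_0}}|K(y)|\,dy
\lesssim 2^{-l}\cdot 2^{k+n_0}\cdot n_0,
\]
which sums in $k$ to $O(n_0)$.
\end{itemize}
Thus only \emph{linear} decay in $2^{k}|x|$ is required on one side, and a crude $L^1\times L^\infty$ bound on the other; no second-moment or Taylor-expansion argument is needed, and the ``delicate point'' you identify disappears. The total is $O(n_0)+O(1)+O(n_0\cdot 2^{n_0})\lesssim 3^{n_0}$.
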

\begin{rem}
The same result holds if $\phi$ commutes with the map $\sigma_1(x_1,x_2)=(-x_1,x_2)$.
Note also that in the proof below,
we only used the oddness in $x_2$ of $h_A$ defined in \eqref{30_2}.
\end{rem}

\begin{proof}[Proof of Lemma \ref{lem30a}]
First, note that by assumption (ii) on the map $\phi$,
the function ${\eta_k} \circ \phi$ is still odd in $x_2$. Since $\mathcal
R_{11}$ is an even operator, it follows that $\mathcal R_{11}({\eta_k}
\circ \phi)(0)=0$. (More precisely one just recalls that $\mathcal
R_{11}$ is obtained by convolution with the even kernel $K(x)=
p.v.\left(\frac 1{2\pi} \cdot\frac{x_2^2-x_1^2}{(x_1^2+x_2^2)^2}\right) +\frac 12 \delta(x)$, and
$\mathcal R_{11}({\eta_k} \circ \phi)(0)= \langle {\eta_k} \circ \phi,
K\rangle =0$.)

Now let $x\in \mathbb R^2 \setminus \{0\}$, $|x| \sim 2^{-l}$. We
evaluate $\mathcal R_{11} ({\eta_k} \circ \phi)(x)$ by considering $3$
cases.

\texttt{Case 1}. $2^k\ll 2^{l-n_0}$. (see \eqref{lem30a_1} for the
definition of $n_0$.)

By definition
\begin{align}
| \mathcal R_{11} ({\eta_k} \circ \phi) (x) | = \left| \int_{\mathbb R^2} ({\eta_k} \circ
\phi)(x-y) K(y) dy \right|. \label{lem30a_4}
\end{align}

The integrand in \eqref{lem30a_4} vanishes unless $|\phi(x-y)| \sim
2^{-k}$ (see \eqref{30_0}). By \eqref{lem30a_1} and $\phi(0)=0$, we
have $$2^{-k+n_0} \gtrsim |x-y| \gtrsim 2^{-k-n_0} \gg 2^{-l}.$$
Therefore $2^{-k-n_0} \lesssim |y| \lesssim 2^{-k+n_0}$. Since
$(\eta_k\circ\phi)(y_1,y_2)$ is odd in the $y_2$ variable, obviously
\begin{align*}
\int_{2^{-k-n_0} \lesssim |y| \lesssim 2^{-k+n_0} } ({\eta_k} \circ \phi)(-y) K(y)dy
=0.
\end{align*}
We then insert the above into \eqref{lem30a_4} and compute
\begin{align*}
| \mathcal R_{11} ({\eta_k} \circ \phi)(x) |& \le \int_{2^{-k-n_0} \lesssim |y| \lesssim 2^{-k+n_0}} | ({\eta_k} \circ
\phi)(x-y) -({\eta_k} \circ \phi)(-y) | |K(y)| dy \notag \\
& \le |x| \cdot \| \nabla ({\eta_k} \circ \phi) \|_{\infty} \cdot
\int_{2^{-k-n_0} \lesssim |y| \lesssim 2^{-k+n_0}} |K(y)| dy \notag
\\
& \lesssim 2^{-l} \cdot 2^{n_0} \cdot 2^{k} \cdot n_0.
\end{align*}

\texttt{Case 2}. $2^k\gg 2^{l+n_0}$.

Again the integrand in \eqref{lem30a_4} vanishes unless $|\phi(x-y)|
\sim 2^{-k}$ which yields $2^{-k+n_0} \gtrsim |x-y| \gtrsim
2^{-k-n_0}$. Since $2^{-l}\gg 2^{-k+n_0}$ and $|x| \sim 2^{-l}$, we get $|y|
\sim 2^{-l}$. Therefore
\begin{align*}
|\mathcal R_{11} ({\eta_k} \circ \phi)(x) | & \le  \| K\|_{L^{\infty}(|y|
\sim 2^{-l})} \cdot \| {\eta_k} \circ \phi\|_1 \notag\\
& \lesssim 4^l \cdot 4^{-k} \cdot 4^{n_0} = 4^{-k+l +n_0}.
\end{align*}

\texttt{Case 3}. $2^{l-n_0} \lesssim 2^k \lesssim 2^{l+n_0}$.

In this case we use Lemma \ref{lem30p}. Then by \eqref{lem30p_1} and
\eqref{30_1},
\begin{align}
\| \mathcal R_{11} ({\eta_k} \circ \phi) \|_{\infty} & \lesssim \| {\eta_k}
\circ \phi \|_2^{\frac 12} \cdot \| \nabla ( {\eta_k} \circ \phi)
\|_{\infty}^{\frac 12} \notag \\
& \lesssim 2^{\frac{1}2 n_0} \cdot \| {\eta_k}\|_2^{\frac 12} \cdot
\| \nabla
{\eta_k}\|_{\infty}^{\frac 12} \cdot 2^{\frac 12 n_0} \notag \\
& \lesssim 2^{ n_0} \cdot 2^{-\frac k2} \cdot 2^{\frac k2} \lesssim
2^{ n_0}. \label{lem30a_5}
\end{align}

Collecting all the estimates, we then obtain
\begin{align*}
\sum_k | \mathcal R_{11} ({\eta_k} \circ \phi)(x) | & \lesssim 2^{
 n_0} \cdot n_0 + n_0 \notag \\
& \lesssim 3^{n_0}.
\end{align*}

The bound \eqref{lem30a_2} follows from this and the normalizing
factor in \eqref{30_2}. Similarly one can prove \eqref{lem30a_3} or
just use the identity $\mathcal R_{11} + \mathcal R_{22}=Id$.
\end{proof}

Since we will be dealing with symplectic maps later on, we now state
a symplectic variant of Lemma \ref{lem30a}.

\begin{lem} \label{lem30}
Let $\phi: \; \mathbb R^2 \to \mathbb R^2$ be a smooth symplectic
(i.e. ${\det}(D\phi)\equiv 1$) function satisfying the following
conditions:
\begin{itemize}
\item[(i)] $\phi(0)=0$.
\item[(ii)] $\phi$ commutes with $\sigma_2(x_1,x_2)=(x_1,-x_2)$.
\item[(iii)] For some integer $n_0\ge 1$,
\begin{align}
\| D\phi \|_{\infty} \le 2^{n_0}. \label{lem30_1}
\end{align}
\end{itemize}
Then with $w=h_j$ defined in \eqref{30_2}, we have
\begin{align}
\| \mathcal R_{11} (\omega \circ \phi)\|_{\infty} \le C \cdot
{2^{n_0}} \cdot \frac{\haa} A, \label{lem30_2} \\
\| \mathcal R_{22} (\omega \circ \phi)\|_{\infty} \le C \cdot
{2^{n_0}} \cdot \frac{\haa} A. \label{lem30_3}
\end{align}
Here $C>0$ is an absolute constant.
\end{lem}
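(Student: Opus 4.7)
The strategy is to mimic the three-case decomposition from the proof of Lemma \ref{lem30a} but to exploit two consequences of the symplectic condition $\det(D\phi)\equiv 1$ in order to sharpen the constant from $3^{n_0}$ down to $2^{n_0}$. First, since $\phi$ is area-preserving, the change-of-variables formula gives $\|\eta_k\circ\phi\|_{L^p}=\|\eta_k\|_{L^p}$ for every $1\le p\le\infty$. Second, for a $2\times 2$ matrix $M$ with $\det M=1$, the explicit formula $M^{-1}=\operatorname{adj}(M)$ shows that the entries of $M^{-1}$ are (up to signs) a permutation of those of $M$, so $\|(D\phi)^{-1}\|_\infty\le\|D\phi\|_\infty\le 2^{n_0}$ is automatic. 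In particular the bi-Lipschitz hypothesis (iii) of Lemma \ref{lem30a} is recovered without imposing it, so the qualitative localization arguments (supports, images of balls under $\phi$) go through verbatim.

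As before, I would use the odd symmetry of each $\eta_k\circ\phi$ in $x_2$ together with the evenness of the kernel of $\mathcal R_{11}$ to conclude $\mathcal R_{11}(\eta_k\circ\phi)(0)=0$, and then fix $x\ne 0$ with $|x|\sim 2^{-l}$ and split into the regimes $2^k\ll 2^{l-n_0}$, $2^k\gg 2^{l+n_0}$, $2^{l-n_0}\lesssim 2^k\lesssim 2^{l+n_0}$. In the first regime, subtracting the (vanishing) integral over the annulus and using $\|\nabla(\eta_k\circ\phi)\|_\infty\lesssim 2^{n_0}2^k$ together with the logarithmic size of $\int|K|$ over a dyadic annulus gives a bound $\lesssim 2^{-l}\cdot 2^{n_0}\cdot 2^k\cdot n_0$, whose sum over $k$ with $2^k\ll 2^{l-n_0}$ is $O(n_0)$. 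In the second regime, $|y|\sim 2^{-l}$ and the symplectic invariance $\|\eta_k\circ\phi\|_1=\|\eta_k\|_1\lesssim 4^{-k}$ yields $\lesssim 4^{l-k}$, whose sum over $k\gg l+n_0$ is $O(1)$ — this is where we remove the $4^{n_0}$ that appeared in Lemma \ref{lem30a}.

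The decisive improvement occurs in the intermediate regime. Applying Lemma \ref{lem30p} and now using $\|\eta_k\circ\phi\|_2=\|\eta_k\|_2\lesssim 2^{-k}$ (instead of the bi-Lipschitz bound $\lesssim 2^{n_0/2}2^{-k}$) together with $\|\nabla(\eta_k\circ\phi)\|_\infty\lesssim 2^{n_0}2^k$, I obtain
\begin{align*}
\|\mathcal R_{11}(\eta_k\circ\phi)\|_\infty
\lesssim 2^{-k/2}\cdot 2^{n_0/2}2^{k/2}
=2^{n_0/2},
\end{align*}
which is half the exponent of what Lemma \ref{lem30a} could afford. Summing this over the $O(n_0)$ values of $k$ in the intermediate regime produces a contribution $\lesssim n_0\cdot 2^{n_0/2}\lesssim 2^{n_0}$. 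Collecting the three regimes gives $\sum_k|\mathcal R_{11}(\eta_k\circ\phi)(x)|\lesssim 2^{n_0}$, and inserting the normalization $\frac{\sqrt{\log A}}{A}$ from \eqref{30_2} yields \eqref{lem30_2}. The bound \eqref{lem30_3} then follows either by an identical argument (the only symmetry used was oddness in $x_2$) or directly from $\mathcal R_{11}+\mathcal R_{22}=\mathrm{Id}$ together with the $L^\infty$ bound on $\omega=h_A$.

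The only place where I expect anything delicate is the intermediate-regime computation, since it is precisely there that the bi-Lipschitz proof lost a factor $2^{n_0/2}$ that the symplectic identity $\det(D\phi)=1$ now recovers; once one sees that the $L^2$ norm is truly preserved under composition, the rest is bookkeeping.
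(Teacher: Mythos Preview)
Your proposal is correct and follows exactly the approach the paper takes: the paper's proof is a one-paragraph remark that Lemma \ref{lem30} is ``essentially a repetition of the proof of Lemma \ref{lem30a},'' noting that sympleticity automatically gives $\|(D\phi)^{-1}\|_\infty\le 2^{n_0}$ and that the improvement from $3^{n_0}$ to $2^{n_0}$ comes precisely from the Case~3 estimate \eqref{lem30a_5}, where the volume-preservation removes the Jacobian factor in $\|\eta_k\circ\phi\|_2$. Your observation about Case~2 is harmless but not actually needed, since even in Lemma \ref{lem30a} the sum $\sum_{2^k\gg 2^{l+n_0}}4^{-k+l+n_0}$ is already $O(1)$.
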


\begin{proof}[Proof of Lemma \ref{lem30}]
This is essentially a repetition of the proof of Lemma \ref{lem30a}.
Note that by sympleticity, \eqref{lem30_1} implies that
$\|(D\phi)^{-1} \|_{\infty} \le 2^{n_0}$. Also there is a slight
improvement of constant in \eqref{lem30_2}--\eqref{lem30_3}. This is
because when bounding \eqref{lem30a_5} we no longer need to bound
the Jacobian since the map is volume-preserving.
\end{proof}

We are now ready to describe the details of the local construction:
namely the existence of large deformation for well-chosen initial
data.

To be more specific, we consider the Euler equation
\begin{align} \label{40_1}
\begin{cases}
\partial_t \omega + \left(\Delta^{-1} \nabla^{\perp} \omega \cdot \nabla\right)
\omega =0, \quad t>0, \\
\omega\Bigr|_{t=0}=h_A,
\end{cases}
\end{align}
where $h_A$ is defined in \eqref{30_2}. Easy to check that $\omega$
is odd in both $x_1$ and $x_2$. We suppress the dependence of the
solution $\omega$ on the parameter $A$ for simplicity of notation.

The equation for the (forward) characteristic lines takes the form
\begin{align} \label{40_2}
\begin{cases}
\partial_t \phi(t,x) = (\Delta^{-1} \nabla^{\perp} \omega)(t,\phi(t,x)),
\\
\phi(0,x)=x \in \mathbb R^2.
\end{cases}
\end{align}

It is easy to check that $\phi=\phi(t,x)$ is a symplectic map and
$\phi(t,0)\equiv 0$. Due to the special choice of the initial data
$h_A$, the flow associated with \eqref{40_1} and \eqref{40_2} is
hyperbolic near the origin with a large deformation gradient. The
following proposition quantifies this fact.

\begin{prop} \label{prop40}
With the notations in \eqref{40_1}--\eqref{40_2}, we have for $A$ sufficiently large,
\begin{align}
\max_{0\le t \le t_A} \| (D\phi)(t,\cdot) \|_{\infty}
> M_A, \label{40_3}
\end{align}
where $M_A=\log\log A$ and $t_A = 1/\log\log A$.
\end{prop}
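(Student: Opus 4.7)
My plan is to exploit the odd-odd symmetry of $h_A$ to reduce the analysis at the origin to control of the single scalar $a(t) := \mathcal R_{12}\omega(t, 0)$, to show $a(t)$ stays of size $\haa$ throughout $[0, t_A]$, and then to extract the desired growth of $D\phi(t, 0)$ from the hyperbolic linearized ODE. Since $h_A$ is odd in both $x_1$ and $x_2$ and these symmetries are preserved by the Euler flow, $\omega(t,\cdot)$ is odd-odd for all $t$, so the origin is a stagnation point ($u(t,0)=0$, $\phi(t,0)\equiv 0$), and by parity $\mathcal R_{11}\omega(t,0) = \mathcal R_{22}\omega(t,0) = 0$. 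From $u = \nabla^\perp \Delta^{-1}\omega$ one computes $Du(t, 0) = \operatorname{diag}(-a(t),\, a(t))$, so $\partial_t D\phi(t,0) = Du(t,0)\,D\phi(t,0)$ with $D\phi(0,0) = I$ yields $\|D\phi(t,0)\| \ge \exp\bigl(\int_0^t |a(s)|\,ds\bigr)$. At $t = 0$ the kernel $K_{12}(y) = y_1 y_2/(\pi|y|^4)$ is homogeneous of degree $-2$, so $\mathcal R_{12}\eta_k(0) = \mathcal R_{12}\eta_0(0) = c_0$ independent of $k$, and $c_0 > 0$ because the signs of $\eta_0$ and of $K_{12}(-\cdot)$ are aligned at each of the four corner supports. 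Summing over $A \le k \le 2A$ gives $a(0) = c_0\haa$.

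Next I run a bootstrap. Suppose $\|D\phi(s,\cdot)\|_\infty \le M_A$ on $[0, t_A]$. Since the 2D Euler flow is symplectic, $\|D\phi^{-1}(s,\cdot)\|_\infty \le M_A$ as well, and $\phi^{-1}(s,\cdot)$ fixes the origin and commutes with $\sigma_2$. Applying Lemma \ref{lem30} with $n_0 = \lceil \log_2 M_A \rceil$ to the transport identity $\omega(s,\cdot) = h_A \circ \phi^{-1}(s,\cdot)$ yields $\|\mathcal R_{ii}\omega(s,\cdot)\|_\infty \lesssim M_A \haa/A$ for $i=1,2$; the same argument applied to the off-diagonal kernel gives a like bound for $\mathcal R_{12}\omega$. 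Combined with $u(s, 0)=0$, this produces $|u(s,y)| \lesssim M_A \haa|y|/A$ near the origin, and a Gronwall estimate along characteristics gives $|\phi(s,z)-z| \lesssim t_A M_A^2 \haa|z|/A \ll |z|$ uniformly for $z \in \operatorname{supp}(h_A)$.

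To verify persistence of $a(t)$, I use the volume-preserving change of variable to write $a(t) = \int K_{12}(-\phi(t,z))\,h_A(z)\,dz$ so that $a(t) - a(0) = \int [K_{12}(-\phi(t,z)) - K_{12}(-z)]\,h_A(z)\,dz$. Since $|\nabla K_{12}(y)| \lesssim |y|^{-3}$ and $|\phi(t,z)-z| \ll |z|$, a dyadic estimate on the shells $|z|\sim 2^{-k}$, together with $|h_A| \lesssim \haa/A$ and $|\operatorname{supp}(\eta_k)| \lesssim 2^{-2k}$, summed over $A \le k \le 2A$, yields $|a(t)-a(0)| \lesssim (\log\log A)(\log A)/A = o(\haa)$. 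Hence $|a(t)| \ge c_0\haa/2$ throughout $[0, t_A]$, and the linearized ODE gives $\|D\phi(t_A, 0)\| \ge \exp(c_1 \haa/\log\log A)$, which exceeds $M_A = \log\log A$ for $A$ large. This contradicts the bootstrap hypothesis and proves \eqref{40_3}.

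The main technical obstacle is the persistence step: the Lagrangian flow must not wash out the cancellation producing the $c_0\haa$ factor in $a(0)$. Two ingredients make it work -- Lemma \ref{lem30} (and a parallel $\mathcal R_{12}$ variant), which bounds the Riesz transforms uniformly by exploiting the symplectic structure and the dyadic separation of the building blocks $\eta_k$, and the tight estimate $|\phi(s,z)-z|\ll |z|$, which survives because $u$ vanishes at the origin to linear order with a coefficient that grows only polynomially in $M_A$.
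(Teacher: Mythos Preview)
Your bootstrap step contains a genuine error. You assert that ``the same argument applied to the off-diagonal kernel gives a like bound for $\mathcal R_{12}\omega$,'' i.e.\ $\|\mathcal R_{12}\omega(s,\cdot)\|_\infty \lesssim M_A\haa/A$. This is false, and in fact directly contradicts what you are trying to prove: you want $a(t)=\mathcal R_{12}\omega(t,0)\sim\haa$, which already forces $\|\mathcal R_{12}\omega\|_\infty\gtrsim\haa\gg M_A\haa/A$. The reason Lemma~\ref{lem30} does \emph{not} transfer to $\mathcal R_{12}$ is that its Case~1 relies on the vanishing of $\int(\eta_k\circ\phi)(-y)K(y)\,dy$. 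For the $\mathcal R_{11}$ kernel $K\propto(y_2^2-y_1^2)/|y|^4$ this holds because $K$ is even in $y_2$ while $\eta_k\circ\phi$ is odd in $y_2$. The $\mathcal R_{12}$ kernel $y_1y_2/|y|^4$ is odd in both variables, so against the odd--odd function $\eta_k\circ\phi$ the integrand is even and there is no cancellation; each $\eta_k$ contributes $O(1)$ and the sum is of size $\haa$. With this bound gone, your velocity estimate $|u(s,y)|\lesssim M_A\haa|y|/A$ collapses, since the diagonal entries of $Du$ are $\pm\mathcal R_{12}\omega$ and hence $\|Du\|_\infty\gtrsim\haa$. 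Under the bootstrap you only know $|z|/M_A\le|\phi(s,z)|\le M_A|z|$, which permits $|\phi(s,z)-z|\sim M_A|z|$; your mean-value estimate on $K_{12}(-\phi)-K_{12}(-z)$ then loses factors of $M_A$ rather than gaining them, and the persistence bound $|a(t)-a(0)|=o(\haa)$ does not close.

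The paper sidesteps this entirely by never attempting to show $a(t)\approx a(0)$. After the change of variable $x\mapsto\phi(t,x)$ one has $-\pi\lambda(t,0)=\int h_A(x)\,\phi_1\phi_2/|\phi|^4\,dx$. The odd--odd symmetry forces $\phi_1(t,0,x_2)=\phi_2(t,x_1,0)=0$, whence $\phi$ is sign-preserving in each coordinate and one gets the componentwise bounds $x_i/\|D\phi\|_\infty\le\phi_i\le\|D\phi\|_\infty\, x_i$ on the first quadrant. These yield $\phi_1\phi_2/|\phi|^4\ge\|D\phi\|_\infty^{-4}\,x_1x_2/|x|^4$, so under the bootstrap $-\lambda(t,0)\gtrsim M_A^{-4}\haa$. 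This is far weaker than your intended $a(t)\approx a(0)$, but it suffices: integrating over $[0,t_A]$ gives $\int_0^{t_A}|\lambda(s,0)|\,ds\gtrsim \haa/M_A^5$, which for large $A$ contradicts the upper bound $e^{|\int_0^t\lambda|}\le 2M_A$ obtained from the Duhamel formula for $D\phi$ and the smallness of the off-diagonal part $P(t)$. The cleaner, contradiction-free version of this idea appears as Proposition~\ref{prop_gener_1}.
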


\begin{proof}[Proof of Proposition \ref{prop40}]
We shall argue by contradiction. Assume that
\begin{align}
\max_{0\le t \le t_A} \| (D\phi)(t,\cdot) \|_{\infty} \le M_A.
\label{40_4}
\end{align}

By Lemma \ref{lem30}, we have
\begin{align}
&\max_{0\le t \le t_A} \| \mathcal R_{11} \omega \|_{\infty} \lesssim
M_A \frac{\haa} A, \notag \\
&\max_{0\le t \le t_A} \| \mathcal
R_{22} \omega \|_{\infty}  \lesssim M_A \frac{\haa} A.
\label{40_5}
\end{align}

Denote $D(t) = D(t,\cdot) =(D\phi)(t,\cdot)$. By \eqref{40_2} and
\eqref{40_5}, we have
\begin{align}
\partial_t D(t)& = \begin{pmatrix}
-\mathcal R_{12} \omega \quad & -\mathcal R_{22} \omega \\
\mathcal R_{11} \omega \quad & \mathcal R_{12} \omega
\end{pmatrix}
D(t)  \notag \\
&=: \begin{pmatrix} -\lambda(t) \quad  0 \\
0 \quad \lambda(t)
\end{pmatrix}
D(t) + P(t) D(t), \label{40_6}
\end{align}
where $\lambda(t,x) = (\mathcal R_{12} \omega)(t, \phi(t,x))$, and
\begin{align}
\max_{0\le t \le t_A} \| P(t) \|_{\infty} \lesssim
M_A \frac{\haa} A. \label{40_7}
\end{align}

Integrating \eqref{40_6} in time and noting that $D(0)=Id$, we get
\begin{align}
D(t) =\begin{pmatrix} e^{-\int_0^t \lambda }\quad 0 \\
0 \quad e^{\int_0^t \lambda}
\end{pmatrix}
+ \int_0^t
\begin{pmatrix}
e^{-\int_{\tau}^t \lambda }\quad 0 \\
0 \quad e^{\int_{\tau}^t \lambda}
\end{pmatrix}
P(\tau) D(\tau) d\tau. \label{40_8}
\end{align}

By \eqref{40_4}, \eqref{40_7} and \eqref{40_8}, we have for all
$0\le t \le t_A$,
\begin{align*}
e^{|\int_0^t \lambda|} & \le M_A + C_2 \cdot M_A^2 \cdot \frac{\haa} A\cdot \max_{0\le \tau \le t} \bigl(
e^{2|\int_0^{\tau} \lambda|} \bigr),
\end{align*}
where $C_2>0$ is some absolute constant.

By taking $A$ sufficiently large and a standard continuity argument, we get
\begin{align}
e^{|\int_0^t \lambda|} \le 2 M_A,\quad \forall\, 0\le t \le t_A. \label{40_10}
\end{align}

Now denote
\begin{align} \label{40_11}
D=
\begin{pmatrix}
e^{-\alpha} \quad 0 \\
0 \quad e^{\alpha}
\end{pmatrix}
+ \beta,
\end{align}
where $\alpha(t,x):= \int_0^t \lambda(\tau,x) d\tau$ and
\begin{align}
|\beta| \le C_2 \cdot M_A^2 \cdot \frac{\haa} A \cdot 4 M_A^2. \notag
\end{align}

From \eqref{40_11} we can get more information on the transport map
$\phi=\phi(t,x)$. Indeed for fixed $t$, using the fact that $\phi(t,0)\equiv 0$, we have
\begin{align*}
\phi(t,x) & = \phi(t,x) - \phi(t,0) \notag \\
& = \int_0^1 \frac d {ds} \bigl( \phi(t,sx) \bigr) ds \notag \\
& = \Bigl(\int_0^1 (D\phi)(t,sx) ds \Bigr)x \notag \\
& = \Bigl( (\int_0^1 e^{-\alpha(t,sx)} ds ) x_1,\;
(\int_0^1 e^{\alpha(t,sx)} ds) x_2 \Bigr) + \tilde \beta,
\end{align*}
where
\begin{align*}
|\tilde \beta| \lesssim M_A^4 \cdot \frac{\haa} A \cdot |x|.
\end{align*}

Note that by \eqref{40_10}, for any $0\le t \le t_A$,
\begin{align*}
\frac 1 {2M_A} \le \int_0^1 e^{ \alpha(t,sx)} ds \le 2M_A, \notag \\
\frac 1 {2M_A} \le \int_0^1 e^{ -\alpha(t,sx)} ds \le 2M_A.
\end{align*}

Since
\begin{align*}
M_A^4 \cdot \frac{\haa} A \ll \frac1 {M_A},
\end{align*}
we have if $x_1>0$, $x_2>0$, and
\begin{align*}
\frac 12 < \frac {x_1}{x_2} <2,
\end{align*}
then for $\phi(t,x)=(\phi_1(t,x),\phi_2(t,x))$, $0\le t\le t_A$,
\begin{align}
\frac 1 {10 M_A^2}
< \frac{\phi_1(t,x)} {\phi_2(t,x)} < 10 M_A^2. \label{40_13}
\end{align}

By \eqref{40_4}, we also have
\begin{align}
|\phi(t,x)| \le M_A |x|. \label{40_14}
\end{align}

These bounds will be needed later.

Now we analyze $\lambda(t,\cdot)$ at $x=0$ to get a contradiction.
We have (recall $\omega(0,x)=h_A(x))$
\begin{align}
\lambda(t,0) &= (\mathcal R_{12} \omega)(t,\phi(t,0)) = (\mathcal R_{12} \omega)(t,0) \notag\\
& =-\frac 1 {\pi} \int_{\mathbb R^2} \omega(t,x) \frac{x_1 x_2}{ (x_1^2+x_2^2)^2} dx \notag \\
& = - \frac 1 {\pi} \int_{\mathbb R^2} h_A(x) \cdot
\frac{\phi_1(t,x) \phi_2(t,x)}{ (\phi_1(t,x)^2+\phi_2(t,x)^2)^2} dx. \label{40_41}
\end{align}
In the last step above we have made a change of variable $x\to \phi(t,x)$ and used the fact $\omega(t,\phi(t,x))=\omega(0,x)=h_A(x)$.

To continue, let us observe that the maps $\phi_1$ and $\phi_2$ are sign-preserving, i.e.
if $x_1\ge 0$ (resp. $x_2\ge 0$) then $\phi_1\ge 0$ (resp $\phi_2\ge 0$). To check this, one can
use \eqref{40_2} and the fact that $\omega$ is odd in $x_1$ and $x_2$ to get
\begin{align*}
\partial_t \phi_1 & = (-\Delta^{-1} \partial_2 \omega)(t,\phi_1,\phi_2)
- (-\Delta^{-1} \partial_2 \omega)(t,0,\phi_2) \notag\\
& = F(t,\phi_1,\phi_2) \phi_1,
\end{align*}
which (by integrating in time) yields that $\text{sign}(\phi_1(t)) = \text{sign}(\phi_1(0)) = \text{sign}(x_1)$.

By using the sign property mentioned above and the parity of our solution, we conclude that the RHS integral
of \eqref{40_41} is always non-negative and can be restricted to the first quadrant. Hence by \eqref{40_41},
\eqref{40_13} and \eqref{40_14}, we have for all $0\le t \le t_A$,
\begin{align*}
-\frac{\pi} 4\lambda(t,0) & = \int_{x_1>0,x_2>0} h_A(x)
\cdot \frac{\phi_1(t,x) \phi_2(t,x)}{ (\phi_1^2(t,x) + \phi_2^2(t,x))^2}  dx \notag \\
&= \int_{x_1>0,x_2>0} h_A(x) \cdot \frac 1 {\frac {\phi_1(t,x)}{\phi_2(t,x)} + \frac{\phi_2(t,x)}{\phi_1(t,x)}} \cdot \frac 1
{\phi_1^2(t,x)+\phi_2^2(t,x)} dx \notag \\
& \ge \int_{x_1>0,x_2>0} h_A(x)
\cdot \frac 1 {20 M_A^2}
\cdot \frac 1{ M_A^2} \cdot \frac 1 {|x|^2} dx \notag \\
& \gtrsim \frac 1 {M_A^4}
\cdot \frac {\haa} A
\cdot \sum_{A\le k\le 2A } \int_{x_1>0,x_2>0}
\frac{\eta_k(x)}{|x|^2} dx \notag \\
& \gtrsim M_A^{-4} \cdot \haa.
\end{align*}

Therefore
\begin{align*}
\int_0^{t_A} \lambda(t,0) dt \gtrsim t_A \cdot M_A^{-4} \cdot \haa
\end{align*}
which obviously contradicts \eqref{40_10}.
\end{proof}

The special initial data $h_A$ in Proposition \ref{prop40} can be generalized to a slightly larger class
of functions. Also the proof of Proposition \ref{prop40} can  be simplified if we take
full advantage of the odd symmetry of the data. The main observation is that by parity $x=0$ is invariant under the flow
and $(Du)(t,0)$ is diagonal for all $t>0$. We now state a more general result taking into account all these considerations.
The argument below
bypasses Lemma \ref{lem30a} and is more streamlined and quantitative. In particular  the contradiction
argument is replaced by a more effective integral (in time) inequality.


Consider
\begin{align*}
 \begin{cases}
  \partial_t \omega + (\Delta^{-1} \nabla^{\perp} \omega \cdot \nabla)\omega =0, \quad t>0,\\
  \omega \Bigr|_{t=0}=g.
 \end{cases}
\end{align*}
Assume $g\in C_c^{\infty}(\mathbb R^2)$ satisfies
\begin{itemize}
 \item[(i)] $g$ is odd in $x_1$ and $x_2$, and
 \begin{align*}
  g(x_1,x_2) \ge 0, \qquad \text{if $x_1\ge 0$ and $x_2\ge 0$.}
 \end{align*}
\item[(ii)]
\begin{align*}
 \int_{\mathbb R^2} g(x) \frac{x_1 x_2} {|x|^4} dx = B>0.
\end{align*}
\end{itemize}


Denoting by $\phi=\phi(t,x)$ the (forward) characteristic lines, we have
\begin{prop}\label{prop_gener_1}
\begin{align} \notag
  \int_0^t \frac 1 { \| D \phi(s) \|_{\infty}^4} ds   \le \frac{\pi}{4B}\log\left(1+\frac{4B}{\pi}t\right), \quad\forall\, t\ge 0.
\end{align}
In particular,
\begin{align*}
 \max_{0\le s \le t} \| D\phi(s)\|_{\infty} \ge \left( \frac {4B} {\pi} \cdot \frac t {\log(1+\frac{4B}{\pi} t)} \right)^{\frac 14},
 \quad \forall\, t>0.
\end{align*}

\end{prop}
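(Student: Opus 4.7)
My plan is to exploit the full $x_1$- and $x_2$-odd symmetry of $g$ to reduce the claim to a scalar ordinary differential inequality for the stretching rate at the stagnation point $x=0$, and then to close the estimate with a one-line integrating-factor argument. Since the 2D Euler dynamics preserves the odd-odd parity of the vorticity, $\omega(t,\cdot)$ remains odd in each $x_i$, $u_i$ has the opposite parity in $x_i$, and $\phi(t,0)\equiv 0$. The sign-tracking argument from the proof of Proposition \ref{prop40} (see the paragraph after \eqref{40_41}) gives $\operatorname{sign}\phi_i(t,x)=\operatorname{sign}(x_i)$, so $\phi_1\phi_2$ is pointwise sign-compatible with $g$. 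Parity also forces $\partial_2u_1(t,0)=\partial_1u_2(t,0)=0$, so $(Du)(t,0)=\operatorname{diag}(-\lambda(t),\lambda(t))$ with $\lambda(t)=(\mathcal R_{12}\omega)(t,0)$. Evaluating $\partial_tD\phi=(Du)(\phi)\,D\phi$ at $x=0$ integrates to
\[
D\phi(t,0)=\operatorname{diag}(e^{-\Lambda(t)},e^{\Lambda(t)}),\qquad \Lambda(t)=\int_0^t\lambda(s)\,ds.
\]
Combining the kernel representation of $\mathcal R_{12}$ with the symplectic change of variable $y=\phi(t,x)$ yields
\[
\pi|\lambda(t)|=\int_{\mathbb R^2}g(x)\,\frac{\phi_1(t,x)\phi_2(t,x)}{|\phi(t,x)|^4}\,dx\ge 0,
\]
where the sign follows from the sign-compatibility. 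Setting $\mu(t)=-\lambda(t)\ge 0$ and $M(t)=\int_0^t\mu\,ds$, we have $L(t):=\|D\phi(t)\|_\infty\ge\|D\phi(t,0)\|_{op}=e^{M(t)}$.

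The heart of the argument is to establish the pointwise-in-$t$ lower bound
\[
\mu(t)\ge\frac{B}{\pi\,L(t)^4}.
\]
Since $\phi(t,\cdot)$ is $L(t)$-Lipschitz with $\phi(t,0)=0$, one has $|\phi|^4\le L^4|x|^4$ and so $1/|\phi|^4\ge 1/(L^4|x|^4)$. The remaining task is the comparison $L^4\phi_1(t,x)\phi_2(t,x)|x|^4\ge|\phi(t,x)|^4 x_1x_2$ (for $x$ in the first quadrant, extended to the third by symmetry), which is what turns the Riesz-kernel integral into $B/L^4$. This inequality is immediate in the linear model $\phi=\operatorname{diag}(e^{-M},e^M)x$, where $\phi_1\phi_2=x_1x_2$ and $|\phi|^2\le e^{2M}|x|^2$. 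I expect this step to be the main obstacle: one must leverage the simultaneous constraints that $\phi$ is symplectic ($\det D\phi\equiv 1$), commutes with both $\sigma_1$ and $\sigma_2$ (forcing $D\phi(t,0)$ to be diagonal), and is globally $L$-Lipschitz, so that the angular distortion and the radial stretching cooperate to give the correct power of $L$.

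Granted the bound $\mu(t)\ge B/(\pi L(t)^4)$, the rest is a clean ODE manipulation. Setting $J(t)=\int_0^tL(s)^{-4}\,ds$, integrating the lower bound on $\mu$ gives $M(t)\ge (B/\pi)J(t)$, hence $L(t)^4\ge e^{4M(t)}\ge e^{(4B/\pi)J(t)}$. Substituting back,
\[
\dot J(t)=L(t)^{-4}\le e^{-(4B/\pi)J(t)},
\]
which rearranges to $\frac{d}{dt}\bigl(\tfrac{\pi}{4B}e^{(4B/\pi)J(t)}\bigr)\le 1$. Integrating from $0$ to $t$ and solving for $J(t)$ yields the stated
\[
J(t)\le\frac{\pi}{4B}\log\!\Bigl(1+\frac{4B}{\pi}t\Bigr),
\]
while the elementary estimate $J(t)\ge t/\max_{0\le s\le t}L(s)^4$ gives the advertised pointwise lower bound on $\max_{0\le s\le t}\|D\phi(s)\|_\infty$.
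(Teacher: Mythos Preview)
Your overall strategy is exactly the paper's: parity forces $(Du)(t,0)$ to be diagonal, $D\phi(t,0)=\operatorname{diag}(e^{-\Lambda},e^{\Lambda})$, and the change of variable $y=\phi(t,x)$ converts $(\mathcal R_{12}\omega)(t,0)$ into the integral of $g(x)\,\phi_1\phi_2/|\phi|^4$. Once you have $-\lambda(t)\ge B/(\pi L(t)^4)$, your ODE closure is identical to the paper's.

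The gap is precisely where you flag it: you identify the comparison
\[
\frac{\phi_1\phi_2}{|\phi|^4}\ \ge\ \frac{1}{L^4}\,\frac{x_1x_2}{|x|^4}\qquad (x_1,x_2>0)
\]
as ``the main obstacle'' but do not prove it. The resolution is much more elementary than your speculation suggests and does not require any delicate interplay of angular and radial distortion. The key observation you are missing is that the odd--odd parity forces $\phi_i$ to vanish on the entire coordinate axis $\{x_i=0\}$, not just at the origin: indeed $\phi_1(t,0,x_2)\equiv 0$ and $\phi_2(t,x_1,0)\equiv 0$. Writing $L=\|D\phi(t)\|_\infty$ and using the fundamental theorem of calculus in the single variable $x_i$ from the axis, together with the symplectic identity $\|(D\phi)^{-1}\|_\infty=\|D\phi\|_\infty$, one gets the \emph{componentwise} two-sided bound
\[
\frac{1}{L}\,\phi_i(t,x)\ \le\ x_i\ \le\ L\,\phi_i(t,x),\qquad i=1,2,\ \ x_1,x_2\ge 0.
\]
Now simply factor the kernel as
\[
\frac{\phi_1\phi_2}{(\phi_1^2+\phi_2^2)^2}
=\frac{1}{\dfrac{\phi_1}{\phi_2}+\dfrac{\phi_2}{\phi_1}}\cdot\frac{1}{\phi_1^2+\phi_2^2},
\]
and observe that the componentwise bounds give $\phi_1/\phi_2+\phi_2/\phi_1\le L^2(x_1/x_2+x_2/x_1)$ and $\phi_1^2+\phi_2^2\le L^2|x|^2$, yielding the desired inequality with exactly $L^{-4}$. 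No subtle ``cooperation'' is needed; the axis-vanishing plus Lipschitz bound is the whole mechanism.
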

\begin{proof}[Proof of Proposition \ref{prop_gener_1}]
By parity, we have $\phi(t,0) \equiv 0$ and
\begin{align*}
 (Du)(t,0) = \begin{pmatrix} -\lambda(t) \quad 0 \\
              0 \quad \lambda(t)
             \end{pmatrix},
\end{align*}
where $\lambda(t) =(\mathcal R_{12} \omega)(t,0)$. The off-diagonal terms of $Du$ vanishes at $x=0$ since $\mathcal R_{11} \omega$
and $\mathcal R_{22}\omega$ are both odd functions of $x_1$, $x_2$. Integrating in time gives
\begin{align*}
 (D\phi)(t,0) = \begin{pmatrix} e^{-\int_0^t \lambda(\tau) d\tau} \quad 0 \\
                 0 \quad e^{\int_0^t \lambda(\tau) d\tau}
                \end{pmatrix}.
\end{align*}
Write $\phi=(\phi_1,\phi_2)$. By parity it is easy to check
$\phi_1(t,0,x_2)\equiv 0$, $\phi_2(t,x_1,0)\equiv 0$ for any $x_1$,
$x_2\in \mathbb R$. By this and sign preservation it follows that
for any $x_1\ge 0$, $x_2\ge 0$,
\begin{align*}
&\frac 1 {\|D\phi(t)\|_{\infty}} \phi_1(t,x_1,x_2) \le x_1 \le \phi_1(t,x_1,x_2) \cdot \|D\phi(t)\|_{\infty}, \notag \\
&\frac 1 {\|D\phi(t)\|_{\infty}} \phi_2(t,x_1,x_2) \le  x_2\le \phi_2(t,x_1,x_2) \cdot \|D\phi(t)\|_{\infty}.
\end{align*}
Therefore for any $x_1>0$, $x_2>0$,
\begin{align*}
  \frac{\phi_1 \phi_2} {(\phi_1^2+\phi_2^2)^2}
  =& \frac 1 {\frac{\phi_1}{\phi_2} + \frac{\phi_2} {\phi_1} } \cdot \frac 1 {\phi_1^2+\phi_2^2} \notag \\
  \ge & \frac 1 {\| D\phi\|_{\infty}^4} \cdot \frac 1 {\frac {x_1}{x_2}+\frac{x_2}{x_1} } \cdot \frac 1 {|x|^2} \notag \\
 = & \frac 1 {\| D\phi\|_{\infty}^4}   \cdot \frac {x_1 x_2} {|x|^4}.
\end{align*}

We  compute $\lambda(t)$ as
\begin{align*}
 - \pi \lambda(t)   & =
 \int_{\mathbb R^2} g(x) \frac{\phi_1(t,x) \phi_2(t,x)} {|\phi(t,x)|^4} dx \notag \\
 & \ge  4 \int_{x_1>0,x_2>0} g(x) \frac{\phi_1(t,x) \phi_2(t,x)} {|\phi(t,x)|^4} dx \notag \\
 & \ge  \frac 4 { \|D\phi(t) \|_{\infty}^4} \int_{x_1>0,x_2>0} g(x) \frac{x_1 x_2} {|x|^4} dx \notag \\
 & = \frac B {\|D\phi(t)\|_{\infty}^4}.
\end{align*}

Since
\begin{align}
\|D\phi(t,\cdot)\|_{\infty} \ge \| (D\phi)(t,0)\|_{\infty} \ge \exp\left(- \int_0^t \lambda(s) ds \right), \notag
\end{align}
we get
\begin{align}
 \| D\phi(t) \|_{\infty} \ge \exp\left( \frac{B}{\pi}  \int_0^t \frac 1 { \| D \phi(s) \|_{\infty}^4} ds \right). \notag
\end{align}
Equivalently,
\begin{align*}
 \frac d {dt} \left( \exp\left( \frac{4B}{\pi}  \int_0^t \frac 1 { \| D \phi(s) \|_{\infty}^4} ds \right) \right) \le \frac {4B}{\pi},
 \quad \forall\, t\ge 0.
\end{align*}
Integrating in time, we get
\begin{align} \notag
  \int_0^t \frac 1 { \| D \phi(s) \|_{\infty}^4} ds   \le \frac{\pi}{4B}\log\left(1+\frac{4B}{\pi}t\right), \quad\forall\, t\ge 0.
\end{align}

\end{proof}

\section{$\dot H^1$ norm inflation by large Lagrangian deformation}
We begin with a simple ODE perturbation lemma.

\begin{lem} \label{lem20}
Suppose $u=u(t,x):\; \mathbb R\times \mathbb R^2 \to \mathbb R$,
$v=v(t,x):\; \mathbb R \times \mathbb R^2 \to \mathbb R$ are given smooth vector fields.
Let $\phi_1$, $\phi_2$ solve respectively
\begin{align*}
\begin{cases}
\partial_t \phi_1(t,x) = u(t,\phi_1(t,x)), \\
\phi_1(0,x)=x \in \mathbb R^2,
\end{cases}
\end{align*}
and
\begin{align*}
\begin{cases}
\partial_t \phi_2(t,x) = u(t, \phi_2(t,x)) + v(t,\phi_2(t,x)), \\
\phi_2(0,x)=x \in \mathbb R^2.
\end{cases}
\end{align*}

Then for some constant $C=C(\max_{0\le t\le 1} \|D^2 u(t) \|_{\infty},\,
\max_{0\le t \le 1} \|Du(t) \|_\infty)>0$, we have
\begin{align*}
\max_{0\le t \le 1}
\Bigl( \| \phi_2(t,\cdot) -\phi_1(t,\cdot )\|_{\infty}
&+ \| (D \phi_2) (t) -
(D\phi_1) (t) \|_{\infty} \Bigr)
\le  \notag \\
&C \cdot \max_{0\le t \le 1}
( \| v(t) \|_{\infty} + \| Dv(t) \|_{\infty} ).
\end{align*}
\end{lem}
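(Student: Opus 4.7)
The plan is to apply Gronwall's inequality twice: first to control $\|\phi_2-\phi_1\|_\infty$, and then, using that bound, to control the Jacobian difference $\|D\phi_2-D\phi_1\|_\infty$.

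For the position estimate, I would subtract the two flow ODEs to get
\begin{align*}
\partial_t(\phi_2-\phi_1)(t,x) = \bigl[u(t,\phi_2(t,x))-u(t,\phi_1(t,x))\bigr] + v(t,\phi_2(t,x)).
\end{align*}
The first bracket is bounded pointwise by $\|Du(t)\|_\infty|\phi_2-\phi_1|$ via the mean value theorem, and the last term by $\|v(t)\|_\infty$. Applying Gronwall on $[0,1]$ then gives
\begin{align*}
\max_{0\le t\le 1}\|\phi_2(t,\cdot)-\phi_1(t,\cdot)\|_\infty \le C_1\cdot \max_{0\le t\le 1}\|v(t,\cdot)\|_\infty,
\end{align*}
with $C_1$ depending only on $\max_{0\le t\le 1}\|Du(t)\|_\infty$.

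For the Jacobian estimate, I would differentiate each flow ODE in $x$ to get matrix-valued ODEs
\begin{align*}
\partial_t D\phi_1 &= (Du)(t,\phi_1)\,D\phi_1,\\
\partial_t D\phi_2 &= (Du)(t,\phi_2)\,D\phi_2 + (Dv)(t,\phi_2)\,D\phi_2.
\end{align*}
A preliminary Gronwall yields $\|D\phi_1(t)\|_\infty\le e^{\|Du\|_\infty}$, and, after assuming without loss of generality that $\max_{0\le t\le 1}(\|v(t)\|_\infty+\|Dv(t)\|_\infty)\le 1$ (otherwise the claimed inequality is trivial for a suitable constant), also $\|D\phi_2(t)\|_\infty\le e^{\|Du\|_\infty+1}$. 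Subtracting the two matrix ODEs,
\begin{align*}
\partial_t(D\phi_2-D\phi_1) &= (Du)(t,\phi_2)(D\phi_2-D\phi_1) \\
&\quad + \bigl[(Du)(t,\phi_2)-(Du)(t,\phi_1)\bigr]D\phi_1 + (Dv)(t,\phi_2)\,D\phi_2.
\end{align*}
The middle term is controlled by $\|D^2u\|_\infty\,\|\phi_2-\phi_1\|_\infty\,\|D\phi_1\|_\infty\lesssim \|v\|_\infty$ via the position estimate, and the last term by $\|Dv\|_\infty\,\|D\phi_2\|_\infty\lesssim \|Dv\|_\infty$. A second application of Gronwall then yields
\begin{align*}
\max_{0\le t\le 1}\|D\phi_2(t)-D\phi_1(t)\|_\infty \lesssim \|v\|_\infty+\|Dv\|_\infty,
\end{align*}
with implicit constant depending only on $\|Du\|_\infty$ and $\|D^2u\|_\infty$. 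Combined with the position bound, this gives the lemma.

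There is no serious obstacle here; this is textbook ODE perturbation theory. The only minor subtlety is that the stated constant $C$ is required to depend only on bounds for $u$, which forces either the harmless reduction $\|v\|_\infty+\|Dv\|_\infty\le 1$ used above or the observation that the claimed inequality is otherwise vacuous—a bookkeeping point rather than a substantive step.
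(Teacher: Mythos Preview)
Your proof is correct and follows essentially the same two-step Gronwall argument as the paper: first bound $\|\phi_2-\phi_1\|_\infty$ via the mean value theorem and Gronwall, then differentiate the flow ODEs and Gronwall again for the Jacobian difference. Your explicit handling of the constant dependence (via the WLOG reduction $\|v\|_\infty+\|Dv\|_\infty\le 1$) is in fact slightly more careful than the paper's version, which leaves the $\|Dv\|_\infty$-dependence in the bound for $\|D\phi_2\|_\infty$ implicit.
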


\begin{proof}[Proof of Lemma \ref{lem20}]
This is quite standard. We sketch the details for the sake of completeness.

Set $\eta(t,x) = \phi_2(t,x)-\phi_1(t,x)$. Then
\begin{align*}
\partial_t \eta & = u(t,\phi_2) -u(t,\phi_1) + v(t,\phi_2) \notag \\
& = \int_0^1 (Du)(t,  \phi_1+(\phi_2-\phi_1) \theta ) d\theta \,
\eta + v(t,\phi_2).
\end{align*}

A Gronwall in time argument then yields
\begin{align*}
\max_{0\le t \le 1} \| \eta(t) \|_{\infty}
\le C \max_{0\le t \le 1} \| v(t) \|_{\infty},
\end{align*}
where the constant $C=C(\max_{0\le t \le 1} \| Du(t) \|_{\infty} )$.

Now for $\partial_x \eta$ note that
\begin{align*}
\partial_t (D\eta) &= (D u)(t,\phi_2)
D\phi_2 - (Du)(t,\phi_1) D \phi_1
+(Dv)(t,\phi_2)  D \phi_2 & \notag \\
& = ( (Du)(t,\phi_2)-(Du)(t,\phi_1) ) D \phi_2
+(Du)(t,\phi_1) D \eta +(Dv) D \phi_2  \notag \\
& = O( \| D^2 u \|_{\infty} \cdot \| \eta \|_{\infty}
\cdot \| D \phi_2 \|_{\infty} )
+ O(\| Du \|_{\infty} ) D \eta
+ O(\| D v \|_{\infty} \cdot \| D \phi_2  \|_{\infty}).
\end{align*}

It is easy to estimate
\begin{align*}
\max_{0\le t \le 1} \| (D \phi_2)(t,\cdot) \|_{\infty}
\le \exp\left( \operatorname{const} \cdot \bigl(\max_{0\le t \le 1}  (\| Du(t)\|_{\infty} + \| Dv(t) \|_{\infty}) \bigr) \right).
\end{align*}

Hence the desired bound follows from Gronwall.
\end{proof}

The following key proposition shows that large deformation of the transportation map
can produce large $\dot H^1$ norm, provided we perturb the initial data judiciously.

\begin{prop}[Large deformation induces $\dot H^1$ inflation] \label{prop20}
Suppose $\omega$ is a smooth solution to the Euler equation
\begin{align*}
\begin{cases}
\partial_t \omega + \Delta^{-1} \nabla^{\perp} \omega \cdot \nabla \omega =0, \quad
0<t\le 1, \\
\omega \Bigr|_{t=0} = \omega_0
\end{cases}
\end{align*}
satisfying the following conditions:

\begin{itemize}
\item $\| \omega_0\|_{L^1} + \| \omega_0 \|_{L^{\infty}} + \| \omega_0 \|_{\dot H^{-1}} <\infty$.
\item For some $z_0\in \mathbb R^2$, $R_0>0$, we have
\begin{align*}
\operatorname{supp}(\omega(t,\cdot) ) \subset B(z_0, \frac 12 R_0),\quad \forall\, 0\le t\le 1.
\end{align*}

\item For some $0<t_0\le 1$ and some $M\gg 1$ ($M\ge 10^{7}$ will suffice),
we have
\begin{align} \label{prop20_2}
\| (D \phi)(t_0,\cdot) \|_{\infty} >M,
\end{align}
where $\phi=\phi(t,x)$ is the (forward) characteristics:
\begin{align*}
\begin{cases}
\partial_t \phi(t,x) = (\Delta^{-1} \nabla^{\perp}\omega)(t,\phi(t,x)), \\
\phi(0,x)=x.
\end{cases}
\end{align*}
\end{itemize}

Then we can find a smooth solution $\tilde \omega$ also solving the Euler equation
\begin{align*}
\begin{cases}
\partial_t \tilde \omega + \Delta^{-1} \nabla^{\perp} \tilde \omega \cdot \nabla \tilde \omega =0,
\quad 0<t\le 1 \\
\tilde \omega \Bigr|_{t=0} = \tilde \omega_0
\end{cases}
\end{align*}
such that the following hold

\begin{enumerate}
\item $\tilde \omega_0$ is a small perturbation of $\omega_0$:
\begin{align}
\| \tilde \omega_0 \|_{L^1} & \le 2 \| \omega_0 \|_{L^1}, \label{prop20_3} \\
\| \tilde \omega_0\|_{L^{\infty}} & \le 2 \| \omega_0\|_{L^{\infty}},
\label{prop20_4} \\
\| \tilde \omega_0 \|_{\dot H^{-1}} & \le 2 \| \omega_0 \|_{\dot H^{-1}}, \\
\| \tilde \omega_0\|_{\dot H^1} &\le \| \omega_0 \|_{\dot H^1} + M^{-\frac 12}. \label{prop20_5}
\end{align}

\item For the same $t_0$ as in \eqref{prop20_2}, we have
\begin{align}
\| \tilde \omega(t_0,\cdot ) \|_{\dot H^1} > M^{\frac 13}. \label{prop20_6}
\end{align}

\item $\tilde \omega$ is also compactly supported:
\begin{align}
\operatorname{supp}(\tilde \omega(t) ) \subset B(z_0,\, R_0),
\quad \forall\, 0 \le t\le1. \label{prop20_7}
\end{align}
\end{enumerate}
\end{prop}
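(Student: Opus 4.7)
The strategy is to perturb $\omega_0$ by a small-amplitude, high-frequency oscillatory bump localized at a point where the Lagrangian deformation $D\phi(t_0,\cdot)$ is large. Since Biot--Savart is sufficiently regular, the perturbed flow $\tilde\phi$ stays uniformly $C^1$-close to $\phi$ on $[0,t_0]$, so transport along $\tilde\phi$ amplifies the $\dot H^1$ norm through the stretching factor $(D\tilde\phi)^{-T}$, whose operator norm equals that of $D\tilde\phi$ by symplecticity.

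Concretely, choose $x_\ast\in B(z_0,R_0/2)$ and a unit vector $e\in S^1$ with $|(D\phi(t_0,x_\ast))^{-T}e|\gtrsim M$; such $x_\ast$ lies well inside the support of $\omega$ in practice since $\|Du\|$ decays rapidly outside. By continuity $|(D\phi(t_0,x))^{-T}e|\gtrsim M/10$ on a small ball $B(x_\ast,\delta)\subset B(z_0,R_0/2)$. Fix $g\in C_c^\infty(B(x_\ast,\delta))$ with $\|g\|_{L^2}\sim 1$ and set
\begin{equation*}
\tilde\omega_0:=\omega_0+\psi_0,\qquad \psi_0(x):=\frac{c_0\,M^{-1/2}}{N}\sin(Nx\cdot e)\,g(x),
\end{equation*}
with $c_0>0$ a small absolute constant and $N$ a large parameter. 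One verifies $\|\psi_0\|_{L^1}+\|\psi_0\|_{L^\infty}\lesssim M^{-1/2}/N$, $\|\psi_0\|_{\dot H^{-1}}\lesssim M^{-1/2}/N^2$ (two integrations by parts against the phase), and $\|\psi_0\|_{\dot H^1}\le M^{-1/2}$ for $c_0$ small enough, which yields \eqref{prop20_3}--\eqref{prop20_5}.

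Next, let $\tilde\omega$ solve Euler on $[0,1]$ with data $\tilde\omega_0$ and flow $\tilde\phi$ (existence on $[0,1]$ is standard via Beale--Kato--Majda since $\|\tilde\omega\|_\infty$ is conserved), and set $\eta:=\tilde\omega-\omega$, $v:=\Delta^{-1}\nabla^\perp\eta$. From $\partial_t\eta+u\cdot\nabla\eta=-v\cdot\nabla\tilde\omega$, combined with Lemma \ref{lem30p} (giving $\|\nabla v\|_\infty\lesssim\|\eta\|_{L^2}^{1/2}\|\nabla\eta\|_{L^\infty}^{1/2}$ and an analogous bound for $\|v\|_\infty$), one bootstraps $L^2\cap W^{1,\infty}$ control of $\eta$ on $[0,t_0]$ and obtains $\max_{0\le t\le t_0}(\|v\|_\infty+\|Dv\|_\infty)\to 0$ as $N\to\infty$. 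Lemma \ref{lem20} then gives $\max_{0\le t\le t_0}\|D\tilde\phi-D\phi\|_\infty\ll 1$ for $N$ sufficiently large, so $|(D\tilde\phi(t_0,x))^{-T}e|\gtrsim M/20$ on $B(x_\ast,\delta)$. Using $\tilde\omega(t_0,\tilde\phi(t_0,x))=\tilde\omega_0(x)$ and the symplectic change of variables,
\begin{equation*}
\|\nabla\tilde\omega(t_0)\|_{L^2}^2=\int\bigl|(D\tilde\phi(t_0,x))^{-T}\bigl(\nabla\omega_0(x)+\nabla\psi_0(x)\bigr)\bigr|^2\,dx.
\end{equation*}
The leading part of $\nabla\psi_0$ is $c_0\,M^{-1/2}\cos(Nx\cdot e)\,e\,g(x)$; with $\cos^2=\tfrac12(1+\cos(2Nx\cdot e))$ and Riemann--Lebesgue handling the oscillatory remainder, the pure $\psi_0$ self-contribution is at least $\tfrac{c_0^2}{2}M^{-1}\int g^2|(D\tilde\phi(t_0,\cdot))^{-T}e|^2\,dx-O(1/N)\gtrsim c_0^2\,M$, while the cross term with $\nabla\omega_0$ is $O(M^{1/2}/N)$ by one integration by parts against the phase. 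Hence $\|\tilde\omega(t_0)\|_{\dot H^1}\gtrsim c_0\,M^{1/2}>M^{1/3}$ for $M$ large, establishing \eqref{prop20_6}. Property \eqref{prop20_7} follows by finite speed of propagation: $\operatorname{supp}(\tilde\omega_0)\subset B(z_0,R_0/2)$, $\|\tilde u\|_\infty$ is bounded on $[0,1]$ by a constant comparable to $\|u\|_\infty$ via closeness, so the perturbed support cannot escape $B(z_0,R_0)$.

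The main obstacle is the flow-comparison step: $v=\Delta^{-1}\nabla^\perp\eta$ is a Riesz-type transform of a highly oscillatory $\eta$, and Riesz transforms are not bounded on $L^\infty$, so the naive estimate fails. The saving is exactly Lemma \ref{lem30p}, which interpolates $L^2$ against Lipschitz control and extracts a factor $N^{-1/2}$ from the oscillation frequency; this is what allows the constants depending on $\omega_0$ and $M$ to be absorbed by taking $N$ large.
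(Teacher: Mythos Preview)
Your overall strategy matches the paper's: perturb by a high-frequency bump at the point of large deformation, compare flow maps via Lemma~\ref{lem20}, and read off $\dot H^1$ inflation from the Lagrangian identity $\|\nabla\tilde\omega(t_0)\|_2^2 = \int|(D\tilde\phi(t_0,x))^{-T}\nabla\tilde\omega_0(x)|^2\,dx$. The execution, however, differs from the paper in two places and has one genuine gap.

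\textbf{The dichotomy versus the cross term.} The paper opens with a case split: if already $\|\omega(t_0)\|_{\dot H^1} > M^{1/3}$ take $\tilde\omega = \omega$; otherwise use precisely that bound together with the Lagrangian identity for the \emph{unperturbed} solution to get $\int|\nabla\omega_0\cdot\nabla^\perp\phi_2|^2 \le \|\omega(t_0)\|_{\dot H^1}^2 \le M^{2/3}$, which is then subtracted via $|a+b|^2\ge\tfrac12|a|^2-|b|^2$. You instead expand the square and kill the cross term by one integration by parts against the phase. This is a legitimate alternative since $N$ is free, but your stated bound $O(M^{1/2}/N)$ is not right: the integrand carries two factors of $(D\tilde\phi)^{-T}$, each of size $\sim M$, and the IBP produces a factor of $D^2\tilde\phi$, so the cross term is only $O_{\omega,M}(1/N)$ with implicit constants that may scale like $M^{5/2}$ or worse. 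The paper's dichotomy avoids this bookkeeping entirely.

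\textbf{Flow comparison.} The paper runs a $W^{1,4}$ energy estimate on $\tilde\omega$ (closing via $\|\mathcal R_{ij}\tilde\omega\|_\infty \lesssim \log(10+\|\nabla\tilde\omega\|_4^4)$), obtains $\|\eta\|_2 = O(k^{-1})$, then interpolates. Your route through Lemma~\ref{lem30p} needs $\|\nabla\eta\|_\infty$ bounded uniformly in $N$; this is not a ``bootstrap'' on $\eta$ but follows from the standard double-exponential $W^{1,\infty}$ propagation applied to $\omega$ and $\tilde\omega$ \emph{separately}, using that $\|\nabla\tilde\omega_0\|_\infty \le \|\nabla\omega_0\|_\infty + c_0 M^{-1/2}$ is uniform in $N$. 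You should make this explicit; a direct bootstrap on $\eta$ in $W^{1,\infty}$ does not obviously close on $[0,1]$.

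\textbf{The $\dot H^{-1}$ gap.} Your claim $\|\psi_0\|_{\dot H^{-1}} \lesssim M^{-1/2}/N^2$ ``by two integrations by parts'' is false as written: in $\mathbb R^2$, finiteness of $\||\nabla|^{-1}\psi_0\|_2$ requires $\hat\psi_0(0)=\int\psi_0=0$ \emph{exactly}, not merely $O(N^{-m})$, since $\int_{|\xi|<1}|\xi|^{-2}\,d\xi$ diverges. The paper handles this by symmetrizing the envelope $b$ so that the perturbation $\beta$ is odd in $x_1$, forcing $\hat\beta(0)=0$. You need an analogous device (e.g.\ center the phase at $x_\ast$ and take $g$ even about $x_\ast$ so that $\psi_0$ is odd in the $e$-direction about $x_\ast$).
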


\begin{proof}[Proof of Proposition \ref{prop20}]
To simplify the later computation, we begin with a general derivation.
Let $W=W(t,x)$ be a smooth solution to the Euler equation
\begin{align*}
\begin{cases}
\partial_t W + \Delta^{-1} \nabla^{\perp} W \cdot \nabla W=0, \\
W\Bigr|_{t=0} =f.
\end{cases}
\end{align*}
Denote the associated (forward) characteristics as $\Phi=\Phi(t,x)$ which solves
\begin{align*}
\begin{cases}
\partial_t  \Phi(t,x) = (\Delta^{-1} \nabla^{\perp} W)(t,\Phi(t,x)),\\
\Phi(0,x)=x.
\end{cases}
\end{align*}

Let $\tilde \Phi(t,x)$ be the inverse map of $\Phi(t,x)$. Then
\begin{align*}
\tilde \Phi (t,\Phi(t,x)) = x.
\end{align*}

Differentiating the above gives us
\begin{align*}
(D\tilde \Phi)(t,\Phi(t,x)) ( D \Phi)(t,x)= Id
\end{align*}
or
\begin{align}
(D \tilde \Phi)(t,\Phi(t,x)) = (D\Phi(t,x))^{-1}, \label{prop20_8}
\end{align}
where $(D\Phi(t,x))^{-1}$ is the usual matrix inverse.

Since $\Phi(t)$ is a smooth symplectic map with $\Phi(0,x)=x$, we have $\det(D\Phi)=1$.
Denote $\Phi(t,x)=(\Phi_1(t,x),\Phi_2(t,x))$ and recall
\begin{align*}
D\Phi= \begin{pmatrix}
\frac{\partial \Phi_1}{\partial x_1}
\quad \frac{\partial \Phi_1}{\partial x_2} \\
\frac{\partial \Phi_2}{\partial x_1} \quad \frac{\partial \Phi_2}{\partial x_2}
\end{pmatrix}.
\end{align*}
Then
\begin{align} \label{prop20_9}
(D\Phi)^{-1} =
\begin{pmatrix}
\frac{\partial \Phi_2}{\partial x_2}
\quad -\frac{\partial \Phi_1}{\partial x_2} \\
- \frac{\partial \Phi_2}{\partial x_1}
\quad \frac{\partial \Phi_1}{\partial x_1}
\end{pmatrix}.
\end{align}

Since $W(t,x)=f(\tilde \Phi(t,x))$, we get
\begin{align}
\int_{\mathbb R^2} |(DW)(t,x)|^2 dx & =
\int_{\mathbb R^2} | (Df)(\tilde \Phi(t,x)) (D \tilde \Phi)(t,x)|^2 dx \notag \\
& = \int_{\mathbb R^2} |(Df)(x) (D\Phi(t,x))^{-1} |^2 dx, \label{prop20_10}
\end{align}
where we have performed a measure-preserving change of variables $x\to \Phi(t,x)$ and
used \eqref{prop20_8}.

By \eqref{prop20_9}, we can then write \eqref{prop20_10} as
\begin{align}
\| W(t,\cdot)\|_{\dot H^1}^2 &
= \int_{\mathbb R^2} |(\nabla f)(x) \cdot (\nabla^{\perp} \Phi_2)(t,x) |^2 dx \notag \\
& \qquad + \int_{\mathbb R^2} | (\nabla f )(x) \cdot (\nabla^{\perp} \Phi_1)(t,x)|^2 dx.
\label{prop20_11}
\end{align}
We shall need this formula below.

Now discuss two cases.

\texttt{Case 1}: $\| \omega(t_0,\cdot)\|_{\dot H^1} >M^{\frac 13}$. In this case
we just set $\tilde \omega = \omega$ and no work is needed.

\texttt{Case 2}: $ \| \omega(t_0, \cdot) \|_{\dot H^1} \le M^{\frac 13}$. It is this
case which requires a nontrivial analysis. We shall use a perturbation argument.

By \eqref{prop20_2}, we can find $x_*$ such that
\begin{align*}
\|({D \phi})(t_0,x_*)\|_{\infty} > M.
\end{align*}
Here for a matrix $A=(a_{ij})$, $\| A\|_{\infty} := \max
{|a_{ij}|}$.

Denote $\phi(t_0,x)=(\phi_1(t_0,x),\phi_2(t_0,x))$. Without loss of generality, we may assume
one of the entries of $(D\phi)(t_0,x_*)$ is at least $M$, namely
\begin{align*}
\left| \frac {\partial \phi_2}{\partial x_2}(t_0,x_*) \right| > M.
\end{align*}

By continuity we can find $\delta>0$ sufficiently small such that $\{ x:\,
|x-x_*|\le 2\delta\} \subset B(z_0,R_0)$ and
\begin{align}
\left| \frac{\partial \phi_2}{\partial x_2} (t_0, x) \right|> M,
\quad \forall\, |x-x_*|\le 2\delta. \label{prop20_12}
\end{align}

Now let $\Phi_0 \in C_c^{\infty} (\mathbb R^2)$ be a  radial bump function such
that $0\le \Phi_0(x) \le 1$ for all $x\in \mathbb R^2$, $\Phi_0(x)=1$ for $|x| \le 1$ and $\Phi_0(x)=0$ for $|x| \ge 2$.
Obviously
\begin{align}
\sqrt {\pi} \le \| \Phi_0\|_2 \le 2 \sqrt{\pi}. \label{prop20_13}
\end{align}

Depending on the location of $x_*$, we need to shrink $\delta>0$ slightly further if necessary and define an even function
$b \in C_c^{\infty} (\mathbb R^2)$ as follows. If $x_*=(0,0)$, we just define
\begin{align*}
 b(x) = \frac 1 {\delta} \Phi_0\Bigl(\frac {x}{\delta} \Bigr).
\end{align*}
If $x_*=(a_*,0)$ for some $a_*\ne 0$, then we shrink $\delta>0$ such that $\delta\ll |a_*|$ and define
\begin{align*}
 b(x) = \frac 1 {\delta} \left( \Phi_0\Bigl( \frac{x-x_*} {\delta} \Bigr) + \Phi_0\Bigl( \frac{x+x_*} {\delta} \Bigr) \right).
\end{align*}
The case $x_*=(0,a_*)$ for some $a_*\ne 0$ is similar. Now if $x_*=(a_*,c_*)$ for some $a_*\ne 0$ and $c_*\ne 0$, then
we take $\delta\ll \min\{|a_*|,\,|c_*|\}$ and define
\begin{align*}
 b(x) = \frac 1 {\delta} \sum_{\epsilon_1,\epsilon_2=\pm 1} \Phi_0\Bigl( \frac{x -(\epsilon_1 a_*,\epsilon_2 c_*)} {\delta} \Bigr).
\end{align*}
Easy to check that in all cases the function $b(x)$ defined above is even in $x_1$, $x_2$, i.e.
\begin{align*}
 b(x_1,x_2) = b(-x_1,x_2) = b(x_1,-x_2),\qquad\forall\, x=(x_1,x_2) \in \mathbb R^2.
\end{align*}

Now introduce the perturbation
\begin{align}
\beta(x) =\frac 1 {10k} \sin(kx_1) \cdot b(x) \cdot
\frac 1 {M^{\frac 12}} , \label{prop20_15}
\end{align}
and define
\begin{align}
\tilde \omega_0(x) =\omega_0(x) +\beta(x). \label{prop20_16}
\end{align}

We now show that if the parameter $k>0$ is taken sufficiently large then the corresponding solution
$\tilde \omega$ will satisfy all the requirements. In the rest of this proof, to simplify the presentation,
we shall use the notation $X=O(\frac 1 {k^{\alpha}})$ if the quantity $X$ obeys the bound
$X\le C_1 \cdot \frac 1 {k^{\alpha}}$ and the constant $C_1$ can depend on $(\omega, M,\Phi_0,\delta, \phi, R_0)$.

We first check \eqref{prop20_3}--\eqref{prop20_5}.

Obviously by \eqref{prop20_15}, if $k$ is sufficiently large, then
\begin{align*}
\| \beta\|_{L^1} \le \frac 1 k \cdot \frac 1 {\sqrt M} \| b\|_{L^1} \le \| \omega_0\|_{L^1},
\end{align*}
Similarly we can take $k$ large such that
\begin{align*}
\| \beta \|_{L^{\infty}} \le \| \omega_0\|_{L^{\infty}}.
\end{align*}
For the $\dot H^{-1}$-norm, note that $\beta$ is an odd function and $\hat \beta(0) =0$. Thus
\begin{align*}
 \| |\nabla|^{-1} \beta \|_2 &\lesssim \| \widehat{x \beta} \|_2 + \| \beta\|_2 \notag \\
 & =O(k^{-1})\le \|  \omega_0 \|_{\dot H^{-1}}
\end{align*}
if $k$ is taken sufficiently large.

For the $\dot H^1$-norm, by \eqref{prop20_13} we have
\begin{align*}
\| \nabla \beta\|_{L^2}^2 & \le O\left(\frac 1 {k^2} \right) + \frac 1 M \cdot 10^{-2}\int
b^2(x) \cos^2 k x_1 dx \notag \\
&  \le O\left(\frac 1 {k^2} \right) + \frac 1 {2M} \cdot 10^{-2}\int
b^2(x) dx \notag \\
& \le O \left(\frac 1 {k^2} \right) + \frac 1 {2M} \cdot 10^{-2} \cdot 4  \cdot 4\pi < \frac 1 M,
\end{align*}
where we again take $k$ sufficiently large. Consequently the bound \eqref{prop20_5} follows. It is also
not difficult to check that \eqref{prop20_7} can be fulfilled by taking $k$ large.

It remains to show \eqref{prop20_6}. We shall proceed in several steps.

First we shall show
\begin{align}
\max_{0\le t \le 1} \| \nabla \tilde \omega(t) \|_{L^4} \lesssim 1. \label{prop20_17}
\end{align}
Here the implied constant is independent of $k$ (but is allowed to depend on other parameters).

By a standard energy estimate, we have
\begin{align*}
\frac d {dt} \Bigl( \| \nabla \tilde \omega (t) \|_4^4 \Bigr)
& \lesssim \| \mathcal R_{ij} \tilde \omega(t) \|_{\infty}
\cdot \| \nabla \tilde \omega (t) \|_4^4 \notag \\
& \lesssim \log(10+\| \tilde \omega\|_2^2 + \| \nabla \tilde \omega \|_4^4 )
\cdot \| \nabla \tilde \omega\|_4^4.
\end{align*}

A Gronwall in time argument then yields \eqref{prop20_17} (by \eqref{prop20_15}, it is easy
to check that the initial data $\tilde \omega_0$ satisfies \eqref{prop20_17}).

Set $\eta=\omega-\tilde \omega$. Then
\begin{align*}
\partial_t \eta + \Delta^{-1} \nabla^{\perp} \omega \cdot \nabla \eta
+ \Delta^{-1} \nabla^{\perp} \eta \cdot \nabla \tilde \omega =0.
\end{align*}

Therefore noting that $\operatorname{supp} (\eta(t) ) \subset B(z_0,R_0)$ for any $0\le t\le 1$, we have
\begin{align*}
\frac d {dt} ( \| \eta\|_2^2)
& \lesssim \| \Delta^{-1} \nabla^{\perp} \eta \|_4 \cdot \| \nabla \tilde \omega \|_4
\cdot \| \eta\|_2 \notag \\
& \lesssim \| \eta\|_2^2 \cdot \| \nabla \tilde \omega \|_4.
\end{align*}

Integrating in time then gives
\begin{align}
\max_{0\le t \le 1} \| \eta(t) \|_2 = O\left(k^{-1} \right). \label{prop20_18}
\end{align}

Interpolating the bound \eqref{prop20_18} with \eqref{prop20_17} (note that $\omega$ also satisfies the same
bound \eqref{prop20_17}), we obtain
\begin{align}
\max_{0\le t \le 1}
\| D\Delta^{-1} \nabla^{\perp} (\tilde \omega(t) - \omega(t) ) \|_{\infty}
+ \max_{0\le t \le 1}
\| \Delta^{-1} \nabla^{\perp} (\tilde \omega(t) -\omega(t)) \|_{\infty}
=O(\frac 1 {k^{\alpha}}), \label{prop20_20}
\end{align}
where $\alpha>0$ is some absolute constant.

Denote the forward characteristic lines  associated with $\tilde \omega$ as $\tilde \phi(t,x)$
which solves
\begin{align*}
\begin{cases}
\partial_t \tilde \phi(t,x) = (\Delta^{-1} \nabla^{\perp}\tilde \omega )(t, \tilde \phi(t,x) ), \\
\tilde \phi(0,x)=x.
\end{cases}
\end{align*}

By Lemma \ref{lem20} and \eqref{prop20_20}, we have
\begin{align*}
\max_{0 \le t \le 1}
\Bigl( \|\tilde \phi(t,\cdot) -\phi(t,\cdot)\|_{\infty}
+ \| (D\phi) (t,\cdot) -
(D\tilde \phi)(t,\cdot) \|_{\infty} \Bigr)
=O(\frac 1 {k^{\alpha}}).
\end{align*}

Write $\tilde \phi(t,x)=(\tilde \phi_1(t,x),\tilde \phi_2(t,x))$. By
\eqref{prop20_11}, we get
\begin{align}
\| \tilde \omega(t_0,\cdot)\|_{\dot H^1}^2
& \ge \int | (\nabla \tilde \omega_0)(x) \cdot \nabla^{\perp}
\tilde \phi_2(t_0,x)|^2 dx \notag \\
& \ge \int | \nabla \tilde \omega_0 (x) \cdot \nabla^{\perp} \phi_2(t_0,x)|^2 dx
-O( \frac 1 {k^{\alpha}} ) \notag \\
& \ge \frac 1 2 \int |\nabla \beta(x) \cdot \nabla^{\perp}
\phi_2(t_0,x)|^2 dx - \int |\nabla \omega_0(x) \cdot \nabla^{\perp}
\phi_2(t_0,x)|^2 dx -O( \frac 1 {k^{\alpha}} ), \label{prop20_22}
\end{align}
where in the last step we used the simple inequality
\begin{align*}
 |a+b|^2 \ge \frac 12 |a|^2 - |b|^2, \qquad \forall\, a,b\in \mathbb R^d.
\end{align*}

Since we are in Case 2, we have $\| \omega(t_0, \cdot )\|_{\dot H^1} \le M^{\frac 13}$.
By \eqref{prop20_11}, we get
\begin{align}
\int|\nabla \omega_0(x) \cdot \nabla^{\perp} \phi_2(t_0,x)|^2 dx
\le \| \omega(t_0,\cdot)\|_{\dot H^1}^2 \le M^{\frac 23}. \label{prop20_24}
\end{align}

By our choice of the function $\beta$  and
\eqref{prop20_12}, we have
\begin{align}
& \frac 12 \int_{\mathbb R^2} |\nabla \beta(x) \cdot \nabla^{\perp} \phi_2(t_0,x)|^2 dx \notag \\
\ge & \frac 12 \int_{\mathbb R^2} \left| \frac{\cos (kx_1) b(x)}{10\sqrt M}
\cdot \frac{\partial \phi_2}{\partial x_2} (t_0,x) \right|^2 dx - O\left(k^{-2} \right) \notag \\
\ge & \frac 12 10^{-2} \cdot M \cdot
\int b^2(x) \cos^2(k x_1) dx  - O\left(k^{-2} \right) \notag \\
\ge & \frac {\pi} 4 \cdot 10^{-2} \cdot M  - O\left(k^{-2} \right). \label{prop20_26}
\end{align}

Plugging \eqref{prop20_24} and \eqref{prop20_26} into \eqref{prop20_22}, we get
\begin{align*}
\| \tilde \omega(t_0,\cdot) \|_{\dot H^1}^2
& \ge \frac{\pi }4 10^{-2} M -M^{\frac 23}- O\left(k^{-2}\right) - O\left(k^{-\alpha} \right) \notag \\
& \ge 0.7 \cdot 10^{-2} M-M^{\frac 23},
\end{align*}
if $k$ is taken sufficiently large. Clearly \eqref{prop20_6} follows.
\end{proof}

\section{Local to global: gluing the patches}

In this section we prove a general proposition which allows us to glue the local solutions into
a global one. We begin with some auxiliary lemmas.

To state the next lemma, we need to fix a sufficiently large constant $A_1>1$ such that
\begin{align}
\| \Delta^{-1} \nabla^{\perp} f \|_{\infty}
\le A_1 \cdot ( \| f\|_1 + \|f \|_{\infty}),
\quad\forall\, f \in L^1(\mathbb R^2) \cap L^{\infty}(\mathbb R^2). \label{lem10_1}
\end{align}
Note that $A_1$ is an absolute constant which does not depend on any parameters.

\begin{lem} \label{lem10}
Consider the Euler equation on $\mathbb R^2$:
\begin{align}
\begin{cases}
\partial_t \omega + \Delta^{-1} \nabla^{\perp} \omega \cdot \nabla \omega =0,
\quad 0<t\le 1, \\
\omega\Bigr|_{t=0} =\omega_0 =f+g.
\end{cases}
\label{lem10_2}
\end{align}
Assume $f\in H^k \cap L^1$ for some $k\ge 2$, $g\in H^2 \cap L^1$ and
\begin{align}
& \| \omega_0 \|_{L^1} + \| \omega_0\|_{L^{\infty}} \le C_1<\infty, \label{lem10_3} \\
& d(\operatorname{supp}(f), \, \operatorname{supp}(g) ) \ge 100 A_1 C_1>0, \label{lem10_4}
\end{align}
where $A_1$ is the same constant as in \eqref{lem10_1}.

Then for any $0\le t\le 1$, the following hold true:
\begin{enumerate}
\item The solution $\omega(t)$ to \eqref{lem10_2} can be decomposed as
\begin{align}
\omega(t) = \omega_f (t) + \omega_g(t), \label{lem10_5}
\end{align}
where $\omega_f(0)=f$, $\omega_g(0)=g$, and (see \eqref{def_ball_1})
\begin{align*}
&\operatorname{supp}(\omega_{f} (t) ) \subset B( \operatorname{supp}(f), \, 2A_1C_1), \\
&\operatorname{supp} (\omega_{g}(t) ) \subset B(\operatorname{supp}(g), 2A_1 C_1).
\end{align*}

\item The Sobolev norm of $\omega_f(t)$ can be bounded in terms of $\|f\|_{H^k}$ and $C_1$ only:
\begin{align}
\max_{0\le t \le 1}
\| \omega_f (t) \|_{H^k} \le C(\| f\|_{H^k}, C_1)<\infty. \label{lem10_6}
\end{align}

\end{enumerate}

\end{lem}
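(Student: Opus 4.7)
The plan is to exploit the Lagrangian structure of 2D Euler: vorticity is passively transported by the divergence-free velocity, so one can split the initial data at $t=0$ and follow each piece along the common flow. The separation condition \eqref{lem10_4} will ensure that the two resulting patches remain disjoint on $[0,1]$, which in turn reduces the $H^k$ estimate for $\omega_f$ to a standard energy estimate in which the contribution from the far-away patch $\omega_g$ is a smooth perturbative drift.

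\textbf{Step 1 (decomposition and support inclusions).} Under the hypotheses $\omega_0 \in L^1\cap L^\infty$, the solution of \eqref{lem10_2} is unique by Yudovich theory, and the Lagrangian flow $\phi=\phi(t,x)$ solving $\partial_t\phi = u(t,\phi)$, $\phi(0,x)=x$, with $u=\Delta^{-1}\nabla^\perp\omega$, exists globally. Define
\[
 \omega_f(t,y):=f(\phi^{-1}(t,y)),\qquad \omega_g(t,y):=g(\phi^{-1}(t,y)).
\]
Then $\omega(t)=\omega_f(t)+\omega_g(t)$ by transport, and each piece solves the same transport equation with velocity $u$. Since $\|\omega(t)\|_{L^1}$ and $\|\omega(t)\|_{L^\infty}$ are conserved and bounded by $C_1$, the bound \eqref{lem10_1} yields $\|u(t)\|_\infty\le A_1C_1$, so $|\phi(t,x)-x|\le A_1 C_1$ on $[0,1]$. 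This immediately gives the support inclusions in part (1), and combined with \eqref{lem10_4},
\[
 d\bigl(\operatorname{supp}(\omega_f(t)),\,\operatorname{supp}(\omega_g(t))\bigr)\ge 96\,A_1C_1,\qquad\forall\, t\in[0,1].
\]

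\textbf{Step 2 ($H^k$ bound via separation).} Split $u=u_f+u_g$ with $u_\bullet=\Delta^{-1}\nabla^\perp\omega_\bullet$. Fix an open neighborhood $N$ of $\bigcup_{t\in[0,1]}\operatorname{supp}(\omega_f(t))$ whose closure stays disjoint from $\bigcup_{t\in[0,1]}\operatorname{supp}(\omega_g(t))$ (possible by Step~1). Because the Biot--Savart kernel $K(y)=\frac{1}{2\pi}y^\perp/|y|^2$ is smooth off the origin, the representation $u_g(t,\cdot)=K\ast\omega_g(t)$ gives
\[
 \|u_g\|_{C^m([0,1]\times N)}\le C_m\bigl(\|g\|_1+\|g\|_\infty\bigr)\le C_m\cdot C_1,\qquad\forall\, m\ge 0.
\]
Thus on $N$ the equation for $\omega_f$ reads $\partial_t\omega_f+u_f\cdot\nabla\omega_f+u_g\cdot\nabla\omega_f=0$ with $u_g$ acting as a smooth external drift. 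Applying $J^k=(1-\Delta)^{k/2}$, using the Kato--Ponce commutator estimate \eqref{commutator_1}, the elliptic gain $\|u_f\|_{H^k}\lesssim \|\omega_f\|_{H^{k-1}}+\|\omega_f\|_{2}$, the conservation $\|\omega_f\|_\infty\le C_1$, and the logarithmic Biot--Savart bound $\|\nabla u_f\|_\infty\lesssim C_1\log(e+\|\omega_f\|_{H^k})+\|\omega_f\|_2$, one obtains
\[
 \frac{d}{dt}\|\omega_f(t)\|_{H^k}^2\le C(C_1,k)\,\|\omega_f(t)\|_{H^k}^2\log\bigl(e+\|\omega_f(t)\|_{H^k}\bigr),
\]
which Gronwall integrates to the (double-exponential) bound \eqref{lem10_6}.

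\textbf{Main obstacle.} The delicate point is the borderline case $k=2$ in two space dimensions, where $\nabla\omega_f$ just fails to belong to $L^\infty$ by Sobolev embedding, so the Kato--Ponce commutator \eqref{commutator_1} in its raw form is borderline. I would handle this either by first approximating $f$ by smoother data so the a priori estimate closes at $k\ge 3$ and then passing to the $H^2$ limit using Yudovich uniqueness and weak continuity, or by replacing the $\|\nabla\omega_f\|_\infty$ factor in the commutator by $\|\omega_f\|_\infty\log(e+\|\omega_f\|_{H^2})$ via a Brezis--Gallouet-type inequality (exploiting the conservation of $\|\omega_f\|_\infty$). Everything else is bookkeeping around the separation of supports established in Step~1.
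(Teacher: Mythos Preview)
Your proposal is correct and follows essentially the same route as the paper: Lagrangian splitting via the common flow, finite transport speed for the support inclusions, smoothness of $u_g$ on $\operatorname{supp}(\omega_f)$ from the Biot--Savart kernel decay, and a log-Gronwall $H^k$ energy estimate. The $k=2$ ``obstacle'' you flag dissolves in the paper's direct computation of $\tfrac{d}{dt}\|\Delta\omega_f\|_2^2$: after integration by parts the only potentially dangerous term $\Delta u_f\cdot\nabla\omega_f=\nabla^\perp\omega_f\cdot\nabla\omega_f$ vanishes identically, so one only needs $\|\nabla u_f\|_\infty\lesssim\|\mathcal R_{ij}\omega_f\|_\infty$ and the log-interpolation inequality, with no Kato--Ponce machinery required.
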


\begin{proof} [Proof of Lemma \ref{lem10}]
By \eqref{lem10_3} and \eqref{lem10_1}, we have
\begin{align*}
\max_{0\le t \le 1} \| \Delta^{-1} \nabla^{\perp} \omega(t) \|_{\infty} \le A_1 C_1.
\end{align*}

By the transport nature of the equation, the support of the solution $\omega(t)$ is enlarged at most
a distance $A_1 C_1$ from its original support in unit time. The decomposition \eqref{lem10_5} follows
easily from this observation and \eqref{lem10_4}. More precisely, $\omega_f$ and $\omega_g$ are solutions
to the following \emph{linear} equations:
\begin{align*}
 \begin{cases}
  \partial_t \omega_f + (u(t)\cdot \nabla) \omega_f =0, \\
  \omega_f \Bigr|_{t=0} =f;
 \end{cases}
\end{align*}
\begin{align*}
 \begin{cases}
  \partial_t \omega_g + (u(t)\cdot \nabla) \omega_g =0, \\
  \omega_f \Bigr|_{t=0} =g.
 \end{cases}
\end{align*}
Here $u=\Delta^{-1} \nabla^{\perp} \omega$. Note that $\omega_f(t)$
and $\omega_g(t)$ stay well separated for all $0\le t\le 1$:
\begin{align}
d(\operatorname{supp}(\omega_f (t)), \,  \operatorname{supp}(\omega_g(t)) ) \ge 90 A_1C_1>0. \label{lem10_7}
\end{align}

To show \eqref{lem10_6}, we note that the equation for $\omega_f (t)$ can be rewritten as
\begin{align}
\partial_t \omega_f + \Delta^{-1} \nabla^{\perp} \omega_f \cdot \nabla \omega_f
+ \Delta^{-1} \nabla^{\perp} \omega_g \cdot \nabla \omega_f =0.
\label{lem10_7a}
\end{align}

Note that for any multi-index $\alpha$, we have
\begin{align}
(\Delta^{-1} \nabla^{\perp} \partial^{\alpha} \omega_g )(x) =
\int_{\mathbb R^2} K(x-y)  (\partial^{\alpha} \omega_g )(y) dy,  \label{lem10_8}
\end{align}
where $K(\cdot)$ is the kernel function corresponding to the operator $\Delta^{-1} \nabla^{\perp}$.

By \eqref{lem10_7}, for any $x\in \operatorname{supp}(\omega_f(t))$, $y\in \operatorname{supp}(\omega_g(t))$, we have
$|x-y|\ge 90A_1 C_1$. Therefore we can introduce a smooth cut-off function $\chi$ on the kernel $K(\cdot)$ and rewrite
\eqref{lem10_8} as
\begin{align}
(\Delta^{-1} \nabla^{\perp} \partial^{\alpha} \omega_g)(x)
& = \int_{\mathbb R^2} K(x-y) \chi_{|x-y|\ge 80A_1C_1} (\partial^{\alpha} \omega_g)(y) dy \notag \\
& = \int_{\mathbb R^2} (-1)^{|\alpha|} \partial_y^{\alpha} \Bigl(
K(x-y) \chi_{|x-y|\ge 80 A_1C_1} \Bigr) \omega_g(y)dy \notag \\
& = \int_{\mathbb R^2} \tilde K_{\alpha} (x-y) \omega_g(y) dy, \label{lem10_9}
\end{align}
where the modified kernel $\tilde K_{\alpha}$ satisfies
\begin{align}
|\tilde K_{\alpha}(z)| \lesssim_{C_1,\alpha} (1+ |z|^2)^{-\frac 12},\quad \forall\, z \in \mathbb R^2. \label{lem10_10}
\end{align}

By using $L^1$ and $L^{\infty}$ conservation, we have
\begin{align}
\| \omega_f (t) \|_{L^1} + \| \omega_f(t) \|_{L^{\infty}} + \| \omega_g(t) \|_{L^1} +
\| \omega_g(t) \|_{L^{\infty}} \le C_1. \label{lem10_11}
\end{align}

Therefore by \eqref{lem10_9}, \eqref{lem10_10}, \eqref{lem10_11} and the Cauchy-Schwartz
inequality, we have
\begin{align}
\max_{0\le t \le 1} \max_{x \in \operatorname{supp}(\omega_f(t))}
| (\Delta^{-1} \nabla^{\perp} \partial^{\alpha} \omega_g )(t,x) |
\lesssim_{C_1,\alpha} 1. \label{lem10_12}
\end{align}

The estimate \eqref{lem10_12} shows that the drift term $\Delta^{-1} \nabla^{\perp} \omega_g$
in \eqref{lem10_7a} is arbitrarily smooth on the support of $\omega_f$. Therefore the estimate
\eqref{lem10_6} follows from the standard energy estimate. For the sake of completeness we sketch the
detail here for $k=2$. By \eqref{lem10_7a}, we have
\begin{align*}
&\frac d {dt} \Bigl( \| \Delta \omega_f (t) \|_2^2 \Bigr) \\
& \le \left |\int_{\mathbb R^2} \Delta (\Delta^{-1} \nabla^{\perp} \omega_f
\cdot \nabla \omega_f ) \Delta \omega_f dx \right|
+ \left| \int_{\mathbb R^2} \Delta(\Delta^{-1} \nabla^{\perp} \omega_g
\cdot \nabla \omega_f) \Delta \omega_f dx \right| \\
& \le \int_{\mathbb R^2}|\Delta^{-1} \nabla^{\perp} \partial \omega_f|
\cdot |\partial^2 \omega_f| \cdot |\Delta \omega_f|dx
+ \max_{\substack{x\in\operatorname{supp}(\omega_f(t))\\|\alpha|\le 2}}
|\Delta^{-1} \nabla^{\perp} \partial^{\alpha} \omega_g(x)|
\cdot \| \omega_f (t)\|_{H^2}^2 \notag \\
& \lesssim_{C_1} (1+\|  \mathcal R_{ij} \omega_f(t) \|_{\infty} )
\cdot \| \omega_f (t) \|_{H^2}^2,
\end{align*}
where $\mathcal R_{ij}$ denotes the Riesz transform. By the usual $\log$ interpolation inequality
and \eqref{lem10_11}, we have
\begin{align*}
\| \mathcal R_{ij} \omega_f \|_{\infty}
\lesssim_{C_1} \log(10+ \| \omega_f (t) \|_{H^2}^2 ).
\end{align*}
Therefore
\begin{align*}
\frac d {dt} \Bigl( \| \omega_f(t) \|_{H^2}^2 \Bigr)
& \lesssim_{C_1}
\log(10+\|\omega_f (t) \|_{H^2}^2) \cdot \| \omega_f(t) \|_{H^2}^2.
\end{align*}
A $\log$ Gronwall in time argument then yields \eqref{lem10_6}.

\end{proof}

\begin{lem} \label{lem11}
Let $\omega$ and $\tilde \omega$ be solutions to the Euler equations
\begin{align*}
\begin{cases}
\partial_t \omega + \Delta^{-1} \nabla^{\perp} \omega \cdot \nabla \omega=0, \quad 0<t\le 1,\\
\omega\Bigr|_{t=0} =\omega_0=f+g.
\end{cases}
\end{align*}
and
\begin{align*}
 \begin{cases}
  \partial_t \tilde \omega +\Delta^{-1} \nabla^{\perp} \tilde \omega \cdot \nabla \tilde \omega =0, \quad 0<t\le 1 \\
  \tilde \omega \Bigr|_{t=0}=f.
 \end{cases}
\end{align*}
Assume $f\in H^{3}\cap L^1$, $g\in H^2 \cap L^1$ and
\begin{align*}
 \| \omega_0\|_{L^1} +\| \omega_0\|_{L^{\infty}} \le C_1 <\infty.
\end{align*}
Assume also $f$ is compactly supported such that
\begin{align} \label{lem11_0}
 \text{Leb}(\operatorname{supp}(f) ) \le C_2<\infty.
\end{align}
Then for any $\epsilon>0$, there exists $R_{\epsilon} = R_{\epsilon} (\epsilon,\|f\|_{H^3}, C_1,C_2)>0$ such that
if
\begin{align*}
 d(\operatorname{supp}(f), \operatorname{supp}(g) ) \ge R_{\epsilon}>0,
\end{align*}
then for any $0<t\le 1$, the following hold true:
\begin{enumerate}
 \item $\omega(t)$ has the decomposition
 \begin{align}\label{lem11_1}
  \omega(t)=\omega_f(t) + \omega_g(t),
 \end{align}
where
\begin{align}
 &\operatorname{supp}(\omega_f (t) ) \subset B( \operatorname{supp}(f),2A_1C_1); \label{lem11_1a}\\
 &\operatorname{supp}(\omega_g(t) ) \subset B( \operatorname{supp}(g), 2A_1C_1) ; \notag \\
 & d(\operatorname{supp}(\omega_f(t), \, \operatorname{supp}(\omega_g(t) ) )\ge 100 A_1C_1. \notag
\end{align}
Here $A_1$ is the same constant in \eqref{lem10_1}.

\item The support of $\tilde \omega(t)$ also satisfies
\begin{align}
 \operatorname{supp} (\tilde \omega (t) ) \subset B(\operatorname{supp}(f), 2 A_1 C_1). \label{lem11_2}
\end{align}

\item $\omega_f(t)$ and $\tilde \omega(t)$ are close:
\begin{align}
 \max_{0\le t \le 1} \| \omega_f(t) -\tilde \omega(t) \|_{H^2} <\epsilon. \label{lem11_3}
\end{align}
\end{enumerate}

\end{lem}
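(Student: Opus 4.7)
The plan is to split the argument into three steps: apply Lemma \ref{lem10} to obtain the decomposition \eqref{lem11_1} together with \eqref{lem11_1a} and the analogue of \eqref{lem11_2} for $\tilde\omega$; derive the equation satisfied by the difference $\eta := \omega_f - \tilde\omega$; and then run an $H^2$ energy estimate whose only nontrivial piece is an ``interaction forcing'' made small by the far-field decay of the Biot--Savart kernel between well-separated supports.

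First, by requiring $R_\epsilon \ge 100A_1C_1$ the hypothesis \eqref{lem10_4} is satisfied, so Lemma \ref{lem10} yields the decomposition \eqref{lem11_1}, the support estimate \eqref{lem11_1a}, and the uniform bound $\max_{0\le t\le 1}\|\omega_f(t)\|_{H^3}\le C(\|f\|_{H^3},C_1)$. Since $f$ and $g$ have disjoint supports, $\|f\|_{L^1}+\|f\|_{L^\infty}\le C_1$, so applying Lemma \ref{lem10} to $\tilde\omega$ with the role of $g$ played by $0$ gives \eqref{lem11_2} together with $\max_{0\le t\le 1}\|\tilde\omega(t)\|_{H^3}\le C(\|f\|_{H^3},C_1)$.

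Second, using the equation for $\omega_f$ derived in the proof of Lemma \ref{lem10},
\[
\partial_t\omega_f+\Delta^{-1}\nabla^\perp\omega_f\cdot\nabla\omega_f+\Delta^{-1}\nabla^\perp\omega_g\cdot\nabla\omega_f=0,
\]
and subtracting the Euler equation for $\tilde\omega$, the standard symmetrization produces
\[
\partial_t\eta+\Delta^{-1}\nabla^\perp\tilde\omega\cdot\nabla\eta+\Delta^{-1}\nabla^\perp\eta\cdot\nabla\omega_f+\Delta^{-1}\nabla^\perp\omega_g\cdot\nabla\omega_f=0,
\]
with $\eta(0)=0$. The last term is the interaction forcing; its smallness in $H^2$ drives the whole argument.

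Third, I run a standard $H^2$ energy estimate on $\eta$. The first two terms on the left contribute a log-Gronwall factor of the form $\log(10+\|\eta\|_{H^2}^2)\|\eta\|_{H^2}^2$ with constants depending on $\|\tilde\omega\|_{H^3}$ and $\|\omega_f\|_{H^3}$, both finite by Step~1. For the forcing, I exploit the separation: since the supports of $\omega_f(t)$ and $\omega_g(t)$ are at distance at least $R_\epsilon-4A_1C_1$, I repeat the truncated-kernel integration-by-parts trick of \eqref{lem10_9}--\eqref{lem10_10}, this time tracking the constants in $R_\epsilon$ rather than absorbing them. Using only the $L^1$ bound on $\omega_g$, this yields
\[
\max_{x\in\operatorname{supp}(\omega_f(t))}|\partial^\alpha(\Delta^{-1}\nabla^\perp\omega_g)(t,x)|\lesssim_{\alpha,C_1}R_\epsilon^{-1-|\alpha|},\qquad|\alpha|\le 2.
\]
Combined with Leibniz, the $H^3$ bound on $\omega_f$, and the fact that $\nabla\omega_f(t)$ is supported in a set of Lebesgue measure controlled by $C_1$ and $C_2$ (via \eqref{lem11_0}), this gives $\|\Delta^{-1}\nabla^\perp\omega_g\cdot\nabla\omega_f\|_{H^2}\le C(\|f\|_{H^3},C_1,C_2)\,R_\epsilon^{-1}$. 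A log-Gronwall inequality then bounds $\max_{0\le t\le 1}\|\eta(t)\|_{H^2}$ by $C(\|f\|_{H^3},C_1,C_2)\,R_\epsilon^{-1}$, and choosing $R_\epsilon$ sufficiently large yields \eqref{lem11_3}. The main technical point, and the only one not already treated in Lemma \ref{lem10}, is tracking the $R_\epsilon$ dependence in the Biot--Savart far-field estimate up to two derivatives; everything else is a routine variant of the 2D Euler energy method.
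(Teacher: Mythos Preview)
Your argument is correct but takes a genuinely different route from the paper. You run a direct $H^2$ energy estimate on $\eta = \omega_f - \tilde\omega$, using the far-field kernel decay to bound the forcing $\Delta^{-1}\nabla^\perp\omega_g\cdot\nabla\omega_f$ in $H^2$ by $O(R_\epsilon^{-1})$, and then close via Gronwall with coefficients controlled by the $H^3$ norms of $\omega_f$ and $\tilde\omega$. The paper instead runs only an $L^2$ energy estimate on $\eta$: it bounds $|(\Delta^{-1}\nabla^\perp\omega_g)(t,x)| \lesssim R_\epsilon^{-1/2}\|\omega_g\|_{4/3}$ on $\operatorname{supp}(\omega_f)$, uses $\|\Delta^{-1}\nabla^\perp\eta\|_3 \lesssim \|\eta\|_{6/5} \lesssim_{C_1,C_2}\|\eta\|_2$ (this is where the compact-support hypothesis \eqref{lem11_0} actually enters), obtains $\max_{0\le t\le 1}\|\eta(t)\|_2 \lesssim R_\epsilon^{-1/2}$, and then \emph{interpolates} this with the uniform $H^3$ bound $\|\eta\|_{H^3}\le C_3$ already supplied by Lemma~\ref{lem10} to reach \eqref{lem11_3}. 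The paper's route is shorter because the $L^2$ estimate has essentially no commutator terms to handle and the interpolation step is free; your route is more self-contained in that it never leaves $H^2$, at the cost of expanding and estimating all the Leibniz pieces of $\Delta(v_\eta\cdot\nabla\omega_f)$. One small correction: since the transport and reaction coefficients in your $H^2$ estimate are governed by the fixed $H^3$ norms of $\omega_f$ and $\tilde\omega$ (in particular $\|\mathcal{R}_{ij}\tilde\omega\|_\infty$ is bounded a priori), the closure is ordinary linear Gronwall, not log-Gronwall; the log factor you write is unnecessary here.
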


\begin{proof}[Proof of Lemma \ref{lem11}]
 Note that \eqref{lem11_1} and \eqref{lem11_2} follows directly from Lemma \ref{lem10}: we just need to take
 $R_{\epsilon} \ge 100 A_1 C_1$. By Lemma \ref{lem10}, we have
 \begin{align}
  &\max_{0\le t \le 1} \| \omega_f(t) - \tilde \omega (t) \|_{H^3} \notag \\
  \le &\; \max_{0\le t \le 1} \| \omega_f(t) \|_{H^3} + \max_{0\le t \le 1} \| \tilde \omega (t)\|_{H^3} \notag \\
  \le & C_3 = C_3(\|f\|_{H^3}, C_1). \label{lem11_4}
 \end{align}

Set $\eta(t) = \omega_f(t) -\tilde \omega (t)$. Then by \eqref{lem10_7a}, we have
\begin{align*}
 \begin{cases}
  \partial_t \eta + \Delta^{-1} \nabla^{\perp} \eta \cdot \nabla \omega_f
  + \Delta^{-1} \nabla^{\perp} \tilde \omega \cdot \nabla \eta + \Delta^{-1} \nabla^{\perp} \omega_g \cdot \nabla \omega_f =0,
  \quad 0<t\le 1, \\
  \eta(0)=0.
 \end{cases}
\end{align*}
For $x \in \operatorname{supp} (\omega_f(t))$, we have
\begin{align*}
 | (\Delta^{-1} \nabla^{\perp} \omega_g) (t,x)|
 & \lesssim \int_{|x-y| \ge \frac 12 R_{\epsilon}} \frac 1 {|x-y|} |\omega_g (t,y) |dy \notag \\
 & \lesssim R_{\epsilon}^{-\frac 12} \cdot \| \omega_g \|_{\frac 43} \lesssim R_{\epsilon}^{-\frac 12} \cdot C_1.
\end{align*}

Therefore
\begin{align}
 & \frac{d }{dt} \Bigl( \| \eta (t) \|_2^2 \Bigr) \notag \\
 \lesssim & \; \| \Delta^{-1} \nabla^{\perp} \eta \|_3 \cdot \| \eta\|_2 \cdot \| \nabla \omega_f \|_6
 + R_{\epsilon}^{-\frac 12} \cdot C_1 \cdot \| \nabla \omega_f \|_2 \cdot \| \eta\|_2. \label{lem11_5}
\end{align}

By Sobolev embedding, \eqref{lem11_0}, \eqref{lem11_1a}, \eqref{lem11_2} and H\"older, we have
\begin{align*}
 \| \Delta^{-1} \nabla^{\perp} \eta \|_3 & \lesssim \| \eta \|_{\frac 65} \notag \\
 & \lesssim_{C_1,C_2} \, \| \eta \|_2.
\end{align*}

By \eqref{lem11_4} and Sobolev embedding we have
\begin{align*}
 \| \nabla \omega_f\|_6 \lesssim C_3.
\end{align*}

Therefore integrating \eqref{lem11_5} in time, we obtain for some $C_4=C_4(C_1,C_2,C_3)>0$ that
\begin{align}
 \max_{0\le t \le 1} \| \eta(t) \|_2 \le R_{\epsilon}^{-\frac 12} \cdot C_4. \label{lem11_6}
\end{align}

The desired estimate \eqref{lem11_3} follows easily from interpolating \eqref{lem11_6}, \eqref{lem11_4} and
taking $R_{\epsilon}$ sufficiently large.
\end{proof}

\begin{prop}[Almost non-interacting patches] \label{prop10}
  Let $\{ \omega_j \}_{j=1}^{\infty}$ be a sequence of functions in $C_c^{\infty} (B(0,1))$ and satisfy the following condition:
  \begin{align} \label{prop10_1}
   \sum_{j=1}^{\infty} \| \omega_j \|_{H^1}^2 + \sum_{j=1}^{\infty} \| \omega_j \|_{L^1} + \sup_{j} \| \omega_j \|_{L^{\infty}}
   \le C_1 <\infty.
  \end{align}

Here we may assume $C_1>1$.

Then there exist centers $x_j \in \mathbb R^2$ whose mutual distance are sufficiently large (i.e. $|x_j-x_k|\gg 1$ if $j\ne k$) such
that the following hold:

\begin{enumerate}
 \item Take the initial data
 \begin{align*}
  \omega_0(x) = \sum_{j=1}^{\infty} \omega_j(x-x_j),
 \end{align*}
then $\omega_0 \in L^1 \cap L^{\infty} \cap H^1 \cap C^{\infty}$. Furthermore for any $j\ne k$
\begin{align} \label{prop10_2}
 B(x_j, 100A_1C_1) \cap B(x_k, 100 A_1C_1) = \varnothing.
\end{align}
Here $A_1$ is the same absolute constant as in \eqref{lem10_1}.

\item With $\omega_0$ as initial data, there exists a unique solution $\omega$ to the Euler equation
\begin{align*}
 \partial_t \omega + \Delta^{-1} \nabla^{\perp} \omega \cdot \nabla \omega =0
\end{align*}
on the time interval $[0,1]$ satisfying $\omega \in L^1\cap L^{\infty} \cap C^{\infty}$, $u=\Delta^{-1} \nabla^{\perp} \omega \in C^{\infty}$.
Moreover for any $0\le t \le1$,
\begin{align}
 \operatorname{supp} ( \omega(t,\cdot) ) \subset \bigcup_{j=1}^{\infty} B(x_j, 3A_1C_1). \label{prop10_3}
\end{align}

\item For any $\epsilon>0$, there exists an integer $J_{\epsilon}$ sufficiently large such that if $j\ge J_{\epsilon}$, then
\begin{align}
 \max_{0\le t \le 1} \| \omega(t,\cdot) - \tilde \omega_j(t,\cdot)\|_{H^2(B(x_j,3A_1C_1))} <\epsilon. \label{prop10_4}
\end{align}
Here $\tilde \omega_j$ is the solution solving the equation
\begin{align*}
 \begin{cases}
  \partial_t \tilde \omega_j + \Delta^{-1} \nabla^{\perp} \tilde \omega_j \cdot \nabla \tilde \omega_j =0,
  \quad 0<t\le 1,\, x \in \mathbb R^2; \\
  \tilde \omega_j(t=0,x)= \omega_j(x-x_j), \quad x \in \mathbb R^2.
 \end{cases}
\end{align*}
\end{enumerate}

\end{prop}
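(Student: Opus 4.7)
The strategy is to choose the centers $x_j$ one at a time, each far enough from the previously chosen ones, and then build the full solution as a limit of the finite $N$-patch solutions. Throughout, Lemma \ref{lem10} supplies the patch decomposition and uniform support bounds, while Lemma \ref{lem11} supplies the quantitative comparison needed both for convergence and for the asymptotic statement (\ref{prop10_4}). Note at the outset that disjointness of the supports of $\omega_j(\cdot-x_j)$ is automatic once the centers are well separated, and then $\|\omega_0\|_{L^1}+\|\omega_0\|_{L^\infty}\le 2C_1$, $\|\omega_0\|_{\dot H^1}^2 = \sum_j\|\omega_j\|_{\dot H^1}^2\le C_1$, so the velocity bound (\ref{lem10_1}) applies with a constant $\le 2 A_1 C_1$, which (together with $A_1C_1\ge 1$) is what produces the $3A_1C_1$ in (\ref{prop10_3}).

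First I would fix a sequence $\epsilon_j\downarrow 0$ (take $\epsilon_j = 2^{-j}$) and, for each $j$, apply Lemma \ref{lem11} with $f=\omega_j$ (centered at the origin), $C_1$ replaced by $2C_1$, $C_2=\pi$, and target $\epsilon_j$; this yields a threshold $R_j=R_{\epsilon_j}(\epsilon_j,\|\omega_j\|_{H^3},2C_1,\pi)$, finite because $\omega_j\in C_c^\infty$. I would then set $D_j:=R_j+100A_1C_1+10$ and choose the centers inductively with $|x_j-x_k|\ge D_j+D_k$ for all $k<j$. This forces (\ref{prop10_2}), and for every $N\ge j$ it also forces $d\bigl(\operatorname{supp}(\omega_j(\cdot-x_j)),\operatorname{supp}(\textstyle\sum_{k\le N,k\ne j}\omega_k(\cdot-x_k))\bigr)\ge R_{\epsilon_j}$. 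For each finite $N$, $\omega_0^{(N)}:=\sum_{j=1}^N\omega_j(\cdot-x_j)$ lies in $C_c^\infty\cap H^s$ for every $s$, so by the Kato theory there is a unique global $C^\infty$ solution $\omega^{(N)}$, and iterating Lemma \ref{lem10} gives a decomposition
\begin{align*}
\omega^{(N)}(t) = \sum_{j=1}^N \omega_j^{(N)}(t), \qquad \operatorname{supp}\bigl(\omega_j^{(N)}(t)\bigr)\subset B(x_j,3A_1 C_1),\quad 0\le t\le 1,
\end{align*}
uniformly in $N$, together with $\|\omega^{(N)}(t)\|_{L^1}+\|\omega^{(N)}(t)\|_{L^\infty}\le 2C_1$ for all $N$, $t$.

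Next, for any $j\le N<M$, applying Lemma \ref{lem11} with $f=\omega_j(\cdot-x_j)$ once to the $N$-patch system and once to the $M$-patch system yields
\begin{align*}
\max_{0\le t\le 1}\bigl\|\omega_j^{(N)}(t) - \tilde\omega_j(t,\cdot-x_j)\bigr\|_{H^2}<\epsilon_j, \qquad \max_{0\le t\le 1}\bigl\|\omega_j^{(M)}(t) - \tilde\omega_j(t,\cdot-x_j)\bigr\|_{H^2}<\epsilon_j,
\end{align*}
so $(\omega_j^{(N)})_{N\ge j}$ is Cauchy in $C([0,1];H^2)$. Denote its limit by $\omega_j^{(\infty)}$, and define $\omega(t,x):=\sum_j \omega_j^{(\infty)}(t,x)$, a sum that is locally finite because the balls $B(x_j,3A_1C_1)$ are pairwise disjoint. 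Each $\omega_j^{(\infty)}$ satisfies, on a fixed neighborhood of its support, the perturbed Euler equation with an external drift $V_j:=\sum_{k\ne j}\Delta^{-1}\nabla^\perp\omega_k^{(\infty)}$; the integration-by-parts argument already used in the proof of Lemma \ref{lem10} shows $V_j\in C^\infty$ on that neighborhood. A standard bootstrap of higher-order energy estimates in $H^k$, $k\ge 2$, on each patch then upgrades $\omega_j^{(\infty)}$ (and hence $\omega$) to $C^\infty$; the Biot–Savart formula together with $\omega\in L^1\cap L^\infty\cap C^\infty$ gives $u=\Delta^{-1}\nabla^\perp\omega\in C^\infty$. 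Uniqueness follows from Yudovich's theorem because $\omega_0\in L^1\cap L^\infty$. Finally, (\ref{prop10_4}) is just the first display above with $\epsilon_j<\epsilon$, so $J_\epsilon$ is determined by $\epsilon_j\downarrow 0$.

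The main technical point I expect to wrestle with is the uniform-in-$N$ bookkeeping: the thresholds $R_j$ must be chosen in terms of the quantities that genuinely do not depend on $N$ (and one must verify that the $L^1\!+\!L^\infty$ bound $2C_1$, the velocity bound $2A_1C_1$, and hence the patch support bound $B(x_j,3A_1C_1)$ all remain valid for every $N$ simultaneously). This comes out of disjointness of supports together with the finite-speed-of-propagation estimate (\ref{lem10_1}). A secondary point is promoting $H^2$ convergence of $\omega_j^{(N)}$ to $C^\infty$ smoothness of $\omega_j^{(\infty)}$, which requires iterating Lemma \ref{lem10} at every $H^k$ level and verifying the resulting bounds pass to the limit; this is routine but must be written out.
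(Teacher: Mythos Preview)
Your overall strategy matches the paper's, but the convergence step contains a genuine logical gap. From
\[
\max_{0\le t\le 1}\bigl\|\omega_j^{(N)}(t)-\tilde\omega_j(t)\bigr\|_{H^2}<\epsilon_j
\quad\text{and}\quad
\max_{0\le t\le 1}\bigl\|\omega_j^{(M)}(t)-\tilde\omega_j(t)\bigr\|_{H^2}<\epsilon_j
\]
you only get $\|\omega_j^{(N)}-\omega_j^{(M)}\|_{H^2}<2\epsilon_j$, a \emph{fixed} bound for each $j$ that does not shrink as $N,M\to\infty$. The sequence $(\omega_j^{(N)})_{N\ge j}$ is trapped in a ball of radius $\epsilon_j$ around $\tilde\omega_j$, but nothing prevents it from oscillating there without converging. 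Your choice $D_j=R_j+100A_1C_1+10$ gives no mechanism for the error on a fixed patch to decrease as more distant patches are appended.

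The paper closes this gap with a second separation threshold. In addition to your $R_j$ (which compares patch $j$ to its isolated evolution), it fixes, for each $m\ge 2$, a radius $\tilde R_m=\tilde R_m(\|f_{m-1}\|_{H^3},C_1)$ coming from Lemma~\ref{lem11} applied with $f=f_{m-1}:=\sum_{l<m}\omega_l(\cdot-x_l)$ and target $2^{-m}$. Placing $x_m$ beyond $\tilde R_m$ from the support of $f_{m-1}$ yields
\[
\max_{0\le t\le 1}\bigl\|\omega^{(m)}(t)-\omega^{(m-1)}(t)\bigr\|_{H^2(B(x_{j_0},3A_1C_1))}\le 2^{-m}
\qquad (m>j_0),
\]
which is summable and gives a genuine Cauchy sequence on each fixed ball. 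Combined with the uniform $H^k$ bounds from Lemma~\ref{lem10} (which the paper also records), this yields convergence in every $H^k$ and hence $C^\infty$ regularity of the limit directly, avoiding the a posteriori bootstrap you sketch. Your argument can be repaired by inserting exactly this second threshold into the inductive choice of $x_m$; without it, neither the existence of $\omega_j^{(\infty)}$ nor the passage to the limit in \eqref{prop10_4} is justified.
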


\begin{proof} [Proof of Proposition \ref{prop10}]

$$ \cdot $$

 \texttt{Step 1}. Choice of the centers $x_j$.

 For each $\omega_j$, $j\ge 1$,  we choose $R_j=R_j(\| \omega_j\|_{H^3}, C_1) >0$ corresponding to $f=\omega_j$ and $\epsilon =2^{-j}$
 in Lemma \ref{lem11} ($C_1$ is the same constant as in \eqref{prop10_1}). More precisely, if we take
 \begin{align*}
  \begin{cases}
   \partial_t \omega + \Delta^{-1} \nabla^{\perp} \omega \cdot \nabla \omega =0, \quad 0<t\le 1, \\
   \omega \Bigr|_{t=0} = \omega_j +g,
  \end{cases}
 \end{align*}
and
\begin{align*}
 \begin{cases}
  \partial_t \tilde \omega + \Delta^{-1} \nabla^{\perp} \tilde \omega \cdot \nabla \tilde \omega =0, \\
  \tilde \omega \Bigr|_{t=0}= \omega_j,
 \end{cases}
\end{align*}
with
\begin{align*}
 \| \omega_j +g \|_{L^1} + \| \omega_j + g\|_{L^{\infty}} \le C_1<\infty,
\end{align*}
and
\begin{align}
 d(\operatorname{supp} (\omega_j), \operatorname{supp}(g) ) \ge R_j, \label{prop10_5}
\end{align}
then $\omega(t) = \omega_f(t) + \omega_g(t)$ with

\begin{align*}
 \operatorname{supp} (\omega_f(t) ) \subset B(0, 1+2A_1C_1)
\end{align*}
and
\begin{align}
 \max_{0\le t \le 1} \| \omega_f(t) -\tilde \omega(t) \|_{H^2} <2^{-j}. \label{prop10_6}
\end{align}

With the numbers  $R_j$ properly defined, we now describe how to choose the centers $x_j$ inductively. First set $x_1=0$.
For $j\ge 2$, assume $x_1$,$\cdots$,$x_{j-1}$ have already been chosen. Let
\begin{align*}
 f_{j-1} (x) = \sum_{l=1}^{j-1} \omega_l(x-x_l)
\end{align*}
and consider the problems
\begin{align*}
 \begin{cases}
 \partial_t \omega + \Delta^{-1} \nabla^{\perp} \omega \cdot \nabla \omega =0, \quad 0<t\le 1, \\
 \omega \Bigr|_{t=0} = f_{j-1} +g,
\end{cases}
\end{align*}
and
\begin{align*}
 \begin{cases}
 \partial_t \tilde \omega + \Delta^{-1} \nabla^{\perp} \tilde \omega \cdot \nabla \tilde \omega =0, \\
 \tilde \omega \Bigr|_{t=0} =f_{j-1}
\end{cases}
\end{align*}
with
\begin{align*}
 \| f_{j-1} +g \|_{L^1} + \| f_{j-1} +g \|_{L^{\infty}} \le C_1 <\infty.
\end{align*}

By Lemma \ref{lem11}, we can find $\tilde R_j = \tilde R_j ( \| f_{j-1} \|_{H^3}, C_1)>0$ such that if
\begin{align}
 d( \operatorname{supp} (f_{j-1}), \, \operatorname{supp} (g) )>\tilde R_j, \label{prop10_7}
\end{align}
then
\begin{align}
 \max_{0\le t \le 1} \| \omega_{f_{j-1}} (t)  -\tilde \omega (t) \|_{H^2} <2^{-j}. \label{prop10_8}
\end{align}

We now choose $x_j$ such that
\begin{align}
 d(\operatorname{supp}(f_{j-1}),  x_j)>2\tilde R_j + 2 \sum_{l=1}^j R_l +1000A_1 C_1 +10^j. \label{prop10_8a}
\end{align}

By induction it is easy to verify that \eqref{prop10_2} holds.

\texttt{Step 2.} Construction of the solution $\omega(t)$ by patching.

Since $\omega_0 \in L^{1} \cap L^{\infty}$, the usual Yudovich theory already gives existence and uniqueness of a weak
solution in $L^1 \cap L^{\infty}$. Here thanks to the special type of initial data we shall give a more direct construction
which also yields the regularity of the solution at one stroke.

To this end, denote for each $m\ge 2$
\begin{align*}
 \omega_0^{(m)} (x)  = \sum_{j=1}^m \omega_j (x-x_j)
\end{align*}
and let $\omega^{(m)} (t,x)$ be the corresponding solution to the Euler equation. Obviously for $0\le t\le 1$,
\begin{align*}
 \operatorname{supp} (\omega^{(m)} (t) ) \subset \bigcup_{j=1}^m B(x_j, 3A_1 C_1).
\end{align*}

Now we define $\omega(t,x)$ as follows

\begin{align*}
 \omega(t,x)=
 \begin{cases}
  \lim_{m\to \infty} \omega^{(m)}(t,x), \qquad \text{if $x \in \bigcup_{j=1}^{\infty}B(x_j,3A_1 C_1)$}, \\
  0,\qquad \text{otherwise.}
 \end{cases}
\end{align*}

We now justify that $\omega(t,x)$ is well-defined and is the desired solution.

Fix $j_0\ge 1$ and consider the ball $B(x_{j_0}, 3A_1C_1)$. By \eqref{prop10_8} (setting $\omega= \omega^{(m)}$
and $\tilde \omega=\omega^{(m-1)}$), we have
\begin{align*}
 \max_{0\le t \le 1} \| \omega^{(m)} (t) - \omega^{(m-1)} (t) \|_{H^2(B(x_{j_0}, 3A_1 C_1) )} \le 2^{-m},
 \quad \text{if } m\ge j_0+1.
\end{align*}

By Lemma \ref{lem10}, we also have for any $k\ge 3$,
\begin{align*}
 \max_{0\le t \le 1} \| \omega^{(m)} (t) \|_{H^k(B(x_{j_0}, 3 A_1 C_1) )} \le C_k=C_k(\|\omega_{j_0}\|_{H^k}, C_1),
 \quad \text{if }m\ge j_0+1.
\end{align*}

Thus $(\omega^{(m)})$ forms a Cauchy sequence in $H^k( B(x_{j_0}, 3A_1 C_1 ))$ for any $k\ge 2$ and hence converge to a unique
limit $\omega(t,x) \in C^{\infty} (B(x_{j_0}, 3A_1 C_1))$. Clearly \eqref{prop10_3} holds. Easy to check $\omega \in L^\infty$.

By using the Lebesgue Dominated Convergence Theorem, we have
\begin{align*}
 \| \omega(t)\|_{L^1( B(x_{j_0}, 3A_1 C_1)  )} & \le \lim_{m\to \infty}
\| \omega^{(m)} (t)\|_{L^1(B(x_{j_0}, 3A_1 C_1 ))} = \| \omega_{j_0}\|_{L^1}.
\end{align*}
Summing in $j_0$ then gives us $\omega \in L^1$.

We now show that $\Delta^{-1} \nabla^{\perp} \omega^{(m)}$ converges locally uniformly to $\Delta^{-1} \nabla^{\perp} \omega$
on $\bigcup_{j=1}^{\infty} B(x_j, 3A_1C_1)$.
By construction we can decompose
\begin{align*}
\omega^{(m)} (t,x) = \sum_{j=1}^m \omega_j^{(m)}(t,x),
\end{align*}
where
\begin{align*}
 \operatorname{supp} (\omega_j^{(m)}) \subset B(x_j, 3A_1 C_1).
\end{align*}

Also we have
\begin{align}
 \omega(t,x) =\sum_{j=1}^{\infty} \omega_j^{(\infty)} (t,x), \; \operatorname{supp} (\omega_j^{(\infty)}) \subset B(x_j,3A_1 C_1).
 \label{prop10_10a}
\end{align}

The summation above is actually a finite sum since for each $x$ there exists at most one $j$ such that $\omega_j^{(\infty)}(t,x) \ne 0$.

Now fix $j_0\ge 1$. Then for $x \in B(x_{j_0}, 2A_1 C_1)$ and $m\ge j_0+1$, we have
\begin{align}
 \Bigl| (\dpp \omega^{(m)} )(x) -
(\dpp \omega )(x) \Bigr|
 & \le \Bigl|  ( \dpp (\omega_{j_0}^{(m)}  - \omega_{j_0}^{(\infty)})(x) \Bigr| \label{prop10_10} \\
 & \quad + \sum_{\substack{j=1\\j\ne j_0}}^m \Bigl|
 ( \dpp ( \omega_j^{(m)} -\omega_j^{(\infty)}   ) )(x) \Bigr| \label{prop10_11} \\
 & \quad + \sum_{j=m+1}^{\infty} \Bigl|  (\dpp \omega_j^{(\infty)})(x) \Bigr|. \label{prop10_12}
\end{align}

For \eqref{prop10_10}, we use the inequality \eqref{lem10_1} to get
\begin{align*}
 \Bigl\|   ( \dpp (\omega_{j_0}^{(m)}  - \omega_{j_0}^{(\infty)})(x) \Bigr\|_{\infty}
 & \le A_1 \Bigl(  \| \omega_{j_0}^{(m)} -\omega_{j_0}^{(\infty)} \|_1 + \|\omega_{j_0}^{(m)} - \omega_{j_0}^{(\infty)} \|_{\infty}  \Bigr)
 \notag \\
 & \lesssim_{C_1} \| \omega_{j_0}^{(m)} - \omega_{j_0}^{(\infty)} \|_{\infty} \notag \\
 & \lesssim_{C_1} \| \omega^{(m)} - \omega \|_{L^{\infty} (B(x_{j_0}, 3 A_1 C_1 ) } \notag \\
 & \to 0, \quad \text{as } m \to \infty,
\end{align*}
since $\omega^{(m)}$ converges uniformly to $\omega$ on the ball $B(x_{j_0}, 3 A_1 C_1)$.

For \eqref{prop10_11}, note that for $j\ne j_0$ (see \eqref{prop10_8a})
\begin{align*}
 d\bigl( \operatorname{supp} (\omega_j^{(m)} - \omega_j^{(\infty)}), \,
 B(x_{j_0}, 3A_1 C_1 ) \bigr) \ge 2^j.
\end{align*}

Therefore by using an estimate similar to \eqref{lem10_9}, we have
\begin{align*}
 \eqref{prop10_11} & \lesssim \sum_{\substack{j=1\\j\ne j_0} }^{\infty} 2^{-j}
 \| \omega_j^{(m)} - \omega_{j}^{(\infty) } \|_{L^1 \cap L^{\infty}} \notag \\
 & \lesssim_{C_1} \sum_{j=1}^{\infty} 2^{-j} \| \omega^{(m)} -\omega \|_{L^{\infty} (B(x_j,3A_1C_1)} \notag \\
 & \to 0, \quad \text{as $m\to \infty$.}
\end{align*}

Similarly
\begin{align*}
 \eqref{prop10_12} \lesssim_{C_1} \sum_{j=m+1}^{\infty} 2^{-j} \to 0, \quad \text{as } m\to \infty.
\end{align*}

Hence we have shown that $\dpp \omega^{(m)} \to \dpp \omega$ locally uniformly on compact sets (and also
uniformly in $t$) as $m$ tends to infinity. By writing
\begin{align*}
 \omega^{(m)} (t) = \omega^{(m)} (0) + \int_0^t (\dpp \omega^{(m)}  \cdot \nabla \omega^{(m)})(\tau) d\tau,
\end{align*}
and sending $m$ to infinity, we conclude that $\omega$ is the desired solution on the time interval $[0,1]$.

Finally \eqref{prop10_4} is a simple consequence of Lemma \ref{lem11} and our choice of the centers
$x_j$ (see \eqref{prop10_6}).
\end{proof}

We are now ready to complete the
\begin{proof} [Proof of Theorem \ref{thm1}]
For each $j\ge 2$, we choose (by a slight abuse of notation)
$h_j=h_{A_j}$ according to \eqref{30_2} with the parameter $A_j$ to be taken sufficiently large. Consider the  Euler equation
 \begin{align*}
  \begin{cases}
   \partial_t \omega + \Delta^{-1} \nabla^{\perp} \omega \cdot \nabla \omega =0, \quad 0<t\le 1, \\
   \omega \Bigr|_{t=0} =h_j.
  \end{cases}
 \end{align*}

By Proposition \ref{prop40}, we obtain for some $t_j \in (0, \frac 1 {\log\log A_j})$,
\begin{align*}
  \| (D\phi) (t_j,\cdot) \|_{\infty} > \log\log A_j,
\end{align*}
where $\phi$ is defined in \eqref{40_2}.

We then use Proposition \ref{prop20} to find $\tilde \omega_j^{(0)} \in C_c^{\infty} (B(0,1))$,
$\tilde \omega_j^{(0)}$ odd in both $x_1$ and $x_2$, such that
\begin{align}
& \|\tilde \omega_j^{(0)} \|_{L^1} \le 2 \| h_j \|_{L^1}, \notag \\
 & \| \tilde \omega_j^{(0)} \|_{L^{\infty}} \le 2 \| h_j \|_{L^{\infty}}, \notag \\
& \| \tilde \omega_j^{(0)} \|_{\dot H^1} \le \| h_j \|_{\dot H^1} + 2^{-j}, \notag \\
&\|  \tilde \omega_j^{(0)} \|_{\dot H^{-1}} \le 2 \|h_j \|_{\dot H^{-1}}, \notag \\
 &  \| \tilde \omega_j(t_j,\cdot) \|_{\dot H^1} >j, \label{thm1_100}
\end{align}
where $\tilde \omega_j(t)$ is the solution to the Euler equation
\begin{align*}
 \begin{cases}
  \partial_t \tilde \omega_j + \Delta^{-1} \nabla^{\perp} \tilde \omega_j \cdot \nabla \tilde \omega_j =0, \quad 0<t\le 1, \\
  \tilde \omega_j \Bigr|_{t=0} = \tilde \omega_j^{(0)}.
 \end{cases}
\end{align*}

We then apply Proposition \ref{prop10} to $\omega_1=\omega_0^{(p)}$, $\omega_j = \tilde \omega_j^{(0)}$ for $j\ge 2$ and find the centers $x_j$.
Obviously by \eqref{thm1_100} and \eqref{prop10_4}, we have
\begin{align*}
 \operatorname{ess-sup}_{0<t\le t_0} \| \omega(t,\cdot) \|_{\dot H^1} =+\infty, \quad \forall\, 0<t_0\le 1.
\end{align*}

It is not difficult to check the $\dot H^{-1}$ regularity of the constructed solution since on each patch the $L^2$ norm of the
velocity field is preserved. The theorem is proved.
\end{proof}

\section{The 2D compactly supported case}

\begin{lem}[Control of the support] \label{lem50}
 Suppose $\omega = \omega(t,x)$ is a smooth solution to the following equation:
 \begin{align*}
  \begin{cases}
   \partial_t \omega + \Delta^{-1} \nabla^{\perp} \omega \cdot \nabla \omega + (b_1+b_2) \cdot \nabla \omega =0, \\
   \omega \Bigr|_{t=0} =f,
  \end{cases}
 \end{align*}
where $b_1=b_1(t,x)$, $b_2=b_2(t,x)$, $f=f(x)$ are smooth functions satisfying the following conditions:
\begin{itemize}
 \item $\|f\|_{\infty} \le C_f$ for some constant $C_f>0$, and
 \begin{align}
\operatorname{supp}(f) \subset B(0,R), \quad R>0. \notag 
\end{align}
 \item $b_1$, $b_2$ are incompressible, i.e. $\nabla \cdot b_1 = \nabla \cdot b_2=0$.
 \item For some $B_1>0$,
 \begin{align*}
  |b_1(t,x)| \le B_1|x|, \quad \forall\, x \in \mathbb R^2.
 \end{align*}
\item For some $B_2>0$,
\begin{align*}
|b_2(t,x)| \le B_2|x|^2, \quad \forall\, x \in \mathbb R^2.
\end{align*}

\end{itemize}
Then there exists $R_0=R_0(C_f, B_1,B_2)>0$, $t_0=t_0(C_f,B_1,B_2)>0$, such that if $0<R\le R_0$, then
\begin{align*}
\operatorname{supp}(\omega(t,\cdot) ) \subset B(0,2R), \quad \forall\, 0\le t \le t_0.
\end{align*}
\end{lem}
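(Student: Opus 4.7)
The plan is to argue by a bootstrap/continuity argument on the support of $\omega$, using the flow map. Let $U(t,x) := (\Delta^{-1}\nabla^{\perp}\omega)(t,x) + b_1(t,x) + b_2(t,x)$ be the total transporting velocity and let $\phi=\phi(t,x)$ be the forward characteristics,
\begin{align*}
\partial_t \phi(t,x) = U(t,\phi(t,x)), \quad \phi(0,x)=x.
\end{align*}
Since $\Delta^{-1}\nabla^{\perp}\omega$ is a curl and $b_1,b_2$ are incompressible by hypothesis, $U$ is divergence-free, so the equation for $\omega$ is a transport equation along $U$ and in particular $\|\omega(t,\cdot)\|_{\infty} \le \|f\|_{\infty} \le C_f$ for every $t\ge 0$. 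Define
\begin{align*}
\tau := \sup\{t\ge 0:\ \operatorname{supp}(\omega(s,\cdot))\subset B(0,2R)\text{ for all }0\le s\le t\}.
\end{align*}
The goal is to show that for $R_0$, $t_0$ depending only on $C_f,B_1,B_2$ (and with $R\le R_0$), one has $\tau \ge t_0$.

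The key observation is a refined Biot--Savart bound exploiting the \emph{small} support. For $t\in[0,\tau]$, one has $\|\omega(t)\|_1 \le C_f\cdot \pi(2R)^2$. Splitting the kernel integral at a radius $r$ around any $x$,
\begin{align*}
|(\Delta^{-1}\nabla^{\perp}\omega)(t,x)| \lesssim r\|\omega(t)\|_{\infty} + r^{-1}\|\omega(t)\|_1,
\end{align*}
and optimizing in $r$ (i.e.\ taking $r\sim\sqrt{\|\omega\|_1/\|\omega\|_\infty}$) gives
\begin{align*}
\|\Delta^{-1}\nabla^{\perp}\omega(t)\|_{\infty} \lesssim \sqrt{\|\omega(t)\|_1\|\omega(t)\|_\infty} \lesssim C_f R.
\end{align*}
This is the crux: the Euler self-advection produces velocity of size $O(R)$, not $O(1)$ as in the general estimate \eqref{lem10_1}.

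Next I would run the ODE estimate on $|\phi(t,x)|$ for $x\in\operatorname{supp}(f)\subset B(0,R)$. For $t\in[0,\tau]$ and $R\le 1$, $|\phi(t,x)|\le 2R$, so combining the refined Biot--Savart bound with the hypotheses on $b_1,b_2$,
\begin{align*}
|U(t,\phi(t,x))| \le C\, C_f R + B_1|\phi(t,x)| + B_2|\phi(t,x)|^2 \le R\cdot K,
\end{align*}
where $K := C\,C_f + 2B_1 + 4B_2$ depends only on $C_f,B_1,B_2$. Integrating,
\begin{align*}
|\phi(t,x)| \le |x| + \int_0^t |U(s,\phi(s,x))|\,ds \le R + RKt.
\end{align*}
Choose $t_0 := \tfrac{1}{2K}$ and $R_0 := 1$. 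Then for $R\le R_0$ and $0\le t\le \min(\tau,t_0)$, $|\phi(t,x)|\le \tfrac{3}{2}R < 2R$, so by a standard open/closed continuity argument $\tau \ge t_0$, which is precisely the conclusion of the lemma.

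The only subtle step is the refined Biot--Savart estimate: using the naive bound \eqref{lem10_1} would force $t_0$ to depend on $R$, whereas exploiting the interplay of $\|\omega\|_\infty$ and $\|\omega\|_1$ on a small support gives the $O(R)$ self-advection needed to close the bootstrap with $t_0$ depending only on $(C_f,B_1,B_2)$.
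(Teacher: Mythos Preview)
Your proof is correct and follows essentially the same approach as the paper: both argue via the characteristics, use the interpolation bound $\|\Delta^{-1}\nabla^{\perp}\omega\|_{\infty}\lesssim \|\omega\|_1^{1/2}\|\omega\|_\infty^{1/2}\lesssim C_f R$ as the key estimate, and then integrate the resulting ODE inequality $\frac{d}{dt}|\phi|\lesssim C_f R + B_1|\phi|+B_2|\phi|^2$ to close the bootstrap. The only cosmetic difference is that the paper uses $L^1$ conservation (giving $\|\omega(t)\|_1=\|f\|_1\le C_f\pi R^2$) rather than your bootstrap bound $\|\omega(t)\|_1\le C_f\pi(2R)^2$, which is immaterial.
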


\begin{proof}[Proof of Lemma \ref{lem50}]
 Define the forward characteristic lines $\phi=\phi(t,x)$ which solves the ODE
 \begin{align*}
  \begin{cases}
   \partial_t \phi(t,x)= (\Delta^{-1} \nabla^{\perp} \omega + b_1 +b_2) (t,\phi(t,x) ), \\
   \phi(t=0,x)=x, \quad x \in \mathbb R^2.
  \end{cases}
 \end{align*}

By using the assumptions, we compute
\begin{align}
 &\frac d {dt} \Bigl( |\phi(t,x)|^2 \Bigr)  \notag \\
 \le & \; \| (\Delta^{-1} \nabla^{\perp} \omega)(t,\cdot) \|_{\infty} |\phi(t,x)|
 +B_1 |\phi(t,x)|^2 + B_2  |\phi(t,x)|^3. \label{lem50_2}
\end{align}

Since both $b_1$ and $b_2$ are  incompressible, we have
\begin{align}
 \| \omega(t,\cdot) \|_{L^1} = \| \omega(t=0,\cdot) \|_{L^1} = \| f \|_{L^1} \le C_f \cdot \pi R^2.  \label{lem50_3}
\end{align}

Then by interpolation and $L^{\infty}$ conservation, we get
\begin{align}
 \| (\Delta^{-1} \nabla^{\perp} \omega)(t,\cdot) \|_{\infty} & \lesssim
 \| \omega(t,\cdot)\|_{L^1}^{\frac 12} \| \omega(t,\cdot) \|_{L^{\infty}}^{\frac 12}
 \notag \\
 & \lesssim \| f \|_{L^1}^{\frac 12} \| f\|_{L^{\infty}}^{\frac 12} \notag \\
 & \lesssim C_f R, \label{lem50_4}
\end{align}
where in the last inequality we have used \eqref{lem50_3} and all the implied constants are absolute constants.

Plugging \eqref{lem50_4} into \eqref{lem50_2}, we obtain
\begin{align*}
 \frac d {dt} \Bigl( |\phi(t,x)| \Bigr)
 \lesssim C_f R + B_1 |\phi(t,x)| + B_2 |\phi(t,x)|^2.
\end{align*}
The desired result then follows from time integration and choosing $R_0$, $t_0$ sufficiently small.
\end{proof}

For the compactly supported case, we need to use a slight variant of the function $h_A$ defined in \eqref{30_2}.
We now take any $A\gg 1$ and
\begin{align} \label{50_1}
 g_A(x) = \frac{1 }{\log\log\log\log A} \cdot \frac 1 {\sqrt{\log A}}\sum_{A\le k \le A+\log A} \eta_k(x),
\end{align}
where $\eta_k$ was defined in \eqref{30_0a}.

It is easy to check that
\begin{align}
 &\operatorname{supp} (g_A) \subset B(0,R_A), \quad \text{with }R_A \sim 2^{-A} , \notag \\
 & \| g_A \|_{H^1} \lesssim \frac{1} {\log\log\log\log A}, \notag \\
 & \| g_A \|_{L^{\infty}} \lesssim \frac  1 {\sqrt{\log A}}, \notag \\
 & \| D^2 g_A \|_{L^{\infty}} \lesssim 2^{2(A+\log A)}. \notag
\end{align}

The main difference between $g_A$ and $h_A$ is that the former has weaker dependence on $A$ in terms of the bounds on higher
derivatives. This fact will be used in the perturbation theory later (see Lemma \ref{lem55}).

The following is a variant of Proposition \ref{prop40}. Note that the additional drift term has a special
form which makes the class of odd flows invariant.

\begin{lem} \label{lem53}
 Let $\omega =\omega(t,x)$ be the smooth solution to the equation
 \begin{align*}
  \begin{cases}
   \partial_t \omega + \Delta^{-1} \nabla^{\perp} \omega \cdot \nabla \omega + b \cdot \nabla \omega =0, \\
   \omega \Bigr|_{t=0} =g_A,
  \end{cases}
 \end{align*}
where $g_A$ is defined in \eqref{50_1}, $b=b(t,x)$ takes the form
\begin{align}
 b(t,x) =b_0(t) \begin{pmatrix} -x_1 \\ x_2 \end{pmatrix}, \quad x \in \mathbb R^2; \label{lem53_3}
\end{align}
and $b_0(t)$ is a smooth function satisfying
\begin{align}
 \| b_0\|_{\infty} \le B_0 <\infty. \label{lem53_4}
\end{align}

Let $\phi=\phi(t,x)$ be the associated forward characteristic line which solves
\begin{align*}
 \begin{cases}
  \partial_t \phi(t,x) = (\Delta^{-1} \nabla^{\perp} \omega + b) (t,\phi(t,x)), \\
  \phi(t=0,x)=x, \quad x \in \mathbb R^2.
 \end{cases}
\end{align*}

Then there exists $A_0=A_0(B_0)>0$ such that if $A>A_0$, then
\begin{align}
 \max_{0\le t \le \frac 1 {\log \log A}} \| (D\phi)(t,\cdot) \|_{\infty} > \log \log A. \label{lem53_4a}
\end{align}
\end{lem}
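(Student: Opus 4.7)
The plan is to mimic the contradiction argument of Proposition \ref{prop40} (or equivalently the streamlined integral inequality of Proposition \ref{prop_gener_1}), with the additional drift $b$ treated as a small diagonal perturbation at the origin. The key structural point is that $g_A$ is odd in both $x_1$ and $x_2$, and the drift $b(t,x)=b_0(t)(-x_1,x_2)^T$ preserves this double oddness as well as the sign-in-each-quadrant property. Consequently, $\omega(t,\cdot)$ remains odd in both variables, so $\phi(t,0)\equiv 0$, the Riesz transforms $\mathcal{R}_{11}\omega$ and $\mathcal{R}_{22}\omega$ vanish at $0$, and $Du(t,0)=\mathrm{diag}(-\lambda(t),\lambda(t))$ with $\lambda(t)=(\mathcal{R}_{12}\omega)(t,0)$. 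Since $Db(t,0)=\mathrm{diag}(-b_0(t),b_0(t))$, integrating the variational equation at $x=0$ gives
\begin{align*}
(D\phi)(t,0)=\mathrm{diag}\bigl(e^{-\int_0^t(\lambda+b_0)},\;e^{\int_0^t(\lambda+b_0)}\bigr),
\end{align*}
so that $\|D\phi(t,\cdot)\|_{\infty}\ge e^{|\int_0^t\lambda|-B_0 t}$.

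Next I would argue by contradiction. Suppose $\max_{0\le t\le t_A}\|D\phi(t)\|_{\infty}\le M_A$ with $M_A=\log\log A$ and $t_A=1/\log\log A$. Since $b$ is divergence free, $\|\omega(t)\|_{L^1}$ and $\|\omega(t)\|_{L^\infty}$ are conserved. Because the first component of the velocity $u_1=-\Delta^{-1}\partial_2\omega$ is odd in $x_1$ (and analogously in $x_2$), and because the linear drift $-b_0 x_1$ vanishes on $\{x_1=0\}$, the same factorization $\partial_t\phi_1=F(t,\phi_1,\phi_2)\phi_1$ used in Proposition \ref{prop40} persists, so each coordinate hyperplane is invariant and quadrants are preserved. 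Combined with the a priori bound $\|D\phi\|_\infty\le M_A$, this yields $M_A^{-1}|x|\le|\phi(t,x)|\le M_A|x|$ and the quadrant pinching $M_A^{-2}\le \phi_1(t,x)/\phi_2(t,x)\le M_A^2$ used in Proposition \ref{prop_gener_1}.

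Then I would change variables $x\mapsto\phi(t,x)$ in $\lambda(t)=-\tfrac{1}{\pi}\int \omega(t,y)\tfrac{y_1y_2}{|y|^4}dy$ using incompressibility and $\omega(t,\phi(t,x))=g_A(x)$, obtaining
\begin{align*}
-\pi\lambda(t)=\int_{\mathbb{R}^2} g_A(x)\frac{\phi_1\phi_2}{|\phi|^4}\,dx\;\ge\;\frac{4}{M_A^4}\int_{x_1,x_2>0} g_A(x)\frac{x_1x_2}{|x|^4}\,dx.
\end{align*}
A direct calculation on each bump $\eta_k$ (supported near $(2^{-k},2^{-k})$ with $|x|\sim 2^{-k}$) shows that each contributes an $O(1)$ amount, so summing the $\sim\log A$ terms and accounting for the normalization in \eqref{50_1} gives
\begin{align*}
B:=\int_{x_1,x_2>0} g_A(x)\frac{x_1x_2}{|x|^4}\,dx \;\gtrsim\; \frac{\sqrt{\log A}}{\log\log\log\log A}.
\end{align*}
Therefore $|\int_0^{t_A}\lambda(t)\,dt|\gtrsim B\,t_A/M_A^4\gg \log M_A+B_0$ for $A$ large, which together with the diagonal formula above forces $\|D\phi(t,0)\|_\infty\ge e^{|\int\lambda|-B_0t_A}\gg M_A$, contradicting the hypothesis and proving \eqref{lem53_4a}.

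The only real obstacle is bookkeeping: one must verify that the linear drift does not spoil the sign-preservation, the diagonal structure of $Du(t,0)$, nor the change-of-variables identity for $\omega$, and that the extra factor $e^{-B_0 t_A}=e^{-B_0/M_A}$ is harmless at the level of the final comparison. Everything else is a direct translation of the argument from Proposition \ref{prop40}/Proposition \ref{prop_gener_1}, with the slightly weaker normalization of $g_A$ (versus $h_A$) still comfortably providing the gain $\sqrt{\log A}/\log\log\log\log A\gg(\log\log A)^6$ needed to close the contradiction.
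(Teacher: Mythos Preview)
Your proposal is correct and follows exactly the approach the paper indicates: the paper's proof consists of the single observation that the drift $b$ in \eqref{lem53_3} preserves the double odd symmetry of $\omega$, after which one ``repeat[s] the proof of Proposition~\ref{prop40} or use[s] the simplified version as in the proof of Proposition~\ref{prop_gener_1}''. You have faithfully carried out that repetition, correctly noting that $Db(t,0)$ is diagonal so the formula for $(D\phi)(t,0)$ acquires only the harmless extra factor $e^{\pm\int_0^t b_0}$, and that the normalization of $g_A$ still yields $B\gtrsim \sqrt{\log A}/\log\log\log\log A$, which is more than enough to close the contradiction.
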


\begin{proof} [Proof of Lemma \ref{lem53}]
Thanks to the special assumption \eqref{lem53_3}, it is easy to check
 that $\omega(t,x)$ is still an odd function in $x_1$ and $x_2$ for any $t$.
 We can then repeat the proof of Proposition \ref{prop40} or use the simplified version
 as in the proof of Proposition \ref{prop_gener_1}. We omit the details.
\end{proof}

The next lemma shows that the patch dynamics can still be controlled under a suitable perturbation in the drift term. This will
play an important role in our later constructions. Since we no longer have odd symmetry at our disposal, we need to carry out
a perturbative analysis.

\begin{lem} \label{lem55}
 Let $W =W(t,x)$ be a smooth solution to the  equation
 \begin{align*}
  \begin{cases}
   \partial_t W + \Delta^{-1} \nabla^{\perp} W \cdot \nabla W + (b(t,x) +r(t,x)) \cdot \nabla
   W =0, \\
   W \Bigr|_{t=0} =W_0=g_A,
  \end{cases}
 \end{align*}
where the functions $g_A$, $b$, $r$ satisfies the following conditions:
\begin{itemize}
 \item  $g_A$ is the same as defined in \eqref{50_1};
 \item $b(t,x)=b_0(t) \begin{pmatrix} -x_1 \\ x_2 \end{pmatrix}, \quad \| b_0\|_{\infty} \le B_0<\infty;$
 \item $r$ is incompressible and
 \begin{align}
  &| r(t,x)| \le B_1 \cdot |x|^2, \notag \\
  & |(Dr)(t,x)| \le B_1 \cdot |x|, \notag \\
  & |(D^2 r)(t,x)| \le B_1, \quad \forall\, x \in \mathbb R^2, \, 0\le t\le 1.  \label{50_1tmp_aa}
 \end{align}
Here $B_1>0$ is a constant.
\end{itemize}

Let $\Phi =\Phi (t,x)$ be the characteristic line which solves the ODE
\begin{align} \label{50_1tmp_aa_1}
 \begin{cases}
  \partial_t \Phi(t,x)  = (\Delta^{-1} \nabla^{\perp } W + b+r) (t,\Phi(t,x) ), \\
  \Phi (t=0,x)=x,\quad x \in \mathbb R^2.
 \end{cases}
\end{align}

Then there exists $A_0=A_0(B_0,B_1)>0$ such that if $A>A_0$, then
\begin{align} \label{lem55_2}
 \max_{0\le t \le \frac 1 {\log \log A}}  \| (D\Phi )(t,\cdot) \|_{\infty} > \log\log\log A.
\end{align}
\end{lem}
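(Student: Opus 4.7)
The plan is to treat the quadratic drift $r$ as a small perturbation of the setting of Lemma \ref{lem53}, exploiting that the vorticity $W$ remains supported in a tiny ball $B(0, 2R_A)$ with $R_A \sim 2^{-A}$ for a universal amount of time, so that $|r| \lesssim B_1 R_A^2 \sim 4^{-A}$ there.

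First, Lemma \ref{lem50} applied with $b_1 = b$, $b_2 = r$ and $C_f = \|g_A\|_\infty \lesssim 1$ furnishes a universal $t_0 = t_0(B_0, B_1) > 0$ such that $\operatorname{supp}(W(t,\cdot)) \subset B(0, 2R_A)$ on $[0, t_0]$; for $A$ large, $1/\log\log A \le t_0$. Let $\omega$ solve the reference equation
\[
\partial_t \omega + \Delta^{-1}\nabla^\perp\omega\cdot\nabla\omega + b\cdot\nabla\omega = 0, \qquad \omega\bigr|_{t=0} = g_A,
\]
with forward flow $\phi$. The same support argument gives $\operatorname{supp}(\omega(t)) \subset B(0, 2R_A)$, and Lemma \ref{lem53} furnishes $t_* \in [0, 1/\log\log A]$ with $\|(D\phi)(t_*,\cdot)\|_\infty > \log\log A$.

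It remains to compare the two flows. Setting $\eta = W - \omega$, one has $\eta(0) = 0$ and
\[
\partial_t \eta + (u_W + b + r)\cdot\nabla\eta + u_\eta\cdot\nabla\omega + r\cdot\nabla\omega = 0, \quad u_\eta = \Delta^{-1}\nabla^\perp\eta.
\]
Since $u_W + b + r$ and $u_\omega + b$ are both divergence-free, the $L^1$ and $L^\infty$ norms of $W$ and $\omega$ are conserved, and $\|\nabla\omega\|_{L^2}$ is controlled polynomially in $\log A$ via the Lagrangian formula $\nabla\omega = (\nabla g_A)\circ\phi^{-1}\cdot (D\phi)^{-1}\circ\phi^{-1}$. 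An energy estimate combining the forcing bound $\|r\cdot\nabla\omega\|_{L^2} \lesssim 4^{-A}(\log A)^{O(1)}$ with the compact-support inequality $\|u_\eta\|_\infty \lesssim \sqrt{\|\eta\|_1\|\eta\|_\infty} \lesssim \sqrt{R_A\|\eta\|_2/\sqrt{\log A}}$ yields $\|\eta(t)\|_{L^2} \lesssim 4^{-A}(\log A)^{O(1)}$ on $[0, t_0]$. Interpolating via Lemma \ref{lem30p} with the pessimistic bound $\|\nabla\eta\|_\infty \lesssim 2^A(\log\log A)^{O(1)}$ gives $\|u_W - u_\omega\|_\infty + \|Du_W - Du_\omega\|_\infty \lesssim 2^{-A/2}$. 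Combined with $\|r\|_\infty + \|Dr\|_\infty \lesssim 2^{-A}$ on the supports, Lemma \ref{lem20} delivers $\max_{t \le t_0}(\|\Phi - \phi\|_\infty + \|D\Phi - D\phi\|_\infty) \lesssim 1$, whence $\|(D\Phi)(t_*,\cdot)\|_\infty > \log\log A - O(1) > \log\log\log A$, which is \eqref{lem55_2}.

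The main obstacle is balancing the exponential Gronwall loss in Lemma \ref{lem20}, driven by $\|Du_\omega\|_\infty$, which by log-interpolation with $\|\omega\|_{H^2} \lesssim 2^A (\log A)^{O(1)}$ and $\|\omega\|_\infty \lesssim 1/\sqrt{\log A}$ can reach size $\sim A/\sqrt{\log A}$, against the smallness of the effective drift perturbation. The argument closes only because $r$ contributes $R_A^2 \sim 4^{-A}$ and the Biot--Savart difference $u_W-u_\omega$ contributes $R_A^{1/2}\|\eta\|_2^{1/2}$, both of which are $2^{-\Omega(A)}$ and thus defeat any polynomial-in-$\log A$ loss; there is no room to spare.
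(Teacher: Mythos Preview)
Your overall strategy---compare $W$ to the unperturbed solution $\omega$ of Lemma \ref{lem53} and show $\|D\Phi-D\phi\|_\infty\lesssim 1$---is the same as the paper's. The gap is quantitative, and it occurs at the $L^2$ energy estimate for $\eta=W-\omega$.

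You bound the nonlinear term via $\|u_\eta\|_\infty\lesssim\sqrt{\|\eta\|_1\|\eta\|_\infty}\lesssim\sqrt{R_A\|\eta\|_2/\sqrt{\log A}}$, which is \emph{sublinear} in $\|\eta\|_2$. The resulting inequality $y'\lesssim(\log\log A)(\sqrt{R_A y}+R_A^2)$ with $y(0)=0$ integrates only to $y\lesssim R_A\sim 2^{-A}$ over $t\le 1/\log\log A$ (once $y\gtrsim R_A^3$ the $\sqrt{R_A y}$ term dominates and $(\sqrt y)'\lesssim\sqrt{R_A}$), not the $4^{-A}$ you claim. With only $\|\eta\|_2\lesssim 2^{-A}$, Lemma \ref{lem30p} gives $\|\mathcal R\eta\|_\infty\lesssim(2^{-A}\cdot 2^{A+})^{1/2}\sim 1$, not $2^{-A/2}$; the flow comparison then fails because the Gronwall factor $\exp(t\|Du_\omega\|_\infty)\sim\exp(A/(\sqrt{\log A}\log\log A))=2^{o(A)}$ is not beaten by an $O(1)$ perturbation. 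There is also a circularity: your ``pessimistic bound'' $\|\nabla\eta\|_\infty\lesssim 2^A(\log\log A)^{O(1)}$ needs $\|D\Phi\|_\infty\lesssim(\log\log A)^{O(1)}$, which is not yet known.

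The paper fixes both issues by arguing by contradiction (assume $\|D\Phi\|_\infty\le\log\log\log A$) and estimating $\|\eta\|_p$ for $1<p<2$ rather than $p=2$: the Sobolev embedding $\|\Delta^{-1}\nabla^\perp\eta\|_{(1/p-1/2)^{-1}}\lesssim\|\eta\|_p$ is \emph{linear}, so a standard Gronwall delivers $\|\eta\|_p\lesssim 4^{-A}$. Interpolating with the universal bound $\|\eta\|_{W^{2,q}}\lesssim 4^A$ then gives $\|\eta\|_{B^0_{\infty,1}}\lesssim 2^{-\frac23 A+}$, which is small enough to survive both the $\|D^2u\|_\infty\sim 2^{A+}$ factor and the exponential Gronwall in the direct ODE analysis of $q=D\Phi-D\phi$. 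Lemma \ref{lem20} cannot be invoked as a black box here, since its hidden constant depends on $\|D^2u\|_\infty$.
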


\begin{proof} [Proof of Lemma \ref{lem55}]
 We shall argue by contradiction. Assume \eqref{lem55_2} is not true, then
 \begin{align} \label{lem55_2a}
 \max_{0\le t \le \frac 1 {\log \log A}}  \| (D\Phi )(t,\cdot) \|_{\infty}  \le  \log\log\log A.
\end{align}
By the definition of the characteristic line $\Phi$, we have
$W(t,x)=W_0(\tilde \Phi(t,x))$ where $\tilde \Phi$ is the inverse map of $\Phi$. By
\eqref{lem55_2a} and using a computation similar to \eqref{prop20_11}, we get
\begin{align}
\max_{0\le t\le \frac 1 {\log\log A}} \| DW(t,\cdot) \|_2
&\lesssim \| DW_0\|_2 \cdot \max_{0\le t\le \frac 1{\log\log A}} \|D\Phi(t,\cdot)\|_{\infty} \notag \\
& \lesssim \frac 1 {\log\log\log\log A} \cdot \log\log\log A \notag \\
& \lesssim \log\log\log A. \label{lem55_2b}
\end{align}
We shall need this estimate later.

The main idea is to compare $W$
 with the other solution $\omega$ which solves the ``unperturbed'' equation
 \begin{align*}
  \begin{cases}
   \partial_t \omega + \Delta^{-1} \nabla^{\perp} \omega \cdot \nabla \omega + b(t,x) \cdot \nabla \omega =0, \\
   \omega \Bigr|_{t=0} =g_A.
  \end{cases}
 \end{align*}

The perturbation theory requires a bit of work so we shall proceed in several steps.

\texttt{Step 1}. Set $\eta= W -\omega$. We first show that
\begin{align}
 \| \eta (t,\cdot) \|_{B^{0}_{\infty,1}} \lesssim 2^{-\frac 2 3 A+}, \qquad
 \forall\, 0\le t \le \frac 1 {\log\log A}. \label{lem55_3}
\end{align}
Here and below we use the notation $X+$ as in \eqref{notation_plus}. Also to simplify notations we shall write $\lesssim_{B_1}$ as $\lesssim$
(i.e. we suppress the notational dependence on $B_1$) since $A$ will be taken sufficiently large.

The equation for $\eta$ takes the form
\begin{align*}
 \begin{cases}
  \partial_t \eta + \Delta^{-1} \nabla^{\perp} \eta \cdot \nabla W + \Delta^{-1} \nabla^{\perp} \omega
  \cdot \nabla \eta + b \cdot \nabla \eta + r \cdot \nabla W =0, \\
  \eta(0)=0.
 \end{cases}
\end{align*}

By Lemma \ref{lem50} and \eqref{50_1tmp_aa}, we have
\begin{align}
  &\| r(t,\cdot)\|_{L^{\infty} (\operatorname{supp} (W(t,\cdot)))} \lesssim  4^{-A}, \notag \\
  & \| (Dr) (t,\cdot) \|_{L^{\infty} (\operatorname{supp} (W(t,\cdot)))} \lesssim  2^{-A}, \notag \\
  & \| (D^2 r)(t,\cdot) \|_{L^{\infty}( \operatorname{supp} (W(t,\cdot)) )}  \lesssim 1,
  \quad \forall\, 0\le t\le \frac 1 {\log\log A}. \label{lem55_3add}
 \end{align}

Let $1<p<2$. By Sobolev embedding and \eqref{lem55_3add}, we compute
\begin{align*}
 \frac d {dt} \Bigl( \| \eta \|_p^p \Bigr)
 & \lesssim \| \Delta^{-1} \nabla^{\perp} \eta \|_{(\frac 1p -\frac 12)^{-1}} \| \nabla W \|_2
 \cdot \| \eta \|_p^{p-1} + 4^{-A} \| \nabla W \|_2 \cdot \| \eta \|_p^{p-1} \notag \\
 & \lesssim  \| \nabla W\|_2 \cdot \| \eta\|_p^p + 4^{-A} \| \nabla W \|_2 \cdot \| \eta \|_p^{p-1}.
\end{align*}

Therefore for $0\le t \le \frac 1 {\log \log A}$, by using \eqref{lem55_2b}, we get
\begin{align}
 \| \eta(t,\cdot ) \|_p & \lesssim 4^{-A} \int_0^t e^{(t-s) \log\log\log A} ds \cdot \log\log\log A \notag \\
 & \lesssim 4^{-A} \cdot \frac {\log\log \log A} {\log\log A} \lesssim 4^{-A}. \label{lem55_4}
\end{align}
This estimate is particularly good for $p=2-$.

On the other hand, for any $2\le q<\infty$, a standard energy estimate gives for any $0\le t \le 1$,
\begin{align*}
 \| \eta(t,\cdot) \|_{W^{2,q}} & \lesssim \| W(t,\cdot) \|_{W^{2,q}} + \| \omega(t,\cdot) \|_{W^{2,q}} \notag \\
 & \lesssim \| g_A \|_{W^{2,q}} \notag \\
 & \lesssim \frac 1 {\sqrt{\log A}} 2^{2(A+\log A) (1-\frac 1q)} \notag \\
 & \lesssim 4^A.
\end{align*}

Interpolating the above with \eqref{lem55_4} then yields \eqref{lem55_3} (note that
$\|\eta\|_{B^0_{\infty,1}(\mathbb R^2)} \lesssim \| \eta\|_{L^2(\mathbb R^2)}^{\frac 23} \| \Delta \eta \|_{L^\infty(\mathbb R^2)}^{\frac 13}.$)

\texttt{Step 2.} Let $\phi$ be the characteristic line associated with the equation for $\omega$, i.e.
\begin{align*}
 \begin{cases}
  \partial_t \phi(t,x) = (\Delta^{-1} \nabla^{\perp} \omega + b)(t,\phi(t,x) ), \\
  \phi(t=0,x)=x, \quad x \in \mathbb R^2.
 \end{cases}
\end{align*}
We show that
\begin{align}
 \max_{0\le t \le \frac 1 {\log\log A}} \| \phi(t,\cdot) -\Phi (t,\cdot) \|_{\infty} \lesssim 2^{-\frac 43 A+}.
 \label{lem55_6}
\end{align}

Set $Y(t,x) = \Phi (t,x)-\phi(t,x)$.
By Lemma \ref{lem50}, we only need to consider $|x| \lesssim 2^{-A}$ sine $\phi(t,x)=\Phi (t,x)=x$ for $|x| \gg 2^{-A}$.

Then for $|x| \lesssim 2^{-A}$,
\begin{align*}
 \frac d {dt} Y & = (\Delta^{-1} \nabla^{\perp} W)(\Phi ) - (\Delta^{-1} \nabla^{\perp}\omega)(\phi)  \notag\\
 & \qquad + b_0(t) \begin{pmatrix} -Y_1 \\ Y_2 \end{pmatrix} + r(t,\Phi ) \notag \\
 & = (\Delta^{-1}\nabla^{\perp} W)(\Phi ) - (\Delta^{-1} \nabla^{\perp} W) (\phi)
 + (\Delta^{-1} \nabla^{\perp}(W -\omega) )(\phi) \notag \\
 & \qquad + b_0(t) \begin{pmatrix} -Y_1 \\ Y_2 \end{pmatrix} + r(t,\Phi ).
\end{align*}

For $|x| \lesssim 2^{-A}$ and $0\le t \le \frac 1 {\log\log A}$, we have $|\Phi (t,x)| \lesssim 2^{-A}$.
By \eqref{50_1tmp_aa}, we get
\begin{align*}
\max_{|x| \lesssim 2^{-A}} |r(t,\Phi (t,x))| \lesssim 4^{-A}, \qquad \forall\, 0\le t\le \frac 1 {\log\log A}.
\end{align*}

Therefore
\begin{align}
 \frac d {dt} \Bigl( |Y(t,x)| \Bigr)
 & \lesssim (B_0+\| \mathcal R W \|_{\infty} )\cdot |Y(t,x)| + \| \Delta^{-1} \nabla^{\perp} (W -\omega)\|_{\infty}
 +4^{-A}. \label{lem55_7}
\end{align}

By using the usual log-interpolation inequality, we have
\begin{align}
 \| \mathcal R W \|_{\infty} & \lesssim \| W\|_2+ \| W \|_{\infty} \log(10+\|W \|_{H^2}) \notag \\
 & \lesssim A. \label{lem55_8}
\end{align}

On the other hand, by \eqref{lem55_3}, we have
\begin{align*}
 \| \Delta^{-1} \nabla^{\perp} (W -\omega) \|_{\infty} & \lesssim \| W -\omega \|_1^{\frac 12}
 \| W -\omega \|_{\infty}^{\frac 12} \notag \\
 & \lesssim 2^{-A} \cdot 2^{-\frac 1 3 A+} \notag \\
 & \lesssim 2^{-\frac 4 3 A+}.
\end{align*}

Plugging these estimates into \eqref{lem55_7} and integrating in time, we obtain
\begin{align*}
 |Y(t,x)| & \lesssim \int_0^t e^{(t-s) C\cdot A} (4^{-A} + 2^{-\frac 4 3 A+}) ds \notag \\
 & \lesssim \int_0^{\frac 1 {\log \log A}} e^{\frac {CA}{\log\log A}} (4^{-A} +2^{-\frac 43 A+}) ds \notag \\
 & \lesssim 2^{-\frac 43A+}.
\end{align*}

\texttt{Step 3.} Set $\tilde J(t) = (D\Phi ) (t,x)$, $J(t) = (D\phi)(t,x)$, then obviously
\begin{align*}
 &\partial_t \tilde J = (\mathcal R W )(\Phi ) \tilde J + b_0(t)
 \begin{pmatrix} -1 \quad 0 \\0 \quad 1 \end{pmatrix} \tilde J + (Dr)(\Phi ) \tilde J, \\
 & \partial_t J = (\mathcal R \omega)(\phi) J + b_0(t)\begin{pmatrix} -1 \quad 0 \\0 \quad 1 \end{pmatrix}  J.
 \end{align*}

Let $q=\tilde J-J$. Then  $q$ satisfies the equation
\begin{align*}
 \partial_t q & = \Bigl( (\mathcal R W)(\Phi ) - (\mathcal R W)(\phi) \Bigr) \tilde J
 + \Bigl( (\mathcal R W)(\phi) - (\mathcal R \omega) (\phi) \Bigr) \tilde J \notag \\
 & \qquad + (\mathcal R \omega) (\phi) q + b_0(t) \begin{pmatrix} -1 \quad 0 \\0 \quad 1 \end{pmatrix} q
 + (Dr)(\Phi ) \tilde J.
\end{align*}

By \eqref{lem55_3}, \eqref{lem55_6}, \eqref{lem55_8}, we obtain
\begin{align*}
 \partial_t |q| & \lesssim \| D \mathcal R W \|_{\infty} |\Phi -\phi| \cdot \| \tilde J \|_{\infty}
 + 2^{-\frac 2 3 A+} \| \tilde J \|_{\infty} \notag \\
 & \qquad + A|q| + 2^{-A} \| \tilde J \|_{\infty} \notag \\
 & \lesssim 2^{A+} 2^{-\frac 43A+} \log\log\log A + 2^{-\frac 23 A+} \log\log \log A + A|q| \notag \\
 & \qquad + 2^{-A} \log\log\log A.
\end{align*}

Integrating in time and noting $t\le \frac 1 {\log\log A}$, we obtain
\begin{align*}
 \| q\|_{\infty} \lesssim 1.
\end{align*}

But this obviously contradicts \eqref{lem53_4a}.
\end{proof}

The next lemma is the main building block for our construction in the compactly supported data case.

\begin{lem} \label{lem57}
 Suppose $f_{-1} \in C_c^{\infty} (\mathbb R^2)$ is a given real-valued function such that
 \begin{itemize}
  \item for some $R_0>0$,
  \begin{align*}
   \operatorname{supp}(f_{-1}) \subset\{x=(x_1,x_2):\;\; x_1 \le -2R_0 \};
  \end{align*}
\item $f_{-1}$ is an odd function of $x_2$, i.e.
\begin{align*}
 f_{-1}(x_1,x_2)= - f_{-1}(x_1,-x_2),\quad\forall\,x \in \mathbb R^2.
\end{align*}
 \end{itemize}

Then for any $0<\epsilon < \frac{R_0}{100}$, one can find $\delta_0=\delta_0(f_{-1}, \epsilon, R_0)>0$,
$0<t_0=t_0(f_{-1},\epsilon, R_0)<\epsilon$, and $f_0 \in C_c^{\infty} (B(0,\epsilon))$ ($f_0$ depends only on
$(f_{-1}, \epsilon, R_0)$) with the properties:
\begin{itemize}
 \item $f_0$ is an odd function of $x_2$;
 \item \begin{align} \label{lem57_1a}
 \|f_0\|_{L^1} + \| f_0\|_{L^{\infty}} + \| f_0\|_{H^1} + \| f_0\|_{\dot H^{-1}} \le \epsilon,
 \end{align}
\end{itemize}
such that for any $f_1 \in C_c^{\infty} (\mathbb R^2)$ with
\begin{itemize}
 \item $\operatorname{supp} (f_1) \subset \{ x=(x_1,x_2):\quad x_1\ge R_0\}$;
 \item $\|f_1\|_{L^1} +\|f_1\|_{L^{\infty}} \le \delta_0$,
\end{itemize}
the following hold true:

Consider the Euler equation
\begin{align*}
 \begin{cases}
  \partial_t \omega + \Delta^{-1} \nabla^{\perp} \omega \cdot \nabla \omega =0, \\
  \omega \Bigr|_{t=0} = f_{-1} +f_0 +f_1,
 \end{cases}
\end{align*}
then the smooth solution $\omega=\omega(t,x)$ satisfies the following properties:
\begin{enumerate}
 \item for any $0\le t \le t_0$, we have the decomposition
 \begin{align}
  \omega(t,x) = \omega_{-1} (t,x) + \omega_0(t,x)+ \omega_1(t,x), \label{lem57_1}
 \end{align}
where
\begin{align*}
 &\operatorname{supp} (\omega_{-1} (t)) \subset B( \operatorname{supp} (f_{-1}), \, \frac 18 R_0);\\
 &\operatorname{supp} (\omega_0(t) ) \subset B(0, \epsilon+\frac 1 8 R_0);\\
 & \operatorname{supp} (\omega_1(t)) \subset B(\operatorname{supp}(f_1, \frac 18 R_0)).
\end{align*}

\item $\|\omega_0(t=0,\cdot)\|_{H^1} = \|f_0\|_{H^1} \le \epsilon$, but
\begin{align}
  \| \omega_0(t_0,\cdot)\|_{\dot H^1} >\frac 1 {\epsilon}. \label{lem57_2}
\end{align}

\end{enumerate}

\end{lem}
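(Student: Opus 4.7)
I take $f_0$ to be a small-amplitude modification of the building block $g_A$ from~\eqref{50_1}, with $A=A(f_{-1},\epsilon,R_0)$ huge, and then select $t_0$ and $\delta_0$ small in terms of $(f_{-1},\epsilon,R_0,A)$. (Note that $f_0$ ends up depending only on $(f_{-1},\epsilon,R_0)$, as required.) The argument proceeds in three stages: separation of the three patches, reduction of the middle-patch equation to a normal form, and norm inflation via large Lagrangian deformation.

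\emph{Separation.} By Lemma~\ref{lem10} applied to the ordered splitting $\{f_{-1},\,f_0,\,f_1\}$, together with the velocity bound $\|u(t)\|_\infty\le A_1(\|f_{-1}\|_{L^1\cap L^\infty}+\|f_0\|_{L^1\cap L^\infty}+\delta_0)$, for $t_0$ and $\delta_0$ small enough one obtains the decomposition~\eqref{lem57_1} with the claimed support properties, and the three patches remain at mutual distance $\gtrsim R_0$ on $[0,t_0]$. In particular the middle patch obeys
\[\partial_t\omega_0+\Delta^{-1}\nabla^\perp\omega_0\cdot\nabla\omega_0+U(t,x)\cdot\nabla\omega_0=0,\qquad U:=u_{-1}+u_1,\]
where $U$ is smooth on $\operatorname{supp}(\omega_0)$ thanks to the support separation.

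\emph{Normal form.} Let $c(t)$ solve $\dot c=U(t,c)$, $c(0)=0$, and substitute $y=x-c(t)$. The translated drift $V(t,y):=U(t,y+c(t))-U(t,c(t))$ vanishes at $y=0$, so Taylor expansion yields $V(t,y)=M(t)y+\tilde R(t,y)$ with $M$ traceless and $|\tilde R|\lesssim|y|^2$, $|D\tilde R|\lesssim|y|$, $|D^2\tilde R|\lesssim 1$. In the symmetric limit $f_1\equiv 0$ the whole evolution preserves oddness in $x_2$: $\omega_{-1}(t,\cdot)$ stays odd in $x_2$, $c(t)$ stays on the $x_1$-axis, and the identity $u_{-1,2}(t,\cdot,0)\equiv 0$ combined with incompressibility forces $M(t)$ to be \emph{exactly} of the form $b_0(t)\operatorname{diag}(-1,1)$. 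For general $f_1$ with $\|f_1\|_{L^1\cap L^\infty}\le\delta_0$, a direct estimate gives $\|u_1\|_{C^2(\operatorname{supp}\omega_0)}\lesssim\delta_0$ and a deviation $|c_2(t)|\lesssim\delta_0 t_0$, so $M(t)=b_0(t)\operatorname{diag}(-1,1)+E(t)$ with $|E(t)|\lesssim\delta_0$. By Lemma~\ref{lem50} the support of $\tilde\omega_0$ remains within $B(0,2R_A)$ with $R_A\sim 2^{-A}$, and choosing $\delta_0\le 2^{-A}$ makes $|E(t)y|\lesssim\delta_0\cdot 2^{-A}\lesssim 4^{-A}$ on this support, comparable to the quadratic bound $B_1|y|^2$; analogously for $|D(E(t)y)|\lesssim\delta_0$ versus $B_1|y|$ and $|D^2(E(t)y)|=0$. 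The extra linear term is therefore absorbed into the remainder $r$ without violating the support-level bounds~\eqref{50_1tmp_aa}, which is all that the proof of Lemma~\ref{lem55} actually uses. The equation for $\tilde\omega_0$ now matches the hypotheses of Lemma~\ref{lem55}.

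\emph{Inflation.} Lemma~\ref{lem55} applied to $\tilde\omega_0$ yields large Lagrangian deformation $\max_{0\le t\le 1/\log\log A}\|(D\Phi)(t,\cdot)\|_\infty>\log\log\log A$ once $A$ is large enough. Proposition~\ref{prop20} then tells us to add to $g_A$ a high-frequency correction of amplitude $(\log\log\log A)^{-1/2}$, localized near a maximizer of the deformation, so that the resulting solution satisfies $\|\tilde\omega_0(t_*)\|_{\dot H^1}>(\log\log\log A)^{1/3}>1/\epsilon$ at the maximizing time $t_*\le 1/\log\log A<\epsilon$. Taking $f_0$ to be this perturbed $g_A$ (odd in $x_2$ by its symmetric construction) and $t_0=t_*$ yields~\eqref{lem57_2}; the bounds~\eqref{lem57_1a} follow from the explicit estimates on $g_A$ after~\eqref{50_1} for $A$ large together with the smallness of the Proposition~\ref{prop20} perturbation. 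The main obstacle is the normal-form reduction of the second stage: absorbing the symmetry-breaking $f_1$-contribution into the quadratic remainder, which depends crucially on the smallness $\delta_0\le 2^{-A}$ together with the $\sim 2^{-A}$ size of the support of $\tilde\omega_0$ given by Lemma~\ref{lem50}.
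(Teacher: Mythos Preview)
Your overall strategy matches the paper's---separation of patches, normal-form reduction of the middle-patch drift, large deformation via Lemma~\ref{lem55}, and inflation via a Proposition~\ref{prop20}-type perturbation---but there is a genuine order-of-quantifiers gap.

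The lemma requires $f_0$ and $t_0$ to be fixed \emph{first}, depending only on $(f_{-1},\epsilon,R_0)$, and the conclusion must then hold for \emph{every} admissible $f_1$. In your Inflation stage you apply Lemma~\ref{lem55} to $\tilde\omega_0$, the translated middle patch of the \emph{full} system (with $f_1$ already present), and then invoke Proposition~\ref{prop20} to localize the high-frequency correction ``near a maximizer of the deformation.'' That maximizer---its location $x_*$ and the time $t_*$---comes from the characteristic $\Phi$ of the full system, which depends on $f_1$. Hence the correction, and therefore $f_0=g_A+\beta$, depends on $f_1$, contradicting the required independence. (A secondary symptom: with $f_1\neq 0$ the $x_2$-oddness is broken, so the maximizer need not respect the symmetry and your claim that $f_0$ is ``odd in $x_2$ by its symmetric construction'' does not follow.)

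The paper resolves this by working first with the two-patch system $f_{-1}+g_A$ (no $f_1$): the exact $x_2$-oddness gives the diagonal normal form, Lemma~\ref{lem55} yields large deformation, and the Proposition~\ref{prop20} perturbation produces $f_0=\tilde g_A$ with $\|\tilde\omega_0^{(1)}(t_0)\|_{\dot H^1}>(\log\log\log A)^{1/3}$. Only \emph{after} $f_0$ is fixed does $f_1$ enter, via a direct $H^2$ stability estimate (in the spirit of Lemma~\ref{lem11}) giving
\[
\max_{0\le t\le t_0}\|\omega_0(t)-\tilde\omega_0^{(1)}(t)\|_{H^2}\lesssim_{\epsilon,f_{-1},R_0}\delta_0,
\]
so that $\|\omega_0(t_0)\|_{\dot H^1}>1/\epsilon$ once $\delta_0$ is chosen small. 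Your absorption idea---putting the $f_1$-induced off-diagonal piece $E(t)y$ into the remainder $r$ using $\delta_0\le 2^{-A}$ and the support bound from Lemma~\ref{lem50}---is a legitimate alternative to this $H^2$ step, but it must be deployed \emph{after} $f_0$ is already defined (to show the full-system characteristics stay close to the $f_1=0$ ones and the $\dot H^1$ computation persists), not before.
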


\begin{proof}[Proof of Lemma \ref{lem57}]
 The decomposition \eqref{lem57_1} is a simple consequence of finite transportation speed. Therefore we only
 need to show how to choose $f_0$ to achieve \eqref{lem57_2} and the other conditions.

 Consider first the equation
 \begin{align} \label{lem57_3}
  \begin{cases}
   \partial_t \omega^{(1)} + \Delta^{-1} \nabla^{\perp} \omega^{(1)} \cdot \nabla \omega^{(1)} =0, \\
   \omega^{(1)} \Bigr|_{t=0} =f_{-1} +g_A,
  \end{cases}
 \end{align}
where $g_A$ was defined in \eqref{50_1} and we shall choose $A$ sufficiently large.

For $0\le t \le \frac 1 {\log \log A}$,we decompose the solution $\omega^{(1)} (t)$ to \eqref{lem57_3} as
\begin{align*}
 \omega^{(1)} (t,x)= \omega^{(1)}_{-1} (t,x) + \omega_0^{(1)} (t,x),
\end{align*}
with
\begin{align}
 &\operatorname{supp} (\omega^{(1)}_{-1} (t,\cdot) ) \subset B(\operatorname{supp} (f_{-1}), \, \frac 1 {10} R_0), \notag \\
 & \operatorname{supp} (\omega^{(1)}_0) (t,\cdot) ) \subset B(\operatorname{supp} (g_A), \frac 1 {10} R_0). \label{lem57_3a}
\end{align}

Obviously $\omega_0^{(1)}(t)$ satisfies the equation
\begin{align} \label{lem57_4}
 \begin{cases}
  \partial_t \omega_0^{(1)} + \Delta^{-1} \nabla^{\perp} \omega_0^{(1)} \cdot \nabla \omega_0^{(1)} +
  \Delta^{-1} \nabla^{\perp} \omega_{-1}^{(1)} (t) \cdot \nabla \omega_0^{(1)} =0, \\
  \omega_0^{(1)} \Bigr|_{t=0} =g_A.
 \end{cases}
\end{align}

Since by assumption $f_{-1}$ is odd in $x_2$ and $g_A$ is also odd in $x_2$, it is easy to check that both
$\omega_{-1}^{(1)}(t)$ and $\omega_0^{(1)}(t)$ are odd functions of $x_2$. Therefore we have
\begin{align}
 &\Bigl(\Delta^{-1} \partial_{22} \omega_{-1}^{(1)} \Bigr)(t,x_1,0) = 0, \notag \\
 &\Bigl( \Delta^{-1} \partial_{11} \omega_{-1}^{(1)} \Bigr)(t,x_1,0) = 0, \notag \\
 &\Bigl( \Delta^{-1} \partial_1 \omega_{-1}^{(1)}  \Bigr)(t,x_1,0) = 0, \quad\forall\, 0\le t\le \frac 1 {\log\log A},\, x_1\in \mathbb R.
 \label{lem57_5}
\end{align}

Now let $\xi(t)$ solve the ODE
\begin{align} \label{lem57_6}
 \begin{cases}
  \frac d {dt} \xi(t) =  (\Delta^{-1} \partial_2  \omega_{-1}^{(1)}) (t,\xi(t),0), \\
  \xi(0)=0.
 \end{cases}
\end{align}

Since for $0\le t \le \frac 1 {\log\log A}$ and $A$ is sufficiently large, the function $\omega_{-1}^{(1)}$ is supported
away from the origin (see \eqref{lem57_3a}), it is easy to check that the function  $(\Delta^{-1} \partial_2 \omega_{-1}^{(1)})(t,\cdot)$
is smooth and has uniform (independent of $A$) Sobolev bounds in a small neighborhood of the origin.
Thus $\xi(t)$ is well-defined and remains close to the origin for $t\le \frac 1 {\log\log A}$.

By \eqref{lem57_5}, we have
\begin{align*}
 \Bigl(-\Delta^{-1} \partial_2 \omega_{-1}^{(1)} \Bigr) (t,\xi(t)+y_1,y_2) & =
 -\Bigl(\Delta^{-1} \partial_2 \omega_{-1}^{(1)} \Bigr)(t,\xi(t),0) -
 \Bigl(\Delta^{-1} \partial_{12} \omega_{-1}^{(1)} \Bigr)(t,\xi(t),0)y_1 \notag \\
&\qquad   + r_1^{(1)}(t,y), \notag \\
\Bigl(\Delta^{-1} \partial_1 \omega_{-1}^{(1)} \Bigr)(t,\xi(t)+y_1,y_2) &=
\Bigl(\Delta^{-1} \partial_{12} \omega_{-1}^{(1)} \Bigr)(t,\xi(t),0)y_2
  + r_2^{(1)}(t,y), \notag \\
\end{align*}
where for $0\le t\le \frac 1{\log\log A}$,
\begin{align*}
 &|r_1^{(1)}(t,y)| + |r_2^{(1)}(t,y)| \lesssim_{f_{-1},R_0} |y|^2, \notag \\
 &|(D r_1^{(1)})(t,y)| + |(D r_2^{(1)})(t,y)| \lesssim_{f_{-1},R_0} |y|, \notag \\
 & |(D^2 r_1^{(1)})(t,y)| + |(D^2 r_2^{(1)})(t,y)| \lesssim_{f_{-1},R_0} 1. \notag
\end{align*}

By \eqref{lem57_6}, we may write the above more compactly as
\begin{align}
& (\Delta^{-1} \nabla^{\perp} \omega_{-1}^{(1)}) (t,\xi(t)+y_1,y_2) \notag \\
=&\; \begin{pmatrix}  -\frac d {dt}{\xi}(t) \\ 0 \end{pmatrix} + b_0(t)
\begin{pmatrix} -y_1\\ y_2 \end{pmatrix} +r(t,y), \label{lem57_8}
\end{align}
where
\begin{align}
& |b_0(t)| \lesssim_{f_{-1},R_0} 1, \notag \\
& |r(t,y) | \lesssim_{f_{-1},R_0} |y|^2, \notag \\
& |(Dr)(t,y)| \lesssim_{f_{-1},R_0} |y|, \notag \\
& |(D^2r)(t,y)| \lesssim_{f_{-1},R_0} 1. \label{lem57_9}
\end{align}

Now we make a change of variable and set
\begin{align}
& x = (\xi(t)+y_1,y_2), \notag \\
& \omega_0^{(1)}(t,x) = \omega_0^{(1)} (t,\xi(t)+y_1,y_2) =: W_0^{(1)} (t,y_1,y_2). \label{lem57_11}
\end{align}

By using \eqref{lem57_8} and the above expressions, we can write \eqref{lem57_4} as
\begin{align*}
 &\partial_t W_0^{(1)} (t,y) + (\Delta^{-1} \nabla^{\perp} W_0^{(1)} \cdot \nabla W_0^{(1)})(t,y) \notag \\
 &\quad + b_0(t) \begin{pmatrix} -y_1\\ y_2 \end{pmatrix} \cdot \nabla W_0^{(1)} (t,y)
 +r(t,y) \cdot \nabla W_0^{(1)}(t,y)=0,
\end{align*}
where $b_0(t)$, $r(t,y)$ satisfies \eqref{lem57_9}.

By Lemma \ref{lem50}, we have for $0\le t \le \frac 1 {\log\log A}$,
\begin{align*}
 \operatorname{supp} (W_0^{(1)} (t,\cdot) ) \subset B(0,\tilde R),\quad \text{with } \tilde R\sim 2^{-A}.
\end{align*}

Therefore by \eqref{lem57_9} and Lemma \ref{lem55}, we have for $A$ sufficiently large,
\begin{align} \label{57_4_new_add1}
 \max_{0\le t \le \frac 1 {\log\log A}} \| (D \Phi ) (t,\cdot ) \|_{\infty} > \log\log\log A,
\end{align}
were $\Phi $ is the forward characteristic line associated with $W_0^{(1)}$ (see \eqref{50_1tmp_aa_1}).

Let $\phi$ be the characteristic line solving the ODE
\begin{align*}
 \begin{cases}
  \partial_t \phi(t,x) = (\Delta^{-1} \nabla^{\perp} \omega_0^{(1)} + \Delta^{-1} \nabla^{\perp} \omega_{-1}^{(1)})(t,\phi(t,x)), \\
  \phi(t=0,x)=x.
 \end{cases}
\end{align*}

Denote by $\tilde \Phi$, $\tilde \phi$ the inverse maps of $\Phi$ and $\phi$ respectively. By \eqref{lem57_11}, it is easy to check
that
\begin{align}
 \tilde \Phi(t,y) = \tilde \phi(t,\xi(t)+y), \qquad  \text{ for any $t\ge 0$ and $y\in \mathbb R^2$.}
\end{align}
 Therefore by \eqref{57_4_new_add1},
\begin{align} \label{lem57_c2}
 \max_{0\le t \le \frac 1 {\log\log A}} \| (D \phi) (t,\cdot ) \|_{\infty} > \log\log\log A.
\end{align}
Now  we just need to modify slightly the proof of Proposition \ref{prop20}.
Note that one can always choose the perturbation $\beta(x)$ (see \eqref{prop20_15}) to be odd in $x_1$ and $x_2$, for example,
\begin{align*}
 \beta(x)= \frac 1k \sin(kx_1) \sin(x_2) b(x) \frac 1 {\sqrt {M}}.
\end{align*}
Denote by $\tilde g_A$ the perturbed initial data and let $\tilde \omega^{(1)}$ be the solution
to
\begin{align*}
 \begin{cases}
  \partial_t \tilde \omega^{(1)} + \Delta^{-1} \nabla^{\perp} \tilde \omega^{(1)} \cdot \nabla \tilde \omega^{(1)}=0, \\
  \tilde \omega^{(1)} \Bigr|_{t=0} = f_{-1} +\tilde g_A.
 \end{cases}
\end{align*}
Similar to $\omega^{(1)}$ (see \eqref{lem57_3}), we also have the decomposition similar to that in \eqref{lem57_3a}:
\begin{align*}
\tilde \omega^{(1)} = \tilde \omega^{(1)}_{-1} + \tilde \omega_0^{(1)}.
\end{align*}
By our choice of perturbation (and taking $A$ sufficiently large), we have
\begin{align}  \notag
 \max_{0\le t\le \frac 1 {\log\log A}} \| \tilde \omega_0^{(1)} (t,\cdot)\|_{\dot H^1} > {(\log\log\log A)}^{\frac 13}.
\end{align}

Let $f_0=\tilde g_A$. We then compare $\tilde \omega^{(1)}$ with $\omega$ which
solves
\begin{align*}
 \begin{cases}
  \partial_t \omega + \Delta^{-1} \nabla^{\perp} \omega \cdot \nabla \omega =0, \\
  \omega \Bigr|_{t=0} = f_{-1} + f_0 +f_1,
 \end{cases}
\end{align*}
with $f_1 \in C_c^{\infty}(\mathbb R^2)$ satisfying
\begin{itemize}
 \item $\operatorname{supp} (f_1) \subset \{x=(x_1,x_2):\quad x_1\ge \frac 12 R_0\}$;
 \item $\|f_1\|_{L^1} + \| f_1\|_{L^{\infty}} \le \delta_0$
\end{itemize}
and $\delta_0$ is to be taken sufficiently small.

By an argument similar to the proof of \eqref{lem11_3}, we then have (see \eqref{lem57_1} for the definition of
$\omega_0(t)$)
\begin{align*}
 \max_{0\le t \le \frac 1 {\log\log A}} \| \omega_0(t) - \tilde \omega_0^{(1)} (t) \|_{H^2} \lesssim_{\epsilon,f_{-1},R_0} \delta_0.
\end{align*}

Therefore \eqref{lem57_2} follows by choosing $\delta_0$
sufficiently small.

\end{proof}

We are now state a weaker version of Theorem \ref{thm2} whose proof
is simpler (but elucidates the main ideas). The main difference is
that the constructed solution $\omega$ is only in $L^{\infty}$
rather than being continuous. This already gives the desired
illposedness in $H^1$ for compactly supported initial data.  We
defer the proof of Theorem \ref{thm2} slightly later since it needs
an additional perturbation argument in $C^0$.

\begin{thm} \label{thm2_weak}
Let $\omega^{(g)}_0\in C_c^{\infty}(\mathbb R^2) \cap \dot
H^{-1}(\mathbb R^2)$ be any given vorticity function which is odd in
$x_2$. For any
such $\omega^{(g)}_0$ and any $\epsilon>0$, we can find a
perturbation $\omega^{(p)}_0:\mathbb R^2\to \mathbb R$ such that the
following hold true:

\begin{enumerate}
 \item $\omega^{(p)}_0$ is compactly supported (in a ball of radius $\le 1$), continuous and
 \begin{align}
 \| \omega^{(p)}_0 \|_{\dot H^1(\mathbb R^2)}  +
 \| \omega^{(p)}_0\|_{L^{\infty}(\mathbb R^2)}+\|\omega^{(p)}_0\|_{\dot H^{-1}(\mathbb R^2)}
 <\epsilon. \notag
 \end{align}

 \item Let $\omega_0= \omega^{(g)}_0 + \omega^{(p)}_0$. Corresponding to $\omega_0$
 there exists a unique time-global solution $\omega = \omega(t)$ to the Euler equation
satisfying
\begin{align*}
\sup_{0<t<\infty} (\| \omega(t,\cdot)\|_{\infty} + \| \omega(t,\cdot)\|_{\dot H^{-1}} ) <\infty.
\end{align*}

\item $\omega(t)$ has additional local regularity in the following sense: there exists $x_* \in \mathbb R^2$ such
that for any $x\ne x_*$, there exists a neighborhood $N_x \ni x$, $t_x >0$ such that $w(t, \cdot) \in C^{\infty} (N_x)$ for any
$0\le t \le t_x$.

\item For any $0<t_0 \le 1$, we have
\begin{align*}
 \operatorname{ess-sup}_{0<t \le t_0} \| \omega(t,\cdot) \|_{\dot H^1} =+\infty.
\end{align*}
More precisely, there exist $0<t_n^1<t_n^2 <\frac 1n$, open precompact sets $\Omega_n$, $n=1,2,3,\cdots$ such that
$\omega(t) \in C^{\infty}(\Omega_n)$ for all $0\le t \le t_n^2$, and
\begin{align*}
 \| \nabla \omega(t,\cdot) \|_{L^2(\Omega_n)} >n, \quad \forall\, t\in[t_n^1,t_n^2].
\end{align*}

\end{enumerate}

\end{thm}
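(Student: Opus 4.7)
The plan is to construct the perturbation $\omega_0^{(p)}$ as a sum of countably many disjointly supported small patches $\omega_n^{(0)}$, each obtained from Lemma \ref{lem57}, whose supports accumulate at a single point $x_*$. The blow-up mechanism provided by that lemma acts inside each patch on a short time window $[t_n^1, t_n^2]$ with $t_n^2 \to 0$, and I will arrange geometry and smallness so different patches do not spoil one another's norm inflation.

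Concretely, I fix $x_* = (a_*, 0)$ lying outside the support of $\omega_0^{(g)}$ and choose patch centers $c_n = (a_n, 0)$ with $a_n \nearrow a_*$. Working inductively on $n$, in coordinates shifted so that $c_n$ is the origin, I apply Lemma \ref{lem57} with $f_{-1}$ equal to $\omega_0^{(g)}$ plus all previously constructed patches (which lies in $\{y_1 \le -2R_0^{(n)}\}$ by the spacing of the $c_k$). This yields an odd-in-$x_2$ patch $\omega_n^{(0)} \in C_c^\infty(B(c_n, \epsilon_n))$ with
\begin{align*}
\|\omega_n^{(0)}\|_{L^1} + \|\omega_n^{(0)}\|_{L^\infty} + \|\omega_n^{(0)}\|_{\dot H^1} + \|\omega_n^{(0)}\|_{\dot H^{-1}} \le \epsilon_n,
\end{align*}
an inflation time $t_n^2 \in (0, 1/n)$, and a threshold $\delta_n > 0$. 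The role of $f_1$ in Lemma \ref{lem57} is played by the tail $\sum_{m > n} \omega_m^{(0)}$; this is legitimate provided $\sum_{m>n}(\|\omega_m^{(0)}\|_{L^1} + \|\omega_m^{(0)}\|_{L^\infty}) \le \delta_n$, which I enforce by choosing future $\epsilon_m$ small enough at stage $n$. Summability $\sum \epsilon_n < \epsilon$ then makes $\omega_0^{(p)} = \sum_n \omega_n^{(0)}$ continuous (with $x_*$ the only possible bad point), compactly supported, and small in the required critical norms.

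For the dynamics, since $\omega_0 \in L^1 \cap L^\infty$, Yudovich's theory gives a unique global weak solution with conserved $L^\infty$ norm, and the $\dot H^{-1}$ bound on $\omega$ (equivalently the $L^2$ energy of $u$) is preserved. Away from $x_*$ each spatial point lies in at most one patch support, and the contribution from the remaining patches is smoothing via the kernel-truncation argument of Lemmas \ref{lem10}--\ref{lem11}; iterating those estimates yields $C^\infty$ regularity locally for a short time, giving conclusion (3). For conclusion (4), define $\Omega_n$ as the time-evolved neighborhood of the support of $\omega_n^{(0)}$ over $[0, t_n^2]$, whose extent is controlled by Lemma \ref{lem50}; on this set during $[t_n^1, t_n^2]$ the only nontrivial vorticity is the evolution of the $n$-th patch, and Lemma \ref{lem57} guarantees $\|\nabla \omega(t,\cdot)\|_{L^2(\Omega_n)} > 1/\epsilon_n > n$.

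The main obstacle is the compatibility bookkeeping: at stage $n$ I must fix $\epsilon_n$, a separation scale $R_0^{(n)} \gg \epsilon_n$, and a threshold $\delta_n$ \emph{before} choosing later parameters, yet the total tail of later patches must fit within $\delta_n$. This is handled by a standard nested schedule (for instance $\epsilon_{n+1} \ll 2^{-n}\delta_n$, with $c_{n+1}$ placed so that all remaining centers lie in $\{y_1 \ge R_0^{(n)}\}$ in the $n$-th frame), but verifying that the perturbation arguments of Lemmas \ref{lem10}, \ref{lem11}, \ref{lem57} accommodate the infinite telescoping, and that the local $C^\infty$ regularity survives the limit, is the most technical part.
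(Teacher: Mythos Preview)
Your proposal is correct and follows essentially the same strategy as the paper: place $\omega_0^{(g)}$ to the left, choose patch centers $c_n$ accumulating at a point $x_*$ on the $x_1$-axis, apply Lemma \ref{lem57} inductively with $f_{-1}$ equal to everything already built and $f_1$ standing for the future tail, and enforce the nested smallness $\sum_{m>n}\epsilon_m \le \delta_n$ so that each patch's inflation survives the addition of all later patches. The geometry, the use of odd-in-$x_2$ symmetry, and the identification of $\Omega_n$ with the short-time neighborhood of the $n$-th patch all match the paper.

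The one place where the paper invests more than you do is in passing local $C^\infty$ regularity to the limit. You invoke Yudovich theory for existence and uniqueness (the paper explicitly notes this would suffice), but then for statement~(3) you still need to show that the Yudovich solution is actually smooth on each $\Omega_{<n}\cup\Omega_n$ for $t\le t_n^2$. The paper does this by working with the finite truncations $\omega^{(J)}$: it proves directly that $(\omega^{(J)})$ is Cauchy in $C_t^0\dot H^{-1}$ via a Yudovich-style $p\to\infty$ energy argument on $\||\nabla|^{-1}(\omega^{(J+1)}-\omega^{(J)})\|_2$, interpolates with the uniform $L^\infty$ bound to get Cauchy in every $L^p$, and then combines this $L^p$ convergence with the uniform $H^k$ patch bounds from Lemma~\ref{lem10} to force convergence in $H^k$ on each fixed patch region. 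Your sketch (``iterating those estimates'') is pointing at the right ingredients, but you should be aware that the contraction-in-$L^p$ step is what actually upgrades the uniform Sobolev bounds on the approximants to smoothness of the limit; without it the uniform $H^k$ bounds alone only give weak-$*$ limits. This is precisely the ``most technical part'' you flagged, and the paper's $\dot H^{-1}$ Cauchy computation is the concrete mechanism.
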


\subsection{Proof of Theorem \ref{thm2_weak}}
We begin by noting that the support condition in Statement (1) (``compactly supported in a ball of radius $\le 1$'') is rather
easy to achieve: one only needs to change the parameters of the distances between the patch solutions in our construction below.
Similar comment also applies to the condition ``$\| \omega^{(p)}_0 \|_{\dot H^1(\mathbb R^2)}  +
 \| \omega^{(p)}_0\|_{L^{\infty}(\mathbb R^2)}+\|\omega^{(p)}_0\|_{\dot H^{-1}(\mathbb R^2)}
 <\epsilon$''.  Therefore we shall ignore all these conditions below. In particular
 to simplify notation we will construct
 $\omega_0^{(p)}$ of order $1$.
 Also without loss of generality we may assume $\omega_0^{(g)}$ is supported (say) in a ball of radius $\le \frac 1{1000}$.

Define $z_0=(-2,0)$, $z_1=(0,0)$. For each integer $j\ge 2$, define
\begin{align}
 z_j = (z_j^{(1)}, 0)=\Bigl( \sum_{k=1}^{j-1} \frac {100} {2^k}, \, 0\Bigr) \label{def_zj1}
\end{align}

Obviously for any $j\ge 2$, we have
\begin{align*}
 & |z_j -z_{j-1}| = \frac {100} {2^{j-1}}, \notag \\
 & |z_{j+1}-z_j|= \frac {100} {2^j}.
\end{align*}

We shall choose $z_j$, $j\ge 0$ to be the center of the $j^{th}$ patch.

Now  define $h_0(x)=\omega_0^{(g)} (x-z_0)$ and $\delta_0=1$. By Lemma \ref{lem57} with $f_{-1}=h_0$, $R_0=\frac 14$,
$\epsilon = 1/800$, we can find $\delta_1>0$, $0<t_1<\frac 12$, and $h_1 \in C_c^{\infty} (B(0,\frac 1{800}))$ with the properties
\begin{itemize}
 \item $h_1$ is an odd function of $x_2$;
 \item $\| h_1 \|_{L^1} +\|h_1\|_{L^{\infty}} + \| h_1 \|_{H^1} + \| h_1 \|_{\dot H^{-1}} \le \frac 18$;
\end{itemize}
such that for any $\tilde f \in C_c^{\infty} (\mathbb R^2)$ with
\begin{itemize}
 \item $\operatorname{supp}(\tilde f ) \subset \{ x =(x_1,x_2):\quad x_1\ge \frac 14\}$;
 \item $\| \tilde f \|_{L^1} + \| \tilde f\|_{L^{\infty}} \le \delta_1$,
\end{itemize}
the following hold true:

For the Euler equation
\begin{align*}
 \begin{cases}
  \partial_t \omega +\Delta^{-1} \nabla^{\perp } \omega \cdot \nabla \omega =0, \\
  \omega \Bigr|_{t=0} =h_0+ h_1 + \tilde f,
 \end{cases}
\end{align*}
the smooth solution $\omega=\omega(t)$ satisfies:
\begin{enumerate}
 \item For any $0\le t \le t_1$, $\omega(t)$ can be decomposed as
 \begin{align*}
  \omega(t,x) = \omega_{h_0}(t,x)+\omega_{h_1} (t,x) +\omega_{\tilde f}(t,x),
 \end{align*}
where
\begin{align*}
 & \operatorname{supp} (\omega_{h_0} (t,\cdot) ) \subset B(0,-2+\frac 1{32}), \\
 & \operatorname{supp} (\omega_{h_1} (t,\cdot) ) \subset B(0,\frac 18+\frac 1{32}), \\
 & \operatorname{supp} (\omega_{\tilde f} (t,\cdot )) \subset \{x=(x_1,x_2):\quad x_1\ge \frac 14 +\frac 1{32} \};
\end{align*}

\item \begin{align*}
        \| \omega_{h_1} (t_1,\cdot) \|_{\dot H^1} >8.
      \end{align*}
\end{enumerate}

We now inductively assume that for $1\le i\le j$, we have chosen $h_i \in C_c^{\infty} (B(z_i, \frac 1 {2^{i+9}} ))$ which is odd in $x_2$,
$0<t_i<\frac 1 {2^i}$, $\delta_i>0$, with
\begin{align}
 & \| h_i \|_{L^1} + \| h_i \|_{L^{\infty}} +\|h_i\|_{H^1} + \| h_i \|_{\dot H^{-1}} \notag \\
 & \quad \le \frac 1 {2^i} \min_{0\le k <i} \delta_k, \label{pf2_10}
\end{align}
such that for any $\tilde f \in C_c^{\infty} (\mathbb R^2)$ with
\begin{itemize}
 \item $\operatorname{supp}(\tilde f) \subset \{ x=(x_1,x_2):\quad x_1 \ge z_{i+1}^{(1)} -\frac 1 {2^i}\}$ (see \eqref{def_zj1}) for the definition of
 $z_j^{(1)}$);
 \item $\| \tilde f \|_{L^1} +\| \tilde f \|_{L^{\infty}} \le \delta_i$,
\end{itemize}
the solution $\omega(t)$ to the equation
\begin{align*}
 \begin{cases}
  \partial_t \omega + \Delta^{-1} \nabla^{\perp} \omega \cdot \nabla \omega =0, \\
  \omega\Bigr|_{t=0} = \sum_{l=1}^{i-1} h_l +h_i +\tilde f,
 \end{cases}
\end{align*}
satisfies the properties:

\begin{enumerate}
 \item for any $0\le t \le t_i$, we have the decomposition
 \begin{align} \label{pf2_20}
  \omega(t,x) = \omega_{\le i-1} (t,x) + \omega_i(t,x) + \omega_{\tilde f }(t,x),
 \end{align}
where
\begin{align*}
 & \operatorname{supp} (\omega_{\le i-1} (t,\cdot )) \subset \{ x=(x_1,x_2):\quad x_1 \le z_{i-1}^{(1)}+\frac 1{2^{i}} \}; \\
 & \operatorname{supp} ( \omega_{i} (t,\cdot ) ) \subset \{ x=(x_1,x_2): \quad |x-z_i| \le \frac 1 {2^i} \}; \\
 & \operatorname{supp} (\omega_{\tilde f} (t,\cdot) ) \subset\{ x=(x_1,x_2): \quad x_1 \ge z_{i+1}^{(1)}-\frac 1{2^i} \};
\end{align*}

\item $\|\omega_i(t_i,\cdot ) \|_{\dot H^1} >2^i$.
\end{enumerate}

Then for $i=j+1$, by shifting the coordinate axis to $z_{j+1}$ if necessary, we can apply Lemma \ref{lem57}
 with $f_{-1} = \sum_{i=0}^j h_i$, $\epsilon \ll \frac 1 {2^{i+1}} \min_{0\le k\le i} \delta_k$, and choose
 $h_{j+1} \in C_c^{\infty} (B(z_{j+1}, \frac 1 {2^{j+9}} )$ to satisfy all the needed properties similar to
 the $i^{th}$ step. This way we have completely specified the profiles of all $h_j$, $j=0,1,2,\cdots$.

 Now we define the initial data
 $$\omega_0 =\sum_{j=0}^{\infty} h_j.$$

 It is easy to check that $\omega_0$ is compactly supported and $\omega_0 \in  L^{\infty} \cap \dot H^{-1} \cap H^1$.
 We can then appeal to Yudovich theory to construct the corresponding solution in $L^{\infty}$. However here for the sake
 of completeness we shall carry out a direct construction which also yields additional local regularity of the solution.
 For simplicity we shall be content with constructing a local solution on some time interval $[0,T_0]$ with $T_0>0$. The breakdown
 of $\dot H^1$ regularity will happen arbitrarily close to time $t=0$.

 To this end, denote the approximating initial data
 \begin{align*}
  \omega_0^{(J)} = \sum_{j=0}^J h_j
 \end{align*}
and let $\omega^{(J)}$ be the solution to the Euler equation
\begin{align} \label{pf2_99_a}
 \begin{cases}
  \partial_t \omega^{(J)} + \Delta^{-1} \nabla^{\perp} \omega^{(J)} \cdot \nabla \omega^{(J)} =0, \\
  \omega^{(J)} \Bigr|_{t=0} = \omega_0^{(J)}.
 \end{cases}
\end{align}

By using $L^p$, $1\le p\le \infty$ conservation of vorticity, $L^2$ conservation of velocity, it is easy to check that
\begin{align}
\sup_J \sup_{0\le t <\infty } \Bigl( \| \omega^{(J)}(t,\cdot) \|_{L^{1}} + \| \omega^{(J)}(t,\cdot) \|_{L^{\infty}} +
\| \omega^{(J)} (t,\cdot) \|_{\dot H^{-1}} \Bigr)\lesssim 1. \label{pf2_99}
\end{align}

We now verify that $(\omega^{(J)} (t) )$ forms a Cauchy sequence in the Banach space $C_t^0 \dot H^{-1} ([0,T_0])$ for some $T_0>0$
sufficiently small.

Denote $\eta^{(J+1)} = \omega^{(J+1)} - \omega^{(J)}$. Then
\begin{align}
 \label{pf2_100}
 \begin{cases}
  \partial_t \eta^{(J+1)} + \Delta^{-1} \nabla^{\perp} \eta^{(J+1)} \cdot \nabla \omega^{(J+1)} + \Delta^{-1} \nabla^{\perp} \omega^{(J)}
  \cdot \nabla \eta^{(J+1)} =0, \\
  \eta^{(J+1)} \Bigr|_{t=0}=h_{J+1}.
 \end{cases}
\end{align}

Multiplying both sides of \eqref{pf2_100} by $(-\Delta)^{-1}
\eta^{(J+1)}$, integrating by parts and using \eqref{pf2_99}, we
have

\begin{align}
 &\frac d {dt} \Bigl( \| |\nabla|^{-1} \eta^{(J+1)}  \|_2^2 \Bigr) \notag \\
 \le & \| |\nabla|^{-1} \eta^{(J+1) } \|_2^2 \cdot \| \omega^{(J+1)} \|_{\infty}
 +\left| \int_{\R^2} (\Delta^{-1} \nabla^{\perp} \omega^{(J)} \cdot \nabla \eta^{(J+1)}) \Delta^{-1} \eta^{(J+1)} dx \right| \notag \\
 & \lesssim \| |\nabla|^{-1} \eta^{(J+1)} \|_2^2
   +\left| \int_{\R^2 } (\Delta^{-1} \nabla^{\perp} \omega^{(J)} \cdot \nabla \eta^{(J+1)}) \Delta^{-1} \eta^{(J+1)} dx \right|.
 \label{pf2_101}
\end{align}

Here note that since $\eta^{(J+1)}$ is an odd function of $x_2$, we have $\widehat{\eta^{(J+1)}}(\xi=0)=0$. From this it is
easy to check that $\Delta^{-1} \eta^{(J+1)}$ is a smooth function. By successive integration by parts and H\"older, we have
\begin{align}
  & \left|\int_{\R^2} (\Delta^{-1} \nabla^{\perp} \omega^{(J)} \cdot \nabla \eta^{(J+1)}) \Delta^{-1} \eta^{(J+1)} dx \right| \notag \\
  = & \left|\int_{\R^2} (\Delta^{-1} \nabla^{\perp} \omega^{(J)} \cdot \nabla (\Delta \Delta^{-1}\eta^{(J+1)}) )\Delta^{-1} \eta^{(J+1)} dx \right| \notag \\
  = & \left| \sum_{k=1}^2
  \int_{\R^2} (\Delta^{-1}  \nabla^{\perp} \partial_k \omega^{(J)} \cdot \nabla (\Delta^{-1}\eta^{(J+1)} ) )\partial_k \Delta^{-1} \eta^{(J+1)} dx \right|
  \notag \\
  \lesssim & \| \mathcal R_{ij} \omega^{(J) }\|_p \cdot \| |\nabla|^{-1} \eta^{(J+1)} \|^2_{\frac {2p}{p-1}}, \label{pf2_102}
\end{align}
where $\mathcal R_{ij}$ is the usual Riesz transform and $1<p<\infty$.

Now recall that for $2\le p<\infty$, we have
\begin{align*}
 \| \mathcal R_{ij} \omega^{(J)} \|_p \le C_1 \cdot p \|\omega^{(J)}\|_p,
\end{align*}
where $C_1>0$ is an absolute constant.

By H\"older and \eqref{pf2_99}, we have
\begin{align*}
 \| |\nabla|^{-1} \eta^{(J+1)} \|_{\frac{2p}{p-1}} & \le
 \| |\nabla|^{-1} \eta^{(J+1)} \|_2^{\frac{p-1} p} \cdot \| |\nabla|^{-1} \eta^{(J+1)} \|_{\infty}^{\frac 1p} \notag \\
 & \lesssim \| |\nabla |^{-1} \eta^{(J+1)} \|_2^{\frac{p-1} p}.
\end{align*}

Plugging the last two estimates and \eqref{pf2_102} into \eqref{pf2_101}, we obtain
\begin{align}
 \frac d {dt} \Bigl( \| |\nabla|^{-1} \eta^{(J+1)} \|_2^2 \Bigr)
 \lesssim \| |\nabla|^{-1} \eta^{(J+1)} \|_2^2 + p \cdot \| |\nabla|^{-1} \eta^{(J+1)} \|_2^{\frac{2(p-1)} p}.
\notag
\end{align}

Now denote
$$a(t):= \| |\nabla|^{-1} \eta^{(J+1)}(t)\|_2^2.$$

We then obtain for some absolute constant $C>0$,
\begin{align*}
 a^{\prime} (t) \le C \cdot (a(t) + p a(t)^{\frac{p-1} p}).
\end{align*}
By \eqref{pf2_99}, we have $a(t)=a(t)^{\frac {p-1}p} a(t)^{\frac 1p} \lesssim a(t)^{\frac {p-1}p}$. Hence
\begin{align*}
 a^{\prime} (t) \le  C^{\prime} p a(t)^{\frac {p-1}p}, \quad\forall\, t\ge 0,
\end{align*}
where $C^{\prime}>0$ is another absolute constant.
Integrating in time gives the inequality,
\begin{align*}
 a(t)^{\frac 1p} \le a(0)^{\frac 1p} +C^{\prime } \cdot t, \quad\forall\, t\ge 0.
\end{align*}
 We now choose $T_0>0$ such that $C^{\prime }T_0\le \frac 14$. Then
for all $0\le t \le T_0$,
\begin{align*}
 a(t) \le (a(0)^{\frac 1p} +\frac 14)^p.
\end{align*}

Now since $a(0) = \| |\nabla|^{-1} \eta^{(J+1)} (0)\|_2^2 = \| |\nabla|^{-1} h_{J+1} \|_2^2 \le  {4^{-(J+1)}}$ (see \eqref{pf2_10}),
taking $p=J$ gives us
\begin{align*}
 a(t) & \le (4^{-\frac{J+1} p} +\frac 14 )^p \notag \\
 & \le (\frac 12)^p=2^{-J},\quad \forall \, 0\le t\le T_0.
\end{align*}

Therefore
\begin{align*}
 \max_{0\le t \le T_0} \| |\nabla|^{-1} \eta^{(J+1)} (t,\cdot) \|_2^2 \le 2^{-J}
\end{align*}
and consequently $(\omega^{(J)})$ is Cauchy in $C_t^0 \dot H^{-1} ([0,T_0])$. Taking the limit $J\to \infty$ gives us
the desired solution in $L^1 \cap L^{\infty} \cap \dot H^{-1}$. By a simple interpolation argument we also have
$(\omega^{(J)})$ is Cauchy in $L^p$ for any $1<p<\infty$.

Set $x_*= \lim_{j\to \infty} z_j = (100,0)$. We now prove statement
(3) and (4) in Theorem \ref{thm2_weak}. Fix any integer $n\ge 2$ and
we choose $t_n<\frac 1{2^n}$ in the same way as specified in
\eqref{pf2_10}. By our way of construction, the fact that
$(\omega^{(J)})$ is Cauchy in $L^p$ for any
$1<p<\infty$\footnote{This is used to show the contraction of all
higher Sobolev norms in patches away from the limiting point $x_*$
(and in a very small time interval) which will yield the
$C^{\infty}$ regularity of $\omega_{<n}$ and $\omega_n$ in
\eqref{pf2_104} below.} and (a version of) Lemma \ref{lem10}, we
have that the limit solution $\omega$ obeys the following
decomposition similar to that in \eqref{pf2_20}:

More precisely define $t_n^2=t_n$, then for any $0\le t \le t_n^2$, we have
\begin{align} \label{pf2_104}
 \omega(t,x) = \omega_{<n} (t,x) + \omega_n(t,x) + \omega_{>n}(t,x),
\end{align}
where $\omega_{<n}(t,\cdot) \in C^{\infty}_c(\Omega_{<n})$, $\omega_n(t,\cdot) \in C_c^{\infty}(\Omega_n)$, and
\begin{align*}
&\Omega_{<n}:= \{ x=(x_1,x_2):\quad |x|<1000 \text{ and } x_1 < z_{n-1}^{(1)}+\frac 2{2^{n}} \}; \\
 & \Omega_{n} := \{ x=(x_1,x_2): \quad |x-z_n| < \frac 2 {2^n} \}; \\
 & \operatorname{supp} (\omega_{>n} (t,\cdot) ) \subset\{ x=(x_1,x_2): \quad x_1 \ge z_{n+1}^{(1)}-\frac 1{2^n} \};
\end{align*}
Furthermore we can choose $t_n^1<t_n^2$ ($t_n^1$ is sufficiently close to $t_n^2$) such that
 \begin{align*}
 \|\omega_n(t) \|_{\dot H^1} >n, \quad \forall\,t \in[t_n^1,t_n^2].
 \end{align*}

Therefore statement (4) in Theorem \ref{thm2_weak} is proved. Now for statement (3) we discuss two cases.
If $x=(x_1,x_2) \ne x_*=(100,0)$ and $x_1\ge 100$, then by using finite transportation speed we can find
a neighborhood $N_x$ of $x$ and $t_x>0$ sufficiently small such that $\omega(t,x)=0$ for any $0\le t\le t_x$ and $x \in N_x$.
Similarly we can treat the case $x=(x_1,x_2)$, $x_1<100$ and $|x|>500$.
On the other hand if $x=(x_1,x_2)$ and $x_1<100$ with $|x|\le 500$, then we can find $n$ sufficiently large such that $x \in \Omega_{<n}$.
Obviously we just need to define $N_x=\Omega_{<n}$ and $t_x=t_n^2$ so that $\omega(t,\cdot) \in C^{\infty} (N_x)$ for all
$0\le t\le t_x$.

This concludes the proof of Theorem \ref{thm2_weak}.
\medskip
$$\;$$

We now state the $C^0$-perturbation lemma needed for the proof of
Theorem \ref{thm2}.

\begin{lem} \label{lem_C0thm2}
Let $R_0>0$ and $f\in C_c^{\infty}(B(0,R_0))$, $g\in C_c^{\infty}(B(0,R_0))$.
Let $\omega^a$ and $\omega$ be smooth solutions to the following
2D Euler equations:
\begin{align} \label{lem_C0thm2_t1}
\begin{cases}
\partial_t {\omega^a} + (u^a \cdot \nabla) {\omega^a}
 =0, \quad 0<t\le 1, \, x\in\mathbb R^2, \\
u^a=\Delta^{-1} \nabla^{\perp} \omega^a, \\
\omega^a \Bigr|_{t=0} =f.
\end{cases}
\end{align}
\begin{align} \label{lem_C0thm2_t2}
\begin{cases}
\partial_t \omega + (u \cdot \nabla)\omega
 =0,\quad 0<t\le 1,\, x\in \mathbb R^2, \\
u=\Delta^{-1} \nabla^{\perp} \omega, \\
\omega \Bigr|_{t=0} =f+g.
\end{cases}
\end{align}

For any $\epsilon>0$, there exists $\delta=\delta(\epsilon,R_0,f)>0$
sufficiently small such that if
\begin{align*}
\| g\|_{\infty}<\delta
\end{align*}
then
\begin{align} \label{lem_C0thm2_t3}
\max_{0\le t\le 1}\| \omega^a(t,\cdot) -\omega(t,\cdot)\|_{\infty}
<\epsilon.
\end{align}

\end{lem}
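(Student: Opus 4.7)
The plan is to subtract the two equations \eqref{lem_C0thm2_t1}--\eqref{lem_C0thm2_t2} and close a linear $L^\infty$ Gronwall inequality for the difference $\eta := \omega - \omega^a$. Direct computation gives
\begin{equation*}
\partial_t \eta + (u\cdot\nabla)\eta = -(u - u^a)\cdot\nabla \omega^a,\qquad \eta\big|_{t=0}=g,
\end{equation*}
and integrating along the characteristics $\phi$ of $u$ and taking the spatial supremum yields
\begin{equation*}
\|\eta(t,\cdot)\|_\infty \le \|g\|_\infty + \|\nabla\omega^a\|_{L^\infty_{t,x}([0,1]\times\mathbb R^2)}\int_0^t\|u(s)-u^a(s)\|_\infty\,ds.
\end{equation*}
Because $f\in C_c^\infty$, standard 2D Euler theory (Beale--Kato--Majda plus persistence of higher regularity, using the conservation of $\|\omega^a\|_\infty$) yields a constant $C_f$ depending only on $f$ such that $\|\nabla\omega^a\|_{L^\infty([0,1]\times\mathbb R^2)}\le C_f$.

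I would next record the uniform support bound. For $\delta\le 1$, both initial vorticities lie in $L^1\cap L^\infty$ with norms bounded by some $C(f,R_0)$; the two-dimensional interpolation $\|v\|_\infty\lesssim\|\mathrm{curl}\,v\|_1^{1/2}\|\mathrm{curl}\,v\|_\infty^{1/2}$ then bounds $\|u\|_\infty+\|u^a\|_\infty\le C_{f,R_0}$, and since both vorticities are transported by these velocities the supports of $\omega(t,\cdot)$ and $\omega^a(t,\cdot)$ remain in a fixed ball $B(0,R_1)$ with $R_1=R_1(f,R_0)$ for all $t\in[0,1]$. Consequently $\operatorname{supp}(\eta(t,\cdot))\subset B(0,R_1)$, so $\|\eta(t)\|_1\le\pi R_1^2\,\|\eta(t)\|_\infty$.

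Applying the same Biot--Savart interpolation to $u-u^a=\Delta^{-1}\nabla^\perp\eta$ and using the support bound gives
\begin{equation*}
\|u(t)-u^a(t)\|_\infty \;\lesssim\; \|\eta(t)\|_1^{1/2}\|\eta(t)\|_\infty^{1/2} \;\le\; C_{f,R_0}\,\|\eta(t)\|_\infty.
\end{equation*}
Substituting this into the characteristic estimate produces the closed linear inequality
\begin{equation*}
\|\eta(t)\|_\infty \le \delta + C_{f,R_0}\int_0^t\|\eta(s)\|_\infty\,ds,\qquad t\in[0,1],
\end{equation*}
and Gronwall yields $\|\eta(t)\|_\infty\le \delta\,e^{C_{f,R_0}}$ uniformly on $[0,1]$. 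Choosing $\delta<\epsilon\,e^{-C_{f,R_0}}$ then delivers \eqref{lem_C0thm2_t3}.

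The only step with genuine content is the comparison of the two velocities, and it is tame here precisely because $\eta$ has compact support inside a fixed ball (so $\|\eta\|_1$ is controlled by $\|\eta\|_\infty$) and because $\omega^a$ is truly Lipschitz, not merely log-Lipschitz. The smoothness of $f$ is essential to both features: if $f$ were only bounded, the factor $\|\nabla\omega^a\|_\infty$ would have to be replaced by a log-type modulus and the argument would be forced into an Osgood-type estimate rather than a straightforward Gronwall; under the hypotheses of the lemma, however, the proof is purely a linear Gronwall in $L^\infty$.
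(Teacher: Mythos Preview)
Your proof is correct and follows essentially the same approach as the paper's: subtract the equations, integrate along the characteristics of $u$, bound $\|\nabla\omega^a\|_{L^\infty_{t,x}}$ using the smoothness of $f$, use the uniform support bound to control $\|u-u^a\|_\infty$ by $\|\eta\|_\infty$, and close by Gronwall. The only cosmetic difference is that the paper bounds $\|\Delta^{-1}\nabla^\perp\eta\|_\infty$ via $\|\eta\|_1+\|\eta\|_\infty$ while you use the interpolation $\|\eta\|_1^{1/2}\|\eta\|_\infty^{1/2}$, but on the fixed support ball these are equivalent.
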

\begin{proof}[Proof of Lemma \ref{lem_C0thm2}]
By first taking $\|g\|_{\infty}\lesssim 1$, we have $\|f+g\|_{\infty} \lesssim_{f,R_0} 1$.
Since $\operatorname{supp}(f) \subset B(0,R_0)$ and $\operatorname{supp}(g) \subset B(0,R_0)$, we get
\begin{align*}
&\operatorname{supp}(\omega(t,\cdot))\subset B(0,R_1), \\
&\operatorname{supp}(\omega^a(t,\cdot))\subset B(0,R_1), \quad\forall\, 0\le t\le 1,
\end{align*}
where $R_1>0$ is some constant depending on $R_0$ and $\|f\|_{\infty}$ only.

Set $\eta=\omega^a-\omega$. Then $\eta$ satisfies the equation
\begin{align} \label{lem5.6_pf1}
\begin{cases}
\partial_t \eta + (\Delta^{-1} \nabla^{\perp} \eta)\cdot \nabla \omega^a + u\cdot \nabla \eta =0, \\
\eta(0)=g.
\end{cases}
\end{align}

By a simple energy estimate, we have
\begin{align*}
\max_{0\le t\le 1} \| \nabla \omega^a(t,\cdot)\|_{\infty} \lesssim_{f} 1.
\end{align*}

On the other hand, since $\operatorname{supp}(\eta(t,\cdot) ) \subset B(0,R_1)$ for any $0\le t\le 1$, we have
\begin{align*}
 \| \Delta^{-1} \nabla^{\perp} \eta(t,\cdot)\|_{\infty} & \lesssim \| \eta(t,\cdot)\|_1 + \| \eta(t,\cdot)\|_{\infty} \notag \\
 & \lesssim_{R_1} \| \eta(t,\cdot)\|_{\infty}.
\end{align*}

By \eqref{lem5.6_pf1}, we then get for any $0\le t\le 1$,
\begin{align*}
\| \eta(t,\cdot)\|_{\infty} \lesssim_{R_1,f} \| g\|_{\infty} + \int_0^t \| \eta(s,\cdot)\|_{\infty} ds.
\end{align*}

A Gronwall argument then yields
\begin{align*}
\max_{0\le t\le 1} \| \eta(t,\cdot)\|_{\infty} \lesssim_{R_1,f} \| g\|_{\infty}.
\end{align*}

Therefore \eqref{lem_C0thm2_t3} follows by choosing $\| g\|_{\infty}$ sufficiently small.
\end{proof}

We now sketch the

\begin{proof}[Proof of Theorem \ref{thm2}]
We still choose the centers $z_j$ as in \eqref{def_zj1}. The main modification
is the choice of the initial profiles $h_j$ (at $z_j$). For this we need to modify
Lemma \ref{lem57} and combine Lemma \ref{lem_C0thm2} so that each when a new profile
$h_J$ ,$J\ge 2$ is added, besides satisfying the existing constraints (as specified in Lemma \ref{lem57}),
the following also hold:

let $\omega^{(J)}$ be defined the same way as in \eqref{pf2_99_a}, then $\|h_{J}\|_{\infty}$ is
sufficiently small (as specified in Lemma \ref{lem_C0thm2}) such that
\begin{align*}
\max_{0\le t\le 1}\| \omega^{(J)}(t,\cdot) -\omega^{(J-1)}(t,\cdot) \|_{\infty}\le 2^{-J}.
\end{align*}

Note that by \eqref{pf2_99} we can always guarantee for some constant $R>0$ that
\begin{align*}
\operatorname{supp}( \omega^{(J)}(t,\cdot)) \subset B(0,R), \quad \forall\, 0\le t\le 1,\, J\ge 1.
\end{align*}

We then view $(\omega^{(J)})_{J\ge 1}$ as a Cauchy sequence in the Banach space
$C_t^0 C_x^0 ([0,1]\times \overline{B(0,R)})$ and extracts the limit solution $\omega$ in the
same space. By interpolation and Sobolev embedding, easy to check that $u^{(J)}=\Delta^{-1} \nabla^{\perp} \omega^{(J)}$
also forms a Cauchy sequence in $C_t^0 L_x^2 \cap C_t^0 C_x^{\alpha} ([0,1]\times \mathbb R^2)$ for any $0<\alpha<1$.
Therefore $u^{(J)}$  converges to the limit $u=\Delta^{-1} \nabla^{\perp} \omega$ and $\omega$ is the desired solution.
The other properties of $\omega$ can be proved in the same way as in Theorem \ref{thm2_weak}. We omit
the repetitive details.

\end{proof}

\section{3D Euler with non-compactly supported data} \label{sec_3D_1}
The main building block in our construction for 3D Euler is the axisymmetric flow without swirl on $\mathbb R^3$.
To unify the notation we first recall the definition and review some useful properties.
We shall review some relevant literature along the way.

By an axisymmetric scalar function on $\mathbb R^3$, we mean a function of the form $f=f(r,z)$, with
$x=(x_1,x_2,z)$, $r=\sqrt{x_1^2+x_2^2}$. In other words the function is invariant under any rotation
about the vertical axis $OZ=\{(0,0,z):\, z\in \mathbb R\}$. Analogously one can define an axisymmetric
vector field on $\mathbb R^3$. The velocity field $u$ representing an axisymmetric flow on
$\mathbb R^3$ generally takes the form
\begin{align*}
u(x_1,x_2,z)= u^r(r,z) e_r + u^{\theta}(r,z) e_{\theta} + u^z(r,z) e_z,
\end{align*}
where $(e_r,e_{\theta},e_{z})$ is the standard orthogonal basis for the cylindrical coordinate system:
\begin{align*}
&e_r = \frac 1 r (x_1,x_2,0), \quad e_{\theta}= \frac 1r (-x_2,x_1,0), \\
&e_z=(0,0,1).
\end{align*}
Here $u^r$, $u^{\theta}$, $u^{z}$ are called the radial, angular/swirl and axial velocity respectively.
By an axisymmetric flow without swirl, we mean the angular (swirl) component $u^{\theta}\equiv 0$, that is
\begin{align*}
u(x_1,x_2,z)=u^r(r,z) e_r + u^z(r,z) e_z.
\end{align*}
The corresponding vorticity $\omega=\nabla\times u$ then reduces to
\begin{align*}
\omega(x_1,x_2,z)= \omega^{\theta}(r,z) e_{\theta} =(\partial_z u^r - \partial_r u^z) e_{\theta}.
\end{align*}
It follows easily that
\begin{align*}
(\omega \cdot \nabla)u &= (\omega^{\theta} e_{\theta} \cdot \nabla ) u \notag \\
&= \omega^{\theta} \frac 1 r u^r e_{\theta} = \frac{u^r} r \omega.
\end{align*}
Therefore the vorticity equation reads as
\begin{align*}
\partial_t \omega + (u^r \partial_r + u^z \partial_z ) \omega = \frac 1 r u^r \omega,
\end{align*}
or simply,
\begin{align*}
\partial_t \left( \frac{\omega} r \right) + (u\cdot \nabla) \left( \frac{\omega} r \right)=0.
\end{align*}
In this way we obtain a transport equation for the quantity $\omega/r$. Since the velocity
$u$ is divergence-free, all $L^p$, $1\le p\le \infty$ and similar Lorentz space norms of $\omega/r$
are preserved in time. These important conservation laws are the key to obtaining global solutions.
Indeed Ukhovskii and Yudovich \cite{UY68}, and independently Ladyzhenskaya \cite{La68} first proved
global wellposedness for initial velocity $u_0 \in H^1$ with initial vorticity satisfying
$\omega_0 \in L^2 \cap L^{\infty}$, $\frac{\omega_0}r \in L^2 \cap L^{\infty}$. In terms of Sobolev
regularity, one need the initial velocity $u_0 \in H^s$, $s>7/2$ to have $\frac 1r \omega_0 \in L^{\infty}$.
In \cite{SY94}, Shirota and Yanagisawa weakened the regularity on velocity to $u_0 \in H^s$, $s>5/2$ which
is the borderline in view of the $H^s$ local wellposedness theory in 3D. Danchin \cite{Danchin07} showed
global existence and uniqueness of solutions for initial vorticity $\omega_0 \in L^{\infty}\cap L^{3,1}$
with $\frac{\omega}r \in L^{3,1}$ in bounded domains with $C^{2,\alpha}$ boundary
or the whole space $\mathbb R^3$. In \cite{AHK10} Abidi, Hmidi and Keraani\footnote{The work of Danchin \cite{Danchin07}
does not address the propagation of critical Besov regularity $B^{\frac 3p+1}_{p,1}$ due to the lack of
Beale-Kato-Majda criteria in borderline Besov spaces.} proved global wellposedness in the space
$C_t^0 B^{1+\frac 3p}_{p,1}(\mathbb R_+\times \mathbb R^3)$ for initial velocity $u_0 \in B_p^{\frac 3p+1}(\mathbb R^3)$,
$1<p\le \infty$ with the additional mild assumption that $\frac{\omega_0} r \in L^{3,1}(\mathbb R^3)$ and
$\omega_0 \in \widetilde B^0_{\infty,1}$. Here $\widetilde B^0_{\infty,1}$ is a subspace of $B^0_{\infty,1}$
containing
\begin{align*}
\Bigl\{ u \in \mathcal S^{\prime}(\mathbb R^3):\quad \sum_{\substack{N\ge 2\\ \text{$N$ is dyadic}}}
(\log N) \| P_N u\|_{\infty} <\infty\Bigr\}.
\end{align*}

We now state a few basic lemmas needed for our construction later. Some of these are well-known
facts which are already extensively used in the aforementioned works.

\begin{lem}[$L^{p,q}$-preservation] \label{lem_Lpq_1}
Let $1\le p,q\le\infty$. Suppose $u$ is a given smooth divergence-free vector field on $\mathbb R^d$, $d\ge 2$.
Let $h$ be the smooth solution to the transport equation
\begin{align*}
\begin{cases}
\partial_t h + (u \cdot \nabla) h =f, \\
h\Bigr|_{t=0}=h_0.
\end{cases}
\end{align*}
Then for any $t>0$, we have
\begin{align*}
\| h(t) \|_{L^{p,q}(\mathbb R^d)} \le \|h_0 \|_{L^{p,q}(\mathbb R^d)} + \int_0^t \| f(\tau)\|_{L^{p,q}(\mathbb R^d)}
d \tau.
\end{align*}
If $f\equiv 0$, then
\begin{align*}
\| h(t) \|_{L^{p,q}(\mathbb R^d)} = \|h_0\|_{L^{p,q}(\mathbb R^d)}.
\end{align*}
\end{lem}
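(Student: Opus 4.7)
The plan is to reduce everything to the method of characteristics and to exploit the fact that the flow map of a divergence-free vector field is measure-preserving. Concretely, first I would introduce the (forward) Lagrangian flow $\Phi=\Phi(t,x)$ defined by
\begin{align*}
 \partial_t \Phi(t,x) = u(t,\Phi(t,x)),\qquad \Phi(0,x)=x,
\end{align*}
which is a smooth global diffeomorphism of $\mathbb R^d$ for each $t\ge 0$. Since $\nabla\cdot u=0$, Liouville's formula yields $\det(D_x\Phi(t,x))\equiv 1$, so both $\Phi(t,\cdot)$ and $\Phi(t,\cdot)^{-1}$ preserve the Lebesgue measure. Differentiating along characteristics gives $\tfrac{d}{d\tau}h(\tau,\Phi(\tau,x))=f(\tau,\Phi(\tau,x))$, which after integration and the substitution $y=\Phi(t,x)$ produces the Duhamel representation
\begin{align*}
 h(t,y) = h_0(\Phi(t,\cdot)^{-1}(y)) + \int_0^t f(\tau,\,\Phi(\tau,\Phi(t,\cdot)^{-1}(y)))\,d\tau.
\end{align*}

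Next I would observe the general fact that, for any measure-preserving (bi-measurable) bijection $T$ on $\mathbb R^d$ and any measurable function $g$, the distribution functions of $g\circ T$ and $g$ coincide; hence their nonincreasing rearrangements agree, $(g\circ T)^*(s)=g^*(s)$, and therefore
\begin{align*}
 \|g\circ T\|_{L^{p,q}(\mathbb R^d)} = \|g\|_{L^{p,q}(\mathbb R^d)}
\end{align*}
for every $1\le p,q\le\infty$. Applying this with $T=\Phi(t,\cdot)^{-1}$, and for fixed $\tau\in[0,t]$ with $T_\tau(y):=\Phi(\tau,\Phi(t,\cdot)^{-1}(y))$ (which is measure-preserving as a composition of two measure-preserving maps), the two contributions in the Duhamel formula satisfy
\begin{align*}
 \|h_0\circ \Phi(t,\cdot)^{-1}\|_{L^{p,q}} = \|h_0\|_{L^{p,q}},\qquad
 \|f(\tau,\cdot)\circ T_\tau\|_{L^{p,q}} = \|f(\tau,\cdot)\|_{L^{p,q}}.
\end{align*}

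Finally I would invoke Minkowski's integral inequality in $L^{p,q}(\mathbb R^d)$ (valid in all admissible ranges of $p,q$ here, with $L^{\infty,q}=L^\infty$ by convention) to pass the norm inside the $d\tau$ integral, obtaining
\begin{align*}
 \|h(t,\cdot)\|_{L^{p,q}} \le \|h_0\|_{L^{p,q}} + \int_0^t \|f(\tau,\cdot)\|_{L^{p,q}}\,d\tau.
\end{align*}
When $f\equiv 0$ the Duhamel term vanishes and the remaining identity $h(t,\cdot)=h_0\circ\Phi(t,\cdot)^{-1}$ turns the inequality into equality by the measure-preservation step. The only subtlety I anticipate is the triangle/Minkowski step for endpoint Lorentz spaces (in particular $L^{p,\infty}$), where one would if necessary pass to the equivalent Banach norm built from $f^{**}$ to justify the integral form; this is a standard adjustment and does not affect the conclusion.
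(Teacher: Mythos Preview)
Your argument is correct and follows essentially the same route as the paper: the paper's proof is a brief sketch that cites Danchin and Abidi--Hmidi--Keraani, observes that in the homogeneous case $h(t)=h_0\circ\phi(t)$ with $\phi(t)$ measure-preserving (hence the Lorentz norm is preserved via equimeasurability), and then says the inhomogeneous case follows from Duhamel. You have simply written out these steps in detail, including the explicit Duhamel representation and the rearrangement-invariance observation. Your caveat about Minkowski's inequality in the endpoint Lorentz spaces (handled via the equivalent $f^{**}$ norm) is appropriate and is the only point where any care is needed; the paper does not comment on this but also does not use the inhomogeneous estimate in any delicate endpoint regime.
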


\begin{proof}[Proof of Lemma \ref{lem_Lpq_1}]
See for example Proposition 2 on p484 of Danchin \cite{Danchin07} or Prop 2.2 of Abidi-Hmidi-Keraani \cite{AHK10}.
In the homogeneous case $f\equiv 0$, one just observes that $h(t)=h_0 \circ \phi(t)$ where the flow map $\phi(t)$
is measure-preserving.  This obviously implies $\|h_0 \circ \phi(t)\|_{L^{p,q}}= \| h_0\|_{L^{p,q}}$ by using
the definition of Lorentz spaces. Alternatively one can use $L^p$ conservation and interpolation, as done in
\cite{AHK10}. The general case of nonzero $f$ follows from Duhamel.
\end{proof}

We shall use Lemma \ref{lem_Lpq_1} often without explicit mentioning. The most useful case for us is the
$L^{3,1}$ conservation of vorticity.

The next useful lemma is a Biot-Savart law estimate in the axisymmetric setting. It is the key to the proof of
global wellposedness for the axisymmetric (without swirl) flow.

\begin{lem} \label{lem_ur_r}
There exists an absolute constant $C>0$ such that
\begin{align*}
\left\| \frac{u^r} r \right\|_{L^{\infty}(\mathbb R^3)} \le C \left\| \frac{\omega^{\theta}}r
\right\|_{L^{3,1}(\mathbb R^3)},
\end{align*}
where $u=u^r e_r+u^z e_z$, $\omega=\nabla \times u= \omega^{\theta} e_{\theta}$.
\end{lem}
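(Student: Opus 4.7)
The plan is to derive the estimate directly from the three-dimensional Biot--Savart formula
\[
u(x) = \frac{1}{4\pi}\int_{\mathbb R^3}\frac{\omega(y)\times(x-y)}{|x-y|^3}\,dy.
\]
Substituting $\omega(y)=\omega^\theta(y)\,e_\theta(y)$ and taking the $e_r(x)$-component, a direct computation gives
\[
\frac{u^r(x)}{r(x)} = \frac{1}{4\pi}\int_{\mathbb R^3}\eta(y)\,K(x,y)\,dy,\qquad K(x,y)=\frac{(z-w)(x_1y_1+x_2y_2)}{r(x)^2\,|x-y|^3},
\]
where $\eta=\omega^\theta/r$. Note that the naive pointwise bound $|K(x,y)|\lesssim r(y)/(r(x)|x-y|^2)$, coming from $|x_1y_1+x_2y_2|\le r(x)r(y)$, blows up when $x$ is near the axis and $y$ is far from it, so it cannot be used on its own --- the axisymmetry of $\eta$ must be exploited.

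\textbf{Angular averaging.} Because $\eta$ is axisymmetric, $K(x,y)$ may be replaced by its average $\tilde K(x,y)$ over rotations of $y$ about the $OZ$-axis without changing the integral. Using that $u$ is also axisymmetric, we may fix $x=(r,0,z)$; parametrising $y=(r'\cos\alpha,r'\sin\alpha,w)$ and invoking the algebraic identity $r'\cos\alpha=(a-|x-y|^2)/(2r)$ with $a=r^2+r'^2+(z-w)^2$ reduces the $\alpha$-integral to two standard elliptic integrals and yields a closed form
\[
\tilde K(r,z;r',w)=\frac{z-w}{2\pi\, r^2\,d_1^2\,d_2}\bigl[(d_1^2+d_2^2)E(k)-2d_1^2 K(k)\bigr],
\]
with $d_1^2=(r-r')^2+(z-w)^2$, $d_2^2=(r+r')^2+(z-w)^2$, $k^2=1-d_1^2/d_2^2$, and $E,K$ the complete elliptic integrals. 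The bracket vanishes to the right order as $k\to 1$ (equivalently $d_1\to 0$), and this is the cancellation one has to exploit.

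\textbf{Kernel estimate and conclusion.} Splitting the regimes $k$ near $1$ (coincidence, $d_1\ll d_2$) and $k$ bounded away from $1$, standard asymptotics for $E$, $K$ produce a pointwise bound on $\tilde K$ whose singularity at $d_1=0$ is only of order $d_1^{-2}$ and which decays suitably in $(r+r')$ at infinity; a direct level-set computation then gives the uniform weak-type estimate
\[
\sup_{x\in\mathbb R^3}\|\tilde K(x,\cdot)\|_{L^{3/2,\infty}(\mathbb R^3)}\le C.
\]
Feeding this into the O'Neil/H\"older inequality for Lorentz spaces with conjugate exponents $3$ and $3/2$ (i.e.\ $|\int f g|\le\|f\|_{L^{3/2,\infty}}\|g\|_{L^{3,1}}$), applied pointwise in $x$, yields $|u^r(x)/r(x)|\le C\|\eta\|_{L^{3,1}}$, which is the claim. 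The main obstacle is the uniform weak-$L^{3/2}$ kernel estimate: it relies essentially on the elliptic-integral cancellation displayed above, since the naive bound from the first paragraph is not summable in this sense.
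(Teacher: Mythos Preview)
Your route is in principle sound, but it is substantially more laborious than what the paper (and the literature it cites) actually does, and the step you flag as ``the main obstacle'' --- the uniform $L^{3/2,\infty}$ bound on the angularly averaged kernel --- is asserted rather than proved.

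The paper's proof simply defers to Proposition~3.1 of Abidi--Hmidi--Keraani, whose key input is the pointwise kernel estimate of Shirota--Yanagisawa:
\[
|u^r(x)| \lesssim \int_{|y-x|\le r}\frac{|\omega(y)|}{|x-y|^2}\,dy \;+\; r\int_{|y-x|\ge r}\frac{|\omega(y)|}{|x-y|^3}\,dy,
\qquad r=r(x).
\]
Dividing by $r$ and writing $|\omega(y)|=r(y)\,|\eta(y)|$ with $\eta=\omega^\theta/r$, one uses $r(y)\le r+|x-y|$ together with the constraints of the two regions ($|x-y|\le r$ in the near piece, $r\le|x-y|$ in the far piece) to bound both integrals by $\int |\eta(y)|\,|x-y|^{-2}\,dy$. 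Since $|x-y|^{-2}\in L^{3/2,\infty}(\mathbb R^3)$ with norm independent of $x$, the Lorentz--H\"older inequality finishes.

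So both arguments terminate in the same Lorentz duality step, but the near/far splitting of Shirota--Yanagisawa bypasses the elliptic-integral representation altogether: the effective kernel against $\eta$ is immediately seen to be dominated by the Riesz kernel $|x-y|^{-2}$, and the uniform weak-$L^{3/2}$ bound becomes a one-line observation rather than an obstacle. Your explicit elliptic-integral formula is correct and the claimed cancellation at $k\to 1$ is real, but extracting a clean uniform bound from it (especially controlling the behavior as $r\to 0$ while $r'$ stays of order one) requires a case analysis that the near/far decomposition avoids entirely.
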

\begin{proof}[Proof of Lemma \ref{lem_ur_r}]
See, for example, Proposition 3.1 of \cite{AHK10}. The key is to use Lemma 1 from \cite{SY94} which gives
the kernel estimate:
\begin{align*}
|u^r (x) | \lesssim \int_{|y-x|\le r } \frac{|\omega(y)|}{|x-y|^2} dy
+ r \int_{|y-x| \ge r } \frac{|\omega(y)|}{|x-y|^3} dy.
\end{align*}
\end{proof}

Occasionally we need the following lemma.

\begin{lem} \label{lem_ur_r_2}
Let $1<p<3$ and $u\in B^{\frac 3p+1}_{p,q}(\mathbb R^3)$ be an axisymmetric divergence-free vector field.
Let $\omega=\nabla \times u$ be its vorticity. Then
\begin{align*}
\| \frac {\omega} r \|_{L^{3,1}(\mathbb R^3)} \lesssim \| u \|_{B^{\frac 3p+1}_{p,1}(\mathbb R^3)}.
\end{align*}
\end{lem}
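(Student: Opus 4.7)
The strategy is to combine the axisymmetric-no-swirl structure of $u$ with a weighted Hardy inequality for vector fields vanishing on the axis, together with a sharp Besov-to-Lorentz Sobolev embedding. Writing $u = u^{r}(r,z)\,e_{r} + u^{z}(r,z)\,e_{z}$ (the convention used throughout Section 7), the vorticity has the scalar form $\omega = \omega^{\theta}(r,z)\,e_{\theta}$, so that $|\omega(x)| = |\omega^{\theta}(r,z)|$ pointwise, and the task reduces to bounding $|\omega|/r$ in $L^{3,1}(\mathbb{R}^{3})$.

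First I would verify that $\omega$ vanishes on the axis. The critical embedding $\nabla u \in B^{3/p}_{p,1}(\mathbb{R}^{3})\hookrightarrow C^{0}(\mathbb{R}^{3})$ (the Besov embedding $B^{d/p}_{p,1}\hookrightarrow C^{0}$ with $d=3$) makes $\omega$ continuous, and mollifying $u$ by a rotation-invariant kernel yields smooth axisymmetric no-swirl approximants $u_{n}$ whose angular vorticities satisfy $\omega^{\theta}_{n}(0,z)\equiv 0$; passing to the uniform limit gives $\omega(0,0,z)=0$. Integrating along the horizontal ray $t\mapsto(tx_{1},tx_{2},z)$ then produces
\begin{align*}
\omega(x_{1},x_{2},z) \;=\; \int_{0}^{1} \bigl(x_{1}\partial_{x_{1}}\omega + x_{2}\partial_{x_{2}}\omega\bigr)(tx_{1},tx_{2},z)\, dt,
\end{align*}
whence Cauchy--Schwarz in the two horizontal variables gives $|\omega(x)|/r \le \int_{0}^{1} |\nabla_{h}\omega|(tx_{1},tx_{2},z)\, dt$ with $\nabla_{h}=(\partial_{x_{1}},\partial_{x_{2}})$. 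Minkowski's inequality in $L^{3,1}(\mathbb{R}^{3})$ combined with the horizontal dilation identity $\|g(t\,\cdot\,,t\,\cdot\,,\cdot)\|_{L^{3,1}(\mathbb{R}^{3})} = t^{-2/3}\|g\|_{L^{3,1}(\mathbb{R}^{3})}$ (immediate from the scaling of the distribution function in the two horizontal variables) then delivers the Hardy-type bound
\begin{align*}
\|\omega/r\|_{L^{3,1}(\mathbb{R}^{3})} \;\le\; \Big(\int_{0}^{1} t^{-2/3}\, dt\Big)\,\|\nabla_{h}\omega\|_{L^{3,1}(\mathbb{R}^{3})} \;\le\; 3\,\|\nabla\omega\|_{L^{3,1}(\mathbb{R}^{3})}.
\end{align*}

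To close the estimate I will invoke the sharp Besov-to-Lorentz embedding $\dot B^{3/p-1}_{p,1}(\mathbb{R}^{3}) \hookrightarrow L^{3,1}(\mathbb{R}^{3})$ (valid for $1<p<3$), obtained by $\ell^{1}$-summing the dyadic Bernstein estimates $\|P_{N}f\|_{L^{3,1}} \lesssim N^{3/p-1}\|P_{N}f\|_{L^{p}}$ (the Lorentz target gained from the atomic character of Littlewood--Paley pieces, or equivalently by real interpolation between $L^{p}$-valued Besov embeddings at two distinct exponents). Applied to $\nabla\omega$ this yields
\begin{align*}
\|\nabla\omega\|_{L^{3,1}} \;\lesssim\; \|\nabla\omega\|_{\dot B^{3/p-1}_{p,1}} \;\sim\; \|\omega\|_{\dot B^{3/p}_{p,1}} \;\lesssim\; \|u\|_{\dot B^{3/p+1}_{p,1}} \;\le\; \|u\|_{B^{3/p+1}_{p,1}},
\end{align*}
completing the proof. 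The main obstacle will be the Hardy step: establishing axis-vanishing of $\omega$ from only Besov-class regularity (handled by mollification together with the critical embedding above) and implementing Minkowski correctly in the Lorentz setting; the remaining ingredients are standard Littlewood--Paley bookkeeping, and dimensional scaling confirms that the estimate is sharp in $p$.
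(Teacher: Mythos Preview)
Your proposal is correct and follows essentially the same approach as the paper's proof: a Hardy-type bound $\|\omega/r\|_{L^{3,1}}\lesssim\|\nabla\omega\|_{L^{3,1}}$ via the axis-vanishing $\omega(0,0,z)=0$ and the Fundamental Theorem of Calculus, followed by the embedding $B^{3/p-1}_{p,1}\hookrightarrow L^{3,1}$ obtained through real interpolation. You supply more detail than the paper (the mollification argument for axis-vanishing at Besov regularity, and the explicit Minkowski/scaling computation in $L^{3,1}$), but the strategy is the same.
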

\begin{proof}[Proof of Lemma \ref{lem_ur_r_2}]
This is Proposition 2.1 from \cite{AHK10}. the key is to prove $\| \frac {\omega} r \|_{L^{3,1}} \lesssim
\| \nabla \omega\|_{L^{3,1}}$ which is obtained by using $\omega(0,0,z)\equiv 0$ and the Fundamental
Theorem of calculus. The embedding $B^{\frac3p-1}_{p,1} \hookrightarrow L^{3,1}$ comes from  interpolation
\begin{align*}
&(L^p,L^r)_{(\theta,1)} = L^{3,1} \notag \\
&(B^0_{p,1},B^{\frac 3p-\frac 3r}_{p,1})_{(\theta,1)}=B^{\theta(\frac 3p-\frac 3r)}_{p,1}=B^{\frac 3p-1}_{p,1},
\end{align*}
where $1<p<3<r<\infty$ and $\frac 13= \frac{1-\theta}p+\frac{\theta}r$,
followed by the embedding of Besov into Lebesgue.
\end{proof}

We now state a lemma which sets up some basic properties of the axisymmetric flow and the associated characteristic
lines.

\begin{lem} \label{x-1}
 Consider the Euler equation (in vorticity form)
 \begin{align*}
  \begin{cases}
   \partial_t \omega + u \cdot \nabla \omega =(\omega \cdot \nabla) u, \quad (t,x) \in[0,\infty)\times \mathbb R^3,\\
   u=-\Delta^{-1}\nabla \times \omega,\\
   \omega \Bigr|_{t=0} =\omega_0,
  \end{cases}
 \end{align*}
where the initial vorticity $\omega_0 = \nabla \times u_0$ and $u_0$ is a smooth axisymmetric velocity field without swirl.

Write
\begin{align}
 u= u^r e_r + u^z e_z, \label{x-1_0}
\end{align}
and consider the forward characteristic lines expressed in cylindrical coordinates, i.e.
\begin{align}\label{x-1_00}
 \begin{cases}
  \partial_t \phi^r(t,r,z) = u^r (t, \phi^r(t,r,z), \phi^z(t,r,z)), \\
  \partial_t \phi^z(t,r,z) =u^z (t, \phi^r(t,r,z), \phi^z(t,r,z)),\\
  (\phi^r, \phi^z) \Bigr|_{t=0} = (r,z).
 \end{cases}
\end{align}

Denote by $\tilde \phi(t,r,z)= (\tilde \phi^r (t,r,z), \tilde \phi^z(t,r,z) )$ the inverse map of $\phi=(\phi^r,\phi^z)$. Then
the following hold:

\begin{itemize}
 \item For any $z \in \mathbb R$, $t\ge 0$, we have
 \begin{align}
 & u^r(t,0,z)=0, \notag \\
  &\phi^r(t,0,z)=0=\tilde \phi^r(t,0,z). \label{x-1_1}
 \end{align}
Also
\begin{align}
\phi^r(t,r,z)>0, \, \tilde \phi^r(t,r,z)>0,\quad\forall\, r>0, z\in
\mathbb R,\, t\ge 0. \label{x-1_1a}
\end{align}
\item For any $t\ge 0$, $r>0$, $z\in \mathbb R$, we have
\begin{align}
 &\det \Bigl( \frac{\partial (\phi^r(t), \phi^z(t) ) }
 {\partial(r,z)} \Bigr)\notag \\
  =& \frac r {\phi^r(t,r,z)} \notag \\
=& \exp\Bigl( - \int_0^t \frac 1 {\phi^r(s,r,z)} u^r (s,\phi^r(s,r,z), \phi^z(s,r,z) ) ds \Bigr).  \label{x-1_2}
\end{align}
For $t\ge 0$, $r=0$, $z\in \mathbb R$, we have
\begin{align}
 &\det \Bigl( \frac{\partial (\phi^r(t), \phi^z(t) ) }
 {\partial(r,z)}
\Bigr) \Bigr|_{(r,z)
=(0,z)} \notag \\
=& \lim_{r\to 0}\frac r {\phi^r(t,r,z)} \notag \\
=& \exp\Bigl( - \int_0^t \lim_{r\to 0}\frac 1 {\phi^r(s,r,z)} u^r (s,\phi^r(s,r,z), \phi^z(s,r,z) ) ds \Bigr).  \label{x-1_3}
\end{align}
Here both limits exist and are finite.

\item Similarly for any $t\ge 0$, $r\ge 0$, $z\in \mathbb R$, we have
\begin{align} \label{x-1_4}
 \det \Bigl( \frac{\partial (\tilde \phi^r(t), \tilde \phi^z(t) ) }
 {\partial(r,z)}
\Bigr) & = \frac r {\tilde \phi^r(t,r,z)}.
\end{align}

\end{itemize}

\end{lem}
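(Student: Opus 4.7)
The plan is to establish the four groups of assertions in the order they are stated, leveraging only the smoothness of $u$, the divergence-free condition, and uniqueness of ODE flows.

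For \eqref{x-1_1}, I would first note that since $u=u^r e_r+u^z e_z$ is smooth as a vector field on $\mathbb R^3$ while $e_r$ is undefined on the $z$-axis, smoothness of $u$ forces $u^r(t,0,z)\equiv 0$. Substituting into the ODE for $\phi^r$ in \eqref{x-1_00} with initial datum $r=0$ shows that $\phi^r(t,0,z)\equiv 0$ solves the system, and by uniqueness this is the only solution; the identity $\tilde\phi^r(t,0,z)=0$ then follows because the axis is invariant under $\phi$ and hence under its inverse. For \eqref{x-1_1a}, distinct trajectories of the Lipschitz ODE system cannot cross, so a trajectory emanating from $(r_0,z_0)$ with $r_0>0$ stays away from the invariant set $\{r=0\}$ for all $t\ge 0$; the same reasoning applied to $\tilde\phi$ yields $\tilde\phi^r>0$.

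For \eqref{x-1_2}, I would give two independent computations of $\det J(t):=\det D_{(r,z)}(\phi^r,\phi^z)$ and verify that they coincide. The first is Liouville's formula applied to the variational equation: differentiating \eqref{x-1_00} in $(r,z)$ gives
\[
\partial_t\log\det J = \bigl(\partial_r u^r+\partial_z u^z\bigr)\big|_{\phi} = -\frac{u^r}{\phi^r}\Big|_{\phi},
\]
where the last equality uses that $\nabla\cdot u=\partial_r u^r + u^r/r+\partial_z u^z=0$ in cylindrical coordinates. Integrating from $\det J(0)=1$ produces the exponential expression on the right of \eqref{x-1_2}. The second derivation, yielding the closed form $r/\phi^r$, uses that the full 3D flow $\Phi(t)$ is measure-preserving in Cartesian coordinates. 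Writing
\[
\Phi\bigl(t,r\cos\theta,r\sin\theta,z\bigr)=\bigl(\phi^r(t,r,z)\cos\theta,\phi^r(t,r,z)\sin\theta,\phi^z(t,r,z)\bigr)
\]
and applying the chain rule in cylindrical coordinates, the two Jacobian factors of $r$ (before) and $\phi^r$ (after) give $r=\phi^r\cdot\det J(t,r,z)$. Since both expressions for $\det J$ satisfy the same first-order ODE with the same initial condition, they are identical, which simultaneously proves both halves of \eqref{x-1_2}.

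For the axis case \eqref{x-1_3}, the plan is to bootstrap from \eqref{x-1_2} by a limiting argument, and this is the one mildly delicate step. Smoothness of $u$ combined with $u^r(t,0,z)=0$ gives a Taylor expansion $u^r(t,\rho,z)/\rho = \partial_r u^r(t,0,z)+O(\rho)$ uniformly on compacts, so the integrand $u^r(s,\phi^r(s,r,z),\phi^z(s,r,z))/\phi^r(s,r,z)$ is uniformly bounded in $s\in[0,t]$ and in $r$ near $0$. Dominated convergence lets me pass $r\to 0$ inside the integral, which shows that the limits $\lim_{r\to 0}r/\phi^r(t,r,z)$ and $\lim_{r\to 0}\exp(-\int_0^t u^r/\phi^r\,ds)$ both exist, are finite, and agree. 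Finally, \eqref{x-1_4} is a one-line consequence of the inverse function theorem: from $\phi(t,\tilde\phi(t,r,z))=(r,z)$ we get $\det D\phi\big|_{\tilde\phi}\cdot\det D\tilde\phi(t,r,z)=1$, and since $\phi^r(t,\tilde\phi(t,r,z))=r$, formula \eqref{x-1_2} gives $\det D\phi|_{\tilde\phi}=\tilde\phi^r/r$, whose reciprocal is $r/\tilde\phi^r$. The main obstacle is simply book-keeping at the $r=0$ singularity; the rest is smoothness plus Liouville.
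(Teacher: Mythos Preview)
Your proposal is correct and follows the same overall architecture as the paper: establish the axis properties, compute $\det J$ in two ways, handle the $r\to 0$ limit, and deduce the inverse-map formula. The Liouville/trace computation, the limiting argument via the Taylor expansion of $u^r/\rho$, and the inverse-function-theorem step are essentially identical to what the paper does.

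The one genuine difference is in how you obtain the closed form $\det J=r/\phi^r$. The paper introduces the substitution $R=\tfrac12(\phi^r)^2$, $Z=\phi^z$ and checks that in the $(R,Z)$ variables the planar velocity $(V^R,V^Z)$ is divergence-free, so the 2D Jacobian $\partial(R,Z)/\partial(R_0,Z_0)$ is identically $1$; unwinding the substitution gives $\det J=r/\phi^r$. You instead invoke 3D volume preservation of the Cartesian flow $\Phi$ and read off the identity from the cylindrical area element $r\,dr\,d\theta\,dz$. Your route is slightly more geometric and avoids inventing the auxiliary variable $R$; the paper's route has the minor advantage of staying entirely within the 2D $(r,z)$ picture and yields as a byproduct the explicit integral formula \eqref{x-1_5} for $\phi^r$, which the paper reuses later (e.g.\ in Lemma~\ref{lemx5}). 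Either argument is perfectly adequate here.
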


\begin{proof}[Proof of Lemma \ref{x-1}]
 We first show \eqref{x-1_1}. Obviously by \eqref{x-1_0}, we must have
 \begin{align*}
  u^r(t,0,z)=0, \quad\forall\, t\ge 0,\, z \in \mathbb R,
 \end{align*}
since otherwise $u$ would not be smooth at $x=(0,0,z)$. By a similar
consideration, we can Taylor expand $(u_1,u_2)$ around the point
$(0,0,z)$ to get
\begin{align*}
\begin{pmatrix} u_1(t,x_1,x_2,z),\; u_2(t,x_1,x_2,z) \end{pmatrix}
& = c(t,z) \begin{pmatrix} x_1, \; x_2 \end{pmatrix} +O(r^2) \notag \\
& = c(t,z) r e_r +O(r^2) \\
&= u^r(t,r,z) e_r.
\end{align*}
Here the coefficient $c(t,z)$ is scalar valued and
\begin{align*}
 c(t,z)= (\partial_1 u_1)(t,0,0,z) = (\partial_2 u_2)(t,0,0,z).
\end{align*}

Therefore
\begin{align*}
 u^r(t,r,z) = c(t,z) r +O(r^2)
\end{align*}
and
\begin{align}
 & \lim_{r\to 0} \frac{u^r(t,r,z)} r = c(t,z), \notag \\
 & \lim_{r\to 0} (\partial_r u^r) (t,r,z) =c(t,z). \label{x-1_5a}
\end{align}

By \eqref{x-1_00}, we have
\begin{align*}
 \frac d {dt} \phi^r & = u^r (t,\phi^r,\phi^z) - u^r(t,0,\phi^z) \notag \\
 & = \Bigl( \int_0^1 (\partial_r u^r ) (t, \theta \phi^r, \phi^z) d\theta \Bigr) \phi^r.
\end{align*}

Integrating in time then gives
\begin{align}
 \phi^r (t,r,z) & = r \exp \Bigl( \int_0^t \int_0^1 (\partial_r u^r) (s, \theta \phi^r(s,r,z), \phi^z(s,r,z)
 )d\theta ds \Bigr). \label{x-1_5}
\end{align}
Clearly \eqref{x-1_1a} follows. Also
\begin{align*}
 \phi^r(t,0,z)=0, \quad \text{for any }t\ge 0, z \in \mathbb R.
\end{align*}

Next we show \eqref{x-1_2}. We shall calculate $\det \Bigl( \frac{\partial(\phi^r,\phi^z)} {\partial(r,z)}   \Bigr)$ in
two ways which in turn would yield the first and second identity in \eqref{x-1_2}.

Introduce new variables
\begin{align}
 &R := \frac1 2 \phi^r(t,r,z)^2, \notag \\
 &Z:= \phi^z(t,r,z). \label{x-1_6}
\end{align}

Then
\begin{align} \notag
 \begin{cases}
  \frac d {dt} R =V^r(t,R,Z) \\
  \frac d {dt} Z = V^z(t,R,Z) \\
  (R,Z) \Bigr|_{t=0} =(R_0,Z_0).
 \end{cases}
\end{align}
Here
\begin{align*}
 V^r(t,R,Z) &:= \sqrt{2R} u^r(t,\sqrt{2R}, Z), \\
 V^z(t,R,Z) &: = u^z(t,\sqrt{2R}, Z).
\end{align*}

By the incompressibility condition $\nabla \cdot u=0$, we have
\begin{align*}
 \frac 1 r \partial_r (r u^r(t,r,z) ) + \partial_z u^z(t,r,z)=0.
\end{align*}

Therefore easy to check that
\begin{align*}
 \partial_R V^r(t,R,Z) + \partial_Z V^z(t,R,Z)=0.
\end{align*}

It follows that
\begin{align*}
 \det\Bigl( \frac{\partial(R(t), Z(t) )} {\partial(R_0,Z_0)}   \Bigr)=1, \quad \forall\, t\ge 0,
\end{align*}
or
\begin{align*}
 \frac{\partial R} {\partial R_0} \frac{\partial Z} {\partial Z_0} - \frac{\partial R}{\partial Z_0} \frac{\partial Z} {\partial R_0}=1.
\end{align*}

Letting $R_0=\frac 1 2 r^2$, $Z_0=z$ and using \eqref{x-1_6} then gives
\begin{align*}
 \det \Bigl( \frac{\partial(\phi^r,\phi^z)} {\partial(r,z)}\Bigr) = \frac r {\phi^r}.
\end{align*}

For the second way of calculating $\det \Bigl( \frac{\partial(\phi^r,\phi^z)} {\partial(r,z)}   \Bigr)$, we need to use the following
elementary fact: if $A(t) \in \mathbb R^{d\times d}$, $B(t) \in \mathbb R^{d\times d}$ are smooth matrix-valued functions solving the ODE
\begin{align*}
 \frac d {dt} A(t) = B(t) A(t)
\end{align*}
with $A(0)=I_d$ (the identity matrix on $\mathbb R^{d\times d}$), then
\begin{align*}
 \det (A(t)) = \exp\Bigl( \int_0^t \text{tr} (B(s)) ds \Bigr),
\end{align*}
where $\text{tr}(\cdot)$ denotes the usual matrix trace.

By \eqref{x-1_00}, we have
\begin{align*}
 \frac d {dt}
 \begin{pmatrix}
  \frac{\partial \phi^r} {\partial r} \quad \frac{\partial \phi^r} {\partial z} \\
  \frac{\partial \phi^z} {\partial r } \quad \frac{\partial \phi^z} {\partial z}
 \end{pmatrix}
=\begin{pmatrix}
  \partial_r u^r \quad \partial_z u^r \\
  \partial_r u^z \quad \partial_z u^z
 \end{pmatrix}
\begin{pmatrix}
  \frac{\partial \phi^r} {\partial r} \quad \frac{\partial \phi^r} {\partial z} \\
  \frac{\partial \phi^z} {\partial r } \quad \frac{\partial \phi^z} {\partial z}
 \end{pmatrix}.
\end{align*}

Since $\partial_r u^r + \partial_z u^z = -\frac1 r u^r$, we get
\begin{align*}
 &(\partial_r u^r)(t,\phi^r,\phi^z) + (\partial_z u^z)(t,\phi^r,\phi^z) \notag \\
 = & -\frac1{\phi^r} u^r(t,\phi^r,\phi^z).
\end{align*}

Therefore the second identity in \eqref{x-1_2} follows. It is not difficult to check that
this also coincides with \eqref{x-1_5} derived earlier.

The existence of the limits \eqref{x-1_3} is a simple consequence of \eqref{x-1_5a} and \eqref{x-1_5}.

Finally \eqref{x-1_4} follows easily from \eqref{x-1_2} and the identity
\begin{align*}
 D\tilde \phi = \Bigl( (D\phi) (\tilde \phi) \Bigr)^{-1}.
\end{align*}

\end{proof}

We now take a parameter $A\gg 1$ and define (by a slight abuse of notation) a class of axisymmetric functions
\begin{align}
 g_A(x_1,x_2,z) & = g_A (r,z) \notag \\
 & =   \frac {\sqrt{\log A}} {{ A}}
 \sum_{A\le k \le 2 A} \eta_k (r,z), \label{x2_1}
\end{align}
where $r=\sqrt{x_1^2+x_2^2}$ and
\begin{align}
 \eta_k(r,z) =
  \sum_{\epsilon_1 = \pm 1} \epsilon_1 \eta_0 \Bigl( \frac{(r-2^{-k}, z-\epsilon_1 2^{-k})} {2^{-(k+10)}} \Bigr).
\label{x2_1a}
\end{align}

Here we choose $\eta_0 \in C_c^{\infty} ( B(0,1) )$ to be a radial compactly supported function such that $0\le \eta_0\le 1$.
Note that by construction $\eta_k$ is an odd function of $z$ and so is $g_A$. Also it is easy to see that
\begin{align}
 \eta_k(2^{-k}r, 2^{-k}z) & = \sum_{\epsilon_1 = \pm 1} \epsilon_1 \eta_0 \Bigl(  \frac{(r-1,z-\epsilon_1)} {2^{-10}}\Bigr)
 \notag \\
 &=: \tilde \eta_0(r,z). \label{x2_2}
\end{align}

Recall $e_{\theta}=\frac 1r (-x_2,x_1,0)$. We first check that
\begin{align}
& \| g_A e_{\theta} \|_{L^{\infty}(\mathbb R^3)}  \lesssim  \frac {\sqrt{\log A}} { A}. \label{x2_3b}\\
& \| g_A e_{\theta} \|_{\dot B^{\frac 32}_{2,1}(\mathbb R^3)}  \lesssim  {\sqrt{\log A}}. \label{x2_3c}\\
& \| \frac {g_A}r e_{\theta} \|_{L^{3,1}(\mathbb R^3)}  \lesssim  {\sqrt{\log A}}. \label{x2_3d}\\
& \| g_A \|_{\dot H^{\frac 32} (\mathbb R^3) } \lesssim \frac {\sqrt{\log A}} {\sqrt{ A}}, \label{x2_3}\\
& \| g_A e_{\theta} \|_{\dot H^{\frac 32} (\mathbb R^3)} \lesssim \frac {\sqrt{\log A}} {\sqrt{ A}}. \label{x2_3a}
\end{align}

Note that \eqref{x2_3b} is obvious. The property \eqref{x2_3c} follows from the triangle inequality
and the fact that each $\eta_k e_{\theta} $ has the same $\dot B^{\frac 32}_{2,1}$ norm.
The inequality \eqref{x2_3d} follows from Lemma \ref{lem_ur_r_2}.

For \eqref{x2_3}, albeit standard, we explain how to check it.
By definition and direct expansion, we have
\begin{align*}
 &\| g_A \|_{\dot H^{\frac 32}}^2  \notag \\
 = &  \frac  {\log A}
  { A^2}
 \sum_{A\le k \le 2 A}
 \sum_{A\le l \le 2 A}
 \int_{\mathbb R^3} |\nabla|^{\frac 32} \eta_k \cdot |\nabla|^{\frac 32} \eta_l dx.
\end{align*}

Therefore it suffices to show for each $k$,
\begin{align}
 \sum_{A\le l \le 2A}
 \left| \int_{\mathbb R^3} |\nabla|^{\frac 32} \eta_k \cdot |\nabla|^{\frac 32} \eta_l dx
 \right| \lesssim 1. \label{x2_4}
\end{align}

By scaling $(r,z) \to (2^{-k} r, 2^{-k}z)$ and \eqref{x2_2}, we have
\begin{align*}
 \text{LHS of \eqref{x2_4} } \lesssim \sum_{l\in \mathbb Z}
 \left| \int_{\mathbb R^3} |\nabla|^{\frac 32} \tilde \eta_0 \cdot |\nabla|^{\frac 32} \eta_l dx \right|,
\end{align*}
where $\tilde \eta_0$ was defined in \eqref{x2_2}. Note that $\tilde \eta_0 \in C_c^{\infty}(\mathbb R^3)$
since it is supported on $r\sim 1$, $|z|\sim 1$.

Now discuss two cases. If $l\ge 0$, then
\begin{align*}
 & \left| \int_{\mathbb R^3} |\nabla|^{\frac 32} \tilde \eta_0 \cdot |\nabla|^{\frac 32} \eta_l dx \right| \notag \\
 = & \left| \int_{\mathbb R^3} |\nabla|^3 \tilde \eta_0 \cdot \eta_l dx \right| \notag \\
 \lesssim & \| |\nabla|^3 \tilde \eta_0 \|_{L^\infty} \cdot \|\eta_l \|_{L^1} \lesssim 2^{-3l},
\end{align*}
which is summable for $l\ge 0$.

On the other hand if $l<0$, then
\begin{align*}
  & \left| \int_{\mathbb R^3} |\nabla|^{\frac 32} \tilde \eta_0 \cdot |\nabla|^{\frac 32} \eta_l dx \right| \notag \\
  = & \left| \int_{\mathbb R^3} |\nabla| \tilde \eta_0 \cdot \Delta \eta_l dx \right| \notag \\
  \lesssim & \| |\nabla| \tilde \eta_0 \|_{L^2} \cdot \| \Delta \eta_l \|_{L^2} \notag \\
  \lesssim & 2^{\frac 12 l}
\end{align*}
which is also summable for $l<0$.

Therefore \eqref{x2_4} follows and \eqref{x2_3} is proved.

For \eqref{x2_3a}, we note that by \eqref{x2_2},
\begin{align*}
 & \eta_k(r,z) e_{\theta} \notag \\
 =\; & \tilde \eta_0 (2^k r,2^k z) \frac 1 {2^k r} \begin{pmatrix} -2^k x_2,\; 2^k x_1,\; 0 \end{pmatrix} \notag \\
 =\; & {\eta_{\text{vec} }} (2^k x),
\end{align*}
where
\begin{align*}
 { \eta_{\text{vec} }}(x)=\eta_{\text{vec} }(x_1,x_2,z) = \tilde \eta_0 (r,z) \frac 1 r
 \begin{pmatrix}
 -x_2,\;x_1,\; 0
 \end{pmatrix}.
\end{align*}

Since by definition $\tilde \eta_0$ is supported on $r\sim 1$, $\eta_{\text{vec} }$ is a smooth function.
Therefore \eqref{x2_3a} can be proved in the same way as \eqref{x2_3} (note that only scaling is used in the argument).

\begin{lem} \label{x3}
For any smooth axisymmetric $f$ on $\mathbb R^3$ ( i.e. $f=f(x)=f(r,z)$, $x=(x_1,x_2,z)$, $r=\sqrt{x_1^2+x_2^2}$),
we have
\begin{align}
 & \Bigl( \partial_z (\Delta-\frac 1 {r^2} )^{-1} \partial_z f \Bigr)(r,z) \notag \\
 = & \Bigl( \partial_{zz} (\Delta- \frac 1 {r^2})^{-1} f \Bigr)(r,z) \notag \\
 = &\, C \cdot \int_{\mathbb R^5} K(x-y) \cdot \frac{|x^{\prime}|} {|y^{\prime}|} f(y)dy, \quad
 x=(r,0,0,0,z),
 \label{x3_1}
 \end{align}
where $C>0$ is an absolute constant, $x^{\prime}=(x_1,\cdots,x_4)$, $y^{\prime}=(y_1,\cdots,y_4)$, and
\begin{align} \label{x3_1a}
K(\tilde x)= \frac{|\tilde x^{\prime}|^2-4\tilde x_5^2} {|\tilde x|^7} + \frac 1{5C} \delta(\tilde x).
\end{align}
Here $\delta(\cdot)$ is the Dirac delta function on $\mathbb R^5$. On the RHS of \eqref{x3_1} we naturally regard $f$ as an axisymmetric function on $\mathbb R^5$ with
the identification $f(y)=f(|y^{\prime}|,y_5)$. Also by axisymmetry the RHS of \eqref{x3_1} depends
only on $(|x^{\prime}|,x_5)$ so that we can actually choose any $x$ with $|x^{\prime}|=r$, $x_5=z$.

Similarly we have
\begin{align}
 & \Bigl( \partial_r (\Delta-\frac 1 {r^2})^{-1} \partial_z f \Bigr)(r,z) \notag \\
 = & C \cdot \int_{\mathbb R^5} \Bigl( \frac 1 {|x-y|^5}
 -5 r \cdot \frac {r-y_1} {|x-y|^7} \Bigr) \cdot \frac {x_5-y_5} {|y^{\prime}|} f(y) dy,
 \quad x=(r,0,0,0,z).  \label{x3_1a_01}
\end{align}

\end{lem}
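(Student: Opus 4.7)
The plan is to reduce the lemma to a $3$D/$5$D correspondence for axisymmetric functions and then use the explicit fundamental solution of $\Delta_{\mathbb R^5}$. First I would establish the key identity
\begin{align*}
(\Delta_{\mathbb R^3} - \tfrac{1}{r^2}) G(r,z) = |y'| \cdot \Delta_{\mathbb R^5}\tilde G(y)\bigr|_{(|y'|,y_5)=(r,z)},
\qquad \tilde G(y):= \frac{G(|y'|,y_5)}{|y'|},
\end{align*}
for any smooth axisymmetric $G$ on $\mathbb R^3$. This is a direct computation using the forms of $\Delta_n$ on axisymmetric functions: on $\mathbb R^3$, $\Delta_3 G = G_{rr} + \tfrac{1}{r}G_r + G_{zz}$; on $\mathbb R^5$ acting on $g(\rho,y_5)$ with $\rho=|y'|$, $\Delta_5 g = g_{\rho\rho}+\tfrac{3}{\rho}g_\rho+g_{y_5 y_5}$. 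Substituting $\tilde G = G/\rho$ and expanding, the $G/\rho^3$ terms cancel, leaving $\Delta_5\tilde G = \tfrac{1}{\rho}(G_{\rho\rho}+G_\rho/\rho-G/\rho^2+G_{zz})$, which is exactly $\tfrac{1}{\rho}(\Delta_3-1/r^2)G$.

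Next, for $h=\partial_z f$, set $v = (\Delta_3-1/r^2)^{-1}h$. The identity above forces $\tilde v(y):=v(|y'|,y_5)/|y'|$ to satisfy $\Delta_5\tilde v = h(|y'|,y_5)/|y'|$. Since $|y'|$ is $y_5$-independent, $h/|y'| = \partial_{y_5}(f/|y'|) = \partial_{y_5}\tilde f$ where $\tilde f(y)=f(|y'|,y_5)/|y'|$, so
\begin{align*}
v(r,z) = r\cdot\bigl(\partial_{y_5}\Delta_5^{-1}\tilde f\bigr)(r,0,0,0,z).
\end{align*}
The equality $\partial_z(\Delta-1/r^2)^{-1}\partial_z f = \partial_{zz}(\Delta-1/r^2)^{-1}f$ follows because $(\Delta-1/r^2)$ has $z$-independent coefficients and hence commutes with $\partial_z$. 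To derive the integral representation in \eqref{x3_1}, I would apply $\partial_z$ and pull it inside (since $r$ is $z$-independent), getting $\partial_z v(r,z) = r\cdot(\partial_{y_5}^2\Delta_5^{-1}\tilde f)(r,0,0,0,z)$. Using the fundamental solution $\Delta_5^{-1}g(x)=-C_5\int g(y)|x-y|^{-3}dy$, the composition $\partial_{y_5}^2\Delta_5^{-1}$ is a Calder\'on–Zygmund operator whose distributional kernel is
\begin{align*}
\partial_{x_5}^2\!\bigl(-C_5|x|^{-3}\bigr) = c_1\,\mathrm{p.v.}\,\frac{5x_5^2-|x|^2}{|x|^7} + c_2\,\delta(x) = -c_1\,\mathrm{p.v.}\,\frac{|x'|^2-4x_5^2}{|x|^7} + c_2\,\delta(x),
\end{align*}
where we used $|x|^2=|x'|^2+x_5^2$. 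After substituting $\tilde f(y)=f(y)/|y'|$, pulling out the $r=|x'|$ factor in front of the integral, and reconciling constants, this matches the stated form \eqref{x3_1}–\eqref{x3_1a}.

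For \eqref{x3_1a_01}, I would compute $\partial_r v$ by the product rule applied to $v(r,z)=r\tilde v(r,0,0,0,z)$. Since $\tilde v$ is axisymmetric in $y'$, $\partial_{x_1}\tilde v$ at $(r,0,0,0,z)$ equals $\partial_\rho\tilde v(r,z)$, and so
\begin{align*}
\partial_r v(r,z) = \tilde v(r,0,0,0,z) + r\cdot(\partial_{x_1}\tilde v)(r,0,0,0,z), \qquad \tilde v=\partial_{x_5}\Delta_5^{-1}\tilde f.
\end{align*}
Differentiating under the integral (legitimate here since $\partial_{x_5}|x-y|^{-3}$ and $\partial_{x_1}\partial_{x_5}|x-y|^{-3}$ are $L^1_{\mathrm{loc}}$ and $\partial_{x_1}\partial_{x_5}$ of a fundamental solution has vanishing spherical mean, so no $\delta$ appears for the mixed derivative), we get $\tilde v = 3C_5\int \tilde f(y)(x_5-y_5)|x-y|^{-5}dy$ and $\partial_{x_1}\tilde v = -15C_5\int\tilde f(y)(x_5-y_5)(x_1-y_1)|x-y|^{-7}dy$. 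Evaluating at $x=(r,0,0,0,z)$ (so $x_1-y_1=r-y_1$) and combining the two terms with the common factor $(x_5-y_5)/|y'|$ produces exactly \eqref{x3_1a_01}.

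The main obstacle is keeping track of the Calder\'on–Zygmund distributional computation in Step~3: the $\delta$-contribution in $K$ must be extracted rigorously from $\partial_{x_5}^2|x|^{-3}$, and its coefficient reconciled with the overall normalization $C$. By contrast, \eqref{x3_1a_01} involves only the off-diagonal derivative $\partial_{x_1}\partial_{x_5}$, which has no $\delta$-component, so once the $3$D/$5$D reduction is in place the second formula is essentially a direct differentiation under the integral sign.
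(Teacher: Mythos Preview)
Your approach is correct and essentially the same as the paper's: both hinge on the substitution $u=rv$ (equivalently $\tilde G=G/r$) to convert $(\Delta_{\mathbb R^3}-r^{-2})u=f$ into $\Delta_{\mathbb R^5}v=f/r$, then invert via the $5$D fundamental solution and differentiate. The paper omits the explicit extraction of the $\delta$-term in $K$ and the product-rule computation for \eqref{x3_1a_01}, which you spell out, but the route is identical.
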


\begin{proof}[Proof of Lemma \ref{x3}]
We first compute the kernel $(\Delta-\frac 1 {r^2})^{-1}$. To derive the explicit representation, consider
the equation
\begin{align*}
(\Delta-\frac 1 {r^2}) u = f
\end{align*}
or
\begin{align*}
(\partial_{rr}+\frac 1 r \partial_r -\frac 1 {r^2} + \partial_{zz}) u =f.
\end{align*}
Set $u=rv$. Then
\begin{align*}
\Delta(rv) &= r \Delta v + 2 (\partial_r r) \partial_r v + v \Delta(r) \notag \\
&=r \Delta v + 2 \partial_r v + \frac 1 r v \notag \\
&=r \Delta_5 v + \frac 1 r v,
\end{align*}
where
\begin{align*}
\Delta_5= \partial_{rr} + \frac 3 r \partial_r + \partial_{zz}
\end{align*}
is the five dimensional Laplacian in cylindrical coordinates.

Therefore
\begin{align*}
(\Delta-\frac 1 {r^2}) u & = (\Delta-\frac 1{r^2}) (rv) = r \Delta_5 v
\end{align*}
and we only need to solve
\begin{align*}
\Delta_5 v= \frac 1 r f.
\end{align*}
Inverting the Laplacian then gives
\begin{align*}
v= C \cdot \int_{\mathbb R^5} \frac {-1} {|x-y|^3} \cdot \frac 1 {|y^{\prime}|} f(y)dy
\end{align*}
and obviously
\begin{align*}
u(x) = C \cdot \int_{\mathbb R^5} \frac {-1} {|x-y|^3} \cdot \frac{|x^{\prime}|}{|y^{\prime}|} f(y)dy.
\end{align*}
Note that in the above expression we have pure convolution in the $y_5$ variable. Therefore the first equality
in \eqref{x3_1} hold. Differentiating in $z=x_5$ variable twice then gives \eqref{x3_1}. In a similar way we can
derive \eqref{x3_1a_01}.
\end{proof}

\begin{lem} \label{x4}
Let $\phi=\phi(r,z)=(\phi^r(r,z),\phi^z(r,z))$ be a bi-Lipschitz map on $[0,\infty)\times \mathbb R$ such that the following hold:
\begin{itemize}
\item for any $r\ge 0$, $z\in \mathbb R$,
\begin{align} \label{x4_0}
\phi^r(0,z)=0 \quad \text{and } \phi^z(r,0)=0;
\end{align}
\item for some integer $n_0\ge 1$,
\begin{align} \label{x4_1}
\| D\phi \|_{\infty} + \| D \tilde \phi\|_{\infty} \le 2^{n_0},
\end{align}
where $\tilde \phi$ is the inverse map of $\phi$.
\end{itemize}
Define
\begin{align}
w(r,z)=(T\omega_0)(r,z)= \frac{\omega_0(\phi(r,z))}{\phi^r(r,z)} r, \label{x4_2aa}
\end{align}
where $\omega_0=g_A$ and $g_A$ is defined in \eqref{x2_1}.

Then
\begin{align}
\left\| \partial_{zz} (\Delta-\frac 1 {r^2})^{-1} \omega \right\|_{\infty} \le C \cdot  {2^{2n_0}}
\cdot \frac {\sqrt{\log A}} {{ A}}, \label{x4_2}
\end{align}
where $C>0$ is an absolute constant.

If in addition the map $\phi$ preserves the measure $rdr dz$, i.e.
\begin{align*}
 \frac {\phi^r} r\det\Bigl( \frac {\partial(\phi^r, \phi^z)} {\partial(r,z)} \Bigr) \equiv 1;
\end{align*}
and $\phi$ is odd in the $z$-variable, i.e.
\begin{align*}
 &\phi^r(r,-z)= \phi^r(r,z), \quad \forall\, r\ge0,\, z \in \mathbb R;\notag \\
 &\phi^z(r,-z)=-\phi^z(r,z),\quad \forall\, r\ge 0, \, z \in \mathbb R;
\end{align*}

then for some absolute constant $C_1>0$,
\begin{align}
  & -\Bigl( \partial_r (\Delta-\frac 1 {r^2})^{-1} \partial_z w \Bigr)(0,0) \notag \\
  \ge & C_1 \cdot \sqrt{\log A} \cdot 2^{-8n_0}. \label{x4_2_new}
\end{align}

\end{lem}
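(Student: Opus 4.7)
The plan is to adapt Lemma~\ref{lem30a} and Proposition~\ref{prop_gener_1} to the $3$D axisymmetric setting, leveraging the explicit integral representations in Lemma~\ref{x3}.

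\emph{Part 1 (upper bound).} Decompose $w=\tfrac{\haa}{A}\sum_{A\le k\le 2A} F_k$ with $F_k=(\eta_k\circ\phi)\cdot r/\phi^r$. By \eqref{x3_1},
\[
\bigl(\partial_{zz}(\Delta-\tfrac{1}{r^2})^{-1}F_k\bigr)(r,z) = C\,|x'|\!\int_{\R^5} K(x-y)\,\frac{F_k(y)}{|y'|}\,dy,\qquad x=(r,0,0,0,z),
\]
where $K$ is the mean-zero CZ kernel on $\R^5$, even in each coordinate separately. The bi-Lipschitz bounds give $\|F_k\|_\infty\lesssim 2^{n_0}$, $\|\nabla F_k\|_\infty\lesssim 2^{2n_0+k}$, support diameter $\lesssim 2^{n_0-k}$ around the origin, and---since the odd-in-$z$ parity of $\eta_k$ is inherited by $F_k$ under pullback (automatic when $\phi$ commutes with $z\to-z$, as in all applications of this lemma)---the axisymmetric extension of $F_k/|y'|$ to $\R^5$ is odd in $y_5$. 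I then run the three-case analysis of Lemma~\ref{lem30a} on $\R^5$: for $|x|\sim 2^{-l}$, the near regime $k\ll l-n_0$ uses the identity $\int K(y)F_k(-y)/|(-y)'|\,dy=0$ combined with the Lipschitz bound on $F_k/|y'|$; the far regime $k\gg l+n_0$ uses $|K(x-y)|\lesssim|x|^{-5}$ and $L^1$ estimates on $F_k$; the intermediate regime $|k-l|\lesssim n_0$ uses an $\R^5$ analog of Lemma~\ref{lem30p}. Summing over $k$ and multiplying by $\haa/A$ yields \eqref{x4_2} with constant $\lesssim 2^{2n_0}$.

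\emph{Part 2 (lower bound).} Here the hypotheses include measure-preservation and $z$-oddness of $\phi$, so the parity is clean. Apply \eqref{x3_1a_01} at $(r,z)=(0,0)$; the second term vanishes since $r=0$, leaving
\[
-\bigl(\partial_r(\Delta-\tfrac{1}{r^2})^{-1}\partial_z w\bigr)(0,0) = C\!\int_{\R^5}\frac{y_5}{|y|^5|y'|}\,w(y)\,dy.
\]
Passing to cylindrical coordinates on $\R^5$ (with $|S^3|=2\pi^2$) and using oddness of $w$ in $y_5$, this equals
\[
4\pi^2\int_0^\infty\!\!\int_0^\infty \frac{y_5\,r_y^2}{(r_y^2+y_5^2)^{5/2}}\,w(r_y,y_5)\,dr_y\,dy_5.
\]
Substituting $w=g_A(\phi)\cdot r_y/\phi^r$ and changing variables $(R,Z)=\phi(r_y,y_5)$, the measure-preserving identity $dR\,dZ=(r_y/\phi^r)\,dr_y\,dy_5$ cancels Jacobian factors, yielding
\[
4\pi^2\int_0^\infty\!\!\int_0^\infty \frac{\tilde\phi^z(R,Z)\,\tilde\phi^r(R,Z)^2}{\bigl(\tilde\phi^r(R,Z)^2+\tilde\phi^z(R,Z)^2\bigr)^{5/2}}\,g_A(R,Z)\,dR\,dZ.
\]
The bi-Lipschitz bounds together with $\phi^r(0,\cdot)=\phi^z(\cdot,0)=0$ give $\tilde\phi^r\ge 2^{-n_0}R$, $\tilde\phi^z\ge 2^{-n_0}Z$ on the first quadrant, and $\tilde\phi^{r2}+\tilde\phi^{z2}\le 2^{2n_0}(R^2+Z^2)$, so the kernel is bounded below by $2^{-8n_0}\,ZR^2/(R^2+Z^2)^{5/2}$. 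Since $g_A\ge 0$ on the first quadrant and each $\eta_k$-bump at $(2^{-k},2^{-k})$ contributes an $O(1)$ scale-invariant integral to $\int ZR^2(R^2+Z^2)^{-5/2}\eta_k\,dRdZ$, summing over the $\sim A$ values of $k$ and combining with the $\haa/A$ prefactor produces the bound $C_1\haa\cdot 2^{-8n_0}$.

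The main obstacle I anticipate is in Part~1: the weight $|x'|/|y'|$ breaks the translation invariance that makes the symmetric-differences technique of Lemma~\ref{lem30a} clean, so the oddness cancellation must be reformulated to accommodate this weight and checked through the 3D-to-5D axisymmetric identification. Part~2 reduces, once the measure-preserving change of variables is executed, to a straightforward scale-invariant computation on $g_A$.
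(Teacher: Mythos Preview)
Your Part~2 is essentially the paper's argument: evaluate \eqref{x3_1a_01} at the origin, pass to cylindrical coordinates, use the measure-preserving change of variables, and lower-bound the resulting kernel via the bi-Lipschitz estimates to extract $2^{-8n_0}$. The paper does the change of variables at a slightly later stage (first pulling out a crude lower bound on $y_5/|y|^5$ and then transforming only $\int \eta_k(\phi)r^3\,drdz$), but the content is the same.

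Part~1, however, takes a genuinely different route, and the obstacle you flag is precisely the one the paper sidesteps. You propose the three-case analysis of Lemma~\ref{lem30a}, relying in the regime $2^k\ll 2^{l-n_0}$ on the oddness-in-$z$ cancellation $\int K(y)\,F_k(-y)/|(-y)'|\,dy=0$. As you note, this oddness is \emph{not} among the hypotheses for \eqref{x4_2} (the paper's Remark after the lemma even stresses that axisymmetry alone suffices), so your argument proves a weaker statement; moreover the weight $|x'|/|y'|$ does complicate the subtraction. The paper's resolution is cleaner: only \emph{two} cases are needed. The factor $|x'|\lesssim 2^{-l}$ in \eqref{x3_1} buys an extra decay which allows the entire regime $2^k\lesssim 2^{l+n_0}$ (your ``near'' and ``intermediate'' together) to be handled by the single interpolation bound
\[
2^{-l}\cdot\Bigl\|\mathcal R_{ij}\Bigl(\tfrac{\eta_k(\phi)}{\phi^r}\Bigr)\Bigr\|_{L^\infty(\mathbb R^5)}
\;\lesssim\; 2^{-l}\cdot\Bigl\|\tfrac{\eta_k(\phi)}{\phi^r}\Bigr\|_{L^5}^{1/2}\Bigl\|\nabla\tfrac{\eta_k(\phi)}{\phi^r}\Bigr\|_\infty^{1/2}
\;\lesssim\; 2^{n_0-l+k},
\]
summable to $2^{2n_0}$; the remaining case $2^k\gg 2^{l+n_0}$ is your far regime. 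No parity is invoked, and the constant $2^{2n_0}$ falls out directly. What the paper's approach buys is that it matches the hypotheses as stated and bypasses the translation-invariance issue you identified; what your approach would buy (if the subtraction were carried out) is a closer parallel to the 2D argument, but at the cost of an extra hypothesis and more bookkeeping.
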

\begin{rem}
In the proof of \eqref{x4_2} below, we do not use the odd symmetry in the $z$-variable of the function $g_A$.
By itself the axisymmetry is enough to control the singular operator $\partial_{zz} (\Delta-\frac 1{r^2})^{-1}$.
\end{rem}
\begin{rem}
For our application later, $\omega$ actually correspond to $\omega^{\theta}$, and the expression
$-(\Delta-\frac 1 {r^2})^{-1} \partial_z \omega$ in \eqref{x4_2_new} will correspond to the radial
velocity $u^r$, see \eqref{x5_6}.
\end{rem}

\begin{proof}[Proof of Lemma \ref{x4}]
We shall adopt the same notations as in Lemma \ref{x3}. By \eqref{x4_0} and \eqref{x4_1},
it is not difficult to check that
if $r\sim 2^{-m}$ and $|z|\sim 2^{-m}$ for some integer $m$, then
\begin{align*}
2^{-m-n_0} \lesssim \phi^r(r,z) \lesssim 2^{-m+n_0}, \\
2^{-m-n_0} \lesssim |\phi^z(r,z)| \lesssim 2^{-m+n_0}.
\end{align*}
Similarly if $\phi^r(r,z) \sim 2^{-m}$, $|\phi^z(r,z)| \sim 2^{-m}$, then
\begin{align*}
2^{-m-n_0} \lesssim r \lesssim 2^{-m+n_0}, \\
2^{-m-n_0} \lesssim |z| \lesssim 2^{-m+n_0}.
\end{align*}
These facts will be used below.

By \eqref{x4_2aa} and \eqref{x2_1}, we have
\begin{align*}
&\partial_{zz} (\Delta-\frac 1 {r^2})^{-1} \omega  \notag \\
= & \frac  {\sqrt{\log A}}  {{ A}}
\sum_{A\le k\le 2A} \Bigl( \partial_{zz} (\Delta-\frac 1 {r^2})^{-1}  \Bigr)(T \eta_k).
\end{align*}

We shall estimate each piece $\partial_{zz}(\Delta-\frac 1 {r^2})^{-1}(T \eta_k)$.

By \eqref{x3_1} and \eqref{x3_1a}, we write
\begin{align}
&|(\partial_{zz}(\Delta-\frac 1 {r^2})^{-1}(T \eta_k))(x) | \notag \\
\lesssim & \left| \int_{\mathbb R^5}  K_1(x-y) {|x^{\prime}|}
\frac{\eta_k(\phi(|y^{\prime}|,y_5))} {\phi^r(|y^{\prime}|,y_5)} dy\right| \label{x4_6} \\
& \quad + r \frac{\eta_k(\phi(r,z))}{\phi^r(r,z)}, \label{x4_6a}
\end{align}

where
\begin{align} \notag
K_1(\tilde x)= \frac{|\tilde x^{\prime}|^2-4\tilde x_5^2} {|\tilde x|^7}.
\end{align}

The contribution of \eqref{x4_6a} is of no problem for us. Indeed on the support of $\eta_k (\phi(r,z))$, we have
\begin{align*}
 \phi^r(r,z) \sim 2^{-k}, \quad \text{and } |\phi^z(r,z)| \sim 2^{-k}.
\end{align*}
Therefore $2^{-k-n_0} \lesssim r \lesssim 2^{-k+n_0}$ and
\begin{align*}
 \frac r {\phi^r(r,z)} \lesssim 2^{n_0}
\end{align*}
Since the supports of the $\eta_k$ functions are mutually disjoint, it follows that
\begin{align*}
 \sum_{A\le k \le 2A}r \frac{\eta_k(\phi(r,z))}{\phi^r(r,z)} \lesssim 2^{n_0}
\end{align*}

Hence we only need to consider the contribution of \eqref{x4_6} to \eqref{x4_2}. By the same consideration as before we have
in \eqref{x4_6} the integration variable $y$ is localized to the regime:
\begin{align*}
 & 2^{-k-n_0} \lesssim |y^{\prime}| \lesssim 2^{-k+n_0}, \notag \\
 & 2^{-k-n_0} \lesssim |y_5| \lesssim 2^{-k+n_0}, \notag \\
 & |\phi(|y^{\prime}|, y_5)|\sim 2^{-k}, \quad\text{and } \phi^r(|y^{\prime}|,y_5) \sim 2^{-k}.
\end{align*}

Obviously if $x=0$, then due to the factor $|x^{\prime}|=0$, the integral \eqref{x4_6} also vanishes. Therefore we only need
to consider the case $x \ne 0$.

Assume $|x| \sim 2^{-l}$. We discuss two cases.

\texttt{Case 1}: $2^k\gg 2^{l+n_0} $. In this case $|x| \gg |y|$ (recall $|y| \lesssim 2^{-k+n_0}$) and $|x-y| \sim |x|$. Therefore
\begin{align*}
 \eqref{x4_6} & \lesssim \| K_1(y) \|_{L^{\infty} (|y| \sim 2^{-l})} \cdot 2^{-l}
 \cdot \frac{ \text{Leb} \{y \in \mathbb R^5:\; 2^{-k-n_0} \lesssim |y| \lesssim 2^{-k+n_0}\} } {2^{-k}} \notag \\
 & \lesssim 2^{5l} \cdot 2^{-l} \cdot \frac{(2^{-k+n_0})^5} {2^{-k}} \notag \\
 & \lesssim 2^{-4k+4l+5n_0}.
\end{align*}

\texttt{Case 2}: $2^k\lesssim 2^{l+n_0}$. In this case we note that the kernel $K_1$ in \eqref{x4_6} corresponds to a Riesz-type
operator on $\mathbb R^5$. By using the interpolation inequality
\begin{align*}
 \| \mathcal R_{ij} f \|_{L^{\infty} (\mathbb R^5)} \lesssim \| f \|_{L^5(\mathbb R^5)}^{\frac 12}
 \cdot \| \nabla f \|_{L^{\infty} (\mathbb R^5)}^{\frac 12},
\end{align*}
we can bound \eqref{x4_6} as
\begin{align*}
 \eqref{x4_6} & \lesssim 2^{-l}
 \cdot \left | \int_{\mathbb R^5} K_1(x-y) \cdot \frac{\eta_k ( \phi(|y^{\prime}|, y_5))} {\phi^r(|y^{\prime}|,y_5 )} dy \right | \notag \\
 & \lesssim 2^{-l} \cdot \| \frac 1 {2^{-k}} \|^{\frac 12}_{L^5(y\in \mathbb R^5:\, 2^{-k-n_0} \lesssim |y| \lesssim 2^{-k+n_0})} \notag \\
 & \qquad \cdot \Bigl( \| \partial_r ( \frac{\eta_k(\phi(r,z))} {\phi^r(r,z)}) \|_{\infty}+
 \| \partial_z ( \frac{\eta_k(\phi(r,z))} {\phi^r(r,z)}) \|_{\infty} \Bigr)^{\frac 12} \notag \\
 & \lesssim 2^{-l} \cdot 2^{\frac k2} \cdot (2^{-k+n_0})^{\frac 12} \cdot (2^{n_0} \cdot 2^{2k})^{\frac 12} \notag \\
 & \lesssim 2^{n_0-l+k}.
\end{align*}

Collecting the estimates, we have
\begin{align*}
 \sum_{A\le k \le 2A} \eqref{x4_6}  & \lesssim \sum_{2^k\gg 2^{l+n_0} } 2^{-4k+4l+5n_0} + \sum_{2^k\lesssim 2^{l+n_0} } 2^{n_0-l+k} \notag \\
 & \lesssim 2^{2n_0}.
\end{align*}

Hence the estimate \eqref{x4_2} follows.

By \eqref{x3_1a_01} and parity of $\phi$ and $\eta_k$, we have
\begin{align}
  & -\Bigl( \partial_r (\Delta-\frac 1 {r^2})^{-1} \partial_z \omega) (0,0) \notag \\
 = & C \int_{\mathbb R^5} \frac {y_5} {|y|^5} \cdot \frac 1 {|y^{\prime}|} \omega(y) dy \notag \\
 = & C \int_{\mathbb R^5 } \frac {y_5} {|y|^5} \cdot \frac{\omega_0(\phi(|y^{\prime}|, y_5))} {\phi^r(|y^{\prime}|, y_5)} dy \notag \\
 = &  C \frac {\sqrt{\log A}} { {A}} \cdot \sum_{A \le k \le 2 A}
 \int_{\mathbb R^5} \frac {y_5} {|y|^5} \cdot \frac{\eta_k(\phi(|y^{\prime}|, y_5))} {\phi^r(|y^{\prime}|, y_5)} dy \notag \\
 = &  {2C} \frac {\sqrt{\log A}} { {A}} \cdot \sum_{A \le k \le 2 A}
 \int_{y\in \mathbb R^5:\, y_5>0}  \frac {y_5} {|y|^5} \cdot \frac{\eta_k(\phi(|y^{\prime}|, y_5))} {\phi^r(|y^{\prime}|, y_5)} dy \notag \\
 \gtrsim & \frac {\sqrt{\log A}} { {A}} \sum_{A \le k \le 2 A}
 \frac {2^{-k-n_0}} {(2^{-k+n_0})^5} \cdot \frac 1 {2^{-k}} \cdot \int \eta_k(\phi(r,z)) r^3 dr dz. \label{x4_8}
\end{align}

Since the map $\phi$ preserves the measure $rdrdz$, the inverse map $\tilde \phi$ also preserves the same measure. By the change of
variable $(r,z) \to \tilde \phi(r,z)$, we have
\begin{align*}
 & \int \eta_k(\phi(r,z)) r^3 dr dz \notag \\
= & \int \eta_k(r,z) \tilde \phi^r(r,z)^2 r drdz \notag \\
\gtrsim & (2^{-k-n_0})^2 \| \eta_k \|_{L^1(\mathbb R^3)} \notag \\
\gtrsim & 2^{-5k-2n_0}.
\end{align*}

Plugging this estimate into \eqref{x4_8}, we obtain
\begin{align*}
 & -\Bigl( \partial_r (\Delta-\frac 1 {r^2})^{-1} \partial_z \omega) (0,0) \notag \\
 \gtrsim & \frac {\sqrt{\log A}} { {A}} \sum_{A \le k \le 2A} 2^{-8n_0} \notag \\
 \gtrsim &  {\sqrt{\log A}} \cdot  2^{-8n_0}.
\end{align*}

Hence \eqref{x4_2_new} is proved.
\end{proof}

We now prove the existence of large deformation in the 3D axisymmetric case. To fix the notation, consider the
3D axisymmetric Euler equation without swirl in simplified form
\begin{align*}
 \begin{cases}
  \partial_t \left( \frac{\omega^{\theta} }r \right) + (u\cdot \nabla) \left( \frac{\omega^{\theta} } r\right) =0, \quad (r,z) \in(0,\infty)\times \mathbb R, \,
  0<t\le 1, \\
  \omega^{\theta} \Bigr|_{t=0}=g_A,
 \end{cases}
\end{align*}
where $u=u^r e_r +u^z e_z$ and $g_A$ is defined in \eqref{x2_1}. Note that here $\omega^{\theta}$ is a scalar-valued function which is related to
$u$ by the relation $\operatorname{curl}(u) = \omega^{\theta} e_{\theta}$.

Let $\phi=\phi(t,r,z)$ be the forward characteristic line as defined in \eqref{x-1_00} and $\tilde \phi=\tilde \phi(t,r,z)$ be the inverse map.
Then

\begin{prop} \label{x5}
 For all $A$ sufficiently large, we have
 \begin{align} \notag
  \max_{0\le t \le \frac 1 {\log \log A}}
  ( \| D\phi(t)\|_{\infty} + \| D \tilde \phi (t)\|_{\infty} ) \ge   \log\log A.
 \end{align}
\end{prop}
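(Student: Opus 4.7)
The plan is to mimic the streamlined odd-symmetry argument of Proposition \ref{prop_gener_1}, exploiting the single trajectory $(r,z)=(0,0)$ which is a stagnation point of the flow. First I would record the parity structure: since $g_A$ is odd in $z$ and axisymmetric, this structure is preserved by the Euler dynamics, so $\omega^{\theta}(t,r,\cdot)$ is odd in $z$, $u^r$ is even in $z$, and $u^z$ is odd in $z$. Together with Lemma \ref{x-1} this forces $\phi(t,0,0)\equiv(0,0)$ and, at $(r,z)=(0,0)$, the off-diagonal entries $\partial_z u^r$ and $\partial_r u^z$ vanish. Using the incompressibility identity $\partial_r u^r + u^r/r + \partial_z u^z=0$ and $u^r(t,0,z)=0$ (whence $u^r/r\big|_{r=0}=\partial_r u^r(t,0,z)$), we get $\partial_z u^z(t,0,0)=-2\,\partial_r u^r(t,0,0)$. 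Setting $\lambda(t):=-\partial_r u^r(t,0,0)$ and integrating the ODE $\partial_t(D\phi)=(Du)(\phi)\,D\phi$ along the fixed trajectory, this yields
\begin{align*}
(D\phi)(t,0,0)=\begin{pmatrix} e^{-\int_0^t\lambda} & 0\\ 0 & e^{2\int_0^t\lambda}\end{pmatrix},
\end{align*}
consistent with $\det(D\phi)(t,0,0)=e^{\int_0^t\lambda}$ coming from \eqref{x-1_3}.

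Next I would extract a lower bound for $\lambda(t)$ in terms of the current deformation, via Lemma \ref{x4}. Since $\omega^{\theta}(t,\cdot)$ equals $(T\omega_0)$ with $T$ defined by the inverse flow $\tilde\phi$, and this flow is measure-preserving for $r\,dr\,dz$ and odd in $z$, the hypotheses of Lemma \ref{x4} are verified with any integer $n_0$ satisfying $2^{n_0}\ge\|D\phi(t)\|_\infty+\|D\tilde\phi(t)\|_\infty=:N(t)$. The key mechanism, mirroring Proposition \ref{prop_gener_1}, is that the Biot–Savart identification $u^r=\partial_z(\Delta-1/r^2)^{-1}\omega^{\theta}$ together with \eqref{x4_2_new} gives
\begin{align*}
\lambda(t)\;=\;-\bigl(\partial_r(\Delta-\tfrac{1}{r^2})^{-1}\partial_z\omega^{\theta}\bigr)(t,0,0)\;\gtrsim\;\sqrt{\log A}\cdot N(t)^{-8}.
\end{align*}
From $\|D\phi(t,\cdot)\|_\infty\ge\|(D\phi)(t,0,0)\|_\infty\ge e^{2\int_0^t\lambda}$ one then obtains the autonomous functional inequality
\begin{align*}
N(t)\;\ge\;\exp\Bigl(c\sqrt{\log A}\int_0^t N(s)^{-8}\,ds\Bigr),
\end{align*}
for some absolute $c>0$.

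From here I would copy the integration-by-parts/Gronwall trick from Proposition \ref{prop_gener_1}: multiplying by $8c\sqrt{\log A}\,N(t)^{-8}$ shows that $\frac{d}{dt}\exp(8c\sqrt{\log A}\int_0^t N^{-8})\le 8c\sqrt{\log A}$, and integrating in time yields
\begin{align*}
\int_0^t\frac{ds}{N(s)^8}\;\le\;\frac{1}{8c\sqrt{\log A}}\log\!\Bigl(1+8ct\sqrt{\log A}\Bigr),\qquad t\ge 0,
\end{align*}
which inverts to
\begin{align*}
\max_{0\le s\le t}N(s)\;\ge\;\Bigl(\frac{c\sqrt{\log A}\,t}{\log(1+c\sqrt{\log A}\,t)}\Bigr)^{1/8}.
\end{align*}
Evaluating at $t=1/\log\log A$, the right-hand side grows like $(\log A)^{1/16}/\mathrm{polylog}$, which for $A$ sufficiently large exceeds $\log\log A$, giving the desired conclusion.

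The main obstacle is of course the non-local singular-integral estimate for $Du$ in the axisymmetric setting; but this is exactly what Lemma \ref{x4} is engineered to deliver, both for the diagonal term at the origin (via \eqref{x4_2_new}) and for the off-diagonal control we do not even need here thanks to parity. The remaining bookkeeping, in particular justifying $\lambda(t)\ge 0$ so that $\|(D\phi)(t,0,0)\|_\infty\ge e^{2\int_0^t\lambda}$ (rather than $e^{-\int_0^t\lambda}$ with the wrong sign), uses the sign computation inside \eqref{x4_2_new} together with parity, and is analogous to the sign analysis in Proposition \ref{prop40}.
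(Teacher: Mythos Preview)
Your approach is correct and takes a genuinely different, more streamlined route than the paper's. The paper argues by contradiction: assuming $\|D\phi(t)\|_\infty+\|D\tilde\phi(t)\|_\infty\le B:=\log\log A$ on $[0,1/B]$, it works with $D\phi(t,r,z)$ at \emph{all} points, controls the off-diagonal entries $\partial_z u^r,\partial_r u^z$ globally via \eqref{x4_2} together with $\|\omega^\theta\|_\infty$, writes $D\phi$ as a diagonal matrix plus an error $P(t)D(t)$, handles the metric factor $u^r/\phi^r$ through \eqref{x-1_2}, and then runs a continuity argument on $e^{|\int_0^t\lambda|}$ to reach a contradiction with \eqref{x4_2_new} at the origin. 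You instead restrict to the single stagnation point $(r,z)=(0,0)$, use parity to kill the off-diagonal terms there exactly (so \eqref{x4_2} is never invoked), and derive an autonomous integral inequality for $N(t)$ in the style of Proposition~\ref{prop_gener_1}. This is more direct; indeed the paper itself adopts precisely your parity simplification later in Lemma~\ref{y3} (see the opening sentence of that proof), though still by contradiction rather than through the explicit integral inequality you use.

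One correction on the sign bookkeeping: the Biot--Savart identity is $u^r=-(\Delta-1/r^2)^{-1}\partial_z\omega^\theta$ (see \eqref{x5_6}, note the minus), so $\partial_r u^r(t,0,0)=-\bigl(\partial_r(\Delta-\tfrac{1}{r^2})^{-1}\partial_z\omega^\theta\bigr)(t,0,0)$, and \eqref{x4_2_new} then gives $\partial_r u^r(t,0,0)\gtrsim\sqrt{\log A}\,N(t)^{-8}>0$. Hence $\lambda(t)=-\partial_r u^r(t,0,0)<0$, contrary to what you claim at the end. This is harmless for your argument: the dominant diagonal entry of $(D\phi)(t,0,0)$ is then $e^{-\int_0^t\lambda}=e^{\int_0^t|\lambda|}$ (rather than $e^{2\int_0^t\lambda}$), and the autonomous inequality $N(t)\ge\exp\bigl(c\sqrt{\log A}\int_0^t N(s)^{-8}\,ds\bigr)$ and everything after it go through unchanged.
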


\begin{proof}[Proof of Proposition \ref{x5}]
 We argue by contradiction. Assume
 \begin{align}
  \max_{0\le t \le \frac 1 {B}} \Bigl( \| D\phi(t)\|_{\infty} + \| D \tilde \phi(t)\|_{\infty} \Bigr)
 \le B, \label{x5_2}
 \end{align}
where for simplicity of the notation we denote $B=\log\log A$.

Denote $D(t)=D(t,r,z)=(D\phi)(t,r,z)$. By \eqref{x-1_00}, we have
\begin{align}
 \partial_t D(t) & = \begin{pmatrix} \partial_r u^r  \quad \partial_z u^r \\
                      \partial_r u^z \quad \partial_z u^z
                      \end{pmatrix}
D(t)  \notag \\
& = \begin{pmatrix} \partial_r u^r  \quad 0 \\
                      0 \quad \partial_z u^z
                      \end{pmatrix}
D(t) +P(t) D(t), \label{x5_3}
\end{align}
where we denote
\begin{align*}
  P(t)=P(t,r,z) = \begin{pmatrix}
  0\quad (\partial_z u^r)(t,\phi(t,r,z)) \\
  (\partial_r u^z)(t,\phi(t,r,z)) \quad 0
 \end{pmatrix}.
\end{align*}

Now since
\begin{align*}
 \frac {\omega^{\theta}(t, \phi(t,r,z))} {\phi^r(t,r,z)} = \frac{\omega^{\theta}_0(t,r,z)} r,
\end{align*}
we have
\begin{align*}
 \omega^{\theta}(t,r,z) = \frac{\omega^{\theta}_0(\tilde \phi(t,r,z))} {\tilde \phi^r(t,r,z)} r.
\end{align*}

By \eqref{x-1_1}, we have
\begin{align*}
r &= \phi^r(t, \tilde \phi^r(t,r,z), \tilde \phi^z(t,r,z)) - \phi^r(t,0, \tilde \phi^z(t,r,z)) \notag \\
  & \le \| \partial_r \phi^r(t,\cdot)\|_{\infty} \tilde \phi^r(t,r,z) \notag \\
  & \le B \tilde \phi^r(t,r,z).
  \end{align*}
  Therefore
  \begin{align} \label{x5_4na}
  \max_{0\le t \le \frac 1 B} \|\omega (t,\cdot )\|_{\infty} \le \frac {\sqrt{\log A}} { {A}} B.
  \end{align}

Since
\begin{align}
 & \omega^{\theta}= \partial_r u^z - \partial_z u^r, \label{x5_4} \\
 & \frac 1 r \partial_r (ru^r) +\partial_z u^z=0, \label{x5_5}
\end{align}
it is not difficult to check that
\begin{align}
 u^r = - (\Delta-\frac 1 {r^2})^{-1} \partial_z \omega^{\theta}, \label{x5_6}
\end{align}
and
\begin{align*}
 \partial_z u^r = - \partial_{zz} (\Delta-\frac 1 {r^2})^{-1} \omega^{\theta}.
\end{align*}

By \eqref{x4_2} and \eqref{x5_2}, we get
\begin{align*}
 \| (\partial_z u^r)(t) \|_{\infty} \lesssim {B^2} \frac {\sqrt{\log A}} { {A}}.
\end{align*}

By \eqref{x5_4} and \eqref{x5_4na}, we also get
\begin{align*}
 \| \partial_r u^z (t)\|_{\infty} & \lesssim \| \omega^{\theta}(t) \|_{\infty} + \| (\partial_z u^r)(t) \|_{\infty} \notag \\
 & \lesssim {B^2} \frac {\sqrt{\log A}} { {A}}.
\end{align*}

Hence
\begin{align*}
  \| P(t)\|_{\infty} \lesssim  {B^2} \frac {\sqrt{\log A}} { {A}}.
\end{align*}

Denote
\begin{align*}
 \lambda (t,r,z) = - (\partial_r u^r)(t,\phi(t,r,z)).
\end{align*}

By \eqref{x5_6} and \eqref{x4_2_new}, we have
\begin{align} \label{x5_9}
 -\lambda(t,0,0) \gtrsim  {\sqrt{\log A}} \cdot B^{-8},
 \quad \forall\, 0\le t\le \frac 1 {B}.
\end{align}

By \eqref{x5_5}, we can write
\begin{align*}
 (\partial_z u^z)(t,\phi(t,r,z)) & = - (\partial_r u^r)(t,\phi) - \frac 1 {\phi^r} u^r(t,\phi) \notag \\
 & = \lambda(t,r,z) - \frac 1 {\phi^r(t,r,z)} u^r(t,\phi(t,r,z)).
\end{align*}

Using the above computation and integrating \eqref{x5_3} in time, we get
\begin{align*}
 D(t) & = \begin{pmatrix}
           e^{-\int_0^t \lambda } \quad &0 \\
           0 \quad &e^{\int_0^t \lambda - \int_0^t \frac 1 {\phi^r} u^r (s,\phi) ds}
          \end{pmatrix}
 \notag \\
 & \quad + \int_0^t
\begin{pmatrix}
           e^{-\int_{\tau}^t \lambda } \quad &0 \\
           0 \quad &e^{\int_{\tau}^t \lambda - \int_{\tau}^t \frac 1 {\phi^r} u^r (s,\phi) ds}
          \end{pmatrix}
          P(\tau) D(\tau) d\tau.
\end{align*}

By \eqref{x-1_2}, we have
\begin{align*}
 \max_{0\le t \le \frac 1 {B}}
 e^{|\int_0^t \frac {u^r(s,\phi)} {\phi^r} |}
 & \lesssim B.
\end{align*}

Therefore we get
\begin{align*}
 \frac 1 {B} e^{|\int_0^t \lambda|}
\lesssim B + \max_{0\le \tau \le t}
\left( e^{2|\int_0^{\tau} \lambda |} \right) \cdot {B^{10}}\cdot\frac {\sqrt{\log A}} { {A}}
\end{align*}
or
\begin{align*}
  e^{|\int_0^t \lambda|}
\lesssim B^2 + \max_{0\le \tau \le t}
\left( e^{2|\int_0^{\tau} \lambda |} \right) \cdot {B^{11}} \cdot \frac {\sqrt{\log A}} { {A}}.
\end{align*}

Since $B^{12} \ll { A}/{\sqrt{\log A}}$, a standard continuity argument then gives
\begin{align*}
 e^{|\int_0^t \lambda(s,r,z) ds |} \lesssim B^2, \quad \forall\, 0\le t \le \frac 1 {B},
 \, r\ge 0, \, z \in \mathbb R.
\end{align*}

But this obviously contradicts \eqref{x5_9}.
\end{proof}

\begin{lem}[Vanishing near $r=0$] \label{lemx5}
Let $U=U^r e_r +U^{z} e_z$ be a (possibly time-dependent) given smooth axi-symmetric without swirl velocity field such that
$\nabla \cdot U=0$ and
\begin{align} \label{lemx5_1}
 \max_{0\le t \le 1} (\|D^2 U(t)\|_{4}+ \| DU (t)\|_{\infty} +\| U(t)\|_{\infty} ) \le C_1 <\infty.
\end{align}

Suppose $\omega$ is a smooth solution to the \emph{linear} system
\begin{align}
 \begin{cases}
  \partial_t \left( \frac {\omega }r \right) + (U\cdot \nabla ) \left( \frac {\omega} r  \right) =0,
  \quad \, x=(x_1,x_2,z),\, r=\sqrt{x_1^2+x_2^2}, \\
  \omega \Bigr|_{t=0} =\omega_0.
 \end{cases}
 \end{align}
Here the initial data $\omega_0 \in C_c^{\infty} (\mathbb R^3)$  and has the form
 \begin{align*}
  \omega_0 = \omega_0^{\theta} e_{\theta},
 \end{align*}
where $\omega_0^{\theta} = \omega_0^{\theta} (r,z)$ is scalar-valued and
\begin{align*}
 e_{\theta}= \frac 1 r \begin{pmatrix} -x_2,\,x_1,\,0 \end{pmatrix}.
\end{align*}

Assume that $\omega_0$ vanishes near $r=0$, i.e. there exists a constant $r_0>0$ such that
\begin{align*}
 \operatorname{supp}(\omega_0(r,z)) \subset\{(r,z):\, r_0 \le r \le  \frac 1 {r_0} \}.
\end{align*}

Then there exists a constant $R_0=R_0(r_0,C_1)>0$ such that
\begin{align} \label{lemx5_2}
 \operatorname{supp} (\omega(t,r,z) )  \subset\{ (r,z):\, {R_0} \le r \le \frac 1 {R_0} \}, \quad \forall\, 0\le t\le 1.
\end{align}

Furthermore we have the estimate
\begin{align} \label{lemx5_3}
 \max_{0\le t \le 1} \| \omega (t) \|_{H^2} \le C_2,
\end{align}
where the constant $C_2$ only depends on $(\|\omega_0\|_{H^2},r_0, C_1)$.

\end{lem}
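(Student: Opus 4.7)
The plan is to pass to the transported scalar $\xi := \omega^\theta/r$, which satisfies $\partial_t \xi + U\cdot\nabla \xi = 0$, so that both the support and the $H^2$ regularity of $\omega$ descend directly from properties of the $U$-flow map. Note that $\xi_0 = \omega_0^\theta/r \in C_c^\infty(\mathbb R^3)$ is supported in $\{r_0 \le r \le 1/r_0\}$ (times a compact set in $z$) and satisfies $\|\xi_0\|_{H^2} \lesssim_{r_0} \|\omega_0\|_{H^2}$, and $\omega = \xi \cdot (-x_2, x_1, 0)$.

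For the support claim \eqref{lemx5_2}, let $(\phi^r, \phi^z)$ be the flow of $U$ in cylindrical coordinates as in \eqref{x-1_00}. Lemma \ref{x-1} gives $U^r(t,0,z) \equiv 0$, so formula \eqref{x-1_5} yields
\begin{align*}
\phi^r(t,r,z) = r \exp\left( \int_0^t \int_0^1 (\partial_r U^r)(s, \theta \phi^r(s,r,z), \phi^z(s,r,z))\, d\theta\, ds \right).
\end{align*}
Since $\|DU\|_\infty \le C_1$, this gives $r e^{-C_1 t} \le \phi^r(t,r,z) \le r e^{C_1 t}$, with the same bounds for the inverse map $\tilde \phi^r$. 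Because $\xi(t,x) = \xi_0(\tilde \phi(t,x))$, the support of $\xi(t,\cdot)$ (and hence of $\omega(t,\cdot)$) is confined to $\{r_0 e^{-C_1} \le r \le r_0^{-1} e^{C_1}\}$, proving \eqref{lemx5_2} with $R_0 := r_0 e^{-C_1}$.

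For \eqref{lemx5_3} I will run a standard energy estimate at the $H^2$ level for $\xi$. Applying $\partial^\alpha$ for $|\alpha| \le 2$ to $\partial_t \xi + U\cdot\nabla \xi = 0$, pairing with $\partial^\alpha \xi$ in $L^2$, using $\nabla\cdot U = 0$ to kill the top-order transport contribution, and collecting commutator terms (for $|\alpha|=2$, these are of type $(\partial^2 U)(\partial \xi)$ and $(\partial U)(\partial^2 \xi)$) gives
\begin{align*}
\tfrac{d}{dt} \|\xi\|_{H^2}^2 \;\lesssim\; \|DU\|_\infty \|\xi\|_{H^2}^2 + \|D^2 U\|_{L^4}\, \|\nabla \xi\|_{L^4}\, \|D^2 \xi\|_{L^2}.
\end{align*}
The only non-routine term is the one containing $D^2 U$, which is only in $L^4$; this is absorbed via the 3D Sobolev embedding $H^2 \hookrightarrow W^{1,4}$ (equivalently Gagliardo--Nirenberg) giving $\|\nabla \xi\|_{L^4} \lesssim \|\xi\|_{H^2}$, so that $\tfrac{d}{dt}\|\xi\|_{H^2}^2 \lesssim C_1 \|\xi\|_{H^2}^2$, and Gronwall yields $\max_{0 \le t \le 1} \|\xi(t)\|_{H^2} \le C(C_1) \|\xi_0\|_{H^2}$. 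Finally, by the previous paragraph $\omega(t,\cdot)$ is supported in a fixed compact set independent of $t$, on which the polynomial factor $(-x_2,x_1,0)$ has bounded first derivatives and vanishing second derivatives, so $\|\omega(t)\|_{H^2} \lesssim_{R_0} \|\xi(t)\|_{H^2}$, completing \eqref{lemx5_3}. The one mildly technical point is the $L^4$ (rather than $L^\infty$) assumption on $D^2 U$, but Gagliardo--Nirenberg in three dimensions handles it cleanly.
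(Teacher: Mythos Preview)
Your proof is correct and follows essentially the same route as the paper's: both control the support via the exponential formula \eqref{x-1_5} for $\phi^r$, run an $H^2$ energy estimate on the transported scalar $g=\omega/r$ (your $\xi$), close the $\|D^2U\|_4\|\nabla g\|_4$ term by the 3D embedding $H^2\hookrightarrow W^{1,4}$, and pass back to $\omega$ using that the support stays in a fixed annulus $r\sim_{r_0,C_1}1$. Your write-up is slightly more explicit (giving $R_0=r_0e^{-C_1}$ and noting the linear factor $(-x_2,x_1,0)$ has vanishing second derivatives), but the argument is the same.
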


\begin{proof}[Proof of Lemma \ref{lemx5}]
The property \eqref{lemx5_2} follows easily from finite-speed propagation of the transport equation. For example by
\eqref{x-1_5} (with $u^r$ replaced by $U^r$) and \eqref{lemx5_1}, we have for some $C_3=C_3(C_1)>0$
\begin{align*}
 \frac 1 {C_3} \le \phi^r(t,r,z)/r  \le C_3, \quad \forall\, 0\le t \le 1.
\end{align*}
This shows that the boundary of the support is supported away from $r=0$. Clearly \eqref{lemx5_2} follows.

Denote $g= \frac {\omega} r$ and $g_0= \frac{\omega_0} r$.
Since $\omega$ is supported away from $r=0$,  obviously we have
\begin{align*}
 \| g_0 \|_{H^2} \lesssim_{r_0} \|\omega_0\|_{H^2}.
\end{align*}

Since
\begin{align*}
 \partial_t g + (U\cdot \nabla) g=0,
\end{align*}
a simple $H^2$ energy estimate then gives
\begin{align*}
 \partial_t( \|g \|_{H^2}^2)  & \lesssim \|D^2 U\|_4 \| \nabla g \|_4 \| g\|_{H^2} + \|DU\|_{\infty} \| g\|_{H^2}^2 \notag \\
 & \lesssim_{C_1} \| g \|_{H^2}^2.
\end{align*}
Therefore
\begin{align*}
 \max_{0\le t \le 1} \|g(t)\|_{H^2} \lesssim_{C_1,r_0} \|\omega_0 \|_{H^2}.
\end{align*}

Since $\omega=rg$ and $\omega$ is supported on $r\sim_{r_0,C_1} 1$, obviously \eqref{lemx5_3} follows.

\end{proof}

\begin{prop} \label{x6}
 Suppose $\omega$ is a smooth solution to the axisymmetric (without swirl) Euler equation in the form
\begin{align} \label{x6_00}
 \begin{cases}
  \partial_t \left( \frac {\omega }r \right) + (u\cdot \nabla ) \left( \frac {\omega} r  \right) =0,
  \quad 0<t\le 1,\, x=(x_1,x_2,z),\, r=\sqrt{x_1^2+x_2^2}, \\
  u=-\Delta^{-1}\nabla \times \omega, \\
  \omega \Bigr|_{t=0} =\omega_0
 \end{cases}
 \end{align}
and satisfy the following conditions:
\begin{itemize}
 \item $\omega_0 \in C_c^{\infty} (\mathbb R^3)$  and has the form
 \begin{align} \label{x6_00a}
  \omega_0 = \omega_0^{\theta} e_{\theta},
 \end{align}
where $\omega_0^{\theta} = \omega_0^{\theta} (r,z)$ is scalar-valued and
$\omega_0$ vanishes near $r=0$, i.e. there exists a constant $r_0>0$ such that
\begin{align*}
 \operatorname{supp}(\omega_0(r,z)) \subset\{(r,z):\, r>r_0 \}.
\end{align*}

\item Let $\phi=(\phi^r,\phi^z)$ be the characteristic lines defined in \eqref{x-1_00} and $\tilde \phi$ be the inverse.
For some $0<t_0\le 1$, $\tilde r_*\ge 0$, $\tilde z_*\in \mathbb R$, we have
\begin{align}
 & \left\| (D\tilde \phi) (t_0, \tilde r_*, \tilde z_*) \right\|_{\infty} =
 \sup_{r\ge 0,z\in \mathbb R}\left\| (D\tilde \phi) (t_0, r, z) \right\|_{\infty} =:M \gg 1. \label{x6_2}
\end{align}
Here in \eqref{x6_2}, $\| \cdot \|_{\infty}$ denotes the matrix max norm defined by $\| A\|_{\infty} = \max (|a_{ij}|)$ ($A=(a_{ij})$).
The notation $M\gg 1$ means that $M$ is sufficiently large than an absolute constant.
\end{itemize}

Then we can find a smooth solution also solving the axisymmetric (without swirl) Euler equation

\begin{align*}
  \begin{cases}
   \partial_t \left( \frac {\tilde \omega } r\right) + (\tilde u\cdot \nabla ) \left( \frac {\tilde \omega} r  \right) =0,
   \quad 0<t\le 1, \\
    \tilde u = -\Delta^{-1} \nabla \times \tilde \omega, \\
   \tilde \omega \Bigr|_{t=0} =\tilde \omega_0
  \end{cases}
 \end{align*}

such that the following hold:

\begin{enumerate}
 \item $\tilde \omega_0  \in C_c^{\infty} (\mathbb R^3)$ and has the form
 \begin{align*}
  \tilde \omega_0 = \tilde \omega_0^{\theta} e_{\theta},
 \end{align*}
 where $\tilde \omega_0^{\theta} = \tilde \omega_0^{\theta} (r,z)$. The function
 $\tilde \omega_0$ also vanishes near $r=0$, i.e. there exists a constant $\tilde r_0>0$ such that
\begin{align*}
 \operatorname{supp}(\tilde \omega_0(r,z)) \subset\{(r,z):\, r>\tilde r_0 \}.
\end{align*}
Furthermore
\begin{align}
  & \| \frac {\tilde \omega_0}r \|_{L^1(\mathbb R^3)} \le 2 \| \frac{\omega_0}r \|_{L^1(\mathbb R^3)},  \notag \\
  &\| \frac{\tilde \omega_0}r \|_{L^{\infty}(\mathbb R^3)} \le 2 \| \frac{\omega_0}r \|_{L^{\infty}(\mathbb R^3)}, \notag \\
  &\| \frac{\tilde \omega_0}r \|_{L^{3,1}(\mathbb R^3)} \le 2 \| \frac{\omega_0}r \|_{L^{3,1}(\mathbb R^3)}. \label{x6_5a}
 \end{align}

 \item $\tilde \omega_0$ is a small perturbation of $\omega_0$:
 \begin{align}
  & \| \tilde \omega_0\|_{L^1(\mathbb R^3)} \le 2 \| \omega_0 \|_{L^1(\mathbb R^3)},  \notag \\
  &\| \tilde \omega_0 \|_{L^{\infty}(\mathbb R^3)} \le 2 \| \omega_0 \|_{L^{\infty}(\mathbb R^3)}, \notag \\
  &\| \tilde \omega_0 \|_{\dot H^{\frac 32} (\mathbb R^3)} \le \| \omega_0 \|_{\dot H^{\frac 32}} +
  \frac {\tilde C}{M^{\frac 16} }. \label{x6_5}
 \end{align}
Here $\tilde C>0$ is an absolute constant.

\item For the same $t_0$ as in \eqref{x6_2}, we have
\begin{align} \label{x6_7}
 \| \tilde \omega (t_0 ) \|_{\dot H^{\frac 32}} > M^{\frac 16}.
\end{align}

\end{enumerate}

\end{prop}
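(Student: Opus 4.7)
The plan follows the 2D template of Proposition \ref{prop20}, modified for two features of the axisymmetric 3D setting: the critical regularity is fractional ($\dot H^{3/2}$), and the transport formula for $\omega^\theta$ carries the extra metric prefactor $r/\tilde\phi^r$ coming from vorticity stretching. First, a dichotomy: if $\|\omega(t_0,\cdot)\|_{\dot H^{3/2}}>M^{1/6}$ the choice $\tilde\omega=\omega$ works, so assume $\|\omega(t_0,\cdot)\|_{\dot H^{3/2}}\le M^{1/6}$. By \eqref{x6_2} some entry of $D\tilde\phi(t_0,\tilde r_*,\tilde z_*)$ has modulus $M$; after relabeling assume $|\partial_r\tilde\phi^z(t_0,\tilde r_*,\tilde z_*)|=M$, and by continuity this bound persists (with $M$ replaced by $M/2$) on an $(r,z)$-neighborhood $V_*$. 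Since $\omega_0$ vanishes near $r=0$ and $\phi$ is bi-Lipschitz, we may further shrink $V_*$ so that both $V_*$ and its preimage $W_*=\tilde\phi(t_0,V_*)$ sit at positive distance from the axis. Choose a nonnegative bump $b\in C_c^\infty(\mathbb R^2)$ supported in $W_*$ with $\int b^2\ge c_0>0$ and set
\begin{equation*}
\beta(x)=\frac{1}{k^{3/2}}\sin(kz)\,b(r,z)\,M^{-1/6}\,e_\theta,\qquad k\gg 1.
\end{equation*}
A computation in the spirit of \eqref{x2_3}--\eqref{x2_3a} gives $\|\beta\|_{\dot H^{3/2}(\mathbb R^3)}\le \tilde C M^{-1/6}+O(k^{-1/2})$ and $\|\beta\|_{L^1}+\|\beta\|_{L^\infty}+\|\beta/r\|_{L^1\cap L^\infty\cap L^{3,1}}=O(k^{-3/2})$, verifying \eqref{x6_5} and \eqref{x6_5a} once $k$ is large.

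Next I would compare the perturbed solution $\tilde\omega$ (with initial data $\omega_0+\beta$ and cylindrical flow $\tilde\phi_\ast$) to $\omega$. An $L^2$ energy estimate on $\tilde\omega-\omega$ analogous to \eqref{prop20_17}--\eqref{prop20_18}, combined with Lemma \ref{lem_ur_r} (axisymmetric Biot--Savart, whose $L^{3,1}$ input is preserved by Lemma \ref{lem_Lpq_1}) and a $W^{1,4}$ propagation bound, interpolates to $\|u-\tilde u\|_\infty+\|Du-D\tilde u\|_\infty=O(k^{-\alpha})$ for some absolute $\alpha>0$; a cylindrical-coordinate version of Lemma \ref{lem20} then yields
\begin{equation*}
\max_{0\le t\le 1}\Big(\|\tilde\phi_\ast(t)-\phi(t)\|_\infty+\|D\tilde\phi_\ast(t)-D\phi(t)\|_\infty\Big)=O(k^{-\alpha}).
\end{equation*}
For $k$ large, $|\partial_r\tilde\phi_\ast^z(t_0,\cdot)|\ge M/2$ persists on a slightly shrunken $V_*$, and by Lemma \ref{x-1} the prefactor $r/\tilde\phi_\ast^r(t_0,\cdot)$ remains comparable to $1$ there.

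The heart of the argument is the $\dot H^{3/2}$ lower bound. From
\begin{equation*}
\tilde\omega^\theta(t_0,x)=\frac{r}{\tilde\phi_\ast^r(t_0,x)}\bigl(\omega_0^\theta+\beta^\theta\bigr)\bigl(\tilde\phi_\ast(t_0,x)\bigr),
\end{equation*}
and the pointwise inequality $|a+b|^2\ge\tfrac12|a|^2-|b|^2$ applied to $|\nabla|^{3/2}(\tilde\omega^\theta e_\theta)$, it suffices to show
\begin{equation*}
\Big\|\tfrac{r}{\tilde\phi_\ast^r(t_0,\cdot)}\,\beta^\theta(\tilde\phi_\ast(t_0,\cdot))\,e_\theta\Big\|_{\dot H^{3/2}(\mathbb R^3)}\gtrsim M^{4/3},
\end{equation*}
because the $\omega_0$ contribution is bounded by $\|\omega(t_0,\cdot)\|_{\dot H^{3/2}}+o(1)\le M^{1/6}+o(1)$ (using Case~2, the $C^1$ closeness of $\tilde\phi_\ast$ to $\phi$, and a Lemma \ref{x4}--type control on the metric prefactor). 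The target function is spatially oscillatory at local frequency $k\nabla_x\tilde\phi_\ast^z(t_0,x)$, of modulus $\ge kM/2$ on $V_*$, while $b\circ\tilde\phi_\ast$ and the metric factor are smooth and bounded away from $0$. I plan to isolate this frequency with a Littlewood--Paley projector $P_N$ at scale $N\sim kM$, lower-bounding $\|P_N(\cdots)\|_{L^2}\gtrsim k^{-3/2}M^{-1/6}\sqrt{c_0\,\mathrm{vol}(V_*)}$ by a stationary-phase argument in Cartesian variables; multiplying by $N^{3/2}\sim(kM)^{3/2}$ recovers $\gtrsim M^{4/3}\gg M^{1/6}$, hence \eqref{x6_7}. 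The principal obstacle is precisely this last step: the fractional nature of $|\nabla|^{3/2}$ rules out the clean identity \eqref{prop20_11} available in 2D, and one must instead control $|\nabla|^{3/2}$ of a composition with the bi-Lipschitz cylindrical map $\tilde\phi_\ast$, carrying the axisymmetric metric prefactor $r/\tilde\phi_\ast^r$ throughout.
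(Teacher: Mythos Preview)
Your overall architecture matches the paper's (dichotomy, high-frequency axisymmetric perturbation $k^{-3/2}\cos(kr)\,b(r,z)\,M^{-1/6}e_\theta$, perturbation argument to transfer the deformation bound, then a lower bound on the $\dot H^{3/2}$ norm of the transported perturbation), but there is one genuine gap and one methodological difference worth flagging.

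\textbf{The metric factor is not comparable to $1$.} Your appeal to Lemma~\ref{x-1} for ``$r/\tilde\phi_\ast^r\sim 1$'' is incorrect: that lemma identifies $r/\tilde\phi^r$ with $\det(D\tilde\phi)$, and the cylindrical flow is \emph{not} area-preserving in $(r,z)$. The paper writes $N_*:=\phi^r(t_0,r_*,z_*)/r_*$ for this factor at the maximizing point and proves the structural inequality $N_*M\ge 1$ (see \eqref{x6_p20aa}) directly from $\tilde\phi^r(t_0,0,\tilde z_*)=0$ and $\|D\tilde\phi(t_0,\cdot)\|_\infty=M$. A priori $N_*$ ranges over $[e^{-C\|\omega_0/r\|_{L^{3,1}}},\,e^{C\|\omega_0/r\|_{L^{3,1}}}]$, which in the intended application (Proposition~\ref{x7}, $\omega_0=g_Ae_\theta$ with $\|\omega_0/r\|_{L^{3,1}}\sim\sqrt{\log A}$ and $M\sim\sqrt{\log\log A}$) is vastly wider than any power of $M$. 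So your claimed lower bound $M^{4/3}$ carries a hidden $\omega_0$-dependent prefactor that can swamp it; the proposition demands absolute constants.

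\textbf{How the paper closes the bound.} Instead of stationary phase, the paper tracks $N_*$ through both an $L^2$ \emph{upper} bound (\eqref{x6_p26}: $\lesssim k^{-3/2}M^{-1/6}N_*$) and a $\dot H^1$ \emph{lower} bound (\eqref{x6_p27}--\eqref{x6_p28}: $\gtrsim k^{-1/2}M^{5/6}N_*$) on the transported perturbation, then uses the interpolation $\|f\|_{\dot H^1}\lesssim\|f\|_{L^2}^{1/3}\|f\|_{\dot H^{3/2}}^{2/3}$ to obtain $\|f\|_{\dot H^{3/2}}\gtrsim M^{4/3}N_*\ge M^{1/3}$. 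The point is that $N_*$ enters both sides with the \emph{same} power, so only the universal lower bound $N_*\ge M^{-1}$ is needed. Your Littlewood--Paley route can be made to work, but only after you carry $N_*$ through the $L^2$ lower bound and invoke $N_*M\ge1$ at the end; the raw $\sqrt{c_0\,\mathrm{vol}(V_*)}$ you wrote is not an absolute constant.

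\textbf{Error control via linear auxiliaries, not $C^1$ flow comparison.} For the ``$\omega_0$ transported by the perturbed flow'' term you want $\le M^{1/6}+o(1)$ in $\dot H^{3/2}$, and you cite $C^1$ closeness of $\tilde\phi_\ast$ to $\phi$. That alone does not give $\dot H^{3/2}$-closeness of the compositions (fractional derivatives of compositions with merely $C^1$ maps are not controlled this way). The paper avoids this by introducing two linear transport systems driven by $u$ and by the perturbed velocity $U$ (equations \eqref{x6_p9}--\eqref{x6_p11}), writing the total error as $E_1+E_2$, bounding each in $H^2$ via Lemma~\ref{lemx5} (this is where vanishing of $\omega_0$ near $r=0$ is used), in $L^2$ via the velocity difference \eqref{x6_p18}, and interpolating. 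Your ``Lemma~\ref{x4}-type control'' does not address this; Lemma~\ref{x4} concerns the operator $\partial_{zz}(\Delta-r^{-2})^{-1}$, not $H^{3/2}$ stability of compositions.
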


\begin{proof}[Proof of Proposition \ref{x6}]
We begin with a general derivation. Let $W$ be a smooth solution to the
system
\begin{align} \label{x6_p1}
 \begin{cases}
  \partial_t ( \frac {W} r) + (U\cdot \nabla) ( \frac W r) =0, \\
  U=-\Delta^{-1}\nabla \times W, \\
  W \Bigr|_{t=0} =W_0=f e_{\theta}.
 \end{cases}
\end{align}

Here $f=f(r,z)$ is scalar-valued.
Define the corresponding forward characteristic lines $\Phi=(\Phi^r, \Phi^z)$ in the same way as
\eqref{x-1_00} and let $\tilde \Phi$ be the inverse map. Then we have
\begin{align*}
 W(t,x)=  W^{\theta}{(t,r,z)} e_{\theta},
\end{align*}
where $W^{\theta}$ is scalar-valued and
\begin{align*}
 \frac{W^{\theta}(t, \Phi(t,r,z))} { \Phi^r (t,r,z)} = \frac {f(r,z)} r.
\end{align*}
Therefore
\begin{align}
 W^{\theta}{(t,r,z)} = \frac{ f(\tilde \Phi(t,r,z))} {\tilde \Phi^r (t,r,z)}  r
\end{align}
and
\begin{align}
 W(t,x) = \frac {f(\tilde \Phi(t,r,z))} {\tilde \Phi^r (t,r,z)} r e_\theta, \quad x=(x_1,x_2,z),\, r=\sqrt{x_1^2+x_2^2}.
 \label{x6_p2}
\end{align}

Now we discuss two cases:

\texttt{Case 1}: $\| \omega(t_0,\cdot )\|_{\dot H^{\frac 32}}
>M^{\frac 16}$. In this case we just set $\tilde \omega=\omega$ and
no work is needed.

\texttt{Case 2}:
\begin{align} \label{x6_case2}
 \|\omega(t_0,\cdot)\|_{\dot H^{\frac 32}} \le M^{\frac 16}.
\end{align}

In this case in order not to confuse with some notations later on we shall denote
$\tilde \omega_0$ as $W_0$ and $\tilde \omega$ as $W$.
We take the initial data $W_0$  in \eqref{x6_p1} as
\begin{align} \label{x6_p4}
 W_0 = \omega_0 + k^{-\frac 32} G_0,
\end{align}
where $\omega_0$ is the same as in \eqref{x6_00}. The function $G_0$ has the form
\begin{align} \label{x6_p5}
 G_0(x)= g_0(r,z) e_{\theta}
\end{align}
where $g_0$ is scalar-valued. The detailed form of $g_0$ will be specified later in the course of the proof.

We shall take the parameter $k$ sufficiently large.  In the rest of this proof, to simplify
the presentation, we shall use the notation $X=O(k^{\alpha})$ ($\alpha$ is a real number) if the quantity
$X$ obeys the bound $X \le C_1 k^{\alpha}$ and the positive constant $C_1$ can depend on all other parameters except $k$.

Now we assume $G_0$ in \eqref{x6_p5} is a smooth compactly-supported function which obeys the following bounds:
\begin{align} \label{x6_p6}
 &\|G_0 \|_{L^1(\mathbb R^3)} + \|G_0\|_{L^{\infty}(\mathbb R^3)}
 +\|\frac {G_0} r \|_{L^1(\mathbb R^3)} +\| \frac{G_0} r \|_{L^{\infty} (\mathbb R^3)} = O(1), \notag \\
 & \|D G_0 \|_{L^1(\mathbb R^3)} + \| D G_0\|_{L^{\infty}(\mathbb R^3)} = O(k), \notag \\
 &  \|D^2 G_0 \|_{L^1(\mathbb R^3)} + \|D^2 G_0\|_{L^{\infty}(\mathbb R^3)} = O(k^2).
\end{align}

By \eqref{x6_p2}, \eqref{x6_p4}, \eqref{x6_p5} and \eqref{x6_00a}, we have
\begin{align}
W(t,x) &= \frac {\omega_0^{\theta}(\tilde \Phi(t,r,z))} {\tilde \Phi^r (t,r,z)} r e_\theta +
k^{-\frac 32 }
\frac {g_0(\tilde \Phi(t,r,z))} {\tilde \Phi^r (t,r,z)} r e_\theta \notag \\
& = \frac {\omega_0^{\theta}(\tilde \phi(t,r,z))} {\tilde \phi^r (t,r,z)} r e_\theta +
k^{-\frac 32 }
\frac {g_0(\tilde \phi(t,r,z))} {\tilde \phi^r (t,r,z)} r e_\theta \notag \\
& \quad + E_1+E_2, \label{x6_p7}
\end{align}
where  $\tilde \phi$ is the same as in \eqref{x6_2} and
\begin{align*}
 E_1 & = \frac {\omega_0^{\theta}(\tilde \Phi(t,r,z))} {\tilde \Phi^r (t,r,z)} r e_\theta-
 \frac {\omega_0^{\theta}(\tilde \phi(t,r,z))} {\tilde \phi^r (t,r,z)} r e_\theta, \notag \\
 E_2 & = k^{-\frac 32} \left( \frac {g_0(\tilde \Phi(t,r,z))} {\tilde \Phi^r (t,r,z)} r e_\theta
-\frac {g_0(\tilde \phi(t,r,z))} {\tilde \phi^r (t,r,z)} r e_\theta \right).
\end{align*}

We now show that the terms $E_1$, $E_2$ in \eqref{x6_p7} are negligible in the computation
of $H^{\frac 32}$-norm of $W$. More precisely, we shall show
for some $\alpha>0$,
\begin{align}
\max_{0\le t \le 1} \| E_1 (t)\|_{H^{\frac 32}(\mathbb R^3)} + \max_{0\le t \le 1 }\| E_2 (t)\|_{H^{\frac 32}(\mathbb R^3)} = O(k^{-\alpha}). \label{x6_p8}
\end{align}

To show \eqref{x6_p8}, let us introduce $\omega_2$, $W_1$, $W_2$ which solve the following
\emph{linear} systems:
\begin{align} \label{x6_p9}
 \begin{cases}
  \partial_t ( \frac{\omega_2} r ) + (U\cdot \nabla ) ( \frac {\omega_2} r )=0, \\
  \omega_2 \Bigr|_{t=0} =\omega_0;
 \end{cases}
\end{align}
\begin{align} \label{x6_p10}
 \begin{cases}
  \partial_t ( \frac{W_1} r ) + (u\cdot \nabla ) ( \frac {W_1} r )=0, \\
  W_1 \Bigr|_{t=0} =k^{-\frac 32} G_0;
 \end{cases}
\end{align}
\begin{align} \label{x6_p11}
 \begin{cases}
  \partial_t ( \frac{W_2} r ) + (U\cdot \nabla ) ( \frac {W_2} r )=0, \\
  W_2 \Bigr|_{t=0} =k^{-\frac 32} G_0;
 \end{cases}
\end{align}
Here the drift terms $u$, $U$ and the data $\omega_0$, $G_0$ are the same as in the nonlinear systems
\eqref{x6_00} and \eqref{x6_p1}.

It is not difficult to check that
\begin{align*}
 &W= \omega +W_1+E_1+E_2, \notag \\
 &E_1=\omega_2-\omega, \notag \\
 &E_2=W_2-W_1.
\end{align*}

Therefore we only need to run perturbation arguments between the nonlinear systems \eqref{x6_00}, \eqref{x6_p1} and
the linear systems \eqref{x6_p9}--\eqref{x6_p11}.

We first control the drift difference $u-U$.

By \eqref{x6_p4} and \eqref{x6_p6}, we have $\|W_0\|_{W^{1,p}} =O(1)$ for any $1\le p\le \infty$.

Thanks to the axisymmetry without swirl, we may write the system \eqref{x6_p1} as either
\begin{align}
 \partial_t W + (U\cdot \nabla) W = (W\cdot \nabla) U, \label{x6_p12}
\end{align}
or
\begin{align}
 \partial_t W + (U\cdot \nabla) W = \frac {U^r} r W. \label{x6_p13}
\end{align}

Take any $3<p<\infty$. A standard energy estimate on \eqref{x6_p12} in $W^{1,p}$ gives
\begin{align} \label{x6_p14}
 \frac d {dt}\Bigl( \|W(t)\|_{W^{1,p}}^p \Bigr)
 \lesssim ( \| Du(t)\|_{\infty} + \| W(t)\|_{\infty} )
 \|W(t)\|_{W^{1,p}}^p.
\end{align}

Note that by \eqref{x6_p6} and Lemma \ref{lem_ur_r},
\begin{align*}
\max_{0\le t\le 1} \| \frac{U^r(t)} r \|_{\infty} \lesssim \| \frac{W_0} r \|_{L^{3,1}} =O(1).
\end{align*}

Therefore by \eqref{x6_p13}, we have
\begin{align*}
 \max_{0\le t\le 1} (\|W(t)\|_2 + \| W(t)\|_{\infty}) = O(1).
\end{align*}

By the usual log-interpolation inequality, we have
\begin{align*}
 \| D U(t) \|_{\infty} & \lesssim \| W(t)\|_2 + \log ( 10 + \|W(t)\|^p_{W^{1,p}} ) \|W(t)\|_{\infty} \notag \\
 & \lesssim O(1) \cdot \log ( 10+ \| W(t) \|_{W^{1,p}}^p ).
\end{align*}

Plugging the last estimate into \eqref{x6_p14}, we obtain
\begin{align*}
 \frac d {dt} ( \|W(t) \|_{W^{1,p}}^p ) \lesssim O(1)
 \cdot \log ( 10 + \|W(t)\|^p_{W^{1,p}} )  \|W(t) \|_{W^{1,p}}^p.
\end{align*}

Integrating in time then gives
\begin{align}
 \max_{0\le t \le 1} \| W(t)\|_{W^{1,p}} =O(1),\quad\forall\, 3<p<\infty. \label{x6_p15}
\end{align}

By Sobolev embedding, we get
\begin{align} \label{x6_p16}
 \max_{0\le t\le 1} (\|D^2U(t)\|_p+\| DU(t)\|_{\infty}) =O(1), \quad \forall\,  3<p<\infty.
\end{align}

Similarly using \eqref{x6_p6} we can derive
\begin{align} \label{x6_p16a}
 \max_{0\le t \le 1} \| W(t)\|_{H^2} =O(k^{\frac 12}).
\end{align}

Note that the system \eqref{x6_00} is independent of the parameter $k$, therefore we have
\begin{align}
 \max_{0\le t \le 1} \| u(t)\|_{W^{20,p}} = O(1), \quad \forall\, 2\le p<\infty. \label{x6_p17}
\end{align}

Now to control the difference, we recall
\begin{align*}
 \begin{cases}
  \partial_t \omega + (u\cdot \nabla) \omega = (\omega \cdot \nabla) u, \\
  \partial_t W + (U\cdot \nabla )W= (W\cdot \nabla) U, \\
  (W-\omega) \Bigr|_{t=0} = k^{-\frac 32} G_0.
 \end{cases}
\end{align*}

Obviously
\begin{align*}
 &\partial_t (W-\omega) + (U\cdot \nabla) (W-\omega) + \Bigl( (U-u) \cdot \nabla \Bigr) \omega \notag \\
 & \qquad = (W\cdot \nabla) (U-u) + \Bigl( (W-\omega) \cdot \nabla \Bigr) u.
\end{align*}

By \eqref{x6_p15}, \eqref{x6_p16}, \eqref{x6_p17} and Sobolev embedding, we then obtain
\begin{align*}
 \partial_t \Bigl( \| W-\omega \|_2^2 \Bigr) & \lesssim \| U-u \|_6 \cdot \|W-\omega\|_2 \cdot \| \nabla \omega\|_3 \notag \\
 & \qquad + \|W \|_{\infty} \cdot \| D(U-u)\|_2 \cdot \|W-\omega\|_2 + \| D u\|_{\infty} \cdot \|W-\omega\|_2^2 \notag \\
 & \lesssim O(1)\cdot \|W-\omega\|_2^2.
\end{align*}

Therefore
\begin{align*}
 \max_{0\le t\le 1} \| W(t)-\omega(t)\|_2 =O(k^{-\frac 32}).
\end{align*}

In a similar way, we can derive
\begin{align}
 & \max_{0\le t \le 1} \| W(t)-\omega(t)\|_p = O(k^{-\frac 32}), \quad \forall\, 1<p<\infty, \notag \\
 & \max_{0\le t \le 1} \Bigl( \|U(t)-u(t)\|_p + \| \nabla (U(t)-u(t)) \|_p \Bigr)
 = O(k^{-\frac 32}), \quad \forall\, 2\le p<\infty. \label{x6_p18}
\end{align}

We are now ready to control $E_1= \omega_2 - \omega$. By \eqref{x6_p9}, \eqref{x6_p16} and Lemma \ref{lemx5}, we have
\begin{align*}
\max_{0\le t \le 1} \|\omega_2 (t)\|_{H^2} =O(1).
\end{align*}
By \eqref{x6_p17}, we get
\begin{align*}
\max_{0\le t \le 1} \|\omega_2 (t)- \omega(t)\|_{H^2} =O(1).
\end{align*}
On the other hand, using \eqref{x6_p18}, it is not difficult to check  that
\begin{align*}
 \max_{0\le t \le 1} \| \omega_2 (t) -\omega(t)\|_2 =O(k^{-\frac 32}).
\end{align*}
Interpolating the above two bounds then gives
\begin{align*}
 \max_{0\le t \le 1} \|\omega_2 (t)- \omega(t)\|_{H^{\frac 32}} =O(k^{-\frac3 8}).
\end{align*}
Therefore $E_1$ is OK for us.

To control $E_2$, we note that by \eqref{x6_p10}--\eqref{x6_p11}, we have
\begin{align}
 &\partial_t W_1 + (u\cdot \nabla ) W_1= (W_1\cdot \nabla )u, \label{x6_p19a} \\
 &\partial_t W_2 + (U\cdot \nabla ) W_2= (W_2\cdot \nabla )U, \label{x6_p19b} \\
 & \partial_t (W_1-W_2) + ((u-U)\cdot \nabla) W_1 + (U\cdot \nabla )(W_1-W_2) \notag \\
 & \qquad =( (W_1-W_2) \cdot \nabla ) u + (W_2 \cdot \nabla) (u-U).  \label{x6_p19c}
\end{align}

For \eqref{x6_p19a}, a simple energy estimate using \eqref{x6_p6} and \eqref{x6_p17} gives
\begin{align}
&\max_{0\le t \le 1} \| W_1\|_2 = O(k^{-\frac 32}), \notag \\
&\max_{0\le t \le 1} \| \nabla W_1 \|_4 = O(k^{-\frac 12}), \notag \\
&\max_{0\le t \le 1} \| W_1 \|_{H^2} = O(k^{\frac 12}). \label{x6_p19d}
\end{align}

Similarly for \eqref{x6_p19b}, we use \eqref{x6_p6}, \eqref{x6_p16} and \eqref{x6_p16a} to get
\begin{align}
 &\max_{0\le t \le 1} \| W_2\|_4 = O(k^{-\frac 32}), \notag \\
&\max_{0\le t \le 1} \| W_2 \|_{H^2} = O(k^{\frac 12}). \label{x6_p19e}
\end{align}

For \eqref{x6_p19c}, a simple $L^2$ estimate using \eqref{x6_p18}, \eqref{x6_p19d} and \eqref{x6_p19e} gives
\begin{align*}
 \partial_t (\|W_1-W_2\|_2^2) & \lesssim \| W_1-W_2\|_2 \cdot \|u-U\|_4 \cdot \| \nabla W_1\|_4 \notag \\
 &\quad + \|W_1-W_2\|_2^2 \cdot \|\nabla u \|_{\infty} + \|W_2\|_4 \cdot \| \nabla(u-U)\|_4\cdot \|W_1-W_2\|_2 \notag\\
 & \lesssim O(k^{-2} )\cdot \|W_1-W_2\|_2 + O(1) \cdot \|W_1-W_2\|_2^2 \notag \\
 & \qquad + O(k^{-3}) \cdot \|W_1-W_2\|_2.
\end{align*}
Gronwall in time then gives
\begin{align*}
 \max_{0\le t\le 1} \|W_1(t)-W_2(t)\|_2 =O(k^{-2}).
\end{align*}
Interpolating this  with the trivial estimate
\begin{align*}
 \max_{0\le t\le 1} \|W_1(t)-W_2(t)\|_{H^2} =O(k^{\frac 12})
\end{align*}
then yields
\begin{align*}
  \max_{0\le t\le 1} \|W_1(t)-W_2(t)\|_{H^{\frac 32}} =O(k^{-\frac 18}).
\end{align*}
This shows that $\|E_2\|_{H^{\frac 32}}=O(k^{-\frac 18})$ and we have finished the proof of \eqref{x6_p8}.

We now specify the choice of $g_0$ in \eqref{x6_p5}.

By \eqref{x6_2}, we have
\begin{align*}
 & \max\{ |(\partial_r \tilde \phi^r)(t_0,\tilde r_*,\tilde z_*)|, \;\;  |(\partial_z \tilde \phi^r)(t_0,\tilde r_*,\tilde z_*)|, \\
&\quad |(\partial_r \tilde \phi^z)(t_0,\tilde r_*,\tilde z_*)|, \;\;  |(\partial_z \tilde \phi^z)(t_0,\tilde r_*,\tilde z_*)|\}=M.
\end{align*}

WLOG we assume
\begin{align}
 |(\partial_r \tilde \phi^r)(t_0,\tilde r_*,\tilde z_*)|=M. \label{x6_p20}
\end{align}
The other cases are similarly treated.

Let $(r_*,z_*)$ be the pre-image of $(\tilde r_*,\tilde z_*)$, i.e. $\tilde r_*= \phi^r (t_0,r_*,z_*)$,
$\tilde z_*=\phi^z(t_0,r_*,z_*)$.

By \eqref{x-1_4}, we have
\begin{align}
 \left|\det\Bigl( (D\tilde \phi)(t_0, \phi(t_0,r_*,z_*)) \Bigr)\right| = \frac{\phi^r(t_0,r_*,z_*)} {r_*} =:N_*>0. \label{x6_p20a}
\end{align}

By the Fundamental Theorem of Calculus and \eqref{x-1_1}, we have
\begin{align*}
 r_* & = \tilde \phi^r(t_0,\phi^r(t_0,r_*,z_*),\phi^z(t_0,r_*,z_*)) - \tilde \phi^r(t_0,0,\phi^z(t_0,r_*,z_*)) \notag \\
 & \le \| \partial_r \tilde \phi^r \|_{\infty} \cdot \phi^r (t_0,r_*,z_*) \notag \\
 & \le M \cdot \phi^r (t_0,r_*,z_*).
\end{align*}

Therefore
\begin{align} \label{x6_p20aa}
 N_* M\ge 1.
\end{align}
This relation will be used later.

By \eqref{x6_p20}, \eqref{x6_p20a} and continuity, we can find a nonempty open set $\Omega_0$ around
the point $(r_*,z_*)$ such that
\begin{align}
 & \frac M2 <| (\partial_r \tilde \phi^r)(t_0, \phi(t_0,r,z)) | <2M, \notag \\
 & \frac {N_*}2 <\frac{\phi^r(t_0,r,z)} r = | \det ( (D\tilde \phi)(t_0,\phi(t_0,r,z)) ) | <2 N_*,
 \quad \forall\, (r,z) \in \Omega_0. \label{x6_p21}
\end{align}

Furthermore we may shrink $\Omega_0$ slightly if necessary such that for some $\delta_1>0$,
\begin{align*}
 \Omega_0 \cap \{ (r,z):\, 0\le r \le \delta_1\} =\emptyset
\end{align*}

In yet other words, if $(r,z) \in \Omega_0$, then we must have $r>\delta_1$.

Now choose $b\in C_c^{\infty} (\Omega_0)$ such that
\begin{align}
 \int |b(r,z)|^2 r dr dz=1. \label{x6_p22}
\end{align}

Since by our choice $\Omega_0$ stays away from the axis $r=0$, the function $b$ can be
naturally regarded as a smooth function on $\mathbb R^3$.

We now let
\begin{align}
 g_0(r,z) = \frac 1 {M^{\frac 16}} \cos (kr) b(r,z) \label{x6_p24}
\end{align}
and recall from \eqref{x6_p5}
\begin{align*}
 G_0(x) & =g_0(r,z) e_{\theta} \notag \\
 & = \frac 1 {M^{\frac 16}}  \cos(kr) b(r,z) e_{\theta}.
\end{align*}

By \eqref{x6_p22}, it is not difficult to check that \eqref{x6_p6} is satisfied.

Since
\begin{align*}
 W_0=\omega_0 + k^{-\frac 32} G_0,
\end{align*}
by taking $k$ sufficiently large, obviously we can have
\begin{align*}
 & \| W_0 \|_{L^1(\mathbb R^3)}\le 2 \|\omega_0 \|_{L^1(\mathbb R^3)}, \notag \\
 & \| W_0 \|_{L^{\infty}(\mathbb R^3)} \le 2 \|\omega_0 \|_{L^\infty(\mathbb R^3)}, \notag \\
  & \| \frac {W_0}r \|_{L^1(\mathbb R^3)} \le 2 \| \frac{\omega_0}r \|_{L^1(\mathbb R^3)},  \notag \\
  &\| \frac{W_0}r \|_{L^{\infty}(\mathbb R^3)} \le 2 \| \frac{\omega_0}r \|_{L^{\infty}(\mathbb R^3)}, \notag \\
  &\| \frac{W_0}r \|_{L^{3,1}(\mathbb R^3)} \le 2 \| \frac{\omega_0}r \|_{L^{3,1}(\mathbb R^3)}.
\end{align*}
Therefore \eqref{x6_5a} and the first two conditions in \eqref{x6_5} are easily satisfied.  To check the third condition
therein, we note that by \eqref{x6_p22} and for $k$ sufficiently large,
\begin{align*}
 & \|G_0\|_{L^2(\mathbb R^3)} \lesssim \frac 1 {M^{\frac 16} }, \notag \\
 & \|G_0 \|_{H^2(\mathbb R^3)} \lesssim \frac 1 {M^{\frac 16} } \cdot k^2.
\end{align*}
Here the implied constants are absolute constants. Interpolation then gives
\begin{align*}
 \| G_0 \|_{H^{\frac 32} (\mathbb R^3)} \lesssim \frac 1 {M^{\frac 16}} k^{\frac 32}.
\end{align*}
Thus all conditions in \eqref{x6_5a} and \eqref{x6_5} are satisfied.

It remains to show \eqref{x6_7}.

By \eqref{x6_case2}, \eqref{x6_p7} and \eqref{x6_p8}, we have
\begin{align}
 \| W(t_0,\cdot)\|_{\dot H^{\frac 32}} & \ge \| k^{-\frac 32} \cdot \frac{g_0(\tilde \phi(t_0))}{\tilde \phi^r(t_0)} r e_{\theta}
 \|_{\dot H^{\frac 32}} \notag \\
 & \qquad - \| \frac{\omega_0^{\theta}(\tilde \phi(t_0))}{\tilde \phi^r(t_0)} r e_{\theta}
 \|_{\dot H^{\frac 32}} \notag \\
 & \qquad - \|E_1\|_{\dot H^{\frac 32}} - \|E_2\|_{\dot H^{\frac 32}} \notag \\
 & \ge  \| k^{-\frac 32} \cdot \frac{g_0(\tilde \phi(t_0))}{\tilde \phi^r(t_0)} r e_{\theta}
 \|_{\dot H^{\frac 32}} - M^{\frac 16} - O(k^{-\alpha}). \notag
\end{align}
Therefore \eqref{x6_7} will be established once we prove the stronger estimate
\begin{align} \label{x6_p25}
\| k^{-\frac 32} \cdot \frac{g_0(\tilde \phi(t_0))}{\tilde \phi^r(t_0)} r e_{\theta}
 \|_{\dot H^{\frac 32}} \gtrsim M^{\frac 13}.
 \end{align}

 We shall prove this via interpolation and inflation of $H^1$ norm.

 By \eqref{x6_p24}, we have
 \begin{align}
 \| k^{-\frac 32} \cdot \frac{g_0(\tilde \phi(t_0))}{\tilde \phi^r(t_0)} r e_{\theta}\|_{L^2(\mathbb R^3)}
 & \lesssim k^{-\frac 32} \left( \int \Bigl| \frac{g_0(\tilde \phi(t_0))} {\tilde \phi^r(t_0)} r \Bigr|^2 rdrdz \right)^{\frac 12}
 \notag \\
 & \lesssim k^{-\frac 32} \left( \int
\Bigl| \frac{g_0(r,z) \phi^r(t_0,r,z)} r \Bigr|^2 r drdz
 \right)^{\frac 12} \notag \\
 & \lesssim \frac {k^{-\frac 32}} {M^{\frac 16} }
 \left(\int \frac{\cos^2(kr) b^2(r,z) (\phi^r(t_0,r,z))^2} {r^2} r dr dz \right)^{\frac 12} \notag \\
 & \lesssim \frac{k^{-\frac 32} }{M^{\frac 16} } \| \frac{b \phi^r(t_0)} r \|_{L^2(\mathbb R^3)}
 \lesssim \frac{k^{-\frac 32}} {M^{\frac 16} } N_*, \label{x6_p26}
 \end{align}
where in the last inequality we have used \eqref{x6_p21} and \eqref{x6_p22}.

Now introduce
\begin{align*}
 g_1(r,z) = \sin(k \tilde \phi^r(t_0,r,z)) \frac{b(\tilde \phi(t_0,r,z))}{\tilde \phi^r(t_0,r,z)}
 (\partial_r \tilde \phi^r)(t_0,r,z) r e_{\theta}.
\end{align*}

By \eqref{x6_p21} and a similar calculation as in \eqref{x6_p26}, we have for $k$ sufficiently large,
\begin{align}
 \|g_1\|_{L^2(rdrdz)} & \ge
 \left( \int  \frac{ \sin^2(kr) b^2(r,z)  ((\partial_r \tilde \phi^r)(t_0, \phi(t_0,r,z)))^2} {r^2}
 (\phi^r(t_0,r,z))^2 rdrdz
 \right)^{\frac 12} \notag \\
 & \ge M \| \frac{ b \phi^r(t_0)} {r} \|_{L^2(\mathbb R^3)} -O(k^{-\alpha}) \notag \\
 & \ge \frac 23 M \| \frac{b \phi^r(t_0)} r \|_{L^2(\mathbb R^3)} \gtrsim M\cdot N_*. \label{x6_p27}
\end{align}

Now for the $\dot H^1$-norm, by using \eqref{x6_p27}, we have
\begin{align}
 \| k^{-\frac 32} \cdot \frac{g_0(\tilde \phi(t_0))}{\tilde \phi^r(t_0)} r e_{\theta}\|_{\dot H^1(\mathbb R^3)}
 & \ge k^{-\frac 32} \| \partial_r \Bigl(  \frac{g_0(\tilde \phi(t_0))}{\tilde \phi^r(t_0)} r e_{\theta}  \Bigr)
 \|_{L^2(rdrdz)} \notag \\
 & \ge \frac{k^{-\frac 32}} {M^{\frac 16}} \cdot ( k \|g_1\|_{L^2(rdrdz)} +O(1) ) \notag \\
 & \ge \frac 12 k^{-\frac 12} M^{\frac 56} N_*, \label{x6_p28}
\end{align}
where again we need to take $k$ sufficiently large.

We are now ready to prove \eqref{x6_p25}.

By the usual interpolation inequality
\begin{align*}
 \| f \|_{\dot H^1(\mathbb R^3)} \lesssim \| f \|_{L^2(\mathbb R^3)}^{\frac 13} \cdot \| f \|^{\frac 23}_{\dot H^{\frac 32}(\mathbb R^3)}
\end{align*}
and \eqref{x6_p26}, \eqref{x6_p28}, we have
\begin{align*}
 & k^{-\frac 12} M^{\frac 56} N_* \notag \\
 & \quad \lesssim \left( \frac{k^{-\frac 32} N_*} {M^{\frac 16}} \right)^{\frac 13}
 \cdot \| k^{-\frac 32} \cdot \frac{g_0(\tilde \phi(t_0))}{\tilde \phi^r(t_0)} r e_{\theta}\|^{\frac 23}_{\dot H^{\frac 32}(\mathbb R^3)}.
\end{align*}

By \eqref{x6_p20aa}, we then have
\begin{align*}
 \| k^{-\frac 32} \cdot \frac{g_0(\tilde \phi(t_0))}{\tilde \phi^r(t_0)}
 r e_{\theta}\|^{\frac 23}_{\dot H^{\frac 32}(\mathbb R^3)}
 & \gtrsim M^{\frac 89} N_*^{\frac 23} \notag \\
 & \gtrsim M^{\frac 29}.
\end{align*}

Hence
\begin{align*}
 \| k^{-\frac 32} \cdot \frac{g_0(\tilde \phi(t_0))}{\tilde \phi^r(t_0)} r e_{\theta}\|_{\dot H^{\frac 32}(\mathbb R^3)}
\gtrsim M^{\frac 13} \gg M^{\frac 16}.
\end{align*}

This ends the estimate of \eqref{x6_p25}.
\end{proof}

\begin{prop} \label{x7}
 For any $A\gg 1$, there exist $\delta_0=\delta_0(A) \to 0$, $t_0 = t_0(A) \to 0$, $M_0=M_0(A) \to \infty$ (as $A\to \infty$),
 and a smooth solution $\omega$ to the axisymmetric (without swirl) Euler equation
\begin{align} \notag
 \begin{cases}
  \partial_t \left( \frac {\omega }r \right) + (u\cdot \nabla ) \left( \frac {\omega} r  \right) =0,
  \quad 0<t\le 1,\, x=(x_1,x_2,z),\, r=\sqrt{x_1^2+x_2^2}, \\
  u=-\Delta^{-1} \nabla \times \omega, \\
  \omega \Bigr|_{t=0} =\omega_0
 \end{cases}
 \end{align}
such that the following conditions are satisfied:
\begin{enumerate}
 \item $\omega_0 \in C_c^{\infty} (\mathbb R^3)$, $\omega_0 =\omega_0^{\theta} (r,z) e_{\theta}$ and for some
 $r_0>0$,
\begin{align} \label{x7_1}
 \operatorname{supp}(\omega_0^{\theta}(r,z)) \subset\{(r,z):\, r>r_0\}.
\end{align}

\item The  $L^{\infty}$ norm of $\omega$ is uniformly small on the interval $[0,1]$:
\begin{align}
 \max_{0\le t \le 1} \| \omega(t)\|_{L^{\infty}} \le \delta_0(A). \label{x7_2}
\end{align}

\item The support of $\omega(t)$ remains close to the origin:
\begin{align} \label{x7_3}
 \operatorname{supp} ( \omega(t,\cdot) ) \subset \{ x:\quad |x| < \delta_0(A)\}, \quad\forall\, 0\le t\le 1.
\end{align}

\item The $\dot H^{\frac 32}$-norm of $\omega$ is inflated rapidly from $t=0$ to $t=t_0$:
\begin{align}
 &\| \omega_0\|_{\dot H^{\frac 32}} <\delta_0(A), \notag \\
 &\| \omega(t_0)\|_{\dot H^{\frac 32}} >M_0(A). \label{x7_4}
\end{align}
\end{enumerate}
\end{prop}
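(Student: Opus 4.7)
The plan is to combine the large Lagrangian deformation provided by Proposition \ref{x5} with the $\dot H^{3/2}$ norm inflation of Proposition \ref{x6}, applied to the family $g_A$ from \eqref{x2_1}. First I view $g_A$ as $\omega_0^\theta$ and consider the axisymmetric (without swirl) Euler evolution with initial vorticity $g_A e_\theta$. The function $g_A$ vanishes near $r=0$ since its support lies in $r\sim 2^{-k}$ for $A\le k\le 2A$, and satisfies the baseline smallness bounds $\|g_A\|_{\dot H^{3/2}}\lesssim \sqrt{\log A}/\sqrt{A}$ and $\|g_A\|_\infty\lesssim \sqrt{\log A}/A$ by \eqref{x2_3b}--\eqref{x2_3a}. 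By Proposition \ref{x5}, for all $A$ sufficiently large there is some $t_0\in(0,1/\log\log A]$ at which $\|D\phi(t_0)\|_\infty+\|D\tilde\phi(t_0)\|_\infty\ge \log\log A$; since $\det(D\phi)=r/\phi^r$ stays bounded above and below (the exponent in \eqref{x-1_5} is controlled by a bounded $\|Du\|_\infty$), the two quantities are comparable and we may take $\|D\tilde\phi(t_0)\|_\infty\ge M:=\tfrac12\log\log A$.

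Next I apply Proposition \ref{x6} to this baseline solution with deformation parameter $M$. This produces a perturbed initial vorticity $\tilde\omega_0=g_A e_\theta+k^{-3/2}G_0$, with $G_0=g_0(r,z)e_\theta$ supported in a small set $\Omega_0$ lying strictly inside the support of $g_A$ (in particular away from $r=0$), such that the resulting axisymmetric Euler solution $\tilde\omega$ satisfies $\|\tilde\omega(t_0)\|_{\dot H^{3/2}}>M^{1/6}$. I set $\omega_0:=\tilde\omega_0$, $\omega:=\tilde\omega$, and $M_0(A):=M^{1/6}\to\infty$. Condition \eqref{x7_1} follows from $\operatorname{supp}(\tilde\omega_0)\subset\operatorname{supp}(g_A)\cup\Omega_0\subset\{r>2^{-2A-10}\}$, while the initial $\dot H^{3/2}$ bound in \eqref{x7_4} follows from \eqref{x6_5}:
\[
\|\omega_0\|_{\dot H^{3/2}}\le \|g_A\|_{\dot H^{3/2}}+\tilde C M^{-1/6}\lesssim \sqrt{\log A}/\sqrt{A}+(\log\log A)^{-1/6}\to 0.
\]

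For the uniform $L^\infty$ bound \eqref{x7_2} I will use the transport formula $\omega^\theta(t,r,z)=\frac{r}{\tilde\phi^r(t,r,z)}\,\omega_0^\theta(\tilde\phi(t,r,z))$ from \eqref{x6_p2}. Since $\tilde\phi^r(t,0,z)=0$ by \eqref{x-1_1}, the ratio $r/\tilde\phi^r(t,r,z)$ is bounded by $\|\partial_r\phi^r\|_\infty$, which in turn is controlled on $[0,1]$ by integrating the logarithmic interpolation $\|Du\|_\infty\lesssim \log(10+\|\omega\|_{H^2})$ in time, exactly as in \eqref{x6_p14}--\eqref{x6_p17}. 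Combined with $\|\omega_0\|_\infty\lesssim \sqrt{\log A}/A+k^{-3/2}\to 0$, this yields $\max_{0\le t\le 1}\|\omega(t)\|_\infty\le \delta_0(A)$ with $\delta_0(A)\to 0$.

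The main technical point is the support bound \eqref{x7_3} on the full interval $[0,1]$, well beyond the time $t_0\le 1/\log\log A$ where Propositions \ref{x5}--\ref{x6} operate. For this I will use the pointwise Biot--Savart estimate $\|u(t)\|_\infty\lesssim \|\omega(t)\|_\infty^{2/3}\|\omega(t)\|_{L^1}^{1/3}$, obtained by splitting the 3D Newton-type kernel at the scale $R\sim(\|\omega\|_1/\|\omega\|_\infty)^{1/3}$. Both $\|\omega(t)\|_\infty$ (by the preceding paragraph) and $\|\omega(t)\|_{L^1}$ (since $\|\omega/r\|_{L^1}$ is conserved by Lemma \ref{lem_Lpq_1} and $r$ is uniformly small on the support) are arbitrarily small for $A$ large. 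A standard bootstrap then shows that the characteristics move the initial support $\{|x|\lesssim 2^{-A}\}$ by at most $o(1)$ over $[0,1]$, so $\operatorname{supp}(\omega(t,\cdot))$ stays inside a ball of radius $\delta_0(A)\to 0$, completing the proof. The delicate point of this last step is that the $L^{3,1}$ norm of $\omega/r$ is only bounded by $\sqrt{\log A}$ rather than small; but one leverages the fact that $r\le 2^{-A}$ on the support to turn $\|u^r/r\|_\infty\lesssim\sqrt{\log A}$ into $\|u^r\|_\infty\lesssim \sqrt{\log A}\cdot 2^{-A}$, which is amply sufficient.
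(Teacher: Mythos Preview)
Your overall plan (apply Proposition~\ref{x5} to $g_A$, then feed the resulting large deformation into Proposition~\ref{x6}) is exactly the paper's route, but two of your quantitative justifications do not go through, and one step is handled very differently.

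\textbf{The $L^\infty$ bound \eqref{x7_2}.} Your argument bounds the metric factor $r/\tilde\phi^r$ by $\|\partial_r\phi^r\|_\infty$ and then controls the latter via $\|Du\|_\infty\lesssim \log(10+\|\omega\|_{H^2})$. This fails because $\|g_A\|_{H^2}$ (and likewise $\|g_A\|_{W^{1,p}}$ for $p>3$) is of order $2^{cA}$, so the logarithm contributes a factor $\sim A$; the references \eqref{x6_p14}--\eqref{x6_p17} are $O(1)$ only \emph{in the perturbation parameter $k$}, with constants that depend on $\omega_0=g_A e_\theta$ and hence on $A$. Integrating over $[0,1]$ you would get a metric factor $e^{cA^{1/2}}$, which destroys smallness. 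The correct mechanism is $L^{3,1}$ conservation of $\omega/r$ together with Lemma~\ref{lem_ur_r}: $\|u^r/r\|_\infty\le C\|\omega_0/r\|_{L^{3,1}}\lesssim\sqrt{\log A}$ for all $t$, and then $\partial_t\omega+u\cdot\nabla\omega=(u^r/r)\omega$ gives $\|\omega(t)\|_\infty\le \|\omega_0\|_\infty\,e^{C\sqrt{\log A}}<A^{-1/2}$. For the perturbed data one uses \eqref{x6_5a} to keep $\|\tilde\omega_0/r\|_{L^{3,1}}\lesssim\sqrt{\log A}$ and repeats the computation.

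\textbf{Comparability of $D\phi$ and $D\tilde\phi$.} Your claim that $\det(D\phi)=r/\phi^r$ stays bounded above and below rests on the same ``bounded $\|Du\|_\infty$'' and fails for the same reason: even via the $L^{3,1}$ route, $|\log(r/\phi^r)|\lesssim \sqrt{\log A}/\log\log A$ at $t_0\le 1/\log\log A$, which is large. The paper avoids this entirely by the algebraic identity $(D\Phi)(\tilde\Phi)=\tfrac{\tilde\Phi^r}{r}\,\mathrm{adj}(D\tilde\Phi)$ together with $\tilde\Phi^r/r\le\|D\tilde\Phi\|_\infty$, which yields $\|D\Phi\|_\infty\le\|D\tilde\Phi\|_\infty^2$ and hence $\|D\tilde\Phi(t_0)\|_\infty\gtrsim\sqrt{\log\log A}$. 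You lose a square root, but that is harmless for the $M^{1/6}$ inflation.

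\textbf{The support bound \eqref{x7_3}.} Here your approach genuinely differs from the paper's. The paper simply observes that the scaling $\omega_\lambda(t,x)=\omega(t,\lambda x)$ preserves the Euler equation and the norms in \eqref{x7_2}, \eqref{x7_4}, so one can shrink the support \emph{a posteriori} by taking $\lambda$ large; no dynamical control is needed. Your direct bootstrap via $\|u\|_\infty\lesssim\|\omega\|_\infty^{2/3}\|\omega\|_1^{1/3}$ is a valid alternative once \eqref{x7_2} is correctly established (and $\|\omega/r\|_{L^1}$ is tiny, of order $4^{-A}$), but it is more work than the one-line scaling argument.
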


\begin{proof}[Proof of Proposition \ref{x7}]
We first note that it suffices to construct the solution $\omega$
satisfying all other conditions except \eqref{x7_3}. Indeed if
$\omega$ is such a solution, then for any $\lambda>0$,
\begin{align*}
&\omega_{\lambda}(t,x) := \omega(t,\lambda x) \notag
\end{align*}
is also a solution to the Euler equation. By finite speed
propagation, we have
\begin{align*}
\operatorname{supp}(\omega(t)) \subset K, \quad \forall\, 0\le t\le 1,
\end{align*}
where $K$ is a fixed compact set. On the other hand
\begin{align*}
\operatorname{supp}(\omega_{\lambda}(t)) \subset \frac 1 {\lambda}K=\{ \frac
1 {\lambda} x:\; x \in K\}, \quad\forall\, 0\le t\le 1.
\end{align*}
Obviously by taking $\lambda$ sufficiently large we can satisfy
\eqref{x7_3}. Note that \eqref{x7_2}, \eqref{x7_4} is invariant
under the scaling transformation $x\to \lambda x$.  Therefore in the
rest of this proof we shall ignore \eqref{x7_3}.

For $A\gg 1$, we choose $g_A$ as in \eqref{x2_1} and denote by $W$
the corresponding smooth solution to the Euler equation:
\begin{align} \notag
\begin{cases}
\partial_t (\frac {W}r) + (U\cdot \nabla)(\frac W r) =0, \quad
-2\le t\le 2, \\
\nabla \cdot U=0,\\
W\Bigr|_{t=0} = W_0 = g_A e_{\theta}.
\end{cases}
\end{align}

By \eqref{x2_3d} we have (recall $U=U^r e_r +U^z e_{z}$)
\begin{align*}
\| \frac{U^r(t)} r \|_{\infty} &\le C \| \frac {W(t)} r \|_{L^{3,1}}
\notag \\
& \le C \sqrt{\log A}, \quad\forall\, t\in \mathbb R,
\end{align*}
where $C>0$ is an absolute constant and we have used the
$L^{3,1}$-preservation of $W/r$:
\begin{align*}
\| \frac {W(t)} r \|_{L^{3,1}} = \| \frac {W_0} r \|_{L^{3,1}},
\quad \forall\, t\in \mathbb R.
\end{align*}

 Since
\begin{align*}
\partial_t W + (U\cdot \nabla) W = \frac{U^r}r W,
\end{align*}
we get
\begin{align}
\max_{-2\le t \le 2} \| W(t)\|_{\infty} &\le \|W_0\|_{\infty}
e^{\max_{-2\le t \le 2} \| \frac{U^r} r \|_{\infty}} \notag \\
& \le \frac{\sqrt{\log A}} A e^{C \sqrt{\log A}} <A^{-\frac 12},
\label{x7_7}
\end{align}
for $A$ sufficiently large.


By definition of $g_A$, the condition \eqref{x7_1} is trivially
satisfied. It remains to check \eqref{x7_4}. By \eqref{x2_3}, we
have
\begin{align*}
\| W_0\|_{\dot H^{\frac 32}} \lesssim \frac {\sqrt {\log A}} {\sqrt
A}.
\end{align*}

Let $\Phi=(\Phi^r,\Phi^z)$ be the forward characteristic lines as in
\eqref{x-1_00} and let $\tilde \Phi$ be the inverse. By Proposition
\ref{x5}, we have for some $0<t_1 \le \frac 1 {\log\log A}$,
\begin{align} \label{x7_700a}
  \|D \Phi(t_1) \|_{\infty} + \|D \tilde \Phi(t_1) \|_{\infty} \ge
\log\log A.
\end{align}

By differentiating the identity $\Phi\circ \tilde \Phi=id$ and using \eqref{x-1_4}, we have
\begin{align*}
 (D \Phi)(\tilde \Phi(r,z)) &= \Bigl( (D\tilde \Phi)(r,z) \Bigr)^{-1} \notag \\
 & = \frac 1 {\det(D\tilde \Phi(r,z))} \text{adj}((D\tilde \Phi)(r,z)) \notag \\
 & =  \frac{\tilde \Phi^r(r,z)} r \text{adj}((D\tilde \Phi)(r,z)),
\end{align*}
where $\text{adj}((D\tilde \Phi)(r,z))$ is the adjugate matrix of $D\tilde \phi(r,z)$.  Recall that for any $2\times 2$ matrix
\begin{align*}
 B=\begin{pmatrix} a \quad b \\ c \quad d \end{pmatrix},
\end{align*}
we have
\begin{align*}
 \text{adj}(B)=\begin{pmatrix} d \quad -b\\-c\quad a \end{pmatrix}
\end{align*}
and obviously
\begin{align*}
 \| B \|_{\infty}  & = \max\{ |a|, |b|, |c|,|d| \} \\
 &=\| \text{adj}(B)\|_{\infty}.
\end{align*}

Therefore
\begin{align*}
 \| D\Phi(t_1)\|_{\infty} & \le \| \frac{\tilde \Phi^r} r \|_{\infty} \| \text{adj} (D\tilde \Phi) \|_{\infty} \notag \\
 & \le \| D\tilde \Phi(t_1)\|_{\infty}^2
\end{align*}

Consequently we have
\begin{align} \label{x7_700b}
 \| D\tilde \Phi(t_1)\|_{\infty} \gtrsim \sqrt{\log\log A}.
\end{align}

We can then apply Proposition \ref{x6} (with $W$ as the input solution $\omega$) and obtain $\tilde \omega$ as the desired solution
(note that $\| \frac{\tilde \omega_0}r \|_{L^{3,1}} \lesssim
\sqrt{\log A}$, $\|\tilde \omega_0\|_{\infty} \lesssim
\frac{\sqrt{\log A}} {A}$ so that we can repeat the computation of
\eqref{x7_7} and still have $\max_{0\le t \le 1} \| \tilde \omega(t)
\|_{\infty} \lesssim A^{-\frac 12}$.)

\end{proof}

\begin{lem} \label{x8}
 Suppose $\omega^1$, $\omega^2$ are given smooth solutions to the 3D Euler equations (in vorticity form):
 \begin{align} \notag
  \begin{cases}
   \partial_t \omega^j+ (u^j \cdot \nabla )\omega^j = (\omega^j \cdot \nabla) u^j, \qquad 0<t\le 1, \\
    u^j =-\Delta^{-1} \nabla \times \omega^j, \\
   \omega^j \Bigr|_{t=0}=\omega^j_0 \in C_c^{\infty}(\mathbb R^3), \quad j=1,2.
  \end{cases}
 \end{align}
Here we assume the lifespan of each $\omega^j$ is at least $[0,1]$.

Define
\begin{align} \label{x8_1}
 r_0= \max_{j=1,2} \max_{0\le t\le 1} \| u^j(t)\|_{\infty}.
\end{align}

Consider the problem
\begin{align} \label{x8_2}
 \begin{cases}
  \partial_t W + (U\cdot\nabla )W = (W\cdot \nabla)U, \\
  U= -\Delta^{-1} \nabla \times W,\\
  W\Bigr|_{t=0} =W_0,
 \end{cases}
\end{align}
where
\begin{align*}
 W_0(x) =\omega_0^1(x) + \omega_0^2(x-x_W),
\end{align*}
and $x_W\in \mathbb R^3$ is a vector which controls the mutual distance between $\omega_0^1$ and $\omega_0^2$.

For any $\epsilon>0$, there exists $R_{\epsilon}
=R_{\epsilon}(\epsilon, \max_{j=1,2}\max_{0\le t \le 1} \|u^j
(t)\|_{H^4})> 100r_0$ sufficiently large, such that if $|x_{W}|\ge
R_{\epsilon}$, then the following hold:

\begin{enumerate}
 \item There exists a unique smooth solution $W$ to \eqref{x8_2} on the time interval $[0,1]$. Furthermore for any $0\le t \le1$ it
 has the decomposition
 \begin{align}
  W(t)=W^{1}(t) + W^2 (t), \label{x8_3}
 \end{align}
where
\begin{align*}
 &\operatorname{supp}(W^{1} (t)) \subset \Omega_1^{\epsilon}, \\
 &\Omega_1^{\epsilon} := \{ x\in \mathbb R^3: \, d(x,\operatorname{supp}(\omega_1^0) )< r_0+\epsilon\}, \\
 &\operatorname{supp}(W^2(t)) \subset \Omega_W^{\epsilon},\\
 &\Omega_W^{\epsilon} :=\{ y=x+x_W:\; d(x, \operatorname{supp}(\omega_2^0))< r_0+\epsilon\}.
\end{align*}

\item The flow $W$ is uniformly close to $\omega^1(\cdot) +\omega^2(\cdot-x_W)$:
\begin{align} \label{x8_4}
 & \max_{0\le t\le 1}\| W^1(t,\cdot) -\omega^1(t,\cdot)\|_{H^2} <\epsilon, \notag \\
 & \max_{0\le t\le 1}\| W^2(t,\cdot) -\omega^2(t,\cdot-x_W)\|_{H^2} <\epsilon
\end{align}

\item All higher Sobolev norms of $W^1$ and $W^2$ can be controlled in terms of $\omega_0^1$ and $\omega_0^2$
respectively:  Let
\begin{align*}
 N= \max_{0\le t\le 1} (\| \omega^1(t,\cdot) \|_{\infty} + \|\omega^2(t,\cdot)\|_{\infty}) + \|u_0^1\|_2+\|u_0^2\|_2.
\end{align*}
Here $u_0^1$, $u_0^2$ the velocity fields corresponding to the vorticity $\omega_0^1$, $\omega_0^2$
respectively. Then for any $k\ge 3$,
\begin{align} \label{x8_5}
 &\max_{0\le t \le 1 } \| W^1(t,\cdot) \|_{H^k} \le C(k, \| \omega_0^1\|_{H^k}, N)<\infty, \notag \\
 &\max_{0\le t \le 1 } \| W^2(t,\cdot) \|_{H^k} \le C(k, \| \omega_0^2\|_{H^k}, N)<\infty.
\end{align}

\end{enumerate}

\end{lem}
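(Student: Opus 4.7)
The plan parallels Lemma~\ref{lem11} in 2D, with the added complication that 3D Euler affords no a priori $L^\infty$ bound on vorticity (so no free lifespan on $[0,1]$) and carries a vortex-stretching term. The main engine is the far-field decay of the Biot--Savart kernel: if $\operatorname{supp} f\subset E_2$ and $x\in E_1$ with $d(E_1,E_2)\ge R/2$, then a cut-off plus integration by parts identical to \eqref{lem10_9}--\eqref{lem10_10} yields modified smooth kernels $\tilde K_\alpha$ with $(-\Delta^{-1}\nabla\times\partial^\alpha f)(x)=\int \tilde K_\alpha(x-y)f(y)\,dy$ and $|\tilde K_\alpha(z)|\lesssim_\alpha (1+|z|)^{-2}$. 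Thus on $E_1$ the velocity $-\Delta^{-1}\nabla\times f$ and all of its derivatives are controlled by $R^{-2}\|f\|_{L^1\cap L^\infty}$.

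I first set up a bootstrap: write $W(t)=W^1(t)+W^2(t)$, where each $W^j$ solves the full nonlinear vorticity equation with drift $U=-\Delta^{-1}\nabla\times W=U_1+U_2$ and stretching $(W^j\cdot\nabla)U$, starting from $\omega_0^1$ and $\omega_0^2(\cdot-x_W)$ respectively. Assume on a maximal $[0,T]\subset[0,1]$ that $\operatorname{supp} W^1(t)\subset\Omega_1^\epsilon$, $\operatorname{supp} W^2(t)\subset\Omega_W^\epsilon$, and $\|W^j(t)\|_{H^s}\le 2(\|\omega^j(t)\|_{H^s}+1)$ for some fixed $s>5/2$. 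Under this hypothesis the kernel estimate yields $\|\partial^\alpha U_{3-j}\|_{L^\infty(\operatorname{supp} W^j)}\lesssim_\alpha |x_W|^{-2}$ for every $\alpha$, so $U_{3-j}$ behaves on $\operatorname{supp} W^j$ as an arbitrarily smooth drift of arbitrarily small amplitude.

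To close the bootstrap, set $\eta^1=W^1-\omega^1$. Subtracting the two equations gives
\begin{align*}
\partial_t\eta^1+(u^1\cdot\nabla)\eta^1 = L(\eta^1) + F,\qquad \eta^1(0)=0,
\end{align*}
where $L(\eta^1)$ is linear in $\eta^1$ and in $-\Delta^{-1}\nabla\times\eta^1$ with coefficients controlled by $\|W^1\|_{H^s}+\|u^1\|_{H^s}$, and $F$ collects the cross-interaction terms $(U_2\cdot\nabla)W^1$ and $(W^1\cdot\nabla)U_2$. By the kernel estimate, $\|F(t)\|_{H^2}\lesssim|x_W|^{-2}$, so an $H^2$ energy estimate plus Gronwall yields $\|\eta^1(t)\|_{H^2}\lesssim|x_W|^{-2}$ on $[0,T]$, and symmetrically for $\eta^2=W^2-\omega^2(\cdot-x_W)$; this gives \eqref{x8_4}. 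Combined with the Beale--Kato--Majda criterion applied to $W=W^1+W^2$ (whose $L^\infty$ is controlled via $\|W^j\|_{H^s}$), the $H^s$ bootstrap extends up to $T=1$ provided $R_\epsilon$ is large. The support property follows because $\operatorname{supp} W^j$ is transported at speed $\le\|u^j\|_\infty+\|U_{3-j}\|_\infty\le r_0+C|x_W|^{-2}<r_0+\epsilon$. The higher-norm estimate \eqref{x8_5} is then a verbatim repeat of the Lemma~\ref{lem10} argument applied to each $W^j$: the diagonal Euler nonlinearity is absorbed by the standard logarithmic Gronwall, while the cross drift $U_{3-j}$ contributes smooth, arbitrarily small forcing on $\operatorname{supp} W^j$.

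The principal obstacle is that vortex-stretching couples $W^1$ and $W^2$ through both the advecting velocity $U$ and the source term $(W^j\cdot\nabla)U$, so one must track simultaneously $U_{3-j}$ and $\nabla U_{3-j}$ on $\operatorname{supp} W^j$. Fortunately the far-field Biot--Savart decay delivers polynomial smallness in $|x_W|$ at every derivative order while preserving smoothness, which is more than enough room to run the perturbation and to maintain the $H^s$ bootstrap across the full interval $[0,1]$.
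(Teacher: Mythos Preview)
Your approach is correct and follows the same strategy as the paper: far-field Biot--Savart decay makes the cross-interaction $U_{3-j}$ an arbitrarily small, arbitrarily smooth perturbation on $\operatorname{supp} W^j$, and a continuation argument pushes existence to $t=1$. One implementation point deserves care: your bootstrap hypothesis $\|W^j(t)\|_{H^s}\le 2(\|\omega^j(t)\|_{H^s}+1)$ does not close directly from $\|\eta^j\|_{H^2}\lesssim|x_W|^{-2}$ plus BKM, since BKM only outputs $\|U\|_{H^s}\le C(\|U_0\|_{H^s},\int_0^t\|W\|_\infty)$ with a constant unrelated to $2(\|\omega^j\|_{H^s}+1)$. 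The paper handles this by bootstrapping instead on $\|U(t)\|_{H^4}\le M_2$, where $M_2$ is precisely the constant in the a priori estimate ``$\|u_0\|_{H^4}\le M_0$ and $\sup\|\omega\|_\infty\le M_1$ imply $\sup\|u\|_{H^4}\le M_2$'', and runs a discrete time-step induction: local theory extends with $\|U\|_{H^4}\le 2M_2$, an $L^2$ estimate on $\eta^j$ interpolated against this gives $\|W\|_\infty\le M_1$, and the a priori estimate then upgrades $2M_2$ back to $M_2$ exactly. Reformulating your bootstrap to match the output of such an a priori bound fixes the closure; everything else in your outline is sound.
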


\begin{proof}[Proof of Lemma \ref{x8}]
Let $R=|x_W|$ and denote
\begin{align} \label{x8_p1}
 &M_0 = 100 ( \|u_0^1\|_{H^4} + \| u_0^2\|_{H^4}),\notag \\
 &M_1=\max_{0\le t\le 1} ( \| \omega^1(t)\|_{\infty} + \| \omega^2(t)\|_{\infty}+1).
\end{align}

Consider the 3D Euler equation (in velocity formulation)
\begin{align*}
 \begin{cases}
  \partial_t u + (u\cdot \nabla) u =-\nabla p, \\
  \nabla \cdot u =0, \\
  u \Bigr|_{t=0} =u_0.
 \end{cases}
\end{align*}
Suppose
\begin{align*}
 \|u_0\|_{H^4} \le M_0,
\end{align*}
and on some time interval $[0,\tau]$, $\tau\le 1$,
\begin{align*}
 \max_{0\le t\le \tau} \| \omega(t)\|_{\infty} \le M_1,
\end{align*}
where $\omega=\operatorname{curl}(u)$. Then by a simple energy estimate, we have
\begin{align} \label{x8_p3}
 \max_{0\le t\le \tau} \|u(t)\|_{H^4} \le M_2,
\end{align}
where $M_2=M_2(M_0,M_1)>0$ can be taken as a constant which is uniform for all $\tau\le 1$.  We shall need this constant below.
Also by standard local wellposedness theory, if for some $t_0$ we have $\|u(t_0)\|_{H^4} \le M_2$,
then there exists $\tau_0=\tau_0(M_2)>0$ such that the corresponding local solution has lifespan at least $[t_0,t_0+\tau_0]$ and
\begin{align} \notag
 \max_{t_0\le t\le t_0+\tau_0} \|u(t)\|_{H^4} \le 2M_2.
\end{align}
This fact will also be used below.

Now let $0=t_0<t_1<\cdots <t_{L-1} <t_L=1$ be a partition of the time interval $[0,1]$ such that
\begin{align*}
 \max_{0\le i \le L-1} (t_{i+1}-t_i) <\tau_0.
\end{align*}

We now inductively check the following

\texttt{Claim}: For each $i=0,1,\cdots, L$, there exists $R_i>0$ sufficiently large such that
if $R>R_i$, then the following hold:

\begin{enumerate}
 \item $W(t)$ has the decomposition \eqref{x8_3} for all $0\le t\le t_{i}$.

 \item \begin{align}
        &\max_{0\le t \le t_{i}} \| W^1(t,\cdot) -\omega^1(t,\cdot)\|_2 < R^{-\frac 14}, \label{x8_p4a}\\
        & \max_{0\le t\le t_{i}} \| W^2(t,\cdot)-\omega^2(t,\cdot-x_{W})\|_2 <R^{-\frac 14}. \label{x8_p4b}
       \end{align}

 \item \begin{align}
        &\max_{0\le t\le t_i} \|U(t) \|_{H^4} \le M_2. \label{x8_p5a}
       \end{align}

\end{enumerate}

Indeed the claim holds trivially for $i=0$. Now assume the claim holds for all $i\le l-1$ ($l\ge 1$), and we need to prove
the claim for $i=l$. Since $\| U(t_{l-1})\|_{H^4} \le M_2$, by our choice of $\tau_0$, $U(t)$ can be extended to
$[t_{l-1}, t_l]$, and
\begin{align}
 \max_{0 \le t\le t_l} \| U(t)\|_{H^4} &\le  \max\{ \max_{0\le t \le t_{l-1}} \|U(t)\|_{H^4}, \max_{t_{l-1} \le t \le t_l}
 \|U(t) \|_{H^4} \} \notag \\
 &\le 2M_2.  \label{x8_p7}
\end{align}
By the inductive assumption, we have
\begin{align*}
 W(t_{l-1}) =W^1(t_{l-1}) +W^2(t_{l-1})
\end{align*}
and for $R$ sufficiently large,
\begin{align*}
 \operatorname{dist}\Bigl(\operatorname{supp} (W^1(t_{l-1})),  \operatorname{supp} (W^2(t_{l-1})) \Bigr) >\frac 23 R.
\end{align*}
By \eqref{x8_p7} and finite speed propagation, we then have for $R$ sufficiently large,
\begin{align} \label{x8_p9}
 \operatorname{dist}\Bigl(\operatorname{supp} (W^1(t)),  \operatorname{supp} (W^2(t)) \Bigr) >\frac 13 R, \qquad \forall\, 0\le t \le t_l.
\end{align}
Denote by $U^1$, $U^2$ the velocity fields corresponding to the vorticity $W^1$ and $W^2$ respectively. By \eqref{x8_p9} and
an argument similar to \eqref{lem10_9}, it is not difficult to check that
\begin{align} \label{x8_p11}
 \max_{0\le t \le t_l} \max_{|\alpha|\le 3} \|D^{\alpha }U^2 (t,\cdot)\|_{L^{\infty}(x\in \operatorname{supp}(W^1(t))) } \le R^{-\frac 13},
\end{align}
where again we need to take $R$ sufficiently large (to kill some prefactors).

Now observe that
\begin{align*}
 \begin{cases}
  \partial_t W^1 + (U^1 \cdot \nabla) W^1 = (W^1 \cdot \nabla) U^1 - (U^2 \cdot \nabla) W^1 + (W^1 \cdot \nabla) U^2,
  \\
  \partial_t \omega^1 + (u^1 \cdot \nabla) \omega^1 = (\omega^1 \cdot \nabla) u^1, \\
  W^1 \Bigr|_{t=0} =\omega^1 \Bigr|_{t=0} =\omega_0^1.
 \end{cases}
\end{align*}
Set $\eta=W^1-\omega^1$, $v=U^1-u^1$. Then clearly
\begin{align*}
 \partial_t \eta + &(v\cdot \nabla ) W^1 + (u^1 \cdot \nabla )\eta \notag \\
 &= (\eta \cdot \nabla ) U^1 + (\omega^1 \cdot \nabla) v -(U^2\cdot \nabla )W^1 + (W^1 \cdot \nabla) U^2.
\end{align*}
A simple $L^2$ estimate using \eqref{x8_p7} and \eqref{x8_p11} then gives for $0<t\le t_l$:
\begin{align*}
 \partial_t (\|\eta\|_2^2) & \lesssim \|\eta\|_2 \cdot \|v\|_6 \cdot \| \nabla W^1 \|_3 +
 \| \eta\|_2^2 \cdot \| \nabla U^1 \|_{\infty} \notag \\
& \qquad + \| \nabla v \|_2 \cdot \|\eta\|_2 \cdot \|\omega^1 \|_{\infty}
+ R^{-\frac 13} \| \nabla W^1 \|_2 \cdot \| \eta \|_2 \notag \\
& \qquad + R^{-\frac 13} \| W^1 \|_2 \cdot \|\eta\|_2 \notag \\
& \lesssim_{M_2, \omega^1} \| \eta\|_2^2 + R^{-\frac 13} \| \eta\|_2.
\end{align*}
Integrating in time up to $t_{l}$ and taking $R$ sufficiently large then gives
\begin{align*}
&\max_{0\le t \le t_{l}} \| \eta(t)\|_2 < R^{-\frac 14}.
\end{align*}
This settles \eqref{x8_p4a} for $i=l$. The inequality \eqref{x8_p4b} is proved similarly.
Interpolating \eqref{x8_p4a}, \eqref{x8_p4b} with \eqref{x8_p7} then easily yields that (see \eqref{x8_p1})
\begin{align*}
 \max_{0\le t \le t_l} \|W(t) \|_{\infty} \le M_1.
\end{align*}
Therefore by \eqref{x8_p3}, we can upgrade the rough estimate \eqref{x8_p7} to \eqref{x8_p5a} for $i=l$.
By \eqref{x8_p4a}--\eqref{x8_p4b}, interpolation and taking $R$ sufficiently large, we can easily have (see \eqref{x8_1})
\begin{align*}
 \max_{0\le t \le t_l} \|U(t,\cdot)\|_{\infty} \le r_0 +\epsilon/2.
\end{align*}
Hence the decomposition \eqref{x8_3} follows. We have completely proved the claim.

By using the claim and a simple interpolation argument, it is not difficult to check that \eqref{x8_4} holds.
Finally \eqref{x8_5} follows from a simple energy estimate using the disjointness of the support of $W^1$ and $W^2$ and
an estimate similar to \eqref{x8_p11}.
The lemma is proved.

\end{proof}

\begin{prop} \label{x9}
Assume $\{\omega^j\}_{j=1}^{\infty}$ is a sequence of smooth functions each of which solves the 3D incompressible Euler equation
(in vorticity form)
\begin{align*}
 \begin{cases}
  \partial_t \omega^j + (u^j \cdot \nabla ) \omega^j = (\omega^j \cdot \nabla )u^j, \quad 0<t\le 1,\\
  u^j=-\Delta^{-1}\nabla \times \omega^j, \\
  \omega^j \Bigr|_{t=0} = \omega^j_0 \in C_c^{\infty}(\mathbb R^3),
 \end{cases}
\end{align*}
and satisfies the following condition:

For each $j\ge 1$,  $\operatorname{supp}(\omega^j(t)) \subset B(0,2^{-10j})$
for any $0\le t\le 1$ and
\begin{align} \label{x9_1}
 \|u^j_0\|_{H^{\frac 52}}+\max_{0\le t\le 1} (\| \omega^j(t) \|_{\infty}
+\|u^j(t)\|_{\infty})\le 2^{-10j}.
\end{align}
Here $u^j_0$ is the velocity corresponding to the vorticity
$\omega^j_0$.

Then there exist centers $x_j \in \mathbb R^3$ whose mutual distance
are sufficiently large (i.e. $|x_j-x_k|\gg 1$ if $j\ne k$) such that
the following hold:

\begin{enumerate}
 \item Take the initial data (vorticity)
 \begin{align*}
  W_0(x) = \sum_{j=1}^{\infty} \omega^j_0(x-x_j),
 \end{align*}
then $W_0 \in L^1 \cap L^{\infty} \cap H^{\frac 32} \cap
C^{\infty}$. The corresponding initial velocity $U_0 \in H^{\frac
52} \cap C^{\infty}$. Furthermore for any $j\ne k$
\begin{align} \notag 
 B(x_j, 100) \cap B(x_k, 100 ) = \varnothing.
\end{align}

\item With $W_0$ as initial data, there exists a unique smooth solution $W$ to the Euler
equation (in vorticity form)
\begin{align*}
\begin{cases}
 \partial_t W + (U\cdot \nabla)W=(W\cdot \nabla)U,\\
 U=-\Delta^{-1} \nabla \times W,\\
 W\Bigr|_{t=0} =W_0.
 \end{cases}
\end{align*}
on the time interval $[0,1]$ satisfying $W \in L_t^{\infty}L_x^1\cap L_t^{\infty}L_x^{\infty}
\cap C^{\infty}$, $U \in C^{\infty} \cap L_t^{\infty} L_x^2$. Moreover for any
$0\le t \le1$,
\begin{align}
 \operatorname{supp} (W(t,\cdot) ) \subset \bigcup_{j=1}^{\infty} B(x_j, 1). \label{x9_3}
\end{align}

\item For any $\epsilon>0$, there exists an integer $J_{\epsilon}$ sufficiently large such that
if $j\ge J_{\epsilon}$, then
\begin{align}
 \max_{0\le t \le 1} \| W(t,\cdot) - \omega^j(t,\cdot-x_j)\|_{H^2(B(x_j,1))} <\epsilon. \label{x9_4}
\end{align}

\end{enumerate}

\end{prop}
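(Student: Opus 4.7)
The plan is to mimic the gluing argument of Proposition \ref{prop10} in the 3D setting, using Lemma \ref{x8} as the basic two-patch non-interaction result. The decay condition \eqref{x9_1} is the crucial quantitative input: it guarantees that $\sum_j \|u_0^j\|_{H^{5/2}} < \infty$, that $\sum_j \|\omega_0^j\|_\infty < \infty$, and that all the constants $r_0, M_0, M_1$ appearing in Lemma \ref{x8} remain uniformly bounded when we iteratively add patches.

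First I would choose the centers $x_j$ inductively. Set $x_1 = 0$. Having chosen $x_1,\ldots,x_{j-1}$, let $f_{j-1}(x) = \sum_{l=1}^{j-1} \omega_0^l(x-x_l)$ and let $\Omega^{(j-1)}(t)$ be the Euler solution with initial data $f_{j-1}$. Applying Lemma \ref{x8} with $\omega^1 = \Omega^{(j-1)}$, $\omega_0^2 = \omega_0^j$, and tolerance $\epsilon_j = 2^{-j}$ produces a threshold $R_j$ depending only on $\|\Omega^{(j-1)}\|_{L^\infty_t H^4_x}$ and the uniform $L^\infty$-bound on velocities. Pick $x_j$ with $|x_j - x_l| > 2 R_j + 2^j$ for all $l<j$; then \eqref{x9_1} ensures the separation is compatible with patch supports of radius $\le 2^{-10j}+1$ and the balls $B(x_j,100)$ are disjoint.

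Next I would construct $W$ by truncation. Let $W^{(m)}$ solve Euler with data $W_0^{(m)} = \sum_{j=1}^m \omega_0^j(\cdot - x_j)$. Iterated application of Lemma \ref{x8} gives, for each $m$, a decomposition $W^{(m)}(t) = \sum_{j=1}^m W^{(m)}_j(t)$ with $\operatorname{supp} W^{(m)}_j(t) \subset B(x_j, r_0+\tfrac12) \subset B(x_j,1)$, together with the quantitative closeness
\begin{align*}
\max_{0\le t\le 1} \|W^{(m)}_j(t,\cdot) - \omega^j(t,\cdot - x_j)\|_{H^2} \le 2^{-j}, \qquad 1\le j\le m,
\end{align*}
and uniform higher-Sobolev bounds $\|W^{(m)}_j\|_{L_t^\infty H^k_x} \le C(k, \|\omega_0^j\|_{H^k})$ from part (3) of Lemma \ref{x8}. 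Taking differences, $(W^{(m)})$ is Cauchy in $C^0_t H^2_x$ on each fixed ball $B(x_{j_0},1)$, so it converges locally smoothly to some $W$ on $\bigcup_j B(x_j,1)$; define $W=0$ elsewhere. Because $\sum_j \|\omega_0^j\|_{L^1} + \sup_j \|\omega^j\|_{L^\infty_{t,x}}$ is finite by \eqref{x9_1}, dominated convergence gives $W \in L^\infty_t(L^1 \cap L^\infty)$ and the support statement \eqref{x9_3}.

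The main obstacle is then to verify that $W$ actually solves Euler in the classical sense and in particular that the Biot--Savart reconstruction $U^{(m)} = -\Delta^{-1}\nabla\times W^{(m)}$ converges locally uniformly together with all its derivatives to $U = -\Delta^{-1}\nabla\times W$. For any fixed ball $B(x_{j_0},1)$ I would split $U^{(m)} - U$ into the self-contribution from the $j_0$-th patch (controlled by $L^1\cap L^\infty$ convergence of $W^{(m)}_{j_0} - W_{j_0}$) and the contribution from all other patches; for the latter, the well-separation $|x_j - x_{j_0}| \ge 2^j$ together with integration by parts against the smooth kernel $K$ away from its singularity (exactly as in \eqref{lem10_9}--\eqref{lem10_12}) gives polynomial-in-distance decay that is summable. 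This yields $D^\alpha U^{(m)} \to D^\alpha U$ uniformly on $B(x_{j_0},1)$ for every $\alpha$, so $W$ satisfies the vorticity equation pointwise. The $L^2$ bound on $U$ follows from $\sum_j \|u_0^j\|_2 < \infty$ (using \eqref{x9_1} and well-separated supports), and $U \in C^\infty$ locally is immediate. Uniqueness within the class $L^\infty_t(L^1\cap L^\infty)$ of axisymmetric-type vorticities with the stated support structure follows from a standard energy argument, and property \eqref{x9_4} is an immediate consequence of the $j\ge J_\epsilon$ tail of the perturbation estimates from Lemma \ref{x8} together with the $C^0_t H^2_x$ convergence $W^{(m)}_j \to W_j$.
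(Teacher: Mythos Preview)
Your proposal is correct and follows essentially the same strategy as the paper's proof: recursive application of Lemma \ref{x8} to choose the centers $x_j$, construction of the truncated solutions $W^{(m)}$, and passage to the limit using the Cauchy property in $H^2$ on each ball $B(x_{j_0},1)$ together with the uniform $H^k$ bounds from part (3) of Lemma \ref{x8}. The paper's treatment of the Biot--Savart convergence is slightly terser (it simply notes that $W^{(m)}$ Cauchy in $L^2$ forces $U^{(m)}$ Cauchy in $L^6$ by Sobolev embedding), whereas you reproduce the more explicit kernel-decay splitting from the 2D argument in Proposition \ref{prop10}; both routes are valid.
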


\begin{proof}[Proof of Proposition \ref{x9}]
 Define $x_1=0$. By recursively applying Lemma \ref{x8}, we can choose centers $x_j$ whose
 mutual distance is sufficiently large such that for each $l\ge 2$, we can find a unique smooth
 solution $W^l$ solving the system
 \begin{align*}
  \begin{cases}
   \partial_t W^l +(U^l \cdot \nabla) W^l = (W^l \cdot \nabla) U^l, \quad 0<t\le 1,\\
   U^l =-\Delta^{-1} \nabla \times W^l,\\
   W^l \Bigr|_{t=0} =W_0^l,
  \end{cases}
 \end{align*}
where
\begin{align*}
 W_0^l = \sum_{j=1}^l \omega_0^j (x-x_j).
\end{align*}
Furthermore $W^l$ satisfies
\begin{itemize}
 \item $\operatorname{supp} (W^l(t)) \subset \bigcup_{j=1}^l B(x_j,\frac 12)$, for all $0\le t\le 1$.
 \item $\max_{0\le t\le 1} \| W^l (t,\cdot) -\omega^j(t,\cdot-x_j) \|_{H^2(B(x_j,1))} <2^{-j}$, for any $1\le j\le l$.
 \item $\max_{0\le t\le 1} \| W^{l+1} (t,\cdot) - W^{l}(t,\cdot) \|_{H^2( \bigcup_{j=1}^l B(x_j,1) )} <2^{-l}$.
 \item $\max_{0\le t \le 1} \| W^{l+1}(t,\cdot) - W^{l} (t,\cdot) \|_{L^2}<2^{-l}$.
 \item $\max_{0\le t \le 1} \| W^l(t,\cdot)\|_{H^k(B(x_j,1))} \le C_k =C_k (k,\| \omega_0^j\|_{H^k})<\infty$, for any $1\le j\le l$.
\end{itemize}
Note that in the last inequality above we have no dependence on other constants thanks to the strong assumption \eqref{x9_1}.

Now define
\begin{align*}
 W(t,x)=
 \begin{cases}
  \lim_{l\to \infty} W^l(t,x), \quad \text{if $x \in \bigcup_{j=1}^\infty B(x_j, 1)$,}\\
 0, \quad \text{otherwise.}
 \end{cases}
\end{align*}
Fix any $j_0\ge 1$. By using the properties of $W^l$ listed above, we have
\begin{align*}
 \max_{0\le t \le 1} \| W^{l+1}(t,\cdot) - W^l(t,\cdot) \|_{H^2(B(x_{j_0},1))} \le 2^{-l}, \quad \text{if $l\ge j_0+1$.}
\end{align*}
Also for any $k\ge 3$,
\begin{align*}
 \max_{0\le t \le 1} \| W^{l}(t,\cdot)\|_{H^k(B(x_{j_0},1))} \le C_k, \quad \forall\, l\ge j_0+1.
\end{align*}
Therefore $(W^l)$ is Cauchy in $H^k (B(x_{j_0},1))$ for any $k\ge 2$. Hence $W^l$ converges uniformly to $W \in C^{\infty}((B(x_{j_0},1))$.
Since $j_0$ is arbitrary, we obtain $W\in C^{\infty} (\mathbb R^3)$.  Similarly fix any $j_0\ge 1$. By Sobolev embedding, we have
\begin{align*}
 &\max_{0\le t\le 1} \| W^l(t,\cdot) -\omega^{j_0} (t,\cdot) \|_{L^{\infty} (B(x_{j_0},1))} \notag \\
 \lesssim & \max_{0\le t\le 1} \|W^l(t,\cdot) - \omega^{j_0}(t,\cdot) \|_{H^2(B(x_{j_0},1))} \lesssim 2^{-j_0}, \quad\forall\, l\ge j_0+1.
\end{align*}
By \eqref{x9_1} and sending $l\to \infty$, we obtain $\max_{0\le t\le 1}\|W(t,\cdot)\|_{L^{\infty}} \lesssim 1$. Similarly it is also
easy to check that $W\in L_t^{\infty} L_x^1$.  Since $W^l$ is Cauchy in $L^2$, by Sobolev embedding we have $U^l$ is Cauchy in $L^{6}$ and
converges to the limit $U$. It is not difficult to check that $U$ is smooth and $W$ is the desired solution. The estimate \eqref{x9_4} follows
obviously from the property of $W^l$ and passing $l$ to the limit. The proposition is proved.

\end{proof}

We are now ready to complete the

\begin{proof}[Proof of Theorem \ref{thm3}]
It suffices for us to prove the case $\omega_0^{(g)}\equiv 0$. The case for nonzero $\omega_0^{(g)}$ is a simple
modification of the proof below.

 For each $j\ge 1$, by using Proposition \ref{x7}, we can find a smooth solution $\omega^j$ solving the system
 \begin{align*}
  \begin{cases}
   \partial_t \omega^j + (u^j \cdot \nabla) \omega^j = (\omega^j \cdot \nabla) u^j, \quad 0<t\le 1, \\
   u^j=-\Delta^{-1} \nabla \times \omega^j, \\
   \omega^j \Bigr|_{t=0} =\omega_0^j,
  \end{cases}
 \end{align*}
such that the following hold:
\begin{itemize}
 \item $\operatorname{supp} (\omega^j(t,\cdot)) \subset\{x,\; |x|<2^{-100j} \}$, for any $0\le t\le 1$.
 \item $\max_{0\le t \le 1} (\| \omega^j(t)\|_{L^{\infty}} + \| u^j(t)\|_{L^{\infty}}) \le 2^{-100j}$.
 \item Let $u_0^j$ be the velocity corresponding to the vorticity $\omega_0^j$, then
 \begin{align*}
  \| u_0^j \|_{H^{\frac 52}} < 2^{-100j}.
 \end{align*}
\item For some $0<t_j^0 <\frac 1j$, we have
\begin{align*}
 \| \omega^j(t_j^0,\cdot) \|_{\dot H^{\frac 32}} >2^j.
\end{align*}

\end{itemize}
By continuity and the last inequality above, we can find $0<t_j^1<t_j^2<\frac 1j$ such that
\begin{align} \label{sec6_6.104}
\| \omega^j(t,\cdot)\|_{\dot H^{\frac 32}} >2^j, \quad\forall\, t_j^1\le t\le t_j^2.
\end{align}

By Proposition \ref{x9}, we can then find centers $x_j$ and build a smooth solution $W$ having initial data
\begin{align*}
 W(0,x) = \sum_{j=1}^{\infty} \omega_0^j(x-x_j).
\end{align*}

The regularity properties of $W$ are simple consequences of Proposition \ref{x9}.

By \eqref{x9_3}, we can write
\begin{align*}
W(t,x)= \sum_{j=1}^{\infty} W^j(t,x),
\end{align*}
where $W^j\in C_c^{\infty}(B(x_j, 1))$.

Now we make the following

\textbf{Claim}: there exists an integer $J_1>0$ and constants $C_1>0$, $C_2>0$ such that the following hold:
for any $0\le \tau_0\le 1$, if $\|W(\tau_0,\cdot)\|_{\dot H^{\frac 32}(\mathbb R^3)} <\infty$,
then
\begin{align} \label{sec6_claim_i1}
\| W(\tau_0,\cdot)\|_{\dot H^{\frac 32}} \ge C_1 \|\omega^j(\tau_0,\cdot)\|_{\dot H^{\frac 32}} -C_2, \quad \forall\, j\ge J_1.
\end{align}
Here the constant $C_1>0$ is actually an absolute constant. The constant $C_2$ depends on
$\max_{0\le t\le 1}\|W(t,\cdot)\|_2$.

To prove the claim, fix a smooth cut-off function
$\phi \in C_c^{\infty}(\mathbb R^3)$ such that $\phi(x)= 1$ for $|x|\le 1$ and $\phi(x)=0$ for $|x| \ge 2$.
Since $|x_j-x_k|\gg 1$ for $j\ne k$, by \eqref{x9_3}, we have for any
$j\ge 1$, we have
\begin{align*}
W^j(\tau_0,x)=W(\tau_0,x) \phi(x-x_j)=W(\tau_0,x)\phi_j(x), \quad \text{here $\phi_j(x):=\phi(x-x_j)$}.
\end{align*}
Fourier transform and the triangle inequality then give
\begin{align*}
|\xi|^{\frac 32} |\widehat{W^j}(\tau_0,\xi)| &\lesssim |\xi|^{\frac 32}\int_{\mathbb R^3}
|\hat{W}(\tau_0,\xi-\eta)| |\hat{\phi_j}(\eta)| d\eta \notag \\
& \lesssim \int_{\mathbb R^3} |\xi-\eta|^{\frac 32} |\hat{W}(\tau_0,\xi-\eta)| |\hat{\phi_j}(\eta)| d\eta \notag \\
& \qquad + \int_{\mathbb R^3} |\hat{W}(\tau_0,\xi-\eta)| |\eta|^{\frac 32} |\hat{\phi_j}(\eta)| d\eta.
\end{align*}
Young's inequality then gives for any $j\ge 1$,
\begin{align*}
\| W^j(\tau_0,\cdot)\|_{\dot H^{\frac 32}} \lesssim \| W(\tau_0,\cdot)\|_{\dot H^{\frac 32}} +\| W(\tau_0,\cdot)\|_{L^2}.
\end{align*}
Easy to check that the implied constants in the above inequalities are only absolute constants (they depend only
on the cut-off function $\phi$). By \eqref{x9_4} and choosing $\epsilon=1$, we get for any $j\ge J_1$,
\begin{align*}
\|\omega^j(\tau_0,\cdot)\|_{\dot H^{\frac 32}} \le \tilde C_1 \| W(\tau_0,\cdot)\|_{\dot H^{\frac 32}} +\tilde C_2,
\end{align*}
where $\tilde C_1>0$ is an absolute constant and $\tilde C_2$ depends only on
$\max_{0\le t\le 1}\|W(t,\cdot)\|_{L^2}$. The claim is proved.

With \eqref{sec6_claim_i1} in hand, we now argue by contradiction to finish the proof of the theorem.
Assume for some $t_0<1$, we have
\begin{align} \label{sec6_thm_pf_L0}
 L_0:=\operatorname{ess-sup}_{0\le t\le t_0} \| W(t,\cdot)\|_{\dot H^{\frac 32}}<\infty.
\end{align}

By \eqref{sec6_6.104}, we choose $j\gg 1$ sufficiently large such that
\begin{align*}
&C_1 2^j-C_2>2L_0, \\
&t_j^2<t_0.
\end{align*}
By \eqref{sec6_claim_i1}, for any $t_j^1\le t\le t_j^2$, we must have
\begin{align*}
2L_0\le \|W(t,\cdot)\|_{\dot H^{\frac 32}}<\infty, \quad\text{or $\|W(t,\cdot)\|_{\dot H^{\frac 32}}=+\infty$}.
\end{align*}
This obviously contradicts \eqref{sec6_thm_pf_L0}. The theorem is proved.

\end{proof}

\section{3D compactly supported case}

\begin{lem} \label{y1}
Let $f\in C_c^{\infty}(B(0,100))$, $g\in C_c^{\infty}(B(0,100))$ be
axisymmetric functions on $\mathbb R^3$ having the form:
\begin{align*}
f(x)=f^{\theta}(r,z) e_{\theta},\quad g(x)=g^{\theta}(r,z)
e_{\theta},\quad x=(x_1,x_2,z),\, r=\sqrt{x_1^2+x_2^2},
\end{align*}
where $f^{\theta}$ and $g^{\theta}$ are scalar-valued and vanish
near $r=0$, i.e. for some $r_0>0$,
\begin{align*}
\operatorname{supp}(f^{\theta}) \subset\{(r,z):\, r>r_0\},\\
\operatorname{supp}(g^{\theta}) \subset\{(r,z):\, r>r_0\}.
\end{align*}

Let $\omega^a$ and $\omega$ be smooth solutions to the following
axisymmetric (without swirl) Euler equations:
\begin{align} \label{y1_t1}
\begin{cases}
\partial_t (\frac{\omega^a}r) + (u^a \cdot \nabla) ( \frac{\omega^a}
r) =0,  \\
u^a=-\Delta^{-1} \nabla \times \omega^a, \\
\omega^a \Bigr|_{t=0} =f.
\end{cases}
\end{align}
\begin{align} \label{y1_t2}
\begin{cases}
\partial_t (\frac{\omega}r) + (u \cdot \nabla) ( \frac{\omega}
r) =0, \\
u=-\Delta^{-1} \nabla \times \omega, \\
\omega \Bigr|_{t=0} =f+g.
\end{cases}
\end{align}

For any $\epsilon>0$, there exists $\delta=\delta(\epsilon, f)>0$
sufficiently small such that if
\begin{align} \label{y1_t3}
\|g\|_{\infty} \exp\left( C \cdot \| \frac g r \|_{L^{3,1}}
\right) <\delta,
\end{align}
then
\begin{align*}
\max_{0\le t\le 1}\| \omega^a(t,\cdot) -\omega(t,\cdot)\|_{\infty}
<\epsilon.
\end{align*}

Here in \eqref{y1_t3}, $C>0$ is the same absolute constant as in the inequality
\begin{align*}
 \| \frac{u^r} r \|_{\infty} \le C \| \frac{\omega} r \|_{L^{3,1}}.
\end{align*}

\end{lem}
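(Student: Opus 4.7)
The plan is to run the 3D analogue of the $C^0$-perturbation argument of Lemma \ref{lem_C0thm2}; the essential new ingredient is the vorticity-stretching term $(u^r/r)\omega$, controlled via Lemma \ref{lem_ur_r} together with $L^{3,1}$-conservation of $\omega/r$ (Lemma \ref{lem_Lpq_1}). Rewriting \eqref{y1_t1} and \eqref{y1_t2} in stretching form $\partial_t\omega + u\cdot\nabla\omega = (u^r/r)\omega$ and subtracting, with $\eta=\omega-\omega^a$ and $v=u-u^a$, one obtains
\begin{align*}
\partial_t\eta + u\cdot\nabla\eta + v\cdot\nabla\omega^a = \tfrac{u^r}{r}\eta + \tfrac{v^r}{r}\omega^a, \qquad \eta(0)=g,
\end{align*}
and the plan is to Gronwall this in $L^\infty$, using that the divergence-free transport $u\cdot\nabla\eta$ drops out and that the remaining three terms are bounded by $\|\eta\|_\infty$ times an $(f,r_0)$-dependent constant.

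For the uniform bounds on $[0,1]$: $\omega^a$ is smooth globally (axisymmetric without swirl global wellposedness) with $\|\omega^a\|_\infty$, $\|\nabla\omega^a\|_\infty$, $\|u^a\|_\infty \le C(f)$. Lemma \ref{lem_ur_r} with $L^{3,1}$-conservation gives $\|u^r/r\|_\infty \le C\|(f+g)/r\|_{L^{3,1}}$, and integrating $\partial_t \phi^r = (u^r/r)\phi^r$ along characteristics yields both the stretching bound $\|\omega(t)\|_\infty \le \|f+g\|_\infty \exp(C\|(f+g)/r\|_{L^{3,1}})$ and $\phi^r(t)/y^r \in [e^{-C'},e^{C'}]$, so that $\operatorname{supp}\omega(t)\subset\{r\ge r_0/2\}\cap B(0,R)$ with $R$ determined by finite propagation via the Biot--Savart bound on $\|u\|_\infty$. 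The elementary estimate $\|g/r\|_{L^{3,1}}\lesssim_{r_0}\|g\|_\infty$, valid for $g\in C_c^\infty(B(0,100))$ vanishing on $\{r\le r_0\}$, combined with hypothesis \eqref{y1_t3}, forces $\|g\|_\infty$ small enough that all these constants depend only on $(f,r_0)$.

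The $L^\infty$ estimate then reads
\begin{align*}
\tfrac{d}{dt}\|\eta\|_\infty \le \|v\|_\infty\|\nabla\omega^a\|_\infty + \|u^r/r\|_\infty \|\eta\|_\infty + \|v^r/r\|_\infty\|\omega^a\|_\infty.
\end{align*}
Compact support of $\eta$ in $B(0,R)$ yields $\|v\|_\infty \lesssim_R\|\eta\|_\infty$ by 3D Biot--Savart; Lemma \ref{lem_ur_r} applied to $v$ (whose vorticity is $\eta$), combined with $\operatorname{supp}\eta\subset\{r\ge r_0/2\}$, yields $\|v^r/r\|_\infty\le C\|\eta/r\|_{L^{3,1}}\lesssim_{f,r_0}\|\eta\|_\infty$ via the compact-support embedding $\|\eta\|_{L^{3,1}}\lesssim R\|\eta\|_\infty$. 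With every coefficient bounded by $C(f,r_0)$, Gronwall over $[0,1]$ produces $\|\eta(t)\|_\infty\le \|g\|_\infty \exp(C(f,r_0))$, and \eqref{y1_t3} then forces this below $\epsilon$ upon choosing $\delta=\delta(\epsilon,f)$ small enough.

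The main technical obstacle is simply bookkeeping: one must verify that every constant appearing in the bounds on $\omega,u,\omega^a,u^a$ and their supports can indeed be taken depending only on $(f,r_0)$, which is possible once the hypothesis has been used to absorb $g$-dependence into an overall $\delta$. The exponential factor $\exp(C\|g/r\|_{L^{3,1}})$ in \eqref{y1_t3} is a mild refinement of the plain condition $\|g\|_\infty<\delta$; it mirrors the natural Gronwall factor produced by integrating the stretching contribution $\|u^r/r\|_\infty\le C\|\omega/r\|_{L^{3,1}}$ over $[0,1]$, and is robust enough for later settings where $\|g/r\|_{L^{3,1}}$ is not directly controlled by $\|g\|_\infty$.
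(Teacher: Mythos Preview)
Your direct $L^\infty$ Gronwall on $\eta=\omega-\omega^a$ is cleaner than the paper's route, but there is a genuine gap once one drops the crutch $\|g/r\|_{L^{3,1}}\lesssim_{r_0}\|g\|_\infty$. The lemma requires $\delta=\delta(\epsilon,f)$ \emph{independent} of $g$'s distance to the axis, and the later applications (Proposition \ref{y2}, \ref{y6}) use it precisely when $\|g/r\|_{L^{3,1}}$ is large while $\|g\|_\infty$ is tiny. In that regime your uniform bounds break: the naive stretching estimate gives
\[
\|\omega(t)\|_\infty \le \|f+g\|_\infty\,\exp\bigl(C\|f/r\|_{L^{3,1}}+C\|g/r\|_{L^{3,1}}\bigr),
\]
and the $\|f\|_\infty$ piece picks up the factor $\exp(C\|g/r\|_{L^{3,1}})$, which is \emph{not} $O_f(1)$. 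Hence $\|u\|_\infty$, the support radius $R$, and therefore the Biot--Savart constant in $\|v\|_\infty\lesssim_R\|\eta\|_\infty$, all blow up with $\|g/r\|_{L^{3,1}}$, and your Gronwall coefficient is no longer $C(f)$. Your final paragraph correctly identifies that the exponential in \eqref{y1_t3} must absorb this, but the sketch does not show how.

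The paper's fix is a linear decomposition $\omega=\omega^1+\omega^2$, both transported by the \emph{same} velocity $u$, with $\omega^1(0)=f$, $\omega^2(0)=g$. The point is that $\omega^1/r$ and $\omega^2/r$ are each separately conserved in $L^{3,1}$ and $L^\infty$. One gets $\|\omega^2\|_\infty=O(\delta)$ directly from the hypothesis; then rewriting the $\omega^1$ stretching as $\frac{(u^1)^r}{r}\omega^1+(u^2)^r\frac{\omega^1}{r}$ and using $\|(u^1)^r/r\|_\infty\lesssim\|\omega^1/r\|_{L^{3,1}}=\|f/r\|_{L^{3,1}}=O_f(1)$, $\|u^2\|_\infty=O(\delta)$, $\|\omega^1/r\|_\infty=\|f/r\|_\infty=O_f(1)$, gives $\|\omega^1\|_\infty=O_f(1)$. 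This is the step your direct approach cannot reproduce: it isolates the $f$-driven stretching (bounded) from the $g$-driven stretching (absorbed by \eqref{y1_t3}). Once $\|\omega\|_\infty=O_f(1)$ is secured, the paper runs an $L^2$ estimate on $\omega^a-\omega^1$, interpolates to get $\|u^a-u^1\|_\infty=O(\delta)$, and finishes with an $L^\infty$ estimate similar in spirit to yours.
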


\begin{proof}[Proof of Lemma \ref{y1}]
In this proof our main ``smallness'' parameter
is $\|g\|_{\infty}$. To simplify the notations,  we shall denote
$X=O(\delta)$ if the quantity $X$ can be made arbitrarily small depending on
$\|g\|_{\infty}$.  For example we shall write $X=O(\delta)$ if $X$ satisfies the inequality of the following sort:
\begin{align*}
|X| \lesssim_{f} \| g\|_{\infty} \exp( C \| \frac g r \|_{L^{3,1}} ).
\end{align*}
Here $C>0$ is some absolute constant. Other inequalities similar to the above will all be denoted by the
same notation $O(\delta)$ whenever there is no confusion.  We shall denote $X=O(1)$ if
\begin{align*}
 X \lesssim_{f} 1.
\end{align*}

We first decompose the solution to \eqref{y1_t2} as
\begin{align*}
 \omega=\omega^1+\omega^2,
\end{align*}
where $\omega^1$, $\omega^2$ solve the \emph{linear} systems
\begin{align} \label{y1_1a}
 \begin{cases}
 \partial_t (\frac{\omega^1} r) + (u\cdot \nabla) (\frac{\omega^1} r) =0, \\
 \omega^1 \Bigr|_{t=0} = f,
\end{cases}
\end{align}

\begin{align} \label{y1_1b}
 \begin{cases}
 \partial_t (\frac{\omega^2} r) + (u\cdot \nabla) (\frac{\omega^2} r) =0,  \\
 \omega^2 \Bigr|_{t=0} = g.
\end{cases}
\end{align}

Consider first \eqref{y1_1b}. Since in \eqref{y1_t2}, we have
\begin{align*}
 \| \frac {\omega(t)} r \|_{L^{3,1}} &= \|\frac {\omega(0)} r \|_{L^{3,1}} \notag \\
& \le \| \frac f r \|_{L^{3,1}} + \| \frac g r \|_{L^{3,1}}, \quad \forall\, t\ge 0.
\end{align*}

Recalling $u=u^r e_r +u^{z} e_z$, we get
\begin{align*}
 \| \frac {u^r(t)} r \|_{\infty} &\lesssim \| \frac{\omega(t)} r \|_{L^{3,1}} \notag \\
& \lesssim \| \frac f r \|_{L^{3,1}} + \| \frac g r \|_{L^{3,1}}, \quad \forall\, t\ge 0.
\end{align*}

Rewrite \eqref{y1_1b} as
\begin{align*}
 \partial_t \omega^2 + (u\cdot \nabla) \omega^2 = \frac{u^r} r \omega^2.
\end{align*}

We obtain for some absolute constant $C>0$,
\begin{align}
\max_{0\le t\le 1} \| \omega^2(t)\|_{\infty} &\le \| g\|_{\infty} \exp\left( C \max_{0\le t\le 1} \| \frac{u^r(t)} r \|_{\infty} \right)
\notag \\
& \le \| g\|_{\infty} \exp\left( C (\| \frac f r \|_{L^{3,1}} + \| \frac g r \|_{L^{3,1}} ) \right) \notag \\
& = O(\delta).  \label{y1_1c}
\end{align}
Thus we only need to control $\|\omega^1-\omega^a\|_{\infty}$.

Set $\eta= \omega^a-\omega^1$.Denote by $u^1$, $u^2$ the velocity fields corresponding to $\omega^1$, $\omega^2$ respectively.
We first show that
\begin{align} \notag
 \max_{0\le t\le 1}\| \eta(t,\cdot)\|_2 = O(\delta).
\end{align}

 Rewrite \eqref{y1_1a} as
\begin{align} \label{y1_3}
 \partial_t \omega^1 + (u^1 \cdot \nabla) \omega^1 = (\omega^1 \cdot \nabla) u^1 -(u^2 \cdot \nabla )\omega^1 +
 (\omega^1\cdot \nabla) u^2.
\end{align}
By \eqref{y1_t1}, we have
\begin{align*}
 \partial_t \omega^a + (u^a \cdot \nabla) \omega^a = (\omega^a \cdot \nabla )u^a.
\end{align*}
Therefore the equation for $\eta$ takes the form
\begin{align} \label{y1_3a}
 &\partial_t \eta  + ( (u^a-u^1) \cdot \nabla) \omega^a + (u^1 \cdot \nabla) \eta \notag \\
  =& (\eta\cdot \nabla) u^a + (\omega^1 \cdot \nabla) (u^a-u^1) \notag \\
 & \qquad + (u^2\cdot\nabla)\omega^a-(u^2\cdot \nabla) \eta -(\omega^1 \cdot \nabla ) u^2.
\end{align}
Computing the $L^2$ norm then gives
\begin{align}
 \partial_t ( \|\eta(t)\|_2^2 )
 & \lesssim \| u^a-u^1 \|_6 \cdot \| D \omega^a \|_3 \cdot \|\eta\|_2 +
 \| \eta\|_2^2 \cdot \| D u^a \|_{\infty}   \notag \\
&\qquad  + \| \omega^1 \|_{\infty } \cdot \| D(u^a-u^1) \|_2 \cdot \| \eta\|_2  \notag \\
 & \qquad + \| u^2 \|_6 \cdot \| D \omega^a \|_3 \cdot \| \eta\|_2
 + \| \omega^1 \|_{\infty} \cdot \| D u^2 \|_2 \cdot \| \eta\|_2 \notag \\
 & \lesssim ( \|D \omega^a \|_3 + \| D u^a \|_{\infty} + \| \omega^1 \|_{\infty}) \| \eta\|_2^2 \notag \\
 & \qquad + (\| u^2\|_6 \| D\omega^a \|_3 + \| \omega^1 \|_{\infty} \| Du^2\|_2) \| \eta\|_2, \notag \\
 & \lesssim (O(1) + \|\omega^1 \|_{\infty} ) \|\eta\|_2^2 \notag \\
 &\qquad + (O(1) \cdot \|Du^2\|_2 + \|\omega^1 \|_{\infty} \|Du^2\|_2) \|\eta\|_2. \label{y1_4}
\end{align}

By an estimate similar to \eqref{y1_1c}, we have
\begin{align} \label{y1_4a}
\max_{0\le t\le 1}\|Du^2(t)\|_2 \lesssim  \max_{0\le t\le 1} \| \omega^2(t)\|_{2}=O(\delta).
\end{align}

Similarly by Sobolev embedding,
\begin{align} \label{y1_4aa}
\max_{0\le t\le 1}\|u^2(t)\|_{2} \lesssim \max_{0\le t \le 1} \| \omega^2(t)\|_{\frac 65} =O(\delta).
\end{align}
This together with \eqref{y1_1c} gives
\begin{align} \label{y1_4b}
\max_{0\le t\le 1} \| u^2(t)\|_{\infty}  &\lesssim \max_{0\le t\le 1} \| u^2(t)\|_2 + \max_{0\le t\le 1} \| \omega^2(t)\|_{\infty}
\notag \\
&=O(\delta).
\end{align}

By \eqref{y1_1a}, we have
\begin{align}
 & \| \frac{\omega^1(t)} r \|_{L^{3,1}} = \| \frac f r \|_{L^{3,1}} = O(1), \quad \forall\, t\ge 0, \notag \\
 & \max_{0\le t\le 1} \| \frac {(u^1(t))^r} r \|_{\infty} \lesssim \| \frac{\omega^1(t)} r \|_{L^{3,1}}=O(1), \notag \\
 & \max_{0\le t\le 1} \| \frac{\omega^1(t)} r \|_{\infty} \le \| \frac f r \|_{\infty} =O(1). \label{y1_5}
\end{align}
Here we write $u^1=(u^1)^r e_r + (u^1)^z e_z$.

 Rewrite \eqref{y1_3} as
\begin{align} \notag
 \partial_t \omega^1 + (u \cdot \nabla) \omega^1 = \frac{(u^1)^r}r \omega^1 +
 {(u^2)}^r  \frac{\omega^1} r.
\end{align}
Using \eqref{y1_4b} and \eqref{y1_5}, we get

\begin{align} \label{y1_7}
 \max_{0\le t\le 1} \|\omega^1(t)\|_{\infty} =O(1)
\end{align}

Plugging \eqref{y1_4a} and \eqref{y1_7} into \eqref{y1_4}, we obtain
\begin{align} \label{y1_9}
 \max_{0\le t\le 1} \| \eta(t,\cdot)\|_2 = O(\delta).
\end{align}

By \eqref{y1_7}--\eqref{y1_9} and H\"older, we get
\begin{align} \label{y1_10}
\max_{0\le t\le 1} \| \omega^a(t)-\omega^1(t)\|_4 &\lesssim \max_{0\le t\le 1 } ( \|\eta(t)\|_2^{\frac 12}  \| \eta(t)\|_{\infty}^{\frac 12} )\notag \\
& = O(\delta)
\end{align}

By $L^2$-conservation of velocity for \eqref{y1_t2}, we have
\begin{align*}
 \|u(t,\cdot)\|_2 =\| u(0)\|_2  &\lesssim \| f \|_{\dot H^{-1}} +\|g \|_{\dot H^{-1}} \notag \\
& \lesssim \| f \|_1 + \| f\|_{\infty} + \| g\|_1 + \| g\|_{\infty} \notag \\
& \lesssim \| f \|_{\infty} + \| g\|_{\infty} = O(1), \qquad \forall\, t\ge 0.
\end{align*}
Therefore by \eqref{y1_4aa},
\begin{align} \label{y1_11}
\max_{0\le t\le 1} \| u^a(t) -u^1(t)\|_2 &\lesssim  \max_{0\le t\le 1} (\|u^a(t)\|_2 + \|u^2(t)\|_2 + \|u(t)\|_2) \notag \\
& =O(1).
\end{align}

By \eqref{y1_10}--\eqref{y1_11} and interpolation, we get
\begin{align} \label{y1_13}
 \max_{0\le t\le 1} \| u^a(t) -u^1(t)\|_{\infty} & \lesssim \max_{0\le t\le 1} \| u^a(t)-u^1(t)\|_2^{\frac 17} \cdot
 \| \omega^a(t) -\omega^1(t)\|_4^{\frac 67} \notag \\
 & =O(\delta).
\end{align}

Now using \eqref{y1_4b}, \eqref{y1_5} and \eqref{y1_13}, we can rewrite \eqref{y1_3a} as
\begin{align*}
 &\partial_t \eta + (u\cdot \nabla) \eta \notag \\
 = & \, - ( (u^a-u^1) \cdot \nabla) \omega^a + (\eta \cdot \nabla) u^a \notag \\
 & \quad + (u^a-u^1)^r \frac{\omega^1} r + (u^2 \cdot \nabla) \omega^a - (u^2)^r \frac {\omega^1}r \notag \\
= & O(\delta) + O(1)\eta + O(\delta) \cdot O(1).
\end{align*}
Obviously then
\begin{align*}
 \max_{0\le t\le 1} \|\eta(t)\|_{\infty} =O(\delta).
\end{align*}

The lemma is proved.
\end{proof}

We now state a proposition which gives the solvability of the 3D axisymmetric without swirl Euler equation
for a special class of initial data (vorticity). In particular we allow initial vorticity (denote it by $\omega_0$) to
carry infinite $\| \frac {\omega_0} r\|_{L^{3,1}}$ norm which is not covered by standard theory. The trade off here
is that we need a precise control of $L^{\infty}$-norm in the sense of Lemma \ref{y1}.

\begin{prop} \label{y2}
 Suppose $\{g_i\}_{i=1}^{\infty}$ is a sequence of axisymmetric functions on $\mathbb R^3$ satisfying the following conditions:
 \begin{itemize}
  \item For each $i\ge 1$, $g_i(x)=g_i^{\theta}(r,z) e_{\theta}$, where $g_i^{\theta}$ is scalar-valued and vanishes near $r=0$:
  \begin{align*}
   \operatorname{supp}(g_i^{\theta}) \subset \{(r,z): \, r>r_i\},\quad \text{for some $r_i>0$}.
  \end{align*}
 \item $g_i\in C_c^{\infty} (B(0,100))$ and $\| g_i\|_{\infty} <2^{-i}$.
 \item For each $i\ge 2$, denote $f_i =\sum_{j=1}^{i-1} g_j$, then
 \begin{align*}
  \| g_i\|_{\infty} \exp\left( C \| \frac {g_i} r \|_{L^{3,1}} \right) <\delta_{i},
 \end{align*}
where $\delta_i=\delta(2^{-i}, f_i)$ as defined in \eqref{y1_t3}.

 \end{itemize}
Let
\begin{align*}
 g= \sum_{i=1}^{\infty} g_i
\end{align*}
and consider the system
\begin{align} \label{y2_t3}
 \begin{cases}
  \partial_t \omega + (u\cdot \nabla)\omega=(\omega\cdot \nabla)u, \quad 0<t\le 1; \\
  u=-\Delta^{-1} \nabla \times \omega, \\
  \omega \Bigr|_{t=0}=g.
 \end{cases}
\end{align}

Then there exists a unique solution $\omega$ to \eqref{y2_t3} with the following properties:
\begin{enumerate}
 \item $\omega$ is compactly supported:
 \begin{align*}
  \operatorname{supp}(\omega(t,\cdot)) \subset B(0,R_0), \quad \forall\, 0<t\le 1.
 \end{align*}
Here $R_0>0$ is an absolute constant.

\item $\omega \in C_t^0 C_x^0([0,1] \times \overline{B(0,R_0)})$,
$u \in C_t^0 L_x^2 \cap L_t^{\infty} L_x^{\infty} ([0,1]\times \mathbb R^3)$. In fact $u\in C_t^0 C_x^{\alpha}([0,1]\times \mathbb R^3)$
for any $0<\alpha<1$.

\end{enumerate}

\end{prop}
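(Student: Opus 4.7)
The strategy is to realize $\omega$ as a $C^0_{t,x}$--limit of smooth axisymmetric approximations, using Lemma \ref{y1} as the quantitative tool that controls successive differences in $L^\infty$.

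First I would set $f_n = \sum_{i=1}^{n} g_i$ and let $\omega^{(n)}$ denote the global smooth axisymmetric (without swirl) solution of \eqref{y2_t3} with initial data $f_n$. Since each $g_i$ is smooth, compactly supported and vanishes on $\{r \le r_i\}$, the partial sum $f_n$ has finite $\|\cdot/r\|_{L^{3,1}}$ norm, so existence and uniqueness of $\omega^{(n)}$ follow from the standard axisymmetric theory recalled earlier (Abidi--Hmidi--Keraani \cite{AHK10}, Danchin \cite{Danchin07}). The key step is to verify that $(\omega^{(n)})$ is Cauchy in $C^0_t L^\infty_x([0,1] \times \mathbb R^3)$. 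For each $n \ge 2$, apply Lemma \ref{y1} with base $f = f_{n-1}$ and perturbation $g = g_n$, so that $f+g = f_n$; the standing assumption $\|g_n\|_\infty \exp(C\|g_n/r\|_{L^{3,1}}) < \delta_n = \delta(2^{-n},f_n)$ is exactly the hypothesis \eqref{y1_t3} of Lemma \ref{y1} with $\epsilon = 2^{-n}$, and therefore yields
\begin{align*}
\max_{0\le t\le 1}\| \omega^{(n)}(t,\cdot) - \omega^{(n-1)}(t,\cdot)\|_{\infty} < 2^{-n}.
\end{align*}
Summing this telescope produces a Cauchy sequence whose limit $\omega$ lies in $C^0_t L^\infty_x$ with $\|\omega\|_\infty \lesssim 1$.

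Next, I would establish a uniform support bound. Using the interpolation $\|u^{(n)}\|_\infty \lesssim \|\omega^{(n)}\|_1^{1/3}\|\omega^{(n)}\|_\infty^{2/3}$ (valid for compactly supported vorticity via splitting the Biot--Savart kernel at scale $\rho$ and optimizing), together with the finite propagation estimate $\dot R_n(t) \le \|u^{(n)}\|_\infty$ for the support radius $R_n(t)$ of $\omega^{(n)}(t)$, Gronwall closes a bound $\operatorname{supp}(\omega^{(n)}(t,\cdot)) \subset B(0,R_0)$ uniform in $n$ and $t \in [0,1]$. Combined with the Cauchy property this upgrades the convergence to uniform convergence on $[0,1]\times \overline{B(0,R_0)}$, so $\omega \in C^0_t C^0_x$. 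The velocity $u = -\Delta^{-1}\nabla\times\omega$ inherits $L^\infty_t L^2_x$ membership from $\omega \in L^\infty_t(L^1_x \cap L^\infty_x)$, while standard singular integral / Riesz potential bounds upgrade $\omega \in L^\infty$ with compact support to $u \in C^0_t C^\alpha_x$ for every $\alpha < 1$. Since $u^{(n)} \to u$ locally uniformly, passage to the limit in the PDE itself is routine.

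The hard part will be uniqueness, since the limit vorticity generally satisfies $\|\omega/r\|_{L^{3,1}} = \infty$ and so falls outside the standard uniqueness class of Danchin and Abidi--Hmidi--Keraani. The point I would exploit is that the solution constructed above is of Yudovich type: $\omega$ is bounded and compactly supported, so the Calder\'on--Zygmund bound $\|\nabla u\|_{BMO} \lesssim \|\omega\|_\infty$ together with the $L^\infty$ control on $u$ gives a log-Lipschitz modulus of continuity for $u$. Osgood's lemma then makes the characteristic ODE $\dot X = u(t,X)$ uniquely solvable, and the quantity $\omega/r$ is transported rigidly along those characteristics. Given two candidate solutions in the class described by (1)--(2), a Yudovich-type differential inequality on a suitably weighted $L^2$-type distance between them, together with the shared initial data, forces them to coincide, completing the uniqueness argument.
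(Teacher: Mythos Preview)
Your approach is essentially the paper's: approximate by $\omega^{(n)}$ with data $\sum_{i\le n} g_i$, use Lemma~\ref{y1} to make $(\omega^{(n)})$ Cauchy in $C^0_t L^\infty_x$, and pass to the limit after a uniform support bound. The only differences are that the paper gets $\|u^{(n)}\|_\infty \lesssim 1$ more directly via energy conservation $\|u^{(n)}(t)\|_2=\|u^{(n)}(0)\|_2$ interpolated against the uniform $\|\omega^{(n)}\|_\infty$ bound (rather than your Gronwall bootstrap on the support radius), and that the paper's proof does not argue uniqueness at all, so your Yudovich-type sketch actually goes beyond what is written there.
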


\begin{proof}[Proof of Proposition \ref{y2}]
For each $l\ge 1$, let $\omega^l$ be the solution to the system
\begin{align*}
 \begin{cases}
  \partial_t (\frac{\omega^l} r ) + (u^l \cdot \nabla) ( \frac{\omega^l} r ) =0, \quad 0<t\le 1, \\
   u^l =-\Delta^{-1} \nabla \times \omega^l, \\
  \omega^l \Bigr|_{t=0} = \sum_{i=1}^l g_i.
 \end{cases}
\end{align*}
By Lemma \ref{y1} and the assumptions on $g_i$, we have
\begin{align} \label{y2_1}
 \max_{0\le t\le 1} \| \omega^{l+1}(t) -\omega^l(t) \|_{\infty} <2^{-l}.
\end{align}

Noting that
\begin{align*}
 \max_{0\le t\le 1} \| \omega^1 (t)\|_{\infty} & \lesssim \| g_1\|_{\infty} \exp \left( Const \cdot \| \frac{g_1} r \|_{L^{3,1}} \right) \notag \\
 & \lesssim 1,
\end{align*}
we obtain
\begin{align*}
 \sup_{l\ge 1} \max_{0\le t\le 1} \| \omega^l(t)\|_{\infty} \lesssim 1.
\end{align*}

By energy conservation, we have
\begin{align*}
 \| u^l(t) \|_2 &= \|u^l(0)\|_2 \lesssim \| \omega^l(0)\|_1 + \| \omega^l(0)\|_{\infty} \notag \\
 & \lesssim 1, \qquad \forall\, t\ge 0, \, l\ge 1.
\end{align*}

Therefore
\begin{align*}
 \sup_{l\ge 1}\max_{0\le t\le 1}\| u^l(t)\|_{\infty} \lesssim 1.
\end{align*}

This shows that for some absolute constant $R_0>0$, we have
\begin{align*}
 \omega^l (t) \in C_c^{\infty} (B(0,R_0)), \quad \forall\, 0<t\le 1,\, l\ge 1.
\end{align*}

By \eqref{y2_1}, the sequence $\omega^l$ is Cauchy in the Banach space $C_t^0C_x^0([0,1] \times \overline{B(0,R_0)})$ and
hence converges to the limit point $\omega$ in the same space. By interpolation and Sobolev embedding it is not difficult to check that
$u^l$ converges to $u\in C_t^0 L_x^2$. By Sobolev embedding we get $u \in L_{t}^{\infty} L_x^{\infty} \cap C_t^0 C_x^{\alpha}$ for any
$\alpha<1$. It is not difficult to check that $\omega$ is the desired solution.
The proposition is proved.

\end{proof}

We now take a parameter $A\gg 1$ and define
\begin{align}
\tilde g_A(x_1,x_2,z)  & = \tilde g_A (r,z) \notag \\
& = \frac{\sqrt{\log A}} {\sqrt A } \sum_{A\le k \le A+\sqrt A} \eta_k(r,z), \label{e_p63_1}
\end{align}
where $\eta_k$ is the same as in \eqref{x2_1a}. Note the slight difference between
$\tilde g_A$ and $g_A$ defined in \eqref{x2_1}. The main reason of choosing $\tilde g_A$ is that in the perturbation
theory later we need
better control of higher Sobolev norms of the solution, i.e. estimates like $\| \tilde g_A \|_{W^{1,q}}
\lesssim 2^{A+}$, for all $3<q\le \infty$. In comparison $\|g_A\|_{W^{1,q}} \sim 2^{2A}$ since there we are
summing $\eta_k$ over $k\le 2A$. This is why the modification is needed.

By a derivation similar to \eqref{x2_3b}--\eqref{x2_3a}, easy to check
\begin{align}
&\| \tilde g_A e_{\theta} \|_{\dot B^{\frac 32}_{2,1}(\mathbb R^3)}
+ \| \frac{\tilde g_A}r e_{\theta} \|_{L^{3,1}(\mathbb R^3)} \lesssim \sqrt{\log A}, \notag \\
&\|\tilde g_A \|_{\dot H^{\frac 32}(\mathbb R^3)} + \| \tilde g_A e_{\theta} \|_{\dot H^{\frac 32}(\mathbb R^3)}
\lesssim \frac{\sqrt{\log A}}{A^{\frac 14}}, \notag \\
& \| \tilde g_A \|_{L^p(\mathbb R^3)} + \| \tilde g_A e_{\theta} \|_{L^p(\mathbb R^3)}
\lesssim \frac{\sqrt{\log A}}{\sqrt A} \cdot 2^{-\frac{3A}p}, \quad \forall\, 1\le p\le \infty, \notag \\
& \| D( \tilde g_A e_{\theta} ) \|_{L^p(\mathbb R^3)} \lesssim \sqrt{\log A}
\cdot 2^{(1-\frac 3p)(A+\sqrt A)}, \quad \forall\, 3\le p\le \infty. \label{e_p63_2}
\end{align}
These estimates will be needed later.

\begin{lem} \label{y3}
Let $\omega$ be a smooth solution to the following system (written in axisymmetric vorticity form)
\begin{align*}
 \begin{cases}
  \partial_t ( \frac{\omega} r) + (u + u^{\text{ex}} )\cdot \nabla ( \frac{\omega} r) =0, \quad 0<t\le 1, r=\sqrt{x_1^2+x_2^2}, x=(x_1,x_2,z), \\
  u=-\Delta^{-1} \nabla \times \omega, \\
  \omega \Bigr|_{t=0} =\tilde g_A e_{\theta},
 \end{cases}
\end{align*}
where $\tilde g_A$ was the same as in \eqref{e_p63_1} and $u^{\text{ex}}$ is a given axisymmetric velocity field having the form
(note that it is incompressible)
\begin{align} \label{y3_t0}
 u^{\text{ex}} (t,r,z) = a(t) r e_r - 2a(t) z e_z.
\end{align}
Assume for some constant $B_0>0$,
\begin{align} \label{y3_t1}
 \sup_{0\le t\le 1} |a(t)| \le B_0 <\infty.
\end{align}

Let $\phi=(\phi^r,\phi^z)$ be the forward characteristic lines associated with the velocity $u+u^{\text{ex}}$ (see \eqref{x-1_00})
and let $\tilde \phi$ be the corresponding inverse map. Then there exists $A_0=A_0(B_0)>0$ such that if $A>A_0$, then
\begin{align} \label{y3_t2}
 \max_{0\le t \le \frac{1} {\log\log A}} \| (D\tilde \phi)(t,\cdot) \|_{\infty} > \log\log A.
\end{align}
\end{lem}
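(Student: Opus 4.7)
The plan is to prove Lemma \ref{y3} by contradiction, closely following the strategy of Proposition \ref{x5} and showing that the presence of the additional drift $u^{\text{ex}}$ does not spoil the key mechanism of norm inflation. First I would assume for contradiction that $\max_{0\le t\le 1/B}\|D\tilde\phi(t)\|_\infty\le B:=\log\log A$, and then derive a companion bound for $D\phi$. Since both $u$ and $u^{\text{ex}}$ are incompressible in $\mathbb R^3$, the combined axisymmetric flow preserves the measure $r\,dr\,dz$, so the determinant identity in Lemma \ref{x-1} still holds: $\det(D\tilde\phi)=r/\tilde\phi^r$. Combining this with the adjugate representation $D\phi(\tilde\phi)=(\det D\tilde\phi)^{-1}\mathrm{adj}(D\tilde\phi)$ yields $\|D\phi\|_\infty\le \|D\tilde\phi\|_\infty^2\le B^2$. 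A pointwise representation $\omega(t,r,z)=\tilde g_A(\tilde\phi(t,r,z))\,r/\tilde\phi^r(t,r,z)$, together with the vanishing $\tilde\phi^r(t,0,z)=0$ (preserved because $u^{\text{ex}}$ has no radial contribution at $r=0$) and the mean-value inequality $r\le B\tilde\phi^r$, then gives $\|\omega(t)\|_\infty\le B\sqrt{\log A}/\sqrt A$, in direct analogy with \eqref{x5_4na}.

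Next, I would symmetrize the picture: the vector field $u^{\text{ex}}=a(t)re_r-2a(t)ze_z$ preserves both axisymmetry and the odd-in-$z$ symmetry of the initial profile $\tilde g_A$, so $\omega$ remains axisymmetric-without-swirl and odd in $z$ and $\tilde\phi$ is odd in $z$. With these symmetries and the measure-preservation, the hypotheses of Lemma \ref{x4} are verified with $n_0\sim 2\log_2 B$ (from the bound $\|D\tilde\phi\|_\infty\le B$), so the singular-integral bound \eqref{x4_2} yields $\|\partial_z u^r(t)\|_\infty\lesssim B^2\sqrt{\log A}/A^{1/2}$, while \eqref{x5_4} gives the same control on $\|\partial_r u^z(t)\|_\infty$. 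Crucially, \eqref{x4_2_new} (whose summing-count is $\sqrt A$ terms each of size $(\sqrt{\log A}/\sqrt A)\cdot 2^{-8n_0}$, still totaling $\sqrt{\log A}\cdot B^{-O(1)}$) provides
\begin{align*}
-(\partial_r u^r)(t,0,0)\gtrsim \sqrt{\log A}\cdot B^{-O(1)},\qquad 0\le t\le 1/B.
\end{align*}

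Now I would write the ODE for $D\phi$ along the trajectory $(r,z)$: by \eqref{y3_t0} the gradient of $u^{\text{ex}}$ in cylindrical coordinates is the diagonal matrix $\mathrm{diag}(a(t),-2a(t))$, so
\begin{align*}
\partial_t D\phi=\Bigl(\mathrm{diag}(\partial_r u^r,\partial_z u^z)+\mathrm{diag}(a(t),-2a(t))\Bigr)D\phi+P(t)D\phi,
\end{align*}
where $\|P(t)\|_\infty\lesssim B^2\sqrt{\log A}/A^{1/2}$. Integrating in time and evaluating at $(r,z)=(0,0)$, exactly as in \eqref{x5_3}--\eqref{x5_9}, the $a(t)$ terms contribute only a harmless multiplicative factor $e^{O(B_0/B)}=O(1)$ since $t\le 1/B$ and $|a|\le B_0$. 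The same continuity argument as in Proposition \ref{x5} then gives $e^{|\int_0^t\lambda|}\lesssim B^2$ throughout $[0,1/B]$, while on the other hand integration of the lower bound yields $e^{|\int_0^{1/B}\lambda(s,0,0)\,ds|}\gtrsim \exp(\sqrt{\log A}\cdot B^{-O(1)})$, which dominates $B^2=(\log\log A)^2$ for $A$ large enough.

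The main obstacle is verifying that the additional external drift $u^{\text{ex}}$ genuinely does not alter the core hyperbolic mechanism. The clean linear structure of $u^{\text{ex}}$ saves us on three fronts: it preserves incompressibility (hence the $r\,dr\,dz$ measure), it preserves the odd-in-$z$ parity of the solution and its flow (hence Lemma \ref{x4} still applies), and its gradient is a constant matrix whose contribution over the short time window $[0,1/B]$ is a bounded multiplicative factor dwarfed by the $\sqrt{\log A}$ gain from \eqref{x4_2_new}. Once these three compatibility facts are in place, every estimate in the proof of Proposition \ref{x5} transfers with at most cosmetic changes to the constants, producing the required contradiction \eqref{y3_t2}.
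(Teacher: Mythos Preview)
Your argument is correct in spirit and would work, but it is noticeably more laborious than the route the paper takes, and there are a couple of minor slips in the constants. You faithfully rerun the machinery of Proposition~\ref{x5}: global off-diagonal control via Lemma~\ref{x4} (i.e.\ the bound \eqref{x4_2} on $\partial_z u^r$, hence on the perturbation matrix $P(t)$), a Duhamel/continuity argument to control $e^{|\int_0^t\lambda|}$, and then the lower bound \eqref{x4_2_new} at the origin to force the contradiction. One small correction: from $\|D\tilde\phi\|_\infty\le B$ you only get $\|D\phi\|_\infty\le B^2$, so the mean-value estimate should read $r\le B^2\tilde\phi^r$ and the resulting $\|\omega\|_\infty$ and $\|P(t)\|_\infty$ bounds carry an extra $B^2$; this is harmless since all such powers of $B=\log\log A$ are swamped by $\sqrt{\log A}$.

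The paper's own proof is cleaner because it observes that, at the single point $(r,z)=(0,0)$, the off-diagonal entries of $D(u+u^{\text{ex}})$ vanish \emph{exactly} by the odd-in-$z$ parity of $\omega$ (so $(\partial_z v^r)(t,0,0)=(\partial_r v^z)(t,0,0)\equiv0$). Hence at the origin the ODE for $D\phi$ is already diagonal and integrates explicitly to $(\partial_r\phi^r)(t,0,0)=\exp\bigl(\int_0^t(\partial_r v^r)(s,0,0)\,ds\bigr)$, with no need for the global bound \eqref{x4_2} or the continuity argument. Only the lower bound \eqref{x4_2_new} is invoked, and the contradiction is immediate. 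In short: your approach (Proposition~\ref{x5} with an added diagonal drift) works, but the paper trades the uniform-in-$(r,z)$ estimate for a one-point parity observation, obtaining a shorter proof with the same contradiction.
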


\begin{proof}[Proof of Lemma \ref{y3}]
 We shall give a slightly simpler proof than that given in Proposition \ref{x5}. The idea is to take full advantage of the symmetry assumption
 and the fact that the off-diagonal terms of $Du$ vanishes completely at $(r,z)=(0,0)$.  Assume \eqref{y3_t2} does not hold. By a derivation similar to \eqref{x7_700a}--\eqref{x7_700b}, we have
 \begin{align} \label{y3_1}
  \max_{0\le t \le \frac 1{\log\log A} } ( \| (D\tilde \phi)(t,\cdot) \|_{\infty} + \| (D\phi)(t,\cdot)\|_{\infty} )
  \lesssim (\log\log A)^2.
 \end{align}

By \eqref{e_p63_1} and \eqref{x2_2}, observe that $\tilde g_A$ is an odd function of $z$. Denote $v(t,r,z)= u+u^{ex}$ and
\begin{align*}
 &u^{\text{ex}} = (u^{\text{ex}})^r e_r + (u^{\text{ex}})^z e_z, \\
 &v=v^r e_r + v^z e_z.
\end{align*}
Easy to check that for any $t\ge 0$, $\omega(t)$ remains an odd function of $z$, and also
\begin{align*}
 & v^r (t,0,z) =v^z(t,r,0)=0, \notag \\
 & \phi^r (t,0,z)= \phi^z(t,r,0)=0, \quad \forall\, r\ge 0, \, z\in \mathbb R, \, t\ge 0.
\end{align*}
Clearly then
\begin{align*}
& (\partial_r v^z)(t,0,0) \equiv 0 \equiv (\partial_z v^r)(t,0,0), \notag \\
& (\partial_r \phi^z)(t,0,0) \equiv 0 \equiv (\partial_z \phi^r)(t,0,0), \quad \forall\, t\ge 0.
\end{align*}

Since $v$ is divergence-free (see \eqref{y3_t0}), we have
\begin{align*}
 2(\partial_r v^r)(t,0,0) + (\partial_z v^z)(t,0,0) \equiv 0, \quad \forall\, t\ge 0.
\end{align*}

From the above identities, we then easily obtain
\begin{align}
 & (\partial_r \phi^r)(t,0,0) = e^{\int_0^t (\partial_r v^r)(s,0,0) ds}, \notag \\
& (\partial_z \phi^z)(t,0,0) = e^{\int_0^t (\partial_z v^z)(s,0,0) ds} = e^{-2 \int_0^t (\partial_r v^r)(s,0,0) ds}.
\label{y3_3}
\end{align}

By \eqref{x4_2_new} (easy to check that same estimate holds with $g_A$ replaced
by $\tilde g_A$), \eqref{y3_1} and \eqref{y3_t1}, we get
\begin{align*}
 (\partial_r v^r)(t,0,0) & =  (\partial_r u^r)(t,0,0) + ( \partial_r (u^{ex})^r )(t,0,0) \notag \\
 & \gtrsim \sqrt{\log A} (\log\log A)^{-16} -B_0.
\end{align*}

Plugging this into \eqref{y3_3} then gives us
\begin{align*}
 (\partial_r \phi^r )(t,0,0) \gtrsim e^{t \frac{\sqrt {\log A} } { (\log\log A)^{16}} -2B_0}.
\end{align*}

This obviously contradicts \eqref{y3_1} for $t=\frac 1{\log\log A}$ and $A$ sufficiently large.

\end{proof}

\begin{lem}[Control of the support] \label{y4}
 Let $\omega$ be a smooth solution to the following system (written in axisymmetric vorticity form)
 \begin{align*}
 \begin{cases}
  \partial_t ( \frac{\omega} r) + (u + u_1+u_2 )\cdot \nabla ( \frac{\omega} r) =0, \quad 0<t\le 1, r=\sqrt{x_1^2+x_2^2}, x=(x_1,x_2,z), \\
  u=-\Delta^{-1} \nabla \times \omega, \\
  \omega \Bigr|_{t=0} =\tilde g_A e_{\theta},
 \end{cases}
\end{align*}

where the following conditions hold:
\begin{itemize}
 \item $\tilde g_A$ is the same as in \eqref{e_p63_1};
 \item $u_1$ and $u_2$ are given smooth incompressible axisymmetric vector fields having the form
 \begin{align*}
  &u_1=a(t) r e_r - 2a(t) z e_z, \\
  &u_2=u_2^r e_r + u_2^z e_z,
 \end{align*}
and for some constant $B>0$,
\begin{align}
& \sup_{0\le t\le 1}|a(t)| \le B, \notag \\
& \sup_{0\le t \le 1} \| \frac{u_2^r(t)} r \|_{\infty} \le B, \notag \\
& |u_2(t,x)| \le B|x|^2, \quad \forall\, x \in \mathbb R^2, \quad 0\le t\le 1. \label{y4_t1}
\end{align}
\end{itemize}

Then there exists a constant $A_0=A_0(B)>0$ sufficiently large such that if $A>A_0$, then
for any $0\le t \le 1$,
\begin{align} \label{y4_t2a}
 \operatorname{supp} (\omega(t,\cdot) ) \subset B(0,R), \qquad \text{with $R\le C_1\cdot 2^{-A}$},
\end{align}
where $C_1>0$ is a constant depending on $B$.

Also for any $0\le t\le \frac 1 {\log\log A}$, we have
\begin{align} \label{y4_t2}
 \operatorname{supp} (\omega(t,\cdot) ) \subset B(0,R), \qquad \text{with $R\sim 2^{-A}$},
\end{align}
where the implied constants (in $R\sim 2^{-A}$) are absolute constants.
\end{lem}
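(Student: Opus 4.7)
The plan is to track the forward characteristic $\phi(t,x)=(\phi^r,\phi^z)$ associated to the full drift $u+u_1+u_2$ and derive a Gronwall-type bound on $|\phi|$. The key input is that, although the self-induced field $u$ carries a potentially large vorticity-stretching factor, the initial data $\tilde g_A e_\theta$ has extremely small $L^1$ and $L^\infty$ mass (of order $\frac{\sqrt{\log A}}{\sqrt A}\cdot 2^{-3A}$ and $\frac{\sqrt{\log A}}{\sqrt A}$ respectively, by \eqref{e_p63_2}), so the $L^\infty$ norm of $u$ on the support of $\omega$ will end up being much smaller than $2^{-A}$.

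First I would exploit that $u+u_1+u_2$ is divergence-free and $\omega/r$ is transported, so $\|\omega(t)/r\|_{L^{3,1}}$ is conserved, and by \eqref{e_p63_2} it is $\lesssim \sqrt{\log A}$ uniformly in $t$. Lemma \ref{lem_ur_r} then yields $\|u^r(t)/r\|_\infty\le C\sqrt{\log A}$. Writing the characteristic equation in the form
\[
\frac{d}{dt}\log\phi^r=\frac{u^r(t,\phi)}{\phi^r}+a(t)+\frac{u_2^r(t,\phi)}{\phi^r},
\]
and using $|u^r/\phi^r|\le \|u^r/r\|_\infty$, $|a|\le B$, $\|u_2^r/r\|_\infty\le B$, I obtain
\[
\frac{\phi^r(t,x)}{r}\le \exp\bigl((C\sqrt{\log A}+2B)\,t\bigr).
\]
Since $\omega(t,\phi(t,x))=(\phi^r/r)\,\omega_0(x)$ and the flow preserves the $3$-dimensional Lebesgue measure, this controls both $\|\omega(t)\|_\infty$ and $\|\omega(t)\|_1$ by the same exponential factor times the tiny initial norms. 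The standard 3D Biot–Savart interpolation $\|u\|_\infty\lesssim\|\omega\|_1^{1/3}\|\omega\|_\infty^{2/3}$ then gives
\[
\|u(t)\|_\infty\lesssim e^{(C\sqrt{\log A}+2B)t}\cdot\frac{\sqrt{\log A}}{\sqrt A}\cdot 2^{-A}.
\]

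For the support bound I would then run Gronwall on $|\phi|$ itself. Using $|u_1(y)|\le 3B|y|$ and $|u_2(y)|\le B|y|^2$, the ODE satisfied by $|\phi|$ gives, as long as $|\phi|\le 1$ (a bootstrap hypothesis verifiable a posteriori since $|\phi(0)|\lesssim 2^{-A}$),
\[
|\phi|(t)\le e^{4Bt}\Bigl(|\phi(0)|+\int_0^t\|u(s)\|_\infty\,ds\Bigr).
\]
For part (a), $t\le 1$, the integral is at most $e^{C\sqrt{\log A}+2B}\cdot\frac{\sqrt{\log A}}{\sqrt A}\cdot 2^{-A}$; since $\sqrt A$ grows polynomially in $A$ while $e^{C\sqrt{\log A}}$ only subpolynomially, this quantity is $o(2^{-A})$ for $A$ sufficiently large (depending on $B$), giving $R\le C_1(B)\cdot 2^{-A}$. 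For part (b), $t\le 1/\log\log A$, the same estimate yields
\[
\int_0^t\|u\|_\infty\,ds\le \frac{1}{\log\log A}\cdot e^{(C\sqrt{\log A}+2B)/\log\log A}\cdot\frac{\sqrt{\log A}}{\sqrt A}\cdot 2^{-A},
\]
and the factor $e^{C\sqrt{\log A}/\log\log A}/\sqrt A=A^{-1/2+o(1)}$ tends to $0$; combined with $e^{4Bt}=1+o(1)$ and $|\phi(0)|\le 2^{-A+10}$, this produces a bound $|\phi|(t)\le R$ with $R\sim 2^{-A}$ with absolute implied constants.

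The main technical obstacle is the apparent mismatch between the subpolynomial inflation $e^{C\sqrt{\log A}}$ coming from $\|u^r/r\|_\infty\lesssim\sqrt{\log A}$ and the demand that $R\le C_1(B)\cdot 2^{-A}$ with $C_1$ independent of $A$; the resolution is that this inflation only enters through the \emph{pointwise} size of $u$ on $\operatorname{supp}(\omega)$, where it is multiplied by the initial $L^1\cap L^\infty$ smallness $\sim 2^{-A}/\sqrt A$ of $\tilde g_A$, and the polynomial gain $1/\sqrt A$ always beats $e^{C\sqrt{\log A}}$. A secondary care point is the bootstrap: one must assume $|\phi|\lesssim 1$ (so the $B|\phi|^2$ term from $u_2$ is harmless) before concluding $|\phi|\lesssim 2^{-A}$, but this is easily closed by taking $A\ge A_0(B)$ large.
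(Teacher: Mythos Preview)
Your proposal is correct and follows essentially the same route as the paper: use $L^{3,1}$ conservation of $\omega/r$ to bound $\|u^r/r\|_\infty\lesssim\sqrt{\log A}$, deduce an exponential-in-$\sqrt{\log A}$ bound on the Lebesgue norms of $\omega(t)$, interpolate to get $\|u(t)\|_\infty\ll 2^{-A}$, and close with a Gronwall inequality on the characteristics. The only cosmetic differences are that the paper interpolates via $\|u\|_\infty\lesssim\|\omega\|_2^{1/3}\|\omega\|_4^{2/3}$ (obtaining the $L^p$ bounds from the equation $\partial_t\omega+(v\cdot\nabla)\omega=\frac{v^r}{r}\omega$), whereas you use $\|u\|_\infty\lesssim\|\omega\|_1^{1/3}\|\omega\|_\infty^{2/3}$ and the explicit Lagrangian formula $\omega(t,\phi)=(\phi^r/r)\,\omega_0$; both yield the same $e^{C\sqrt{\log A}}\cdot A^{-1/2}\cdot 2^{-A}$ control on $\|u\|_\infty$ and hence the same support conclusions.
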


\begin{proof}[Proof of Lemma \ref{y4}]
 Denote $v=u+u_1+u_2$ and write
 \begin{align*}
  &v=v^r e_r + v^z e_z, \\
  &u=u^r e_r +u^z e_z.
 \end{align*}

By $L^{3,1}$ conservation of $\frac{\omega} r$ and \eqref{e_p63_2}, we have
\begin{align*}
 \sup_{0\le t\le 1} \| \frac{u^r(t)} r \|_{\infty} \lesssim \| \frac{\omega(t=0,\cdot)} r \|_{L^{3,1}} \lesssim \sqrt{\log A}.
\end{align*}

By \eqref{y4_t1}, we get
\begin{align}\label{y4_1}
 \sup_{0\le t\le 1} \| \frac{v^r(t)} r \|_{\infty} \lesssim B + \sqrt{\log A}.
\end{align}

Rewrite the equation for $\omega$ as
\begin{align*}
 \partial_t \omega + (v\cdot \nabla) \omega= \frac{v^r} r \omega.
\end{align*}

By \eqref{y4_1}, a simple $L^p$-estimate (note that $v$ is incompressible) then gives
\begin{align}
 &\sup_{0\le t \le 1} \| \omega(t)\|_2 \lesssim e^{ C  (B+\sqrt{\log A})} \| g_A \|_2, \notag \\
 &\sup_{0\le t \le 1 } \| \omega(t)\|_4 \lesssim e^{C  (B+\sqrt{\log A})} \| g_A \|_4, \label{y4_2}
\end{align}
where $C>0$ is an absolute constant.

Note that by \eqref{e_p63_2}, we have
\begin{align*}
 &\|\tilde g_A \|_2 \lesssim \frac{\sqrt{\log A}} {\sqrt A} 2^{-\frac 32 A},\\
 &\|\tilde g_A \|_4 \lesssim \frac{\sqrt{\log A}} {\sqrt A} 2^{-\frac 34 A}.
\end{align*}

By \eqref{y4_2} and interpolation, we then get
\begin{align}
 \sup_{0\le t \le 1} \| u(t)\|_{\infty}
 & \lesssim \sup_{0\le t \le 1} \| \omega(t)\|_2^{\frac 13} \cdot \| \omega(t)\|_4^{\frac 23} \notag \\
 & \lesssim e^{C (B+\sqrt{\log A})} \cdot \frac{\sqrt{\log A}} A \cdot 2^{-A} \notag \\
 & <  {A^{-\frac 13}} 2^{-A}, \label{y4_3}
\end{align}
where in the last inequality we need to take $A$ sufficiently large.

Denote $\phi$ as the (usual Euclidean) characteristic line associated with the velocity $v$. Then by \eqref{y4_t1} and \eqref{y4_3}, we get
\begin{align*}
 \frac d {dt} ( |\phi(t)|) \lesssim A^{-\frac 13} 2^{-A} + B |\phi(t)| + B |\phi(t)|^2.
\end{align*}
Since $|\phi(0)| \lesssim 2^{-A}$, obviously \eqref{y4_t2a} and
\eqref{y4_t2} follows (in the latter case since $t\le \frac 1 {\log
\log A}$ we can take $A$ sufficiently large to kill pre-factors).
\end{proof}

\begin{lem} \label{y5}
 Let $\omega$ be a smooth solution to the following system (written in axisymmetric vorticity form)
 \begin{align*}
 \begin{cases}
  \partial_t ( \frac{W} r) + (U+ u_1+u_2 )\cdot \nabla ( \frac{W} r) =0, \quad 0<t\le 1, r=\sqrt{x_1^2+x_2^2}, x=(x_1,x_2,z), \\
  U=-\Delta^{-1} \nabla \times W, \\
  W \Bigr|_{t=0} =\tilde g_A e_{\theta},
 \end{cases}
\end{align*}

where the following conditions hold:
\begin{itemize}
 \item $\tilde g_A$ is the same as in \eqref{e_p63_1};
 \item $u_1$ and $u_2$ are given smooth incompressible axisymmetric vector fields having the form
 \begin{align*}
  &u_1=a(t) r e_r - 2a(t) z e_z, \\
  &u_2=u_2^r e_r + u_2^z e_z,
 \end{align*}
and for some constant $B>0$,
\begin{align}
& \sup_{0\le t\le 1}|a(t)| \le B, \notag \\
  &| u_2(t,x)| \le B \cdot |x|^2, \notag \\
  & |(Du_2)(t,x)| \le B\cdot |x|, \notag \\
  & |(D^2 u_2)(t,x)| \le B, \quad \forall\, x \in \mathbb R^3, \, 0\le t\le 1.  \label{y5_t1_aa}
 \end{align}
\end{itemize}

Let $\Phi=(\Phi^r,\Phi^z)$ be the characteristic line associated with the velocity field $U+u_1+u_2$ (see \eqref{x-1_00}) and
let $\tilde \Phi$ be the corresponding inverse map.

There exists a constant $A_0=A_0(B)>0$ sufficiently large such that if $A>A_0$, then
either
\begin{align} \label{y5_t2a}
 \max_{0\le t\le \frac 1 {\log\log A}} \| W(t,\cdot) \|_{\dot H^{\frac 32}} >{\log \log\log A},
\end{align}
or
\begin{align} \label{y5_t2b}
 \max_{0\le t \le \frac 1 {\log\log A}} \| (D\tilde \Phi) (t,\cdot)\|_{\infty} > {\log \log\log A}.
\end{align}

\end{lem}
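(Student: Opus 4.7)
The plan is a contradiction argument in the spirit of Lemma \ref{lem55}, comparing $W$ with the unperturbed solution $\omega$ obtained by deleting the quadratic drift $u_2$. Suppose both \eqref{y5_t2a} and \eqref{y5_t2b} fail on $[0, T_A]$ with $T_A := 1/\log\log A$, so that $\max_{0\le t\le T_A}(\|W(t)\|_{\dot H^{3/2}} + \|(D\tilde\Phi)(t)\|_\infty) \le L := \log\log\log A$. Let $\omega$ solve the same axisymmetric transport system with drift $u + u_1$ only (where $u$ is the Biot--Savart field of $\omega$) and initial data $\tilde g_A e_\theta$, and write $\phi$, $\tilde\phi$ for its forward and inverse characteristic maps. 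Lemma \ref{y3} (whose hypotheses on $u_1$ are met verbatim) delivers $\max_{0\le t\le T_A}\|D\tilde\phi(t)\|_\infty > \log\log A$. The goal is to show that the standing assumption forces $\|D\tilde\Phi - D\tilde\phi\|_\infty = o(1)$ as $A\to\infty$, which is incompatible with $\|D\tilde\Phi\|_\infty \le L \ll \log\log A$.

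Lemma \ref{y4} confines $\mathrm{supp}\,W(t)$ and $\mathrm{supp}\,\omega(t)$ to $B(0,R)$ with $R\sim 2^{-A}$ for $0\le t\le T_A$, so \eqref{y5_t1_aa} yields $\|u_2\|_{L^\infty(\mathrm{supp}\,W)} \lesssim_B 4^{-A}$ and $\|D u_2\|_{L^\infty(\mathrm{supp}\,W)} \lesssim_B 2^{-A}$. Writing $\eta := W - \omega$ and letting $U$, $u$ denote the two Biot--Savart velocities, the difference equation reads $\partial_t(\eta/r) + ((U+u_1)\cdot\nabla)(\eta/r) + ((U-u)\cdot\nabla)(\omega/r) + (u_2\cdot\nabla)(W/r) = 0$. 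An $L^p$-energy estimate for $1<p<3$, combined with Lemma \ref{lem_ur_r} and the $L^{3,1}$-conservation of $W/r$, $\omega/r$ (used to dominate $\|(U-u)^r/r\|_\infty$ by an interpolation of $\|\eta/r\|$), followed by Gronwall, yields $\max_{0\le t\le T_A}\|\eta(t)\|_{L^p} \lesssim_B 4^{-A}$; interpolating this with the crude $H^2$-bound $\|\eta\|_{H^2}\lesssim 2^{2A+}$ inferred from the characteristic representation $W = (\tilde g_A\circ\tilde\Phi)\,r/\tilde\Phi^r\,e_\theta$ under $\|D\tilde\Phi\|_\infty\le L$ and the bounds \eqref{e_p63_2} produces $\|\eta(t)\|_{B^0_{\infty,1}} \lesssim 2^{-\alpha A+}$ for some $\alpha>0$, as in \eqref{lem55_3}.

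From Biot--Savart and the log-interpolation $\|\mathcal R\eta\|_\infty \lesssim \|\eta\|_2 + \|\eta\|_\infty\log(10+\|\eta\|_{H^2})$ one upgrades the previous step to $\|U-u\|_\infty \lesssim 2^{-\beta A+}$ on $[0,T_A]$ for some $\beta>0$. Since $u_1$ appears in both characteristic ODEs and cancels in the difference, a Gronwall argument as in Lemma \ref{lem20} controls $\|\Phi-\phi\|_\infty$ by $\|U-u\|_\infty + \|u_2\|_{L^\infty(\mathrm{supp})} = O(2^{-\beta A+})$; differentiating once more and using the $\dot H^{3/2}$-hypothesis \eqref{y5_t2a} (via log-interpolation) to control $\|DU\|_\infty$ by $L^{O(1)}$ gives $\|D\Phi - D\phi\|_\infty = o(1)$. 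Inverting via $D\tilde\Phi = (D\Phi)^{-1}\circ\tilde\Phi$ and using $\|D\Phi\|_\infty \lesssim L^{O(1)}$ converts this to $\|D\tilde\Phi - D\tilde\phi\|_\infty = o(1)$, contradicting Lemma \ref{y3}. The principal technical obstacle is this last Gronwall for $D\Phi - D\phi$: in three dimensions $\|W\|_\infty$ alone does not control $\|DU\|_\infty$, and it is precisely the $\dot H^{3/2}$-hypothesis \eqref{y5_t2a}, together with the Bernstein-type log-interpolation, that closes it — which is why both alternatives \eqref{y5_t2a}--\eqref{y5_t2b} must be violated simultaneously to reach a contradiction.
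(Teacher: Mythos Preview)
Your overall plan --- contradict Lemma \ref{y3} by showing that, under the simultaneous failure of \eqref{y5_t2a}--\eqref{y5_t2b}, the perturbed and unperturbed Jacobians stay $o(1)$-close --- is exactly the paper's strategy. The gap is at the step you single out as decisive: you assert that the $\dot H^{3/2}$-hypothesis on $W$ controls $\|DU\|_\infty=\|\mathcal R W\|_\infty$ by $L^{O(1)}$ via ``log-interpolation''. This is not available: $\dot H^{3/2}(\mathbb R^3)$ is precisely the borderline at which the log-type Beale--Kato--Majda inequality fails (one needs $W\in H^s$ for some $s>3/2$). So the $\dot H^{3/2}$ bound alone cannot close the Gronwall for $D\Phi-D\phi$, and your stated reason ``why both alternatives must be violated'' misidentifies the role of \eqref{y5_t2a}.

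In the paper the two hypotheses enter at different places. The $\dot H^{3/2}$ bound is used only in Step~1, through the embedding $\dot H^{3/2}(\mathbb R^3)\hookrightarrow\dot W^{1,3}$, to supply $\|DW\|_3\le L$ in the $L^p$ estimate on $\eta$; this is also what your $L^p$ argument needs but does not provide (bounding $\|(U-u)^r/r\|_\infty$ by $\|\eta/r\|_{L^{3,1}}$ does not give smallness, and the forcing term $((U-u)\cdot\nabla)\omega$ requires a gradient bound on the vorticity). The $D\tilde\Phi$ hypothesis supplies $\|D\Phi\|_\infty\lesssim L^2$ for Step~4. For the Gronwall on $q=D\Phi-D\phi$ the paper does \emph{not} bound $\|DU\|_\infty$ by $L^{O(1)}$: it bounds the cylindrical gradient of the \emph{unperturbed} velocity by $\|Dv\|_\infty\lesssim\sqrt{A\log A}$, obtained from a direct $W^{1,q}$ energy estimate on $\omega$ and $W$ (not from the characteristic formula --- that would require $D^2\tilde\Phi$, which is why your $H^2$ bound $\|\eta\|_{H^2}\lesssim 2^{2A+}$ is also unjustified as written). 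The coefficient $\sqrt A$, integrated over time $1/\log\log A$, still yields growth $e^{\sqrt A/\log\log A}$, which is beaten by the $2^{-cA}$ smallness of the forcing. A separate Step~3 converts the Cartesian bound $\|D^2u\|_\infty\lesssim 2^{A+}$ into bounds on the cylindrical second derivatives $\partial_{rr}u^r,\partial_{rz}u^r,\ldots$ needed in the $(r,z)$-ODE for $D\phi$; this handles the $1/r$ singularity and is not addressed in your sketch.
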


\begin{proof}[Proof of Lemma \ref{y5}]
By Lemma \ref{y4} and \eqref{y5_t1_aa}, we have $\operatorname{supp}(W(t,\cdot)) \subset \{x:\, |x| \lesssim 2^{-A}\}$ and
\begin{align}
& \| u_2(t,\cdot) \|_{L^{\infty}(\operatorname{supp} (W(t,\cdot) ))} \lesssim  4^{-A}, \quad \forall\, 0\le t\le 1, \notag \\
& \| (Du_2)(t,\cdot) \|_{L^{\infty}(\operatorname{supp} (W(t,\cdot)))} \lesssim  2^{-A}, \quad \forall\, 0\le t\le 1, \notag \\
& \| (D^2 u_2)(t,\cdot) \|_{L^{\infty}(\operatorname{supp} (W(t,\cdot)))} \lesssim 1, \quad \forall\, 0\le t\le 1.
\label{y5_t1}
\end{align}
Throughout this proof we suppress the dependence of the implied constants on $B$ since $A$ will be taken sufficiently large.

Assume that both \eqref{y5_t2a} and \eqref{y5_t2b} do not hold, i.e.
 \begin{align} \label{y5_0}
 \max_{0\le t\le \frac 1 {\log\log A}} \| W(t,\cdot) \|_{\dot H^{\frac 32}} +
 \max_{0\le t \le \frac 1 {\log\log A}} \| (D\tilde \Phi) (t,\cdot)\|_{\infty} \lesssim {\log \log\log A}.
\end{align}
Easy to check that
\begin{align} \label{y5_0a}
\max_{0\le t \le \frac 1 {\log\log A}} \| (D \Phi) (t,\cdot)\|_{\infty} \lesssim {(\log \log\log A)^2}.
\end{align}
 We shall derive a contradiction. The idea is to
 compare $W$ with the other solution $\omega$ to the following ``unperturbed'' system
 \begin{align*}
 \begin{cases}
  \partial_t ( \frac{\omega} r) + (u + u_1 )\cdot \nabla ( \frac{\omega} r) =0, \quad 0<t\le 1, r=\sqrt{x_1^2+x_2^2}, x=(x_1,x_2,z), \\
  u=-\Delta^{-1} \nabla \times \omega, \\
  \omega \Bigr|_{t=0} =\tilde g_A e_{\theta}.
 \end{cases}
\end{align*}
By using the conservation of $\|\frac W r \|_{L^{3,1}}$ and $\| \frac{\omega} r \|_{L^{3,1}}$ respectively, it is not difficult
to check that
\begin{align}
 & \sup_{0\le t\le 1} \| \frac{U^r(t)} r \|_{\infty} \lesssim \sqrt{\log A}, \notag \\
 & \sup_{0\le t \le 1} \| \frac{u^r(t)} r \|_{\infty} \lesssim \sqrt{\log A}, \notag \\
 & \sup_{0\le t \le 1} \| W(t)\|_q  \lesssim \frac {\sqrt{\log A}}  {\sqrt A} 2^{-\frac{3A} q}, \quad \forall\, 1<q\le \infty, \notag \\
 & \sup_{0\le t \le 1} \| \omega(t) \|_q \lesssim \frac {\sqrt{\log A}}
 {\sqrt A} 2^{-\frac{3A} q}, \quad \forall\, 1<q\le \infty,
 \label{y5_1}
\end{align}
where in the last two inequalities we have used \eqref{y5_t1}.

We carry out the perturbation argument in several steps.

\texttt{Step 1}. Set $\eta=\omega-W$. We first show that
\begin{align} \label{y5_3}
 \| \eta(t,\cdot) \|_{B^{0}_{\infty,1}} \lesssim 2^{-\frac A2+}, \quad \forall\, 0\le t\le \frac 1 {\log\log A}.
\end{align}
Here and below we use the notation $X+$ as in \eqref{notation_plus}.

Rewrite the equations for $\omega$ and $W$ as
\begin{align*}
 &\partial_t \omega + (u+u_1) \cdot \nabla \omega = (\omega \cdot \nabla) (u+u_1), \notag \\
 & \partial_t W + (U+u_1 +u_2) \cdot \nabla W = (W \cdot \nabla) (U+u_1+u_2).
\end{align*}
Taking the difference, we have
\begin{align*}
 \partial_t \eta &+ (u+u_1) \cdot \nabla \eta + (u- U-u_2) \cdot \nabla W \notag \\
 &= (\eta \cdot \nabla) (u+u_1) + (W\cdot \nabla) (u-U-u_2).
\end{align*}
Let $1<p<3$. By \eqref{y5_0} and \eqref{y5_1}, we have
\begin{align*}
 \partial_t ( \| \eta \|_p ) & \lesssim \| u - U \|_{(\frac 1p -\frac 13)^{-1} } \| D W\|_3 + \| DW\|_p \cdot 4^{-A} \cdot B \notag \\
 & \qquad + \| \eta \|_p \cdot ( \| \frac{u^r} r \|_{\infty} +B )  \notag \\
 & \qquad + \| D(u-U)\|_p \cdot \| W \|_{\infty} + \| W\|_p \cdot 2^{-A} \notag \\
 & \lesssim \| \eta \|_p \cdot {\log \log\log A} + {\log \log\log A} \cdot 4^{-A} \cdot B \notag \\
 & \qquad + \| \eta \|_p \cdot (\sqrt{\log A} + B) + \| \eta\|_p \cdot \frac {\sqrt{\log A}} {\sqrt A} + 4^{-A}.
\end{align*}

Set $\eta(0)=0$, integrating in $t\le \frac 1 {\log\log A}$ then gives
\begin{align} \label{y5_4}
 \max_{0\le t \le \frac 1 {\log\log A}} \| \eta(t,\cdot)\|_p \lesssim 4^{-A+}, \quad \forall\, 1<p<3.
\end{align}

This estimate is particularly good for $p=3-$.

Now for any $1<q<\infty$, a standard energy estimate using \eqref{e_p63_2}, \eqref{y5_t1} and \eqref{y5_1} (using $\|W\|_{\infty}$ and
$\|\omega\|_{\infty}$) gives for any $0\le t\le 1$,
\begin{align*}
 &\| W(t,\cdot)\|_{W^{1,q}} \lesssim 2^{A-}, \notag \\
 &\| \omega(t,\cdot)\|_{W^{1,q}} \lesssim 2^{A-},
\end{align*}
and obviously
\begin{align*}
 \max_{0\le t\le 1} \| \eta(t,\cdot)\|_{W^{1,q}} \lesssim 2^{A-}, \qquad \forall\, 1<q<\infty.
\end{align*}

Interpolating the above (set $q=\infty-$) with \eqref{y5_4} (set $p=3-$) then gives \eqref{y5_3}.

\texttt{Step 2}. Let $\phi=(\phi^r,\phi^z)$ be the characteristic line associated with the velocity field $u+u_1$. We show that
\begin{align} \label{y5_6}
 \max_{0\le t\le \frac 1 {\log\log A}} \| \phi(t,\cdot) - \Phi(t,\cdot) \|_{\infty} \lesssim 2^{-\frac 7 6A+}.
\end{align}

Set $Y(t)=\phi(t)-\Phi(t)$. By Lemma \ref{y4}, we only need to consider the region $|x| \lesssim 2^{-A}$.
By \eqref{y5_t1_aa},
we have the estimate
\begin{align} \label{y5_6a}
 |u_2(t, \Phi(t,r,z))| \lesssim 4^{-A}, \qquad \forall\, 0\le t\le \frac 1 {\log\log A}, \, \forall\, \sqrt{r^2+z^2} \lesssim 2^{-A}.
\end{align}

Let $Y(t)= \phi(t)-\Phi(t)=(Y^r(t),Y^z(t))$. In order not to confuse the notation, we denote
\begin{align*}
& v=\begin{pmatrix}u^r,\, u^z
   \end{pmatrix},
   \quad
   v_1= \begin{pmatrix} u_1^r,\,
         u_1^z
        \end{pmatrix},
        \notag \\
&V= \begin{pmatrix} U^r,\,
    U^z
   \end{pmatrix},
   \quad
   v_2= \begin{pmatrix} u_2^r, \,
         u_2^z
        \end{pmatrix}.
\end{align*}

Then the equation for $Y$ takes the form
\begin{align} \label{y5_7}
 \frac d {dt} Y = (v+v_1)(\phi) - (v+v_1)(\Phi) + (v-V)(\Phi) - v_2(\Phi).
\end{align}

By \eqref{y5_1} and a simple energy estimate, we have
\begin{align}
 \| Du\|_{\infty} & \lesssim \| \omega\|_2 + \| \omega \|_{\infty} \log (10+ \| \omega\|_{H^2}) \notag \\
  & \lesssim 1+ \frac {\sqrt{\log A}} {\sqrt A} \cdot A   \lesssim \sqrt A \cdot \sqrt{\log A}. \notag
\end{align}

Since $u(t,0,0,z)=u^z e_z$, we have
\begin{align} \label{y5_7a}
 |\frac 1 {r} u^r(t,r,z)|& = \bigl|\frac {\bigl(u(t,x_1,x_2,z)-u(t,0,0,z)\bigr) \cdot e_{r} } r\bigr| \notag \\
 & \lesssim \|Du\|_{\infty} \lesssim \sqrt A \cdot \sqrt{\log A}.
\end{align}

By the incompressibility condition $\partial_r u^r = -\frac 1 r u^r -\partial_z u^z$, we obtain
\begin{align*}
 \| \partial_r u^r \|_{\infty} \lesssim \| Du\|_{\infty} \lesssim \sqrt A \cdot \sqrt{\log A}.
\end{align*}

Similarly we have the estimate for $\|\partial_z u^r\|_{\infty}$, $\|\partial_r u^z \|_{\infty}$, $\|\partial_z u^z \|_{\infty}$, and
hence
\begin{align}\label{y5_8}
 \| D v \|_{\infty} \lesssim \sqrt A \cdot \sqrt{\log A}.
\end{align}

By \eqref{y5_1}, \eqref{y5_3} and interpolation, we have
\begin{align}
 \max_{0\le t \le \frac 1 {\log\log A}} \| u(t)-U(t)\|_{\infty} & \lesssim \max_{0\le t\le \frac 1 {\log\log A}}
 (\|\omega(t)\|_2+\|W(t)\|_2)^{\frac 23} \|\omega(t)-W(t) \|_{\infty}^{\frac 13} \notag \\
 & \lesssim 2^{-A+} \cdot 2^{-\frac A 6+} \notag \\
 & \lesssim 2^{-\frac 76 A+}. \notag
\end{align}

Therefore
\begin{align}\label{y5_9}
 \max_{0\le t\le \frac 1 {\log\log A}} \| v(t)-V(t)\|_{\infty} \lesssim 2^{-\frac 76 A+}.
\end{align}

Plugging the estimates \eqref{y5_8}--\eqref{y5_9} into \eqref{y5_7} and using \eqref{y5_6a}, we have
\begin{align*}
 \frac d {dt} (|Y(t)|) & \lesssim \sqrt A \cdot \sqrt{\log A} \cdot |Y(t)| + 2^{-\frac 76 A+} + 4^{-A}.
\end{align*}

Integrating in time, we get
\begin{align*}
 \max_{0\le t\le \frac 1 {\log\log A}} |Y(t)| & \lesssim \int_0^{\frac 1 {\log\log A}}
 e^{\frac 1 {\log\log A} \sqrt A \cdot \sqrt{\log A}}
 \bigl( 2^{-\frac 76 A+} + 4^{-A} \bigr) ds \notag \\
 & \lesssim 2^{-\frac 76 A+}.
\end{align*}

Therefore \eqref{y5_6} is proved.

\texttt{Step 3}. We show that
\begin{align} \label{y5_10}
 &\| \partial_{rr} u^r(t)\|_{\infty} + \| \partial_{rz} u^r(t)\|_{\infty} + \| \partial_{zz} u^r(t)\|_{\infty} \notag \\
 & \qquad + \| \partial_{rr}u^z(t)\|_{\infty} + \| \partial_{rz} u^z(t)\|_{\infty}
 + \| \partial_{zz} u^z (t)\|_{\infty} \lesssim 2^{A+}, \quad \forall\,0\le t\le 1.
\end{align}
and
\begin{align} \label{y5_10a}
 &\| \partial_{rr} U^r(t)\|_{\infty} + \| \partial_{rz} U^r(t)\|_{\infty} + \| \partial_{zz} U^r(t)\|_{\infty} \notag \\
 & \qquad + \| \partial_{rr}U^z(t)\|_{\infty} + \| \partial_{rz} U^z(t)\|_{\infty}
 + \| \partial_{zz} U^z (t)\|_{\infty} \lesssim 2^{A+}, \quad \forall\,0\le t\le 1.
\end{align}
We shall only prove \eqref{y5_10} since the proof for \eqref{y5_10a} is essentially the same.

By a simple energy estimate, we have
\begin{align} \label{y5_11}
 \max_{0\le t\le 1} (\| D^2 u(t)\|_{\infty} +\|D\omega(t)\|_{\infty} )\lesssim 2^{A+}.
\end{align}
Write
\begin{align*}
 u= \begin{pmatrix} u^1,\, u^2,\, u^z\end{pmatrix}.
\end{align*}
Obviously
\begin{align*}
 u^1 (t,x_1,x_2,z) = \frac 1 r u^r x_1, \qquad u^2 = \frac 1 r u^r x_2.
\end{align*}
Since $u$ is axisymmetric, easy to check that
\begin{align*}
 \begin{pmatrix} u^1(t,x_1,x_2,z),\,u^2(t,x_1,x_2,z) \end{pmatrix}
 =\alpha(t,z) \begin{pmatrix} x_1,\,x_2\end{pmatrix} + O(r^2),
\end{align*}
where $\alpha(t,z)$ is a constant depending only on $(t,z)$. From this and \eqref{y5_11}, it is not
difficult to show that
\begin{align*}
 |(\partial_2 u^1)(t,0,x_2,z)| & \lesssim \| D^2 u \|_{\infty} \cdot |x_2| \notag \\
 & \lesssim 2^{A+} |x_2|,\quad\forall\, 0\le t\le 1, \, x_2 \in \mathbb R.
\end{align*}
By the Fundamental Theorem of Calculus, we then have for any $(x_1,x_2,z)$ ($r=\sqrt{x_1^2+x_2^2}$),
\begin{align}
 \bigl| \frac{(\partial_2 u^1)(t,x_1,x_2,z)} r \bigr| & \lesssim | \frac{ (\partial_2 u^1)(t,x_1,x_2, z)  -(\partial_2 u^1)(t,0,x_2,z)}
 r| + | \frac{(\partial_2 u^1)(t,0,x_2, z)} r| \notag \\
 & \lesssim 2^{A+}. \label{y5_13}
\end{align}
Denote $g=\frac 1r u^r$. Since
\begin{align*}
 u^1(t,x_1,x_2,z) = g(t,r,z) x_1, \quad r=\sqrt{x_1^2+x_2^2},
\end{align*}
differentiating in $x_2$ then gives us
\begin{align*}
 \partial_2u^1 = (\partial_r g) \cdot \frac{x_2 x_1} r.
\end{align*}
Therefore choosing $|x_1|\sim |x_2|\sim r$ and using \eqref{y5_13}, we obtain
\begin{align} \label{y5_14a}
 \|\partial_r (\frac 1 r u^r(t))\|_{\infty} = \| \partial_r g\|_{\infty} \lesssim 2^{A+}, \qquad \forall\, 0\le t\le 1.
\end{align}

By \eqref{x5_6}, we have
\begin{align*}
  \partial_{rr} u^r = - \partial_r ( \frac 1 r u^r) - \partial_{zz} {u^r} - \partial_z \omega^{\theta}.
\end{align*}
By \eqref{y5_11},
\begin{align} \label{y5_14b}
 &\|\partial_{zz} u^r\|_{\infty} = \|(\partial_{zz} u)\cdot e_r\|_{\infty} \lesssim \|D^2 u\|_{\infty} \lesssim 2^{A+}, \notag \\
 & \| \partial_z \omega^{\theta}\|_{\infty} = \| (\partial_z \omega) \cdot e_{\theta}\|_{\infty} \lesssim 2^{A+}.
\end{align}
Therefore by \eqref{y5_14a} and \eqref{y5_14b}, we get
\begin{align*}
 \max_{0\le t \le 1}\| \partial_{rr} u^r (t)\|_{\infty} \lesssim 2^{A+}.
\end{align*}

Similar to \eqref{y5_7a}, we have
\begin{align}
 |\frac 1 {r} (\partial_z u^r)(t,r,z)|& = \bigl|\frac {\bigl( (\partial_z u)(t,x_1,x_2,z)-(\partial_z u)(t,0,0,z)\bigr) \cdot e_{r} } r\bigr| \notag \\
 & \lesssim \|D^2u\|_{\infty} \lesssim 2^{A+}. \notag
\end{align}
Since $\omega^{\theta}=\partial_r u^z -\partial_z u^r$, we get
\begin{align*}
 \| \frac 1 r \partial_r u^z \|_{\infty} & \lesssim \| \frac{\omega^{\theta}} r \|_{\infty} + \| \frac{1} r \partial_z u^r \|_{\infty} \notag \\
 & \lesssim 2^{A+}.
\end{align*}
We then get
\begin{align*}
 \| \partial_{rr} u^z \|_{\infty} &\lesssim \| \Delta u^z \|_{\infty} + \| \frac 1 r \partial_r u^z \|_{\infty} + \| \partial_{zz} u^z \|_{\infty}
\notag \\
& \lesssim 2^{A+}, \qquad \forall\, 0\le t\le 1.
\end{align*}
We have proved that $\| \partial_{rr} u^r \|_{\infty}$ and $\| \partial_{rr} u^z \|_{\infty}$ are both under control. The rest of the terms
in \eqref{y5_10} are similarly estimated. We omit further details.

\texttt{Step 4}. Set $e(t)=(D\phi)(t)$, $E(t)=(D\Phi)(t)$, then obviously
\begin{align*}
& \partial_t E = (DV)(\Phi) E+ (Dv_1)(\Phi) E + (Dv_2)(\Phi) E, \notag \\
 & \partial_t e = (Dv)(\phi) e + (Dv_1)(\phi) e.
\end{align*}
Observe that
\begin{align*}
 Dv_1 = a(t) \begin{pmatrix} 1 \qquad 0\\
              0 \quad -2
             \end{pmatrix}.
\end{align*}
Set $q=E-e$. By Lemma \ref{y4}, we only need to control $q$ in the region $|x| \lesssim 2^{-A}$. In
this region we have
\begin{align*}
\| Dv_2(\Phi) \|_{\infty} \lesssim 2^{-A}.
\end{align*}
 The equation for $q$ takes the form
\begin{align*}
 \partial_t q & = ( (DV)(\Phi) - (DV)(\phi) ) E + ( (DV)(\phi) -(Dv)(\phi)) E \notag \\
 & \qquad + (Dv)(\phi) q +  a(t) \begin{pmatrix} 1 \qquad 0\\
              0 \quad -2
             \end{pmatrix} q \notag \\
             & \qquad + (Dv_2)(\Phi) E.
\end{align*}

By \eqref{y5_9}, \eqref{y5_10}, \eqref{y5_10a} and interpolation, we have
\begin{align} \label{y5_20}
 \max_{0\le t\le \frac 1 {\log\log A}} \| D(v-V)\|_{\infty} \lesssim 2^{-\frac 1 {12} A+}.
\end{align}

By \eqref{y5_10a},\eqref{y5_6}, \eqref{y5_0a}, \eqref{y5_8} and \eqref{y5_20}, we have
\begin{align*}
 \partial_t (|q|) & \lesssim \| D^2 V \|_{\infty} \cdot |\phi-\Phi| \cdot \| E\|_{\infty} + \| D(V-v)\|_{\infty} \cdot \| E\|_{\infty}
 \notag \\
 & \qquad + \| Dv \|_{\infty} \cdot |q| + B \cdot |q| + \| D v_2 (\Phi) \|_{\infty} \cdot \| E \|_{\infty} \notag \\
 & \lesssim 2^{A+} \cdot 2^{-\frac 76 A+} \cdot (\log\log\log A)^2 \notag \\
 & \qquad + 2^{-\frac 1 {12} A+} \cdot (\log\log\log A)^2 + (\sqrt A \cdot \sqrt{\log A}+B) |q| \notag \\
 & \qquad + 2^{-A} \cdot (\log\log\log A)^2.
\end{align*}
Integrating in time $t\le \frac 1 {\log\log A}$, we then obtain
\begin{align*}
 \max_{0\le t\le \frac 1 {\log\log A}} \| q(t)\|_{\infty} \lesssim 1.
\end{align*}
But this obviously contradicts \eqref{y3_t2}.
\end{proof}

The next proposition is the key to our construction in the 3D compactly supported data case. It is written
in the same style as in Lemma \ref{lem57}. The overall statement of the proposition is a bit long and over-stretched
due to some additional technical conditions pertaining to the 3D situation.
Nevertheless the structure of the proposition is the same as that in Lemma \ref{lem57}.
In short summary the main body of the proposition should read as "
Let $\omega_{-1}$ satisfy ...Then for any $0<\epsilon<\epsilon_0$, one can find $\omega_0$ with the properties ...
and $\delta_0$ such that for any $\omega_j$ with the properties ...,  the following hold true: ....".

\begin{prop} \label{y6}
Let $\omega_{-1} \in C_c^{\infty}(B(0,100) )$ be a given axisymmetric function
such that $\omega_{-1}=\omega_{-1}^{\theta} e_{\theta}$, $\omega_{-1}^{\theta}=\omega_{-1}^{\theta}(r,z)$ is scalar-valued
and for some $r_{-1}>0$, $0<R_0<\frac 1 {100}$,
\begin{align*}
\operatorname{supp}(\omega_{-1}^{\theta}) \subset\{(r,z):\; r>r_{-1},\, z\le -4R_0\}.
\end{align*}
Denote $u_{-1}= -\Delta^{-1} \nabla \times \omega_{-1}$ and
\begin{align*}
u_{-1}^*=\| u_{-1} \|_2.
\end{align*}

 Then for any $0<\epsilon\le \epsilon_0$ with $\epsilon_0=\epsilon_0(\omega_{-1})\ll R_0$  sufficiently small, we can find a smooth axisymmetric
 function $\omega_0=\omega_0^{\theta} e_{\theta}$
 (depending only on $(\epsilon,\omega_{-1})$)
 with the properties:
\begin{itemize}
 \item  $\omega_0\in C_c^{\infty}(B(0,100))$ and for some $r_0>0$,
\begin{align} \label{y6_t-1_1a}
 \operatorname{supp}(\omega_0^{\theta}(r,z)) \subset\{(r,z):\, r_0<r<\epsilon, \,  -\epsilon <z<\epsilon \}.
\end{align}
\item $\|\omega_0\|_{\infty} \exp(C \| \frac{\omega_0} r \|_{L^{3,1}} ) <\delta(\epsilon^2,\omega_{-1})$
(see \eqref{y1_t3});
\item denote $u_0= -\Delta^{-1} \nabla \times \omega_0$, then
\begin{align} \label{y6_t-1_1b}
\| u_0\|_2 <\epsilon u_{-1}^*<\frac 14 u_{-1}^*;
\end{align}
\end{itemize}

\noindent
and $\delta_0=\delta_0(\omega_{-1},\omega_0)\ll \epsilon $ sufficiently small
 such that for any smooth axisymmetric functions $\omega_j=\omega_j^{\theta}e_{\theta}$, $1\le j\le N$ (here $N\ge 1$ is arbitrary but finite)
  with the properties:
\begin{itemize}
\item $\omega_j\in C_c^{\infty}(B(0,100))$ and
$\operatorname{supp}(\omega_j^{\theta}) \subset\{(r,z):\, r>r_j, z>2R_0\}$ for some $r_j>0$.
\item for each $j\ge 1$, denote $f_j=\omega_{-1}^{\theta}+ \omega_0^{\theta}+\sum_{i=1}^{j-1} \omega_i^{\theta}$,
then
\begin{align*}
\| \omega_j^{\theta} \|_{\infty} \cdot \exp\bigl( C \|\frac{ \omega_j^{\theta}} r \|_{L^{3,1}} \bigr)<\delta_j,
\end{align*}
where $\delta_j=\delta(2^{-3j}\delta_0, f_j)$ as defined in \eqref{y1_t3};
\item denote $u_j = -\Delta^{-1} \nabla \times \omega_j$, then
\begin{align*}
\| u_j \|_2 < \frac {\epsilon} {2^{j+1}} u_{-1}^*;
\end{align*}
\end{itemize}

 the following hold true:

 Let  $\omega$ be the smooth solution to the axisymmetric system
\begin{align} \notag
 \begin{cases}
  \partial_t \left( \frac {\omega }r \right) + (u \cdot \nabla ) \left( \frac {\omega} r  \right) =0,
  \quad 0<t\le 1, \\
  u=-\Delta^{-1} \nabla \times \omega, \\
  \omega \Bigr|_{t=0} =(\omega_{-1}^{\theta} +\omega_0^{\theta} +\sum_{j=1}^N \omega_j^{\theta}) e_{\theta},
 \end{cases}
 \end{align}
then
\begin{enumerate}

\item for any $0\le t \le \epsilon$, we have the decomposition
\begin{align} \label{y6_t1}
\omega(t)=\omega_A(t)+\omega_B(t)+\omega_C(t),
\end{align}
where
\begin{align*}
&\operatorname{supp}(\omega_A(t)) \subset\{(r,z):\, z\le -4R_0+\sqrt{\epsilon} \}; \\
&\operatorname{supp}(\omega_B(t)) \subset\{(r,z):\, |z| \le \sqrt{\epsilon} \};\\
&\operatorname{supp}(\omega_C(t)) \subset\{(r,z):\, z\ge 2R_0-\sqrt{\epsilon} \};
\end{align*}
and $\omega_A(t=0)=\omega_{-1}$, $\omega_{B}(t=0)=\omega_0^{\theta} e_{\theta}$,
$\omega_C(t=0)=(\sum_{j=1}^N \omega_j^{\theta}) e_{\theta}$.

\item the  $L^{\infty}$ norm of $\omega_B$ and $\omega_C$ is uniformly small on the interval $[0,1]$:
\begin{align}
 \max_{0\le t \le 1} (\| \omega_B (t)\|_{L^{\infty}}+\|\omega_C(t)\|_{L^{\infty}}) \le \epsilon. \label{y6_t2}
\end{align}

\item the $\dot H^{\frac 32}$-norm of $\omega_B$ is inflated rapidly on the time interval $[0,\epsilon]$:
there exists $0<t_0^1=t_0^1(\epsilon,\omega_{-1},\omega_0)<\epsilon$, $0<t_0^2=t_0^2(\epsilon,\omega_{-1},
\omega_0)<\epsilon$, such that
\begin{align}
 &\| \omega_B(t=0)\|_{\dot H^{\frac 32}} <\epsilon, \notag \\
 &\| \omega_B(t)\|_{\dot H^{\frac 32}} >\frac 1{\epsilon}, \quad \text{for any $t_0^1\le t \le t_0^2$.} \label{y6_t3}
\end{align}

\item all $H^k$, $k\ge 2$ norms of $\omega_B$ can be bounded purely in terms of initial
data $\omega_0$ on the time interval $[0,\epsilon]$:
for any $k\ge 2$,
\begin{align} \label{y6_t4}
\max_{0\le t \le \epsilon} \| \omega_B(t) \|_{H^k} \le C(k,R_0,u^*_{-1}) \|\omega_0\|_{H^k}.
\end{align}
Note here the bound of $\|\omega_B\|_{H^k}$ is "almost local" in the sense that it depends only on $u^*_{-1}$ but
not on other higher Sobolev norms of $\omega_A$ or $\omega_C$. Similarly we have
\begin{align} \label{y6_t5}
\max_{0\le t \le \epsilon} \| \omega_A(t) \|_{H^k} \le C(k,R_0,u^*_{-1}) \| \omega_{-1} \|_{H^k}, \quad \forall\, k\ge 2.
\end{align}
\end{enumerate}
\end{prop}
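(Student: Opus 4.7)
The plan is to mirror the strategy of Lemma \ref{lem57} for 2D, but with the three key ingredients replaced by their 3D axisymmetric analogues already prepared in this section: Lemma \ref{y5} for large Lagrangian deformation of the middle patch in an ambient linear--hyperbolic drift plus quadratic error, Proposition \ref{x6} for converting that deformation into $\dot H^{3/2}$ norm inflation, and Lemma \ref{y1} for $C^0$--stability under the addition of the distant right patches. The profile $\omega_0$ will be constructed from $(\omega_{-1},\epsilon,R_0)$ alone, and the threshold $\delta_0$ will be fixed afterwards to absorb the $\omega_j$'s.

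First, ignoring the right patches, I would consider the solution $\omega^{(1)}$ of the axisymmetric Euler equation with initial data $\omega_{-1}+\tilde g_A e_\theta$ (with $\tilde g_A$ from \eqref{e_p63_1} and $A$ a large parameter). By finite propagation speed and the gap in $z$ between the supports of $\omega_{-1}$ and $\tilde g_A e_\theta$, on a short interval one has $\omega^{(1)}=\omega^{(1)}_A+\omega^{(1)}_B$ with disjoint supports, and $u^{(1)}_A:=-\Delta^{-1}\nabla\times \omega^{(1)}_A$ is smooth on $B(0,2R_0)$ with bounds depending only on $\omega_{-1}$. Introducing a moving center $\xi(t)\in OZ$ which absorbs $u^{(1)}_A(t,0)$ (the analogue of \eqref{lem57_6}), axisymmetry without swirl plus $\nabla\cdot u^{(1)}_A=0$ force the Taylor expansion
\[
u^{(1)}_A(t,\xi(t)+y)=\dot\xi(t)+a(t)\bigl(y_r\,e_r-2y_z\,e_z\bigr)+r(t,y),
\]
with $|a(t)|\lesssim_{\omega_{-1}}1$ and $|r|\lesssim|y|^2$, $|Dr|\lesssim|y|$, $|D^2 r|\lesssim 1$. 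In the shifted frame $\omega^{(1)}_B$ therefore satisfies precisely the hypothesis of Lemma \ref{y5} with $u_1=a(t)(re_r-2ze_z)$ and $u_2=r$, so either $\|\omega^{(1)}_B\|_{\dot H^{3/2}}$ already exceeds $\log\log\log A$ on $[0,1/\log\log A]$, or the inverse characteristic map $\tilde\Phi$ satisfies $\|D\tilde\Phi(t_*)\|_\infty>\log\log\log A$. In the latter case I apply Proposition \ref{x6} to the input $\omega^{(1)}_B$: this produces a perturbation of $\tilde g_A e_\theta$, supported in $\{r_0<r<\epsilon,\,|z|<\epsilon\}$ after rescaling the cutoff $b$ in that proof, whose evolution attains $\|\cdot\|_{\dot H^{3/2}}\gtrsim M^{1/6}$ with $M\sim\log\log\log A$. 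Choosing $A$ large depending on $(\epsilon,\omega_{-1})$ defines $\omega_0$ and yields \eqref{y6_t-1_1a}, \eqref{y6_t-1_1b}, and the inflation \eqref{y6_t3} on some $[t_0^1,t_0^2]$.

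Next, with $\omega_0$ fixed, I would add the right patches one at a time. Finite propagation and the $z$-separation give the decomposition \eqref{y6_t1} for $t\lesssim\epsilon$. Setting $f_j=\omega_{-1}^\theta+\omega_0^\theta+\sum_{i<j}\omega_i^\theta$ and applying Lemma \ref{y1} (or rather Proposition \ref{y2}, which is built to handle exactly such a countable concatenation) with the hypothesis $\|\omega_j\|_\infty\exp(C\|\omega_j/r\|_{L^{3,1}})<\delta_j=\delta(2^{-3j}\delta_0,f_j)$, each insertion perturbs the existing middle piece in $C^0$ by at most $2^{-3j}\delta_0$. Combined with the $L^2$ smallness $\|u_j\|_2<\epsilon\,2^{-j-1}u^*_{-1}$, this gives $C^0$ and $L^2$ closeness of $\omega_B$ and its velocity to their unperturbed counterparts. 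Uniform higher Sobolev control on $\omega_B$ (from a 3D analogue of Lemma \ref{lem10}, exploiting support separation to localize the nonlocal Biot--Savart contribution of $\omega_A$ and $\omega_C$ to smooth bounded drift, cf.\ \eqref{lem10_9}) gives \eqref{y6_t4}--\eqref{y6_t5}; interpolating the $C^0$ estimate against the $H^3$ bound then preserves the $\dot H^{3/2}$ inflation \eqref{y6_t3} and supplies \eqref{y6_t2} by summing the geometric series in $j$.

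The main obstacle, and what makes the 3D compactly supported case genuinely harder than its 2D counterpart (Lemma \ref{lem57}), is that the summed weighted norm $\sum_j\|\omega_j/r\|_{L^{3,1}}$ is allowed to be infinite: this is essential for the whole construction to terminate at a finite time. Consequently a single Gronwall step through $\mathrm{curl}\,u_j$ is not available, and I must apply Lemma \ref{y1} sequentially so that the exponential amplification $\exp(C\|\omega_j/r\|_{L^{3,1}})$ is paid for \emph{one patch at a time} against the shrinking tolerance $\delta_j$, whose dependence on the previously built background $f_j$ is exactly what lets the $C^0$ errors sum geometrically. Making this iterative scheme compatible with the deformation-based inflation of Step 2 (which was proved for the solution with only $\omega_{-1}$ present) requires finally interpolating between the strong $H^3$ bounds of $\omega_B$ and the $C^0$ stability — the $H^k$-locality statement \eqref{y6_t4} is precisely designed to make this last step independent of the divergent $L^{3,1}$ tail, which is the technical heart of the proposition.
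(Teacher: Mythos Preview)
Your proposal is correct and matches the paper's proof essentially step for step: reduce to the two-patch system $\omega_{-1}+\omega_0$ via Lemma \ref{y1} and interpolation against the $H^3$ bound \eqref{y6_t4}, then in the shifted frame apply Lemma \ref{y5} to the middle patch and, in the large-deformation alternative, run a perturbation argument \emph{in the style of} Proposition \ref{x6} (not the proposition as a black box, since $\omega^{(1)}_B$ carries the external drift $u_1+u_2$). The paper's derivation of \eqref{y6_t4} is exactly your ``3D analogue of Lemma \ref{lem10}'': localize the Biot--Savart kernel using the $z$-separation and bound the external velocity in $H^k$ purely by $\|u(t)\|_2\le 2u^*_{-1}$.
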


\begin{proof}[Proof of Proposition \ref{y6}]
The nontrivial point is to find $\omega_0$ such that \eqref{y6_t3} is achieved. We first
show that a generic $\omega_0$ (i.e. satisfying the properties specified in \eqref{y6_t-1_1a}--\eqref{y6_t-1_1b})
is enough to make \eqref{y6_t1}, \eqref{y6_t2} and \eqref{y6_t4} hold.

By conservation of $\|u(t)\|_2$, we have
\begin{align}
\| u(t)\|_2 = \|u(0)\|_2
& \le u_{-1}^* + \sum_{j=1}^{\infty} u_{-1}^* \cdot 2^{-j} \notag \\
& \le 2u_{-1}^*, \quad \forall\, t\ge 0. \label{y6_1}
\end{align}

Let $\omega_L$ be the smooth solution to the axisymmetric system
\begin{align*}
\begin{cases}
\partial_t (\frac{\omega_L} r) + (u_L \cdot \nabla) (\frac{\omega_L} r) =0, \quad 0<t\le 1, \\
u_L = -\Delta^{-1} \nabla \times \omega_L, \\
\omega_L(t=0) = \omega_{-1}.
\end{cases}
\end{align*}

Obviously
\begin{align*}
\max_{0\le t\le 1} \|\omega_L(t)\|_{\infty} \lesssim_{\omega_{-1}} 1.
\end{align*}

By Lemma \ref{y1}, we have
\begin{align*}
\max_{0\le t\le 1} \| \omega(t) - \omega_L(t)\|_{\infty} \ll \epsilon
\end{align*}
and clearly
\begin{align*}
\max_{0\le t\le 1} \| \omega(t) \|_{\infty} \lesssim_{\omega_{-1}} 1.
\end{align*}

Interpolating the above with \eqref{y6_1} then gives
\begin{align*}
\max_{0\le t\le 1} \|u(t)\|_{\infty} \le c_1,
\end{align*}
where $c_1>0$ is a constant depending only on $\omega_{-1}$.

This shows that the support of $\omega(t)$ moves at a speed at most $c_1$. Since we can always
choose $\epsilon$ sufficiently small such that $c_1 \epsilon \ll \sqrt{\epsilon}$,
the decomposition \eqref{y6_t1} then obviously follows.

The inequality \eqref{y6_t2} is a simple consequence of Lemma \ref{y1}. To show \eqref{y6_t4}, we
note that for $0\le t\le \epsilon$, $\omega_B=\omega_B(t)$ solves the equation
\begin{align*}
\partial_t \omega_B+ \bigl((u_B+u_{ex}) \cdot \nabla\bigr) \omega_B=( \omega_B \cdot \nabla)(u_B+u_{ex}),
\end{align*}
where
\begin{align*}
&u_B(t)= -\Delta^{-1} \nabla \times \omega_B(t), \notag \\
&u_{ex}(t)= -\Delta^{-1} \nabla \times (\omega_A(t) + \omega_C(t)).
\end{align*}

Since for $0\le t\le \epsilon$ and $\epsilon$ sufficiently small,
\begin{align*}
&d(\operatorname{supp}(\omega_A(t)),\operatorname{supp}(\omega_B(t))) \ge R_0, \notag \\
&d(\operatorname{supp}(\omega_C(t)),\operatorname{supp}(\omega_B(t))) \ge R_0, \notag
\end{align*}
we can then write for $x\in \operatorname{supp}(\omega_B(t))$,
\begin{align} \label{y6_3}
u_{ex}(t,x) = \int_{R^3} K(x-y) (\omega_A(t,y)+\omega_C(t,y)) dy,
\end{align}
where the modified kernel $K(\cdot)$ satisfies
\begin{align*}
|(\partial^{\alpha} K)(x)|\lesssim_{R_0, \alpha} (1+| x |^2)^{-\frac{2+|\alpha|}2}, \quad\forall\, x\in \mathbb R^3,
\, |\alpha|\ge 0.
\end{align*}
Since $\omega(t)=\nabla \times u(t)$, we can rewrite \eqref{y6_3} as
\begin{align*}
u_{ex}(t,x)
&= \int_{\mathbb R^3} K(x-y) \omega(t,y) dy - \int_{\mathbb R^3} K(x-y) \omega_B(t,y)dy \notag \\
&= \int_{\mathbb R^3} K(x-y) \nabla \times u(t,y) dy -\int_{\mathbb R^3} K(x-y) \omega_B(t,y)dy \notag \\
&= \int_{\mathbb R^3} \tilde K(x-y) u(t,y)dy -\int_{\mathbb R^3} K(x-y) \omega_B(t,y)dy\notag \\
&=:u_{ex}^{(1)}(t,x)+u_{ex}^{(2)}(t,x).
\end{align*}
Obviously we only need to bound $u_{ex}^{(1)}$. Since
$|(\partial^{\alpha}\tilde K)(x)| \lesssim_{R_0,\alpha} (1+ |x|^2)^{-\frac{3+|\alpha|} 2}$, we
have
\begin{align*}
\| u_{ex}^{(1)}(t,\cdot)\|_{H^k} \lesssim_{k,R_0} \|u(t)\|_2 \lesssim_{k,R_0} u_{-1}^*, \quad \text{for any
$k\ge 0$}.
\end{align*}
The inequality \eqref{y6_t4} then easily follows from this and a simple energy estimate. Similarly one can prove
\eqref{y6_t5}.

It remains for us to show the existence of $\omega_0$ such that \eqref{y6_t3} hold. First we show that
it suffices to consider the following reduced system
\begin{align*}
\begin{cases}
\partial_t (\frac{W}r) + (U\cdot \nabla)(\frac W r)=0, \quad 0<t\le 1, \\
U=-\Delta^{-1} \nabla \times W, \\
W\Bigr|_{t=0} = \omega_{-1}+\omega_0.
\end{cases}
\end{align*}

By Lemma \ref{y1} and our assumptions on $\omega_j$, we have
\begin{align} \label{y6_5}
\max_{0\le t\le 1} \| \omega(t)-W(t)\|_{\infty} \lesssim \delta_0 \ll \epsilon.
\end{align}

Since $\max_{0\le t\le 1}\|u(t)\|_{\infty} \lesssim_{\omega_{-1}} 1$ and
$\max_{0\le t\le 1}\|U(t)\|_{\infty} \lesssim_{\omega_{-1}} 1$, we have
for some $c_2=c_2(\omega_{-1})>0$,
\begin{align*}
&\operatorname{supp}(W(t)) \subset B(0,100+c_2), \quad\forall\, 0\le t\le 1,\\
&\operatorname{supp}(\omega(t))\subset B(0,100+c_2), \quad\forall\, 0\le t\le 1.
\end{align*}
Therefore by \eqref{y6_5} and H\"older, we get
\begin{align} \label{y6_6}
\max_{0\le t\le 1} \| \omega(t)-W(t)\|_{2} \lesssim_{\omega_{-1}} \delta_0^{\frac 12}.
\end{align}
For $0\le t\le \epsilon$, we write the decomposition of $W$ as
\begin{align*}
W(t)=W_L(t)+W_R(t),
\end{align*}
where
\begin{align*}
&\operatorname{supp}(W_L(t)) \subset\{(r,z):\, z\le -4R_0+\sqrt{\epsilon} \}; \\
&\operatorname{supp}(W_R(t)) \subset\{(r,z):\, |z| \le \sqrt{\epsilon} \};
\end{align*}
and $W_L(t=0)=\omega_{-1}$, $W_R(t=0)=\omega_0$.

By \eqref{y6_t4} and a similar bound for $W_R(t)$, we have
\begin{align*}
\max_{0\le t\le \epsilon}\|\omega_B(t)-W_R(t)\|_{H^3} \lesssim_{\omega_{-1},R_0,u_{-1}^*} \|\omega_0\|_{H^3}.
\end{align*}
Interpolating the above with \eqref{y6_6} and choosing $\delta_0$ sufficiently small, we then get
\begin{align*}
\max_{0\le t \le \epsilon} \| \omega_B(t)-W_R(t)\|_{H^2} \le \epsilon.
\end{align*}
This shows that it suffices for us to inflate the $\|W_R(t)\|_{\dot H^{\frac 32}}$ norm.

To this end, let $W^1$ be the smooth solution to
\begin{align*}
\begin{cases}
\partial_t (\frac{W^1}r) + (U^1\cdot \nabla)(\frac {W^1} r)=0, \quad 0<t\le 1, \\
U^1=-\Delta^{-1} \nabla \times W^1, \\
W^1\Bigr|_{t=0} = \omega_{-1}+\tilde g_A e_{\theta},
\end{cases}
\end{align*}
where $\tilde g_A$ is the same as defined in \eqref{e_p63_1} and we shall take $A$ to be sufficiently large without
too much explicit mentioning. Eventually we shall take $\omega_0$ to be a suitable perturbation
of $\tilde g_A$ and let $W$ be the corresponding solution.

For $0\le t\le \frac 1 {\log\log A}$, we can decompose the solution $W^1$ as
\begin{align*}
W^1(t)= W^1_L(t)+W^1_R(t),
\end{align*}
where
\begin{align*}
\operatorname{supp}(W^1_L(t)) \subset \{(r,z):\, z\le -4R_0+ \frac 1 {\sqrt{\log\log A}} \}, \\
\operatorname{supp}(W^1_R(t)) \subset \{(r,z):\, |z| \le \frac 1 {\sqrt{\log\log A}} \}.
\end{align*}

The equation for $W^1_R$ takes the form
\begin{align*}
\begin{cases}
\partial_t(\frac{ W^1_R}r) + \bigl((U^1_R +U^1_L)\cdot \nabla \bigr) (\frac {W^1_R}r) = 0,\\
U^1_R=-\Delta^{-1} \nabla \times W^1_R, \\
U^1_L=-\Delta^{-1} \nabla \times W^1_L,\\
W^1_R(t=0)= \tilde g_A e_{\theta}.
\end{cases}
\end{align*}
Write
\begin{align*}
U^1_L=U_L^r e_r +U_L^z e_z.
\end{align*}
Let $\xi(t)$ solves the ODE
\begin{align*}
\begin{cases}
\frac d{dt} \xi(t) = -U_L^z(0,0,\xi(t)),\\
\xi(0)=0.
\end{cases}
\end{align*}
We can expand $U_L^1(t)$ near the point $(0,0,\xi(t))$ to get
\begin{align*}
U_L^1(t,x_1,x_2,z+\xi(t))=U_L^z(t,0,0,\xi(t))e_z + \underbrace{a(t)re_r-2a(t)ze_z}_{=:u_1(t,x_1,x_2,z)}+u_2(t,x_1,x_2,z),
\end{align*}
where for any $0\le t\le \frac 1 {\log\log A}$,
\begin{align*}
&|a(t)| \lesssim_{\omega_{-1},R_0} 1, \\
&|u_2(t,x)| \lesssim_{\omega_{-1},R_0} |x|^2,  \\
&|(Du_2)(t,x)| \lesssim_{\omega_{-1},R_0} |x|, \\
&|(D^2u_2)(t,x)| \lesssim_{\omega_{-1},R_0} 1, \qquad \forall\, x \in \mathbb R^3.
\end{align*}
Note that $u_2$ is axisymmetric without swirl, i.e. $u_2=u_2^r e_r +u_2^z e_z$.

Introduce $\Omega(t)=\Omega(t,x_1,x_2,z)$ such that
\begin{align*}
&\Omega(t,x_1,x_2,z):=W_R^1(t,x_1,x_2,z+\xi(t)), \\
&U_{\Omega}(t):= -\Delta^{-1} \nabla \times \Omega(t).
\end{align*}
It is then not difficult to check that the equation for $\Omega$ takes the form
\begin{align*}
\partial_t ( \frac{\Omega} r) +\bigl( (U_{\Omega}+u_1+u_2) \cdot \nabla \bigr)(\frac{\Omega} r) =0.
\end{align*}

Let $\Phi_\Omega=(\Phi^r_{\Omega},\Phi^z_{\Omega})$ be
the characteristic line associated with $U_{\Omega}+u_1+u_2$ and let $\tilde \Phi_{\Omega}$
be the corresponding inverse map. By Lemma \ref{y5}, for $A$ sufficiently large, we have
either
\begin{align} \label{y6_8a}
\max_{0\le t\le \frac 1 {\log\log A}} \| \Omega(t)\|_{\dot H^{\frac 32}} >\log\log\log A,
\end{align}
or
\begin{align} \label{y6_8b}
\max_{0\le t\le \frac 1 {\log\log A}} \| (D\tilde \Phi_{\Omega})(t)\|_{\infty} >\log\log\log A
\end{align}
must hold.

Now discuss two cases.

\texttt{Case 1}: \eqref{y6_8a} hold. In this case easy to check that
\begin{align*}
\max_{0\le t\le \frac 1 {\log\log A}} \| W_R^1(t)\|_{\dot H^{\frac 32}} \gtrsim \log\log\log A.
\end{align*}
Therefore we can just let $W(t)=W^1(t)$ with $\omega_0=\tilde g_A$.

\texttt{Case 2}: \eqref{y6_8b} hold. In this case we just need to apply a perturbation argument similar to
that in the proof of Proposition \ref{x6}. Easy to check that this case is also OK.

Concluding from the above two cases, the proposition is proved.
\end{proof}

We are now ready to complete the
\begin{proof}[Proof of Theorem \ref{thm4}]
We shall only sketch the proof for $\omega_0^{(g)}\equiv 0$. The construction of $\omega_0^{(p)}$ for the
general nonzero $\omega_0^{(g)}$ is a simple modification of the proof presented below. For example, one can
just take the first patch as $\omega_0^{(g)}$ and start the perturbation for $j\ge 2$.

We now begin the proof for $\omega_0^{(g)}\equiv 0$. For each integer $j\ge 1$, define $x^j_*=(0,0,\sum_{k=1}^j \frac 1 {2^k})$. Obviously for any
$j\ge 2$, we have
\begin{align*}
&|x^{j+1}_*-x^j_*|=\frac 1 {2^{j+1}}, \\
&|x^j_*-x^{j-1}_*|=\frac 1 {2^j}.
\end{align*}
We shall choose $x^j_*$ to be the center of the $j^{th}$ patch. So the distance between the nearest patches
is about $2^{-j}$. Define
\begin{align*}
x_*= \lim_{j\to \infty} x^j_*=(0,0,1).
\end{align*}
Our constructed solution will exhibit some additional regularity away from the limit point $x_*$.

Let $W^1$ be a smooth axisymmetric solution to the Euler equation (in vorticity form)
\begin{align*}
\begin{cases}
\partial_t (\frac{W^1}r) + (U^1 \cdot \nabla)(\frac{W^1} r)=0, \quad 0<t\le 1, x=(x_1,x_2,z),
r=\sqrt{x_1^2+x_2^2}, \\
U^1=-\Delta^{-1} \nabla \times W^1, \\
W^1\Bigr|_{t=0}=W^1_0=W^{1,\theta}_0 e_{\theta},
\end{cases}
\end{align*}
such that $W^{1,\theta}_0=W^{1,\theta}_0(r,z)$ is scalar-valued,
$\operatorname{supp}(W^{1,\theta}_0) \subset\{(r,z):\, r>r_0\}$ for some $r_0>0$,
$W^1(t)\in C_c^{\infty} (B(x_*^1, \frac 1 {2^{10}}))$  for any $0\le t\le 1$ and
\begin{align} \label{y7_1}
\| U^1(0,\cdot)\|_{H^{\frac 52}} +\max_{0\le t \le 1}  \|W^1(t,\cdot)\|_{\infty}\le \frac 1 {2^{100}}.
\end{align}
In view of the scaling symmetry ($\omega \to \omega_{\lambda}(t,x)=\lambda \omega(\lambda t, x)$)
and translation symmetry (in the axisymmetric case we just shift only along the $z$-axis so as to
keep axisymmetry) of the Euler equation, we can always find a nonzero $W^1$ satisfying the aforementioned
conditions by transforming an arbitrary compactly supported solution.

By repeated applying Proposition \ref{y6} (one needs to shift along the $z$-axis if necessary),  we can find
a sequence of smooth solutions $W^j$, $j\ge 2$, solving the equations
\begin{align*}
\begin{cases}
\partial_t (\frac{W^j}r) + (U^j\cdot \nabla)(\frac{W^j}r)=0, \quad 0<t\le 1, \\
U^j=-\Delta^{-1}\nabla \times W^j,\\
W^j\Bigr|_{t=0}=W^j_0=W^{j,\theta}_0 e_{\theta},
\end{cases}
\end{align*}
such that the following hold:

\begin{itemize}
\item $W^j_0 =(\sum_{k=1}^j f_k) e_{\theta}$, where $f_1=W^{1,\theta}_0$, and for $k\ge 2$,
$\operatorname{supp}(f_k) \subset \{(r,z):\, r>r_k\}$ for some $r_k>0$.

\item Define $F_k=f_k e_{\theta}$. Then for each $k\ge 1$, $F_k \in C_c^{\infty}(B(x_*^k, \frac 1{2^{10k}}))$.
Furthermore
\begin{align} \label{y7_2}
\| \Delta^{-1} \nabla \times F_k\|_{H^{\frac 52} } \le 2^{-100k},
 \quad\forall\, k\ge 1.
\end{align}

\item For any $j\ge 2$,
\begin{align} \label{y7_3}
\max_{0\le t\le 1} \| W^{j}(t,\cdot)-W^{j-1}(t,\cdot)\|_{\infty} \le 2^{-100j}.
\end{align}

\item For each $j_0\ge 2$, there exists $t_{j_0}^1$, $t_{j_0}^2$ with $0<t_{j_0}^1<t_{j_0}^2<2^{-j_0}$, such
that for any $j\ge j_0+2$, we have the decomposition:
\begin{align} \label{y7_3a}
W^j(t,x)= W_{<j_0}^j(t,x) + W_{j_0}^j(t,x) +W_{>j_0}^j (t,x), \quad \forall\, t\le t_{j_0}^2,
\end{align}
where $W_{<j_0}^j \in C_c^{\infty}(\mathbb R^3)$, $W_{j_0}^j \in C_c^{\infty}(\mathbb R^3)$,
$W_{>j_0}^j \in C_c^{\infty}(\mathbb R^3)$ satisfy
\begin{align*}
&\operatorname{supp}(W_{<j_0}^j) \subset \{x=(x_1,x_2,z):\; z\le \sum_{k=1}^{j_0-1}2^{-k} +\frac 18 \cdot {2^{-j_0}}\}, \\
&\operatorname{supp}(W_{j_0}^j) \subset \{ x=(x_1,x_2,z):\;  \sum_{k=1}^{j_0} 2^{-k} -\frac 18 \cdot 2^{-j_0} <z <\sum_{k=1}^{j_0}
2^{-k} + \frac 18\cdot 2^{-j_0} \};\\
&\operatorname{supp}(W_{>j_0}^j) \subset \{ x=(x_1,x_2,z): \; z>\sum_{k=1}^{j_0} 2^{-k} +\frac 14 2^{-j_0} \}.
\end{align*}
Here
\begin{align*}
&W_{<j_0}^j(t=0) =\sum_{k=1}^{j_0-1} F_k, \quad
W_{j_0}^j (t=0)=F_{j_0}, \notag \\
&W_{>j_0}^j(t=0)=\sum_{k=j_0+1}^j F_k.
\end{align*}
Furthermore
\begin{align}
& \| W_{j_0}^j (t,\cdot)\|_{\dot H^{\frac 32}(\mathbb R^3) } > j_0, \quad \forall\, t\in[t_{j_0}^1, t_{j_0}^2];\label{y7_4} \\
& \| W_{j_0}^j (t, \cdot)\|_{L^2(\mathbb R^3)}\le 2^{-100j_0}, \quad \forall\, t\le t_{j_0}^2. \label{y7_5} \\
& \max_{0\le t\le t_{j_0}^2} (\| W_{j_0}^j(t,\cdot)\|_{H^k(\mathbb R^3)} + \| W_{<j_0}^j (t,\cdot)\|_{H^k(\mathbb R^3)})
\le C_{j_0,k}<\infty,\quad\forall\, k\ge 2,  \label{y7_5a}
\end{align}
where $C_{j_0,k}$ is a constant depending only on $k$ and $(F_1,F_2,\cdots,F_{j_0})$.
\end{itemize}

We now show the existence of the solution $\omega$ as the limit of $W^j$, $j\to \infty$.  By $L^2$-conservation of velocity
and \eqref{y7_2}, we have
\begin{align}
 \max_{0\le t\le 1} \| U^j(t,\cdot)\|_2 & = \| U^j(0,\cdot)\|_2 \notag \\
 & \le \sum_{k=1}^{\infty} 2^{-100k} \le 2^{-99}, \quad \forall\, j\ge 1. \label{y7_6}
\end{align}

By \eqref{y7_1} and \eqref{y7_3},
\begin{align}
 \max_{0\le t\le 1} \| W^j(t,\cdot)\|_{\infty} \le \sum_{k=1}^j 2^{-100k} \le 2^{-99}, \quad \forall\, j\ge 1.
 \label{y7_7}
\end{align}

By \eqref{y7_6}, \eqref{y7_7} and interpolation, we then get
\begin{align}
 \sup_{j\ge 1} \max_{0\le t\le 1} \| U^j(t,\cdot)\|_{\infty} \lesssim 1. \label{y7_8}
\end{align}

Since $\operatorname{supp}(W^j(t,\cdot)) \subset B(0,2)$, \eqref{y7_8} then implies that
\begin{align} \label{y7_9}
 \operatorname{supp} (W^j(t,\cdot) ) \subset B(0, C_1), \quad \forall\, 0\le t\le 1,\, j\ge 1,
\end{align}
where $C_1>0$ is an absolute constant. By \eqref{y7_3} and \eqref{y7_9}, the sequence $W^j$ is Cauchy
in the space $C_t^0 C_x^0([0,1] \times \overline{B(0,C_1)})$ and hence converges to the limit solution $w$ in the same
space. By Sobolev embedding and interpolation, it is not difficult to check that $U^j$ also converges to
$u= -\Delta^{-1} \nabla \times \omega \in C_t^0 L_x^2 \cap C_t^0 C_x^{\alpha} ([0,1]\times \mathbb R^3)$ for any $0<\alpha<1$.
It follows easily that $\omega$ is the desired solution satisfying the first two statements in Theorem \ref{thm4}.

It remains for us to check the last two properties of $\omega$ in Theorem \ref{thm4}.

Fix any $j_0\ge 2$. By \eqref{y7_3a}, \eqref{y7_5a} and taking the limit $j\to \infty$,
we get the decomposition of $\omega(t,x)$ for $t\le t_{j_0}^2$ as
\begin{align} \label{y7_10}
\omega(t,x)=\omega_{<j_0}(t,x)+\omega_{j_0}(t,x)+\omega_{>j_0}(t,x),
\end{align}
where $\omega_{<j_0}(t) \in C_c^{\infty}(\mathbb R^3)$, $\omega_{j_0}(t)\in C_c^{\infty}(\mathbb R^3)$, $\omega_{>j_0}(t) \in C^0_c(\mathbb R^3)$
for $t\le t_{j_0}^2$, and
\begin{align*}
&\operatorname{supp}(\omega_{<j_0}) \subset \{x=(x_1,x_2,z):\; z\le \sum_{k=1}^{j_0-1}2^{-k} +\frac 18 \cdot {2^{-j_0}}\}, \\
&\operatorname{supp}(\omega_{j_0}) \subset \{ x=(x_1,x_2,z):\;  \sum_{k=1}^{j_0} 2^{-k} -\frac 18 \cdot 2^{-j_0} <z <\sum_{k=1}^{j_0}
2^{-k} + \frac 18\cdot 2^{-j_0} \};\\
&\operatorname{supp}(\omega_{>j_0}) \subset \{ x=(x_1,x_2,z): \; z>\sum_{k=1}^{j_0} 2^{-k} +\frac 14 2^{-j_0} \}.
\end{align*}
Furthermore
\begin{align}
& \| \omega_{j_0} (t,\cdot)\|_{\dot H^{\frac 32}(\mathbb R^3) } \ge  j_0, \quad \forall\, t\in[t_{j_0}^1, t_{j_0}^2];\label{y7_11a} \\
& \| \omega_{j_0} (t, \cdot)\|_{L^2(\mathbb R^3)}\le 2^{-100j_0}, \quad \forall\, t\le t_{j_0}^2. \label{y7_11b} \\
& \max_{0\le t\le t_{j_0}^2} (\| \omega_{j_0}(t,\cdot)\|_{H^k(\mathbb R^3)} + \| \omega_{<j_0} (t,\cdot)\|_{H^k(\mathbb R^3)})
\le C_{j_0,k}<\infty,\quad\forall\, k\ge 2,  \label{y7_11c}
\end{align}
where $C_{j_0,k}$ is a constant depending only on $k$ and $(F_1,F_2,\cdots,F_{j_0})$.
Now for any $y=(y_1,y_2,y_3) \ne x_*=(0,0,1)$, consider three cases. If $y_3\ge 1$, then in this case by our choice of
initial data and finite transport speed, we can find a small neighborhood $N_y$ of $y$ and $0<t_y<1$
 such that $\omega(t,x)=0$ for any $x \in N_y$ and $0\le t\le t_y$.
If $y_3<1$, then we can choose $j_0$ sufficiently large such that $y\in
\{x=(x_1,x_2,z):\; z< \sum_{k=1}^{j_0-1}2^{-k} +\frac 1{16} \cdot {2^{-j_0}}\}$. In this case we can just choose
$t_y=t_{j_0}^2$ and $N_y$ to be a small open neighborhood contained in
$\{x=(x_1,x_2,z):\; z< \sum_{k=1}^{j_0-1}2^{-k} +\frac 1{16} \cdot {2^{-j_0}}\}$.
By \eqref{y7_11c}
$\omega(t)\in C^{\infty}(N_y)$ for any $0\le t\le t_y$. Therefore statement (3) in Theorem \ref{thm4} is proved.

Finally we prove statement (4) in Theorem \ref{thm4}. For each integer $n\ge 1$, we shall take $j_n$ to be sufficiently
large and decompose $\omega$ according to \eqref{y7_10} with $j_0$ replaced by $j_n$. By a slight abuse
of notation we denote $t_n^1=t_{j_n}^1$, $t_n^2=t_{j_n}^2$ and $\omega_n=\omega_{j_n}$. Define
\begin{align*}
 & K_n = \overline{\{x\in \mathbb R^3:\; \omega_n(t,x) \ne 0 \text{ for some } 0\le t\le t_n^2 \}}, \notag \\
 & \Omega_n^1 = \{ x \in \mathbb R^3:\, \text{dist}(x, K_n) <\frac 1 {2^{100j_n}} \}, \notag \\
 & \Omega_n^2 = \{ x \in \mathbb R^3:\, \text{dist}(x, K_n)  <\frac 1 {1000} \cdot \frac 1 {2^{j_n}} \}.
\end{align*}
The inequality \eqref{thm4_t3a} follows from \eqref{y7_11a}.  To show \eqref{thm4_t3b}, we note that if $x\in \mathbb R^3 \setminus \Omega_n^2$
and $y \in K_n$, then
\begin{align*}
 |x-y| \gtrsim 2^{-j_n}.
\end{align*}
We can then write for $x\in \mathbb R^3 \setminus \Omega_n^2$,
\begin{align*}
 (|\nabla|^3 \omega_n) (t,x)  & = (|\nabla|^{-1} (-\Delta)^2 \omega_n)(t,x) \notag \\
 & = \int_{\mathbb R^3} K(x-y) \chi_{|x-y|\gtrsim 2^{-j_n}} \bigl( (-\Delta) \omega_n \bigr)(t,y) dy \notag \\
 & = \int_{\mathbb R^3} \tilde K(x-y) \omega_n (t,y) dy,
\end{align*}
where $\chi$ is a smooth cut-off function and we have integrated by parts in the last step. The modified kernel $\tilde K(\cdot)$
is smooth and obeys the bound
\begin{align*}
 |\tilde K(x) | \lesssim 2^{10 j_n} (1+| x|^2)^{-1}, \quad\forall\, x \in \mathbb R^3.
\end{align*}
Thus by \eqref{y7_11b} and taking $j_n$ sufficiently large, we obtain
\begin{align*}
&\max_{0\le t\le t_n^2} \| (|\nabla|^3 \omega_n)(t,x)\|_{L^2(\mathbb R^3\setminus \Omega_n^2)} \notag \\
\lesssim & 2^{10j_n} 2^{-100j_n} \le 1.
\end{align*}

Theorem \ref{thm4} is now proved.

\end{proof}

\section{Ill-posedness in Besov spaces} \label{sec_Bp2}
In this last section we settle the illposedness in the Besov case.

\begin{thm}\label{thm1_Bp2}
For any $\omega_0^{(g)} \in C_c^{\infty}(\mathbb R^2) \cap \dot H^{-1}(\mathbb R^2)$, any $\epsilon>0$,  and any $1<p< \infty$, $1<q\le \infty$,
we can find a $C^{\infty}$ perturbation $\omega_0^{(p)}:\mathbb R^2\to \mathbb R$
such that the following hold true:

\begin{enumerate}
 \item The perturbation is very small:
 \begin{align*}
   \| \omega_0^{(p)} \|_{L^1(\mathbb R^2)} + \| \omega^{(p)}_0\|_{L^{\infty}(\mathbb R^2)} + \| \omega^{(p)}\|_{\dot H^{-1}(\mathbb R^2)}
   + \| \omega_0^{(p)} \|_{B^{\frac 2p}_{p,q}(\mathbb R^2)} <\epsilon.
  \end{align*}
 \item Let $\omega_0=\omega_0^{(g)}+ \omega_0^{(p)}$. The initial velocity $u_0 = \Delta^{-1} \nabla^{\perp} \omega_0$ has regularity
  $u_0 \in L^2(\mathbb R^2) \cap B^{1+\frac 2p}_{p,q}(\mathbb R^2) \cap C^{\infty} (\mathbb R^2) \cap L^{\infty}(\mathbb R^2)$.
 \item There exists a unique classical solution $\omega = \omega(t)$ to the 2D Euler equation (in vorticity form)
 \begin{align*}
  \begin{cases}
   \partial_t \omega + (\Delta^{-1} \nabla^{\perp} \omega \cdot \nabla) \omega =0, \quad 0<t\le 1, \, x \in \mathbb R^2,\\
   \omega \Bigr|_{t=0} = \omega_0,
  \end{cases}
 \end{align*}
satisfying $\omega(t) \in L^1 \cap L^{\infty} \cap C^{\infty} \cap \dot H^{-1}$, $u=\Delta^{-1} \nabla^{\perp} \omega \in C^{\infty}
\cap L^2\cap L^{\infty}$ for each $0\le t \le 1$.

\item For any $0<t_0 \le 1$, we have
\begin{align} \label{thm1_Bp2_t3}
 \operatorname{ess-sup}_{0<t \le t_0} \| \omega(t,\cdot) \|_{\dot B^{\frac 2p}_{p,\infty}}  =+\infty.
\end{align}

\end{enumerate}

\end{thm}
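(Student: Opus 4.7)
My plan is to transport, step for step, the three-part scheme used to prove Theorem \ref{thm1}, adjusting only the critical-norm-inflation step so as to target $\dot B^{\frac 2p}_{p,\infty}$ in place of $\dot H^1$. The single new mechanism to exploit is the following dilation scaling: a bump-localized single-frequency perturbation
$$
\beta(x) \;=\; M^{-\alpha}\, k^{-\frac 2p}\, \sin(kx_1)\, b(x), \qquad 0<\alpha<\tfrac 2p,
$$
has $\|\beta\|_{B^{\frac 2p}_{p,q}} = O(M^{-\alpha})$ uniformly in $k$, whereas if its oscillatory factor is later composed with a bi-Lipschitz change of variables whose inverse Jacobian has a gradient entry of size $\sim M$ in some direction, then the resulting function oscillates in physical space at a frequency of order $kM$, making its $\dot B^{\frac 2p}_{p,\infty}$ norm at least of order $(kM)^{\frac 2p}\cdot M^{-\alpha}k^{-\frac 2p}=M^{\frac 2p-\alpha}$, which diverges with $M$.

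Step~1 (creation of large Lagrangian deformation) can be kept essentially verbatim from Section~3: the profile $h_A$ from \eqref{30_2} satisfies $\|h_A\|_{B^{\frac 2p}_{p,q}}\to 0$ as $A\to\infty$ (a straightforward count since each $\eta_k$ has essentially disjoint frequency support and $B^{\frac 2p}_{p,q}$-norm of order one, so the normalizing prefactor $\sqrt{\log A}/A$ forces decay for every $1<p<\infty$, $1<q\le \infty$), and Proposition \ref{prop40}/Proposition \ref{prop_gener_1} produces $\|D\phi(t_A,\cdot)\|_\infty>\log\log A$ at some $t_A<1/\log\log A$. Step~3 (patching) will use Proposition \ref{prop10} exactly as in the proof of Theorem \ref{thm1}, with the only modification that the summability assumption on $\|\omega_j\|_{H^1}$ is replaced by the summability of the Besov norms $\|\omega_j\|_{B^{\frac 2p}_{p,q}}$, which is no harder to arrange since each patch profile can be made arbitrarily small in that topology.

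Step~2 (local inflation of the critical Besov norm) is the heart of the argument. I plan to follow Proposition \ref{prop20} closely, perturbing the initial data by the single-frequency profile $\beta$ above, with $b$ supported in a small ball around a point $x_\ast$ at which a fixed entry of $D\phi(t_0,x_\ast)$ (say $\partial_2\phi_2$) has magnitude at least $M$. The $W^{1,r}$ perturbation argument of Lemma \ref{lem20}, repeated in (say) $W^{1,4}$, will give $\|D\tilde\phi(t_0,\cdot)-D\phi(t_0,\cdot)\|_\infty \ll 1$ provided $k$ is taken large enough depending on $(M,\omega_0,b)$. I then use the Lagrangian representation $\tilde\omega(t_0,x)=\tilde\omega_0(\tilde\phi(t_0,x))$ together with the Littlewood--Paley lower bound
$$
\|\tilde\omega(t_0,\cdot)\|_{\dot B^{\frac 2p}_{p,\infty}} \;\ge\; (kM)^{\frac 2p} \,\bigl\|P_{kM}\bigl[\beta\circ\tilde\phi(t_0,\cdot)\bigr]\bigr\|_p \;-\; C_{\omega_0}\bigl(1+o_k(1)\bigr),
$$
and finally estimate the frequency-localized piece from below by $\gtrsim M^{-\alpha}k^{-\frac 2p}$ via a stationary-phase/change-of-variables argument on the ball where $\tilde\phi(t_0,\cdot)$ is well approximated by its affine Taylor expansion around $x_\ast$, absorbing the non-affine corrections into an $L^p$ remainder of size $o(M^{-\alpha}k^{-\frac 2p})$ by taking $k$ large. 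This yields the desired lower bound $\|\tilde\omega(t_0,\cdot)\|_{\dot B^{\frac 2p}_{p,\infty}}\gtrsim M^{\frac 2p-\alpha}\gg 1$ for $M$ large, which after patching gives \eqref{thm1_Bp2_t3}.

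The main obstacle is precisely the last sentence of the previous paragraph: unlike the $L^2$ pointwise identity \eqref{prop20_11} that drove the $\dot H^1$ inflation, there is no closed-form expression for $\|P_N f\|_p$ in terms of pullbacks, so the frequency localization has to be done by hand. I expect the cleanest way forward is to reduce, via a smooth change of coordinates making $\tilde\phi(t_0,\cdot)$ affine to leading order around $x_\ast$ together with an outer cutoff, to estimating $P_N[\sin(\lambda y_1)\tilde b(y)]$ with $\lambda\sim kM$ and $N\sim kM$ in the flattened coordinates; for such one-frequency Gabor-type profiles a direct Fourier calculation pins down both the size of $P_N$ and shows that the contribution from the non-affine part of $\tilde\phi(t_0,\cdot)$ is strictly smaller in $L^p$ than the principal term, provided $k\gg k_0(M,b,\omega_0)$.
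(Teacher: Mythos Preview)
Your proposal follows the same three-step architecture as the paper's proof, and your Step~2 mechanism (single-frequency perturbation $\beta\sim k^{-2/p}\sin(kx_1)b(x)$, composed with the flow map to produce oscillation at scale $\sim kM$, hence $\dot B^{2/p}_{p,\infty}$-norm $\gtrsim M^{2/p-\alpha}$) is exactly the right idea and matches the paper. The paper carries out the frequency localization by the global change of variables $x\mapsto \phi_A(t_A,x)$ in the Fourier integral followed by stationary phase on the phase $\phi_A(t_A,x)\cdot\xi\pm kx_1$, rather than by a local affine flattening of $\tilde\phi$; either route works, but the global one is cleaner because it avoids having to quantify the size of the region on which the affine approximation is valid. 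One minor point: the paper replaces the profile $h_A$ of \eqref{30_2} by a shorter sum $\sum_{A<k<A+\log A}\eta_k$ with a different normalization, and gets $\|h_A\|_{\dot B^{2/p}_{p,q}}\to 0$ by interpolating between $\dot B^{2/r}_{r,1}$ and $\dot B^0_{\infty,\infty}$ rather than by your ``disjoint frequency support'' count (the $\eta_k$ are localized at different \emph{scales}, not disjoint frequencies; your conclusion is still correct for $q>1$, but the argument needs to be phrased as an $\ell^q$ sum over Littlewood--Paley blocks).

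There is, however, a genuine gap in your Step~3. You assert that Proposition~\ref{prop10} applies ``exactly as in the proof of Theorem~\ref{thm1}'', with only the summability hypothesis changed. That is not enough. In the $\dot H^1$ case the passage from local inflation $\|\omega_{j_n}(t_n)\|_{\dot H^1}>n$ to global inflation $\|\omega(t_n)\|_{\dot H^1}>n/2$ is automatic because $\dot H^1$ is a Hilbert space and the cross terms $\langle\nabla\omega_j,\nabla\omega_k\rangle$ for $j\ne k$ are small by the kernel-decay argument. For $\dot B^{2/p}_{p,\infty}$ with $p\ne 2$ there is no such orthogonality: one must show, for the relevant dyadic $N$, that $\|P_N\omega\|_p^p\ge \|P_N\omega_{j_n}\|_p^p-\text{(small)}$. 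The paper does this via the pointwise convexity inequality $|x+y|^p\ge |x|^p+p|x|^{p-2}\bar x\cdot y$, combined with the observation that $P_N\omega_j$ has $L^p$-mass $\lesssim N^{-100}$ outside a $2$-neighborhood of $\operatorname{supp}(\omega_j)$, so that the cross terms $\int|P_N\omega_{j_n}|^{p-2}(P_N\omega_{j_n})\cdot P_N\omega_k\,dx$ for $k\ne j_n$ are $O(N^{-100})$. This is a short but essential extra ingredient that your patching paragraph does not supply; without it you cannot conclude \eqref{thm1_Bp2_t3} from the per-patch inflation.
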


\begin{proof}[Proof of Theorem \ref{thm1_Bp2}]
Again with out loss of generality we assume $\omega_0^{(g)}\equiv 0$.
We shall sketch the details and point out the important changes (as compared to the proof of Theorem \ref{thm1}).
The first crucial step is the local construction. Since
$B^{\frac 2p}_{p,q_1} \hookrightarrow B^{\frac 2p}_{p,q_2}$ whenever $q_1<q_2$, it suffices
for us to consider the case $B^{\frac 2p}_{p,q}$ with $1<q<p$. Fix such $p$ and $q$.
 We will prove the following

\textbf{Claim:}  for any small $\delta>0$, there exists a smooth initial data $\omega_0^{\delta} \in C_c^{\infty} (B(0,\delta))$
and $t_\delta \in (0,\delta)$ such that if $\omega^{\delta}$ is the smooth solution to
\begin{align*}
 \begin{cases}
  \partial_t \omega^{\delta} + (\Delta^{-1} \nabla^{\perp} \omega^{\delta} \cdot \nabla) \omega^{\delta} =0, \quad 0<t\le 1, \, x \in \mathbb R^2, \\
  \omega^{\delta} \Bigr|_{t=0} = \omega_0^{\delta},
 \end{cases}
\end{align*}
then the following hold:
\begin{itemize}
 \item $\| \omega_0^{\delta}\|_{B^{\frac 2p}_{p,q}} +
 \| \omega_0^{\delta} \|_{L^{\infty} } + \|\omega_0^{\delta} \|_{\dot H^{-1}} \le \delta$.
 \item $\operatorname{supp}(\omega^{\delta} (t,\cdot) ) \subset B(0,\delta)$ for any $0\le t\le \delta$.
 \item $\| \omega^{\delta} (t_{\delta}, \cdot ) \|_{\dot B^{\frac 2p}_{p,\infty}} > \frac 1 {\delta}$.
\end{itemize}

To prove the claim, we first take $A\gg 1$ and define one parameter of functions
\begin{align*}
 g_A^0 = \frac 1 {\log A} \sum_{A<k<A+\log A} \eta_k(x),
\end{align*}
where $\eta_k$ is the same as in \eqref{30_0a}. Easy to check that
\begin{align*}
 &\|g_A^0 \|_{\dot B^{\frac 2 r}_{r,1}} \lesssim 1, \quad \forall\, 1\le r\le \infty,\notag \\
 &\| g_A^0 \|_{\dot B^0_{\infty,\infty}} \lesssim  \| g_A^0\|_{\infty} \lesssim \frac 1 {{\log A}}. \notag
\end{align*}
Therefore by interpolation (choosing $r=p/q$)
\begin{align*}
 \| g_A^0 \|_{\dot B^{\frac 2p}_{p,q}} \lesssim \frac 1 {(\log A)^{2\epsilon_1}},
\end{align*}
where the exponent $\epsilon_1= \frac 12 (1-\frac 1q) \in (0,\frac 12)$.
 Now take
 \begin{align} \label{thm1_Bp2_hA_def}
  h_A = (\log A)^{\epsilon_1} g_A^0.
 \end{align}
Obviously we have
\begin{align} \label{thm1_Bp2_30}
 & \| h_A\|_{\infty} \lesssim \frac 1 {(\log A)^{1-\epsilon_1}}, \notag \\
 & \| h_A\|_{1} + \| h_A\|_{\dot H^{-1}} \lesssim 2^{-2A}, \notag \\
 & \| h_A \|_{\dot B^{\frac 2p}_{p,q}} \lesssim \frac 1 { (\log A)^{\epsilon_1}}.
\end{align}
Let $W_A$ be the smooth solution to the system
\begin{align} \label{thm1_Bp2_30a}
 \begin{cases}
  \partial_t W_A +  (\Delta^{-1} \nabla^{\perp} W_A \cdot \nabla) W_A=0, \quad t>0, \, x\in\mathbb R^2, \\
  W_A \Bigr|_{t=0}=h_A.
 \end{cases}
\end{align}
Define the forward characteristics $\phi_A$ such that
\begin{align} \label{thm1_Bp2_40}
 \begin{cases}
  \partial_t \phi_A (t,x) = ( \Delta^{-1} \nabla^{\perp} W_A)(t,\phi_A(t,x)), \\
  \phi_A(t=0,x)=x \in \mathbb R^2.
 \end{cases}
\end{align}
By following the proof of Proposition \ref{prop40} (or using Proposition
\ref{prop_gener_1}, we then have for $A$ sufficiently large,
\begin{align} \label{thm1_Bp2_40a}
M_A:= \max_{0\le t \le \frac 1{\log\log A}} \| (D\phi_A)(t,\cdot)\|_{\infty} \ge \log\log A.
\end{align}

Clearly we can find $0<t_A < \frac 1 {\log\log A}$ and $x_A$ such that
\begin{align*}
\| (D\phi_A)(t_A, x_A)\|_{\infty} > \frac 45 M_A.
\end{align*}
Denote $\phi_A(t,x_1,x_2)=(\phi_A^1(t,x_1,x_2), \phi_A^2(t,x_1,x_2))$. Without loss of generality we can assume
\begin{align*}
 | (\partial_2 \phi_A^2)(t_{A}, x_A) |>\frac 4 {5} M_A.
\end{align*}
By continuity we can find a small neighborhood $O_A=B(x_A,r_A)$ of $x_A$  such that
\begin{align}\label{thm1_Bp2_40b}
 | (\partial_2 \phi_A^2)(t_{A}, x) |>\frac 45 M_A, \quad \forall\, x \in O_A .
\end{align}

Depending on the location of $x_A$, we need to shrink $0<r_A<1$ slightly further and define an even function
$b\in C_c^{\infty}(\mathbb R^2)$ as follows. Fix a smooth radial cut-off function $\Phi_0 \in C_c^{\infty}(\mathbb R^2)$
such that $\Phi_0(x)=1$ for $|x|\le \frac 12$ and $\Phi_0(x)=0$ for $|x|>1$.
If $x_A=(0,0)$, we just define $b(x) = r_A^{-\frac 2p} \Phi_0(\frac x {r_A})$.
 If $x_A=(a_*,0)$ for some $a_*\ne 0$,
 then we choose $r_A>0$  such that $r_A \ll |x_A| $.
In this case we choose $b$ as an even function of $x_1$ and $x_2$ which takes the form
\begin{align*}
b(x) = {r_A}^{-\frac 2p} \Bigl( \Phi_0( \frac {x-x_A} {r_A})+ \Phi_0(\frac{x+x_A} {r_A}) \Bigr).
\end{align*}
The case $x_A=(0,c_*)$ is similar. Now if $x_A=(a_*,c_*)$ with $a_*\ne 0$ and $c_*\ne 0$, then
we choose $r_A \ll \min\{ |a_*|, |c_*|\}$ and define
\begin{align*}
b(x) =  {r_A}^{-\frac 2p} \sum_{\epsilon_1=\pm 1,\,\epsilon_2=\pm 1} \Phi_0 \bigl(
\frac {x -(\epsilon_1 a_*, \epsilon_2 c_* \bigr)}
{r_A}).
\end{align*}
Easy to check that $b$ is an even function of $x_1$ and $x_2$.

Therefore in all situations we can choose an even function $b\in C_c^{\infty} (\mathbb R^2)$ such that
\begin{align}
 &\|b\|_{L^p(O_A)} \sim 1, \notag \\
 & \| b\|_{L^p(\mathbb R^2)} \sim 1, \notag\\
 & \| b \|_{\dot B^0_{p,1}(\mathbb R^2)} \lesssim 1. \label{thm1_Bp2_100}
\end{align}
In the above the implied constants depend only on the definition of the smooth cut-off function $\Phi_0$
and thus can be made as absolute constants. To simplify later notations and discussions, we shall
still denote by $O_A$ the support of $b$ which are unions of even reflections of $O_A$ on the plane.
The last inequality in \eqref{thm1_Bp2_100} is due to the fact that
translation in the real domain is equivalent to phase modulation in the frequency domain and hence
\begin{align} \label{thm1_Bp2_101}
\| b \|_{\dot B^0_{p,1}} \lesssim \| {r_A}^{-\frac 2p} \Phi_0(\frac{\cdot}{r_A})\|_{\dot B^0_{p,1}} \lesssim
\| \Phi_0\|_{\dot B^0_{p,1}} \lesssim 1.
\end{align}

Now we consider two cases.

\texttt{Case 1}: $\max_{0\le t\le \frac 1 {\log\log A}}
\| W_A(t,\cdot) \|_{\dot B^{\frac 2p}_{p,\infty}} \ge \log \log\log\log A$.
In this case we set $\omega^{\delta}_0 = W_A$ with $A=A(\delta)$ chosen sufficiently large.
No particular work is needed in this case.

\texttt{Case 2}:
\begin{align} \label{thm1_Bp2_104}
 \max_{0\le t\le \frac 1 {\log\log A}} \| W_A(t,\cdot) \|_{\dot B^{\frac 2p}_{p,\infty}} <  \log\log\log\log A.
\end{align}

In this case we consider
\begin{align*}
 \tilde h_A = h_A +
 \underbrace{\frac 1 {\log\log\log A} \cdot k^{-\frac 2p} \sin (kx_1) \cdot b(x)}_{:=\beta(x)},
\end{align*}
where $b(x)$ was chosen as in \eqref{thm1_Bp2_100}.\footnote{Of
course a natural idea is to consider cutting off the high
frequencies of $b$, say replacing $b$ by $b_1:=P_{<N_1} b$ for some
sufficiently large $N_1$. This will simplify the computation of
$\dot B^{\frac 2p}_{p,1}$ norm of the perturbation $\sin(kx_1)
b_1(x)$  since for large $k\gg N_1$ the function $\sin(kx_1) b_1(x)$
will have frequency localized to $\{\xi:\, |\xi| \sim k \}$. However
the disadvantage of doing this is that $b_1$ is not compactly
supported in the $x$-space. This will bring some more unnecessary
technical complications in the gluing of patch solutions later. }
Once again we shall take the parameter $k$ sufficiently large.
Consider first $N\ll k$. Write
\begin{align*}
\sin(kx_1) b(x) = -\frac 1k \partial_{x_1} (\cos(kx_1) b(x))+ \frac 1k \cos (kx_1) \partial_{x_1} b(x).
\end{align*}
By Bernstein and the above identity, we get
\begin{align*}
N^{\frac 2p} \| P_{N} ( \sin (kx_1) b(x) ) \|_p
& \lesssim \frac {N^{1+\frac 2p}} k \| b\|_p + \frac 1 k  N^{\frac 2p}\| P_N( \cos (kx_1) \partial_{x_1} b) \|_p \notag \\
& \lesssim \frac {N^{1+\frac 2p}}  k \| b\|_p + \frac 1 k N^{\frac 2p} \| \partial_{x_1} b\|_{ p}.
\end{align*}
Summing over dyadic $N\ll k$ and letting $k$ be sufficiently large, we obtain
\begin{align*}
\frac 1 {k^{\frac 2p}}\sum_{N\ll k} N^{\frac 2p} \| P_{N} ( \sin (kx_1) b(x) ) \|_p \lesssim 1.
\end{align*}

Next consider $N\gg k$. By frequency localization, observe
\begin{align*}
 &  P_{N} ( \sin(kx_1) b ) = P_{N} ( \sin (kx_1) \tilde P_N b),
 \end{align*}
where $\tilde P_N$ is a fattened frequency projector adapted to the regime $|\xi|\sim N$.
Clearly by taking $k$ sufficiently large, we have
\begin{align*}
 & \frac 1 {k^{\frac 2p}} \sum_{N \gg k} N^{\frac 2p}\| P_{N} ( \sin(kx_1) b(x) ) \|_p \notag \\
 \lesssim & \frac 1 {k^{\frac 2p}} \sum_{N\gg k} N^{\frac 2p} \| \tilde P_N b \|_p
 \lesssim k^{-\frac 2p} \| b\|_{\dot B^{\frac 2p}_{p,1}} \lesssim 1.
 \end{align*}
In the intermediate regime $N\sim k$, there are finitely many such dyadic $N$ and we have
\begin{align*}
\frac 1 {k^{\frac 2p}}\sum_{N \sim k} N^{\frac 2p}\| P_{N} ( \sin(kx_1) b(x) ) \|_p \lesssim \| b\|_p \lesssim 1.
\end{align*}

Summing over all cases, we have proved
\begin{align*}
\frac 1{k^{\frac 2p}} \| b(x) \sin(kx_1) \|_{\dot B^{\frac 2p}_{p,1}} \lesssim 1.
\end{align*}
Therefore
\begin{align*}
 \| \beta \|_{\dot B^{\frac 2p}_{p,1}} \lesssim \frac 1 {\log\log\log A}.
\end{align*}
By a similar analysis, we also have
\begin{align}
\| \beta \|_{ B^{\frac 2p}_{r,1}} =O(1), \quad \forall\, p\le r \le \infty. \label{thm1_Bp2_104_11a}
\end{align}
Here and below we adopt the same big O notation as described in the paragraph after \eqref{prop20_16}.
Denote $e_1=(1,0)$. Then
\begin{align*}
 \| |\nabla|^{-1} \beta \|_2^2 & \lesssim \frac 1 {k^{\frac 4p}} \int_{\mathbb R^2} \frac 1 {|\xi|^2} | \hat b(\xi+ke_1)-\hat b(\xi-ke_1) |^2 d\xi
\notag \\
& \lesssim k^{-\frac 4p} ( \| x b(x)\|_1^2 +\|b\|_2^2) \notag \\
& = O(k^{-\frac 4p}).
\end{align*}
Therefore
\begin{align}
 \| \beta \|_{\dot H^{-1}}  = O(k^{-\frac 2p}). \notag
\end{align}

By \eqref{thm1_Bp2_30} and choosing $k$ sufficiently large, we then have
\begin{align}
 & \| \tilde h_A\|_{\infty} \lesssim \frac 1 {(\log A)^{1-\epsilon_1}}, \notag \\
 & \|\tilde  h_A\|_{1} + \| \tilde h_A\|_{\dot H^{-1}} \lesssim 2^{-2A}, \notag \\
 & \| \tilde h_A \|_{\dot B^{\frac 2p}_{p,q}} \lesssim \frac 1 { (\log A)^{\epsilon_1}} +\frac 1 {\log\log\log A}. \notag
\end{align}
Let $W_A^1$ be the smooth solution to the equation
\begin{align*}
 \begin{cases}
  \partial_t W_A^1 + (\Delta^{-1} \nabla^{\perp} W_A^1 \cdot \nabla) W_A^1 =0, \quad 0<t\le 1, \\
  W_A^1 \Bigr|_{t=0} = \tilde h_A.
 \end{cases}
\end{align*}

Let $\eta=W_A^1-W_A$ where $W_A$ is the solution to \eqref{thm1_Bp2_30a}. Then $\eta$ satisfies
\begin{align*}
 \begin{cases}
  \partial_t \eta + (\dpp \eta \cdot \nabla) W_A^1 +(\dpp W_A\cdot \nabla) \eta =0, \\
  \eta \Bigr|_{t=0} =\beta.
 \end{cases}
\end{align*}
Now
\begin{align*}
 \partial_t ( \| |\nabla|^{-1} \eta \|_2^2) & \lesssim \| \dpp \eta \|_2 \cdot \| W_A^1 \|_{\infty} \cdot \| |\nabla|^{-1} \eta\|_2
 \notag \\
 & \qquad + \left| \int (\dpp W_A \cdot \nabla) ( \Delta \Delta^{-1} \eta) \cdot \Delta^{-1} \eta dx \right| \notag \\
 & \lesssim \| |\nabla|^{-1} \eta \|_2^2 \cdot ( \| W_A^1 \|_{\infty} + \| \mathcal R_{ij} W_A \|_{\infty}).
\end{align*}
Hence
\begin{align} \label{thm1_Bp2_m21_1a}
 \max_{0\le t \le 1} \| (|\nabla|^{-1} \eta) (t,\cdot)\|_2 = O(k^{-\frac 2p}).
\end{align}
Take $r \in(p,\infty)$. By \eqref{thm1_Bp2_104_11a} and local wellposedness in $B^{\frac 2p}_{r,1}$,
\begin{align}
 \max_{0\le t\le 1} (\| W_A^1(t,\cdot)\|_{B^{\frac 2p}_{r,1}}+  \| W_A(t,\cdot)\|_{B^{\frac 2p}_{r,1}} )=O(1).
 \label{thm1_Bp2_m21_1b}
\end{align}
Interpolating the above with \eqref{thm1_Bp2_m21_1a}, we get
\begin{align}
 \max_{0\le t\le 1} \| \eta(t,\cdot)\|_{B^{0}_{\infty,1}} =O(k^{-\alpha}). \label{thm1_Bp2_m21_2}
\end{align}
Here and below we denote by the general notation $X=O(k^{-\alpha})$  if the quantity $X \lesssim k^{-\alpha}$ for some
$\alpha>0$. The value of $\alpha$ does not play much role in the analysis as long as $\alpha>0$.

Let $\Phi_A$ be the characteristic line associated with $W_A^1$, i.e.
\begin{align*}
 \begin{cases}
  \partial_t \Phi_A(t,x)= (\dpp W_A^1)(t,\Phi_A(t,x)), \\
  \Phi_A (0,x)= x \in \mathbb R^2.
 \end{cases}
\end{align*}

Set $J(t)=(D\Phi_A)(t)-(D\phi_A)(t)$. Then
\begin{align*}
 \partial_t J &= (\mathcal R W_A^1) (\Phi_A) J + \Bigl( \mathcal R (W_A^1-W_A) \Bigr)(\Phi_A) D\phi_A \notag \\
 & \qquad + \Bigl( (\mathcal R W_A)(\Phi_A) - (\mathcal R W_A)(\phi_A) \Bigr) D\phi_A.
\end{align*}

Using \eqref{thm1_Bp2_m21_2} and the above equation, it is easy to check
\begin{align}
 \max_{0\le t \le 1} \left(
 \| (D\Phi_A)(t,\cdot)- (D\phi_A)(t,\cdot)\|_{\infty} +
 \| \Phi_A (t,\cdot)- \phi_A (t,\cdot)\|_{\infty}\right)=O(k^{-\alpha}). \label{thm1_Bp2_m21_3}
\end{align}

Let $W_A^2$, $W_A^3$ be the smooth solutions to the \emph{linear} equations
\begin{align*}
 \begin{cases}
  \partial_t W_A^2 + (V_A \cdot \nabla) W_A^2 =0, \quad t>0, \\
  W_A^2 \Bigr|_{t=0} = g_A,
 \end{cases}
\end{align*}
\begin{align*}
 \begin{cases}
  \partial_t W_A^3 + (V_A \cdot \nabla )W_A^3 =0, \quad t>0, \\
  W_A^3 \Bigr|_{t=0} = \beta,
 \end{cases}
\end{align*}
where $V_A(t,x)=(\Delta^{-1} \nabla^{\perp} W_A^1)(t,x)$. Obviously,
\begin{align*}
 W_A^1= W_A^2 + W_A^3.
\end{align*}

We first show that
\begin{align}
 \max_{0\le t\le 1}\| W_A(t,\cdot) - W_A^2(t,\cdot) \|_{\dot B^{\frac 2p}_{p,\infty}} =  O(k^{-\alpha}).
 \label{thm1_Bp2_m21_5}
\end{align}
By \eqref{thm1_Bp2_m21_1b}, it is easy to check
\begin{align}
 &\max_{0\le t\le 1} \|  D^2 W_A^2(t,\cdot) \|_{p} = O(1), \quad \text{if $1<p\le 2$, } \notag \\
 &\max_{0\le t\le 1} \|  D W_A^2(t,\cdot) \|_{p} = O(1), \quad \text{if $2<p<\infty$. }\label{thm1_Bp2_m21_6}
\end{align}
On the other hand, by the Fundamental Theorem of Calculus, we have
\begin{align*}
 W_A^2-W_A  & = g_A(\tilde \Phi_A) - g_A(\tilde \phi_A) \notag \\
 & = \int_0^1 (Dg_A)(\tilde \phi_A + s(\tilde \Phi_A- \tilde \phi_A ))ds (\tilde \Phi_A-\tilde \phi_A).
\end{align*}
Here $\tilde \Phi_A$, $\tilde \phi_A$ denote the inverse map of $\Phi_A$ and $\phi_A$ respectively.
By \eqref{thm1_Bp2_m21_3} and H\"older, we then get\footnote{Alternatively one can derive the estimate in an ``Eulerian'' way, i.e. directly derive
an $L^p$ estimate from the equation.}
\begin{align*}
 &\max_{0\le t\le 1} \| W_A^2(t,\cdot) -W_A(t,\cdot)\|_{p} \notag \\
\lesssim & \max_{0\le t\le 1} \| W_A^2(t,\cdot) -W_A(t,\cdot)\|_{\infty}=
 O(k^{-\alpha}).
\end{align*}
Interpolating this with \eqref{thm1_Bp2_m21_6} then yields \eqref{thm1_Bp2_m21_5}.

By \eqref{thm1_Bp2_m21_5} and \eqref{thm1_Bp2_104}, we only need to show
\begin{align*}
 \| W_A^3(t_A, \cdot) \|_{\dot B^{\frac 2p}_{p,\infty}} \gg \log\log\log A.
\end{align*}

For this we need to introduce $W_A^4$ which solves the linear equation
\begin{align*}
 \begin{cases}
  \partial_t W_A^4 + (U_A \cdot \nabla )W_A^4 =0, \quad 0<t\le 1, \\
  W_A^4 \Bigr|_{t=0} = \sin(kx_1) b(x)=:W_{4,0},
 \end{cases}
\end{align*}
where $U_A=\dpp W_A$.

We shall \emph{not} directly compare $W_A^3$ with $k^{-\frac 2p} \frac 1 {\log\log\log A} W_A^4$ and
run a perturbation argument in $\dot B^{\frac 2p}_{p,\infty}$. Instead we will carry out
an indirect argument as follows.

We first analyze the structure of $W_A^4$. Write $\tilde \phi_A = (\tilde \phi_A^1,\tilde \phi_A^2)$ and
\begin{align*}
 W_A^4(t_A,x) & = W_{4,0}(\tilde \phi_A(t_A,x) ) \notag \\
 &=  \sin( k \tilde \phi_A^1 (t_A,x)) \cdot b(\tilde \phi_A(t_A,x)).
\end{align*}

Now consider
\begin{align} \label{thm1_Bp2_110}
 F(\xi) = \int_{\mathbb R^2} \sin (k\tilde \phi_A^1 (t_A,x)) b(\tilde \phi_A(t_A,x))
  e^{-ix\cdot \xi} dx.
\end{align}

By a change of variable $x\to \phi_A(t_A,x)$ in \eqref{thm1_Bp2_110} (and recall that the map is volume-preserving), we get
\begin{align*}
F(\xi)
&= \frac 1{2i} \int_{\mathbb R^2} b(x)  \cdot e^{-i \phi_A(t_A,x) \cdot \xi +ikx_1} dx \notag \\
&\quad - \frac 1{2i} \int_{\mathbb R^2}  b(x)  \cdot e^{-i \phi_A(t_A,x) \cdot \xi -ikx_1} dx
\end{align*}

Consider the phase $\phi_A(t_A, x)\cdot \xi +kx_1$. Write
\begin{align*}
 (D\phi_A)(t_A,x) \xi + k \begin{pmatrix} 1 \\ 0 \end{pmatrix}
 &= (D \phi_A)(t_A, x)  \Bigl(   \xi + k  \bigl( (D \phi_A)(t_A,x) \bigr)^{-1} \begin{pmatrix} 1 \\ 0 \end{pmatrix} \Bigr).
\end{align*}
Since $\bigl((D\phi_A)(t_A,x) \bigr)^{-1}=\operatorname{adj} ( (D\phi_A)(t_A,x) )$, by \eqref{thm1_Bp2_40a} and
\eqref{thm1_Bp2_40b}, we get
\begin{align*}
\frac 1 {C_1} \le \frac 1 {M_A } | \bigl( (D \phi_A)(t_A,x) \bigr)^{-1} \begin{pmatrix} 1 \\ 0 \end{pmatrix} \Bigr) | \le C_1,
\quad\forall\, x \in O_A,
\end{align*}
where $C_1>0$ is an absolute constant. Now if $|\xi| \ge 2C_1 \cdot kM_A$ and $x\in O_A$, then
\begin{align}
 &| (D\phi_A)(t_A,x) \xi + k \begin{pmatrix} 1 \\ 0 \end{pmatrix}  | \notag \\
\gtrsim &\; | \Bigl( (D\phi_A)(t_A,x) \Bigr)^{-1}|^{-1} \cdot | \xi + k
\bigl( (D \phi_A)(t_A,x) \bigr)^{-1} \begin{pmatrix} 1 \\ 0 \end{pmatrix}|
\notag \\
\gtrsim & \;M_A^{-1} \cdot |\xi|.  \label{thm1_Bp2_113}
\end{align}

Similarly if $|\xi| \le \frac 1 {2C_1} k M_A$ and $x\in O_A$, then
\begin{align}
 &| (D\phi_A)(t_A,x) \xi + k \begin{pmatrix} 1 \\ 0 \end{pmatrix}  | \notag \\
\gtrsim &\;  M_A^{-1} \cdot k M_A \gtrsim k. \label{thm1_Bp2_115}
\end{align}

This shows that $F(\xi)$ is essentially localized to $|\xi| \sim kM_A$.
By \eqref{thm1_Bp2_113}, \eqref{thm1_Bp2_115} and a stationary phase argument (note that the derivatives of $\phi_A$ are independent
of $k$!), we have
\begin{align*}
 \| P_{\gg kM_A} W_A^4(t_A,\cdot)\|_{p} + \| P_{\ll kM_A} W_A^4(t_A,\cdot) \|_{p}
 = O(k^{-\alpha}).
\end{align*}

 Consequently
\begin{align}
  \| P_{\sim kM_A} W_A^4(t_A,\cdot) \|_p
& \ge \| W_A^4(t_A,\cdot) \|_p -  \| P_{\gg kM_A} W_A^4(t_A,\cdot)\|_{p} -\| P_{\ll kM_A} W_A^4(t_A,\cdot) \|_{p}
\notag \\
& \ge \| \sin(kx_1) b(x) \|_p - O(k^{-\alpha}), \notag
\end{align}
where in the last step we have used $L^p$-conservation. Take an integer $m$ such that $10p>2m>p$, obviously
for $k$ sufficiently large,
\begin{align*}
 \| \sin(kx_1) b(x)\|_p^p &\ge \int_{\mathbb R^2} (\sin(kx_1))^{2m} |b(x)|^p dx \notag \\
 & \gtrsim_p \int_{\mathbb R^2} (1- \cos(2k x_1))^{m} |b(x)|^p dx \notag \\
 & \gtrsim_p \|b\|_p^p + \sum_{1\le j\le m} (-1)^j\binom{m}{j} \int_{\mathbb R^2} (\cos(2k x_1))^j |b(x)|^p dx \notag \\
 & \gtrsim_p \| b\|_p^p + O(k^{-\alpha}) \gtrsim \| b\|_p^p \gtrsim 1.
\end{align*}
Therefore
\begin{align}
 \| P_{\sim kM_A} W_A^4(t_A,\cdot )\|_{p} \gtrsim 1. \label{thm1_Bp2_m21_10}
 \end{align}

Now set
\begin{align*}
 \eta_1 = W_A^4- k^{\frac 2p} \cdot (\log\log\log A) \cdot W_A^3.
\end{align*}
Clearly,
\begin{align*}
 \begin{cases}
  \partial_t \eta_1 + ((U_A-V_A)\cdot \nabla )W_A^4 + (V_A\cdot \nabla) \eta_1 =0, \\
  \eta_1 \Bigr|_{t=0}=0.
 \end{cases}
\end{align*}
By \eqref{thm1_Bp2_m21_2} (to control $U_A-V_A$) and a similar argument as in the derivation of
\eqref{thm1_Bp2_m21_1a}, we have
\begin{align*}
 \max_{0\le t\le 1} \| |\nabla|^{-1} \eta_1 (t,\cdot)\|_2 =O(k^{-\alpha}).
\end{align*}
Since $\|\eta_1\|_1+\|\eta_1\|_{\infty} =O(1)$, interpolation then gives
\begin{align*}
 \max_{0\le t\le 1} \| \eta_1(t,\cdot)\|_p = O(k^{-\alpha}).
\end{align*}
By \eqref{thm1_Bp2_m21_10}, we then obtain
\begin{align}
 k^{\frac 2p }\| P_{\sim kM_A} W_A^3(t_A,\cdot )\|_{p} \gtrsim \frac 1 {\log\log\log A}. \notag
 \end{align}
Clearly,
\begin{align*}
 \| W_A^3(t_A, \cdot) \|_{\dot B^{\frac 2p}_{p,\infty}} & \gtrsim (kM_A)^{\frac 2p} \| P_{\sim k M_A} W_A^3(t_A,\cdot) \|_p \notag \\
 & \gtrsim \frac{M_A^{\frac 2p}} { \log\log\log A} \notag \\
 &\gtrsim \frac{ (\log\log A)^{\frac 2p} }{ \log\log\log A} \gg \log\log\log A.
\end{align*}
This settles Case 2.

We have proved the claim in the local construction step.

To finish the proof of Theorem \ref{thm1_Bp2} we need to appeal to a version of Proposition \ref{prop10}
and build a solution in the form
\begin{align*}
\omega(t,x) = \sum_{j=1}^{\infty} \omega_j(t,x),
\end{align*}
where each $\omega_j$ has compact support and $\operatorname{dist}(\operatorname{supp}(\omega_j),
\operatorname{supp}(\omega_k))=:R_{jk}\gg 1$ for $j\ne k$. Furthermore for any $n>1$ we can find
$0<t_n<\frac 1n$ and $j_n$ such that
\begin{align} \label{thm1_Bp2_130}
\| \omega_{j_n}(t_n, \cdot )\|_{\dot B^{\frac 2p}_{p,\infty}}>n.
\end{align}
Due to the nonlocal nature of the Besov norm $\| \cdot \|_{\dot B^{\frac 2p}_{p,\infty}}$, we have to control
the interactions of the patches $\omega_j$. For this we will need to use the convexity inequality:
if $1<r<\infty$ and $x,y \in \mathbb C^d$, then
\begin{align} \label{convex_ineq1}
 |x+y|^r \ge |x|^r + r |x|^{r-2} \bar x \cdot y, \qquad \forall\, x ,\, y \in \mathbb C^d.
\end{align}

Now fix any dyadic $N\ge 2$.  By the convexity inequality above, we have for any $j$,
\begin{align*}
 \|P_N \omega\|_p^p  & = \int_{\mathbb R^2} | P_N \omega_j + \sum_{k\ne j} P_N \omega_k |^p dx \notag \\
 & \ge \| P_N \omega_j\|_p^p + p \sum_{k\ne j} \int_{\mathbb R^2} | P_N \omega_j|^{p-2}( P_N \omega_j) \cdot P_N \omega_k dx.
\end{align*}

Observe that for any $m\ge 1$, $N\ge 2$,
\begin{align*}
\| P_N \omega_m \|_{L^p(\{x:\, \operatorname{dist}(x,\operatorname{supp}(\omega_m))>2\})} \lesssim N^{-100} \| P_N \omega_m\|_p.
\end{align*}

By this and the fact $R_{jk}\gg 1$ for $j\ne k$, we have
\begin{align*}
&\sum_{k\ne j} \left| \int_{\mathbb R^2} | P_N \omega_j|^{p-2}( P_N \omega_j) \cdot P_N \omega_k dx \right| \notag \\
\lesssim & \sum_{k\ne j} N^{-100} \|\omega_j\|_p^{p-1} \cdot \| \omega_k\|_p  \lesssim N^{-100},
\end{align*}
where we need to use the fact $\sum_{k=1}^{\infty} \|\omega_k\|_p \lesssim 1$ which can be easily accommodated into the construction.
Clearly for any $j$,
\begin{align*}
 \| P_N \omega \|_p^p & \ge \| P_N \omega_j \|_p^p -\operatorname{const}\cdot N^{-100}.
\end{align*}
From this and \eqref{thm1_Bp2_130}, it is then easy to check \eqref{thm1_Bp2_t3} holds.

\end{proof}

\begin{thm} \label{thm2_Bp2}
 For any $\omega_0^{(g)} \in C_c^{\infty}(\mathbb R^2) \cap \dot H^{-1}(\mathbb R^2)$ which is odd in $x_1$, any $\epsilon>0$,  and any $1<p< \infty$, $1<q\le \infty$,
we can find a  perturbation $\omega_0^{(p)}:\mathbb R^2\to \mathbb R$
such that the following hold true:

\begin{enumerate}
 \item $\omega_0^{(p)}$ is compactly supported, continuous and
 \begin{align*}
   \| \omega^{(p)}_0\|_{L^{\infty}(\mathbb R^2)} + \| \omega^{(p)}\|_{\dot H^{-1}(\mathbb R^2)}
   + \| \omega_0^{(p)} \|_{B^{\frac 2p}_{p,q}(\mathbb R^2)} <\epsilon.
  \end{align*}
 \item Let $\omega_0=\omega_0^{(g)}+ \omega_0^{(p)}$.  Corresponding to $\omega_0$,
 there exists a unique time-global solution $\omega = \omega(t)$ to the 2D Euler equation
satisfying $\omega(t) \in  L^{\infty} \cap \dot H^{-1}$. Furthermore $\omega \in C_t^0 C_x^0$ and
$u=\Delta^{-1} \nabla^{\perp} \omega \in C_t^0 L_x^2 \cap C_t^0 C_x^{\alpha}$ for any $0<\alpha<1$.

\item $\omega(t)$ has additional local regularity in the following sense: there exists $x_* \in \mathbb R^2$ such
that for any $x\ne x_*$, there exists a neighborhood $N_x \ni x$, $t_x >0$ such that
$w(t, \cdot) \in C^{\infty} (N_x)$ for any
$0\le t \le t_x$.

\item For any $0<t_0 \le 1$, we have
\begin{align} \label{thm2_Bp2_t1}
 \operatorname{ess-sup}_{0<t \le t_0} \| \omega(t,\cdot) \|_{\dot B^{\frac 2p}_{p,\infty} } =+\infty.
\end{align}

More precisely, there exist $0<t_n^1<t_n^2 <\frac 1n$, open precompact sets
$\Omega_n^1$, $\Omega_n^2$ with $\Omega_n^1\subset \overline{\Omega_n^1} \subset \Omega_n^2$, $n=1,2,3,\cdots$ such that
\begin{itemize}
\item $\omega(t) \in C^{\infty}(\Omega_n^2)$ for all $0\le t \le t_n^2$;
\item $\omega(t,x)\equiv 0$ for any $x\in \Omega_n^2 \setminus \Omega_n^1$, $0\le t\le t_n^2$.
\item Define $\omega_n(t,x)=\omega(t,x)$ for $x\in \Omega_n^1$, and $\omega_n(t,x)=0$ otherwise. Then
$\omega_n \in C^\infty_c(\mathbb R^2)$, and for some dyadic $N_n \ge 2$,
 \begin{align} \label{thm2_Bp2_t3a}
 N_n^{\frac 2p} \| (P_{N_n} \omega_n)(t,\cdot) \|_{L^p(\mathbb R^2)} >n, \quad \forall \, t_n^1\le t\le t_n^2,
\end{align}
and
\begin{align} \label{thm2_Bp2_t3b}
&\| ( P_{N_n} \omega_n)(t,\cdot)\|_{L^p(x\in \mathbb R^2 \setminus \Omega_n^2)} \notag \\
& \quad + \| \bigl(P_{N_n} (\omega-\omega_n) \bigr) \|_{L^p( \Omega_n^2)}\le \frac 1 {N_n^{100}},
\quad\forall\, 0\le t\le t_n^2.
\end{align}
\end{itemize}

\end{enumerate}

\end{thm}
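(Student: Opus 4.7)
The overall strategy parallels the proof of Theorem \ref{thm2} (the compactly supported $H^1$ case), but with the Besov inflation mechanism developed in the proof of Theorem \ref{thm1_Bp2} replacing Proposition \ref{prop20}, and with the nonlocality of $\|\cdot\|_{\dot B^{2/p}_{p,\infty}}$ handled via the convexity argument \eqref{convex_ineq1}. The construction is a sequential patching argument with centers $z_j$ accumulating at a point $x_*$, where each new patch $h_j$ produces localized Besov inflation on a progressively shorter patch time $t_j \to 0$.

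First I would establish a Besov analogue of Lemma \ref{lem57}. Given an existing profile $f_{-1}$ supported in $\{x_1\le -2R_0\}$ and odd in $x_1$, one adds the perturbation $h_A$ from \eqref{thm1_Bp2_hA_def} (translated to the origin and reflected through the $x_2$-axis to preserve oddness in $x_1$). The drift produced by $f_{-1}$ on the support of $h_A$ admits the expansion \eqref{lem57_8}: after the change of variable $x=\xi(t)+y$ that removes its leading part, the perturbed patch $\tilde W$ obeys an equation with drift $b_0(t)(-y_1,y_2)^\top + r(t,y)$ satisfying \eqref{lem57_9}. On this equation I would run the dichotomy from the proof of Theorem \ref{thm1_Bp2}: either $\tilde W_A$ already attains $\|\tilde W_A(t,\cdot)\|_{\dot B^{2/p}_{p,\infty}}\gg\log\log\log A$ on $[0,1/\log\log A]$, or one further perturbs by $\beta(x)=\tfrac{1}{\log\log\log A}k^{-2/p}\sin(kx_1)b(x)$ (with $b$ placed symmetrically so $\beta$ is odd in $x_1$) and invokes the stationary-phase argument localizing $P_{\sim kM_A}W_A^4$ in $L^p$ to obtain $\|\tilde W_A(t_A,\cdot)\|_{\dot B^{2/p}_{p,\infty}}\gtrsim M_A^{2/p}/\log\log\log A$. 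The deformation estimate $M_A\ge\log\log A$ is supplied by a version of Lemma \ref{lem55} adapted from Proposition \ref{prop40}.

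With this local mechanism in hand, I would then build $\omega_0=\sum_{j\ge 0} h_j$ exactly as in the proof of Theorem \ref{thm2_weak}/\ref{thm2}: set $z_0=(-2,0)$, $h_0=\omega_0^{(g)}$, $z_j=(\sum_{k<j}100\cdot 2^{-k},0)$ for $j\ge 1$, and recursively choose $h_j\in C_c^\infty(B(z_j,2^{-j-9}))$ (odd in $x_1$) and thresholds $\delta_j$ so that any subsequent patches of size $\le\delta_j$ do not disturb the decomposition \eqref{pf2_20} up to time $t_j\in(0,2^{-j})$. In addition, as in the proof of Theorem \ref{thm2}, one requires $\|h_j\|_\infty$ so small (in terms of Lemma \ref{lem_C0thm2}) that the sequence $\omega^{(J)}$ of smooth truncated solutions is Cauchy in $C_t^0C_x^0$; this yields the unique continuous limit $\omega$ with $u\in C_t^0 L_x^2\cap C_t^0 C_x^{\alpha}$, the support description $\Omega_n^1\subset\Omega_n^2$, local $C^\infty$ regularity off $x_*=\lim z_j$, and the Cauchy property in $L^p$ for all $1<p<\infty$ that upgrades the smoothness of individual patches $\omega_n$.

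The main obstacle is the nonlocality of the Besov seminorm: unlike $\dot H^1$, one cannot read off \eqref{thm2_Bp2_t1} merely from local smoothness of individual patches. This is precisely why the theorem is phrased via the localized data \eqref{thm2_Bp2_t3a}--\eqref{thm2_Bp2_t3b}. I plan to prove these as follows. For each $n$, let $N_n\sim k_n M_{A_n}$ be the frequency produced by the $n$th inflation, choose $\Omega_n^1$ a tiny neighborhood of $\operatorname{supp}(\omega_n(t,\cdot))_{0\le t\le t_n^2}$, and $\Omega_n^2$ a fattening of $\Omega_n^1$ by $N_n^{-1/2}$ (say). Then \eqref{thm2_Bp2_t3a} follows from the local inflation produced by the Besov analogue of Lemma \ref{lem57} (restricted to $\Omega_n^1$), since $\omega_n$ agrees with one patch there. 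For \eqref{thm2_Bp2_t3b}, I would use the rapid off-support decay of Littlewood--Paley kernels: writing $P_{N_n}f(x)=\int K_{N_n}(x-y)f(y)\,dy$ with $|K_{N_n}(z)|\lesssim_M N_n^2(1+N_n|z|)^{-M}$, any contribution of a patch to $P_{N_n}$ evaluated at distance $\gtrsim N_n^{-1/2}$ from its support is bounded by $N_n^{-200}$ (taking $M$ large). Summing over the countably many other patches, using $\sum_j\|\omega_j\|_\infty<\infty$, gives \eqref{thm2_Bp2_t3b}. Finally, \eqref{thm2_Bp2_t1} follows from \eqref{thm2_Bp2_t3a}--\eqref{thm2_Bp2_t3b} exactly as in the remark after Theorem \ref{thm4}: decomposing $\omega(\tau)=\omega_n(\tau)+g_n(\tau)$ with disjointly supported $g_n=\omega-\omega_n$ (up to an $N_n^{-100}$ error on $\Omega_n^2$), using the convexity inequality \eqref{convex_ineq1} in the form $\|P_{N_n}\omega\|_p^p\ge\|P_{N_n}\omega_n\|_p^p-C N_n^{-100}$ forces $\|\omega(\tau)\|_{\dot B^{2/p}_{p,\infty}}\gtrsim N_n^{2/p}\|P_{N_n}\omega_n(\tau)\|_p>n/2$ for $\tau\in[t_n^1,t_n^2]$, which is incompatible with any essentially bounded $\dot B^{2/p}_{p,\infty}$ bound on $[0,t_0]$ once $t_n^2<t_0$.
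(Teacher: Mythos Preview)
Your proposal is correct and follows essentially the same route as the paper's own (sketched) proof: combine the compact-support patching of Theorem~\ref{thm2} with the Besov inflation mechanism and convexity argument from Theorem~\ref{thm1_Bp2}, via a Besov analogue of Lemma~\ref{lem57}. The only notable difference is bookkeeping: the paper takes $\Omega_n^2$ at the patch-separation scale $R_0$ and records the high-frequency restriction explicitly as $\|P_{>\epsilon^{-1/10}}\omega_0(t,\cdot)\|_{\dot B^{2/p}_{p,\infty}}>1/\epsilon$ (your $N_n\sim k_nM_{A_n}$ plays the same role), whereas you fatten by $N_n^{-1/2}$; also note your description ``$f_{-1}$ supported in $\{x_1\le -2R_0\}$ and odd in $x_1$'' is self-contradictory and should read odd in $x_2$ (or support in $\{x_2\le -2R_0\}$), matching Lemma~\ref{lem57}.
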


\begin{proof}[Proof of Theorem \ref{thm2_Bp2}]
Again WLOG assume $\omega_0^{(g)} \equiv 0$.
 We shall only sketch the needed modifications (compared to the proof of Theorem \ref{thm2} and repeating some of the steps
 in Theorem \ref{thm1_Bp2}). In the local construction step,
 we take same $h_A$ as in \eqref{thm1_Bp2_hA_def}.  We then need to modify  Lemma \ref{lem55}  (with initial data $h_A$)
only slightly, namely \eqref{lem55_2} is replaced by
  \begin{align*}
  \max_{0\le t\le \frac 1 {\log\log\log A}} \| (D\Phi)(t,\cdot) \|_{\infty} > \log\log\log\log A.
 \end{align*}
 The proof stays essentially the same.

The next step in the construction is to modify Lemma \ref{lem57}. In \eqref{lem57_1a} and the sentence before
\eqref{lem57_2}, we replace the $H^1$ norm
by $\dot B^{\frac 2p}_{p,q}$ ($q=1+$) norm. The inequality \eqref{lem57_2} is modified as
\begin{align} \label{thm2_Bp2_20a}
 \max_{0\le t\le t_0} \| P_{>\epsilon^{-\frac 1{10}}}  \omega_0(t,\cdot)\|_{\dot B^{\frac 2p}_{p,\infty}} >\frac 1{\epsilon}.
\end{align}
Also it should be noted that we need to choose $\epsilon \ll R_0$. One can then easily check
\begin{align} \label{thm2_Bp2_20b}
 &\max_{0\le t\le t_0}\Bigl( \| P_N \omega_0 (t,\cdot)\|_{L^p(x:\, \operatorname{dist}(x, \operatorname{supp}(\omega_0))\gtrsim R_0) }
 \notag \\
 & \quad + \| P_N (\omega-\omega_0)(t,\cdot)\|_{L^p(x:\, \operatorname{dist}(x, \operatorname{supp}(\omega_0))\lesssim R_0) }\Bigr)
 \lesssim_m N^{-m},
 \quad \forall\, N>\epsilon^{-\frac 1{10}}.
\end{align}

The last step is to glue the patch solutions. This is essentially the same as the proof of Theorem \ref{thm2}.
The inequalities \eqref{thm2_Bp2_20a} and \eqref{thm2_Bp2_20b} then imply \eqref{thm2_Bp2_t3a}
and \eqref{thm2_Bp2_t3b} respectively. To show  \eqref{thm2_Bp2_t1} from \eqref{thm2_Bp2_t3a}--\eqref{thm2_Bp2_t3b}, we just
decompose $\omega$ as
\begin{align*}
\omega(t,x) = \omega_n(t,x) + g_n(t,x).
\end{align*}
By \eqref{convex_ineq1},
\begin{align*}
 \| P_{N_n} \omega\|_p^p \ge \| P_{N_n} \omega_n \|_p^p - p \left| \int_{\mathbb R^2}  |P_{N_n} \omega_n|^{p-2} (P_{N_n} \omega_n)
 \cdot P_{N_n}g_n dx \right|.
\end{align*}
By construction, we have for some $R_n>0$, $\operatorname{dist}(\operatorname{supp}(\omega_n), \operatorname{supp}(g_n)) >3R_n$, and
\begin{align*}
 &\| P_{N_n} g_n \|_{L^p( x \in \mathbb R^2:\, \operatorname{dist}(x,\operatorname{supp} (\omega_n)) \le R_n )} \lesssim \frac 1 {N_n^{100}}, \notag \\
 &\| P_{N_n} \omega_n \|_{L^p( x \in \mathbb R^2:\, \operatorname{dist}(x,\operatorname{supp} (\omega_n)) > R_n )} \lesssim \frac 1 {N_n^{100}}.
\end{align*}
Clearly,
\begin{align*}
 \| P_{N_n} \omega\|_p^p \ge \| P_{N_n} \omega_n \|_p^p - \frac {\operatorname{const}} {N_n^{100}}.
\end{align*}
Thus  \eqref{thm2_Bp2_t1} follows.

\end{proof}

The last two theorems are on the illposedness of 3D Euler in Besov
spaces. We omit the proof since it mimics the ones made in preceding
sections.

\begin{thm} \label{thm3_BP2}
Consider the 3D incompressible Euler equation in vorticity form:
\begin{align} \label{thm3_Bp2_1}
 \begin{cases}
  \partial_t \omega + (u\cdot \nabla) \omega = (\omega \cdot \nabla)u, \quad 0<t\le 1, \; x=(x_1,x_2,z) \in \mathbb R^3;\\
  u=-\Delta^{-1} \nabla \times \omega,\\
  \omega \Bigr|_{t=0} =\omega_0.
 \end{cases}
\end{align}

For any axisymmetric vorticity $\omega_0^{(g)} \in C_c^{\infty}(\mathbb R^3)$, any $\epsilon>0$,  and any $1<p< \infty$, $1<q\le \infty$,
we can find a $C^{\infty}$ perturbation $\omega_0^{(p)}:\mathbb R^3\to \mathbb R^3$
such that the following hold true:

\begin{enumerate}
 \item The perturbation is very small:
 \begin{align*}
   \| \omega_0^{(p)} \|_{L^1(\mathbb R^3)} + \| \omega^{(p)}_0\|_{L^{\infty}(\mathbb R^3)}
   + \| \omega_0^{(p)} \|_{B^{\frac 3p}_{p,q}(\mathbb R^3)} <\epsilon.
  \end{align*}
 \item Let $\omega_0=\omega_0^{(g)}+ \omega_0^{(p)}$. Let $u_0$ be the velocity corresponding to the initial vorticity $\omega_0$.
 We have $u_0 \in B^{\frac 3p+1}_{p,q} (\mathbb R^3) \cap C^{\infty} (\mathbb R^3)\cap L^{\infty} (\mathbb R^3)$.
\item Corresponding to $\omega_0$, there exists a unique  solution $\omega = \omega(t)$ to \eqref{thm3_Bp2_1} on the whole time
interval $[0,1]$ such that
\begin{align*}
 \sup_{0\le t \le 1} (\| \omega(t,\cdot)\|_{L^\infty} + \| \omega(t,\cdot) \|_{L^1} )<\infty.
\end{align*}
Moreover $\omega \in C^{\infty}$ and $u \in C^{\infty}$ so that the solution is actually classical.

\item For any $0<t_0 \le 1$, we have
\begin{align*}
 \operatorname{ess-sup}_{0<t \le t_0} \| \omega(t,\cdot) \|_{\dot B^{\frac 3p}_{p,\infty} (\mathbb R^3)} =+\infty.
\end{align*}

\end{enumerate}

\end{thm}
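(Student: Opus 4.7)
The plan is to combine the 3D axisymmetric machinery of Section 7 with the Besov perturbation scheme used in the proof of Theorem \ref{thm1_Bp2}. Since $B^{3/p}_{p,q_1}\hookrightarrow B^{3/p}_{p,q_2}$ for $q_1<q_2$, it suffices to treat $1<q<p$, and by the same reduction as in Theorem \ref{thm3} we may assume $\omega_0^{(g)}\equiv 0$ and concentrate on exhibiting a single patch that inflates the critical Besov norm; the global solution is then assembled as in Proposition \ref{x9}. First I would replace the building block $\tilde g_A$ in \eqref{e_p63_1} by its $(p,q)$-calibrated analogue
\begin{align*}
H_A(x) \;=\; (\log A)^{\epsilon_1}\,\frac{1}{\log A}\sum_{A<k<A+\log A}\eta_k(r,z)\, e_\theta,
\qquad \epsilon_1=\tfrac12\bigl(1-\tfrac1q\bigr),
\end{align*}
in direct imitation of \eqref{thm1_Bp2_hA_def}. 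Interpolating the trivial bounds $\|H_A\|_{\dot B^{3/r}_{r,1}}\lesssim 1$ (for all $1\le r\le\infty$) with $\|H_A\|_\infty\lesssim(\log A)^{-(1-\epsilon_1)}$ yields $\|H_A\|_{\dot B^{3/p}_{p,q}}\lesssim(\log A)^{-\epsilon_1}$, while simultaneously $\|H_A/r\|_{L^{3,1}}\lesssim(\log A)^{\epsilon_1}$ and $\|H_A\|_{L^p}\lesssim(\log A)^{-(1-\epsilon_1)}2^{-3A/p}$. These estimates are exactly what is needed to repeat the support-confinement argument of Lemma \ref{y4} and the large-Lagrangian-deformation argument of Proposition \ref{x5}: the conservation of $\|W/r\|_{L^{3,1}}$ and Lemma \ref{lem_ur_r} keep $\|U^r/r\|_\infty$ polylogarithmic in $A$, so the integral inequality derived in Proposition \ref{x5} still forces $\|D\tilde\Phi(t_*)\|_\infty \ge \log\log A$ for some $0<t_*\le (\log\log A)^{-1}$.

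Next I would perform the $\dot B^{3/p}_{p,\infty}$ inflation step by adapting Proposition \ref{x6}. Following Case 2 of that proposition, at the point $(r_*,z_*)$ where $|\partial_r\tilde\Phi^r|=M\sim\sqrt{\log\log A}$ (see \eqref{x6_p20}--\eqref{x6_p21}) I would replace the scalar profile \eqref{x6_p24} by the oscillatory bump
\begin{align*}
g_0(r,z) \;=\; k^{-3/p}\,(\log\log\log A)^{-1}\,\cos(kr)\,b(r,z),
\end{align*}
with $b$ an even, compactly supported, $L^p$-normalized bump constructed exactly as around \eqref{thm1_Bp2_100}. Since Proposition \ref{x6}'s decomposition $W=\omega+W_1+E_1+E_2$ is driven by linear transport bounds, the identical nested energy-and-interpolation scheme used there — now closed in the critical space $B^{3/p+1}_{p,1}$ for velocity (natural in view of \cite{AHK10}) instead of $H^{5/2}$ — produces error bounds $\|E_1\|_{\dot B^{3/p}_{p,q}}+\|E_2\|_{\dot B^{3/p}_{p,q}}=O(k^{-\alpha})$. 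To extract a genuine lower bound on the main term, I would mirror \eqref{thm1_Bp2_110}--\eqref{thm1_Bp2_m21_10}: Fourier-transform the oscillatory piece after the volume-preserving change of variables $x\mapsto\Phi(t_0,x)$, use the bounds \eqref{x6_p21} on the flow to show by stationary phase that the output is frequency-localized to $|\xi|\sim kM$, and conclude that $\|P_{\sim kM}W(t_0)\|_{L^p}\gtrsim M^{-\sigma}$ for an absolute $\sigma<3/p$. Multiplying by $(kM)^{3/p}$ then gives $\|W(t_0)\|_{\dot B^{3/p}_{p,\infty}}\gtrsim M^{3/p-\sigma}/\log\log\log A\to\infty$.

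Finally I would glue patches through Proposition \ref{x9}: choosing centers $x_j$ with mutual distances $\gg 1$, the lifespan, support, and local $H^k$ properties of each axisymmetric patch propagate independently, and the limit solution $W=\sum_j W^j(\cdot-x_j)$ is smooth away from the accumulation direction. To transfer per-patch inflation to the global Besov norm I would apply the convexity inequality \eqref{convex_ineq1} exactly as in the last step of the proof of Theorem \ref{thm1_Bp2}: for any dyadic $N$ and any patch index $j_0$,
\begin{align*}
\|P_N W(t)\|_{L^p}^{p}
\;\ge\;\|P_N W^{j_0}(t)\|_{L^p}^{p}
-\,p\sum_{k\ne j_0}\Bigl|\int_{\mathbb R^3}|P_N W^{j_0}|^{p-2}\,(P_N W^{j_0})\cdot P_N W^{k}\,dx\Bigr|,
\end{align*}
and the rapid off-support decay $\|P_N W^k\|_{L^p(\operatorname{dist}(\cdot,\operatorname{supp} W^k)>2)}\lesssim N^{-100}$ together with summability $\sum_j\|W^j\|_{L^p}\lesssim 1$ makes the cross terms negligible; thus $\|P_N W(t)\|_{L^p}^p\ge\|P_N W^{j_0}(t)\|_{L^p}^p-\operatorname{const}\cdot N^{-100}$, which produces $\operatorname{ess\,sup}_{0<t\le t_0}\|W(t)\|_{\dot B^{3/p}_{p,\infty}}=\infty$ for any $t_0>0$.

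The main obstacle will be step two, specifically the verification that the "transport error" $E_1+E_2$ in \eqref{x6_p7} is indeed small in $\dot B^{3/p}_{p,q}$ and not merely in $\dot H^{3/2}$. In the $H^{3/2}$ proof the errors are killed by interpolating $L^2$-smallness against an $H^2$ ceiling; here I must replace $H^2$ by $B^{3/p+1}_{p,1}$ and argue persistence of critical Besov regularity for 3D axisymmetric flows along the lines of Abidi--Hmidi--Keraani, which in turn requires carrying a quantitative version of Lemma \ref{lem_ur_r} (and hence a uniform control of $\|W/r\|_{L^{3,1}}$) through the entire nonlinear perturbation. This is exactly what forces the particular choice of the weights $(\log A)^{\epsilon_1}$, $k^{-3/p}$ and $(\log\log\log A)^{-1}$ above; once these bookkeeping inequalities are in place, each remaining subsection of the argument transfers from the $H^{3/2}$ case line by line.
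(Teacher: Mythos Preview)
The paper itself omits the proof of this theorem, stating only that it ``mimics the ones made in preceding sections,'' i.e.\ one combines the 3D axisymmetric machinery of Section~7 (Propositions \ref{x5}--\ref{x7}, \ref{x9}) with the Besov perturbation scheme from the proof of Theorem \ref{thm1_Bp2}. Your proposal is exactly this synthesis, so it is aligned with the paper's intended approach.

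One small refinement: you identify the ``main obstacle'' as controlling the transport errors $E_1+E_2$ in the full $\dot B^{3/p}_{p,q}$ norm, but the 2D Besov template (Theorem \ref{thm1_Bp2}) actually sidesteps this. There the comparison between $W_A^3$ and the auxiliary $W_A^4$ is made only in $L^p$ (via the estimate $\max_{0\le t\le 1}\|\eta_1(t,\cdot)\|_p=O(k^{-\alpha})$, interpolated from $\||\nabla|^{-1}\eta_1\|_2$ and $\|\eta_1\|_{L^1\cap L^\infty}$), and the Besov lower bound comes from the stationary-phase frequency localization $\|P_{\sim kM_A}W_A^4\|_p\gtrsim 1$ together with the prefactor $(kM_A)^{2/p}$. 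The 3D version should proceed the same way: you do not need to close the perturbation in $\dot B^{3/p}_{p,q}$, only in $L^p$ at the single dyadic scale $N\sim kM$. This makes the bookkeeping lighter and avoids having to propagate critical Besov regularity through the nonlinear system. The only genuinely new ingredient relative to 2D is the metric factor $r/\tilde\Phi^r$ in the solution formula \eqref{x6_p2}; since $b$ is supported away from $r=0$ and $\|D\tilde\Phi\|_\infty\lesssim M$, this factor is bounded above and below on $\operatorname{supp}(b)$ and does not disrupt the stationary-phase localization.
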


\begin{thm} \label{thm4_Bp2}
For any axisymmetric vorticity $\omega_0^{(g)} \in C_c^{\infty}(\mathbb R^3)$, 
any $\epsilon>0$,  and any $1<p< \infty$, $1<q\le \infty$,
we can find a  perturbation $\omega_0^{(p)}:\mathbb R^3\to \mathbb R^3$
such that the following hold true:

\begin{enumerate}
 \item $\omega_0^{(p)}$ is compactly supported, continuous and
 \begin{align*}
  \| \omega^{(p)}_0\|_{L^{\infty}(\mathbb R^3)}+\| \omega_0^{(p)} \|_{B^{\frac 3p}_{p,q}(\mathbb R^3)} <\epsilon.
  \end{align*}
 \item Let $\omega_0=\omega_0^{(g)}+\omega_0^{(p)}.$
 Corresponding to $\omega_0$ there exists a unique solution $\omega = \omega(t,x)$ to the Euler equation \eqref{thm3_1} on the time
 interval $[0,1]$ satisfying
 \begin{align} \label{thm4_Bp2_t1}
 &\sup_{0\le t\le 1} \|\omega(t,\cdot)\|_{\infty}<\infty, \notag \\
 &\operatorname{supp}(\omega(t,\cdot))\subset\{x, \: |x| < R\}, \quad \forall\, 0\le t\le 1,
 \end{align}
where $R>0$ is some constant. Furthermore $\omega \in C_t^0 C_x^0$ and
$u \in C_t^0 L_x^2 \cap C_t^0 C_x^{\alpha}$ for any $\alpha<1$.

\item $\omega(t)$ has additional local regularity in the following sense: there exists $x_* \in \mathbb R^3$ such
that for any $x\ne x_*$, there exists a neighborhood $N_x \ni x$, $t_x >0$ such that $w(t) \in C^{\infty} (N_x)$ for any
$0\le t \le t_x$.

\item For any $0<t_0 \le 1$, we have
\begin{align} \label{thm4_Bp2_t2}
 \operatorname{ess-sup}_{0<t \le t_0} \| \omega(t,\cdot) \|_{\dot B^{\frac 3p}_{p,\infty}(\mathbb R^3) } =+\infty.
\end{align}
%

\end{enumerate}

\end{thm}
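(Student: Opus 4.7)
The plan is to combine the construction scheme of Theorem \ref{thm4} (3D compactly supported axisymmetric case) with the Besov-space modifications introduced in the proofs of Theorem \ref{thm1_Bp2} and Theorem \ref{thm2_Bp2}. As in those earlier results it suffices to treat $\omega_0^{(g)}\equiv 0$, to restrict to $1<q<p$ (by the embedding $B^{s}_{p,q_1}\hookrightarrow B^{s}_{p,q_2}$ for $q_1<q_2$), and to build the perturbation as a countable sum $\omega_0^{(p)}=\sum_{j\ge 1} F_j$ of well-separated axisymmetric patches localized near centers $x_*^j=(0,0,\sum_{k=1}^j 2^{-k})$ on the $z$-axis. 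The regularity statements (existence, uniqueness, $C^\infty$ away from $x_*=(0,0,1)$, $u\in C_t^0L_x^2\cap C_t^0 C_x^\alpha$) will follow verbatim from the $C^0$-Cauchy scheme used in the proof of Theorem \ref{thm4}, since that argument uses only $L^\infty$-smallness of each patch together with the $L^{3,1}$-based perturbation Lemma \ref{y1}; none of it is tied to the Hilbert scale.

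The real work is the local construction and the norm-inflation mechanism, which must be reformulated in the Besov scale. Fix a small $\delta>0$. In place of the axisymmetric profile $\tilde g_A$ from \eqref{e_p63_1} I would use
\begin{align*}
\tilde g_A^{(p,q)}(r,z) \;=\; (\log A)^{\epsilon_1}\,\frac{1}{\log A}\!\!\sum_{A<k<A+\log A}\!\!\eta_k(r,z),
\qquad \epsilon_1=\tfrac12(1-\tfrac1q)\in(0,\tfrac12),
\end{align*}
renormalized exactly as in \eqref{thm1_Bp2_hA_def} so that $\|\tilde g_A^{(p,q)}\|_{L^\infty}\lesssim(\log A)^{\epsilon_1-1}$, $\|\tilde g_A^{(p,q)}e_\theta\|_{\dot B^{3/p}_{p,q}}\lesssim(\log A)^{-\epsilon_1}$, while $\|\tfrac{\tilde g_A^{(p,q)}}r e_\theta\|_{L^{3,1}}\lesssim \sqrt{\log A}$ (the $L^{3,1}$ bound is unchanged because it only depends on the scaling, cf.\ \eqref{e_p63_2}). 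Plugging this profile into Lemma \ref{y4} and Lemma \ref{y5} I still get the same hyperbolic-deformation dichotomy for the characteristic map $\tilde\Phi$ on the time interval $[0,\tfrac1{\log\log A}]$: either $\|W(t)\|_{\dot B^{3/p}_{p,\infty}}$ already blows up along $[0,\tfrac1{\log\log A}]$ (in which case nothing else is needed), or $\|D\tilde\Phi(t)\|_\infty>\log\log\log A$ for some $t\le \tfrac1{\log\log A}$.

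In the latter case I would run the Besov analogue of Proposition \ref{x6}: perturb the initial data by
\begin{align*}
G_0(x) \;=\; \tfrac{1}{\log\log\log A}\,k^{-3/p}\sin(k\,\tilde\Phi^r_{\!\text{placeholder}}(\cdot))\,b(r,z)\,e_\theta,
\end{align*}
where $b$ is an even axisymmetric bump localized in the open set around the maximizing point of the deformation (exactly as in the $2$D construction of Theorem \ref{thm1_Bp2}), and compare the resulting solution $W$ to the linear transport of $G_0$ by the unperturbed flow. A stationary-phase argument identical to the one carried out in \eqref{thm1_Bp2_110}--\eqref{thm1_Bp2_m21_10} shows that at time $t_0\sim \tfrac1{\log\log A}$ the Fourier transform of the transported perturbation is essentially supported at $|\xi|\sim k\,\|D\tilde\Phi\|_\infty$, yielding
\begin{align*}
\|W(t_0)\|_{\dot B^{3/p}_{p,\infty}}\;\gtrsim\; \frac{(\log\log\log A)^{3/p}}{\log\log\log A}\;\gg\;\log\log\log\log A,
\end{align*}
which is the desired local inflation. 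The remaining ingredients (the $L^\infty$-smallness, the $L^{3,1}$-control of $\omega/r$, and the $H^k$-bounds on the individual patch that do \emph{not} depend on other patches) all come from Proposition \ref{y6} together with Lemma \ref{y1}, and need no modification.

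Finally the patches must be glued. I would iterate Proposition \ref{y6} exactly as in Theorem \ref{thm4}, but choose the $j$th patch to carry Besov norm inflation to height $\sim j$ at some time $t_j<1/j$, with initial $\dot B^{3/p}_{p,q}$-norm bounded by $2^{-j}$ and $\|F_j\|_\infty\exp(C\|F_j/r\|_{L^{3,1}})<\delta_j$ so that Lemma \ref{y1} controls the perturbation and gives $C^0$-Cauchy convergence to the global solution. The main obstacle — and this is the one novel analytic point — is to pass from local inflation of the $j$th patch to inflation of the sum, since the Besov norm is non-local. As in the proof of Theorem \ref{thm2_Bp2} I would use the convexity inequality \eqref{convex_ineq1} with $r=p$: writing $\omega(t)=\omega_{j_n}(t)+g_n(t)$ with disjoint supports separated by distance $R_n$, the spatial decay of the Littlewood-Paley kernel gives
\begin{align*}
\|P_{N_n}g_n\|_{L^p(\{\mathrm{dist}(\cdot,\mathrm{supp}\,\omega_{j_n})\le R_n\})} + \|P_{N_n}\omega_{j_n}\|_{L^p(\{\mathrm{dist}(\cdot,\mathrm{supp}\,\omega_{j_n})> R_n\})}\;\lesssim\;N_n^{-100},
\end{align*}
so that $\|P_{N_n}\omega\|_p^p\ge \|P_{N_n}\omega_{j_n}\|_p^p - O(N_n^{-100})$. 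This gives the essential-sup blow-up \eqref{thm4_Bp2_t2}. The hard part is arranging the quantitative bookkeeping: one must pick the separation $R_n$ between the $n$th patch and the (infinitely many) remaining patches so that the $L^p$-tail of each frequency-localized piece is dominated by the target frequency $N_n$; this is done by choosing the centers $x_*^j$ in a geometric progression and inflating $N_n$ quickly enough along the sequence, exactly as in Theorem \ref{thm2_Bp2}, while simultaneously respecting the smallness constraint $\|F_j\|_\infty\exp(C\|F_j/r\|_{L^{3,1}})<\delta_j$ that keeps the 3D patch interaction under control.
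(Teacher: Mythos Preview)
Your approach is correct and is exactly what the paper intends: the authors omit the proof of Theorem~\ref{thm4_Bp2}, stating only that it ``mimics the ones made in preceding sections,'' and your proposal carries out precisely this mimicry by combining the 3D compactly supported axisymmetric scheme (Proposition~\ref{y6}, Lemma~\ref{y1}) with the Besov-scale inflation mechanism of Theorems~\ref{thm1_Bp2}--\ref{thm2_Bp2}.

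Two small points are worth fixing. First, your inflation estimate
\[
\|W(t_0)\|_{\dot B^{3/p}_{p,\infty}}\gtrsim \frac{(\log\log\log A)^{3/p}}{\log\log\log A}
\]
fails to diverge when $p\ge 3$, because you have placed the perturbation normalization $1/\log\log\log A$ at the \emph{same} log-scale as the deformation bound from Lemma~\ref{y5}. In the 2D proof the two scales are deliberately separated (deformation $\gtrsim\log\log A$, normalization $\sim 1/\log\log\log A$), and you should do the same here: take the prefactor in $G_0$ to be, say, $1/\log\log\log\log A$, so that the ratio $(\log\log\log A)^{3/p}/\log\log\log\log A\to\infty$ for every finite $p$. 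Second, the stationary-phase step is not quite ``identical'' to the 2D one: in the axisymmetric setting the transported perturbation carries the metric factor $r/\tilde\Phi^r$ (cf.\ \eqref{x6_p2}), and the change of variables uses $r\,dr\,dz$ rather than Lebesgue measure. Since the support of $b$ is bounded away from $r=0$ and the factor $r/\tilde\Phi^r$ is smooth there (bounded by $N_*$ as in \eqref{x6_p20a}--\eqref{x6_p21}), this only modifies the amplitude and the argument goes through with the obvious changes; but it should be acknowledged rather than called identical.
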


\end{document}